\numberwithin{equation}{section}
\theoremstyle{plain}
\newtheorem{thm}{Theorem}[section]
\newtheorem*{thmno}{Informal Theorem}
\newtheorem{lem}[thm]{Lemma}
\newtheorem{prop}[thm]{Proposition}
\newtheorem{cor}[thm]{Corollary}
\theoremstyle{definition}
\newtheorem{defn}[thm]{Definition}
\theoremstyle{remark}
\newtheorem{rem}[thm]{Remark}
\DeclareMathOperator{\dist}{dist}
\renewcommand{\bar}{\overline}
\newcommand{\sgn}{{\rm sgn}}
\newcommand{\even}{{\rm even}}
\newcommand{\odd}{{\rm odd}}
\newcommand{\ora}[1]{\vec{#1}}
\newcommand{\pa}{\partial}
\newcommand{\vs}{\varsigma}
\newcommand{\vphi}{\varphi}
\newcommand{\normk}[2]{\| #1 \|_{#2}^{k_0,\upsilon}}
\renewcommand{\whi}{\widehat \imath}
\renewcommand{\wti}{\widetilde \imath}
\newcommand{\fy}{{\mathfrak{y}}}
\title{\bf Space quasi-periodic steady Euler flows close to the inviscid
Couette flow }
\author{Luca Franzoi, Nader Masmoudi, Riccardo Montalto}
\begin{document}

\date{}

\maketitle

\noindent
{\bf Abstract.} 
We prove the existence of steady \emph{space quasi-periodic} stream functions,  solutions for the Euler equation in vorticity-stream function formulation in the two dimensional channel $\R\times [-1,1]$. These solutions bifurcate from a prescribed shear equilibrium near the Couette flow, whose profile induces finitely many modes of oscillations in the horizontal direction for the linearized problem. Using a Nash-Moser implicit function iterative scheme, near such equilibrium we construct small amplitude, space reversible stream functions slightly deforming the linear solutions and retaining the horizontal quasi-periodic structure. These solutions exist for most values of the parameters characterizing the shear equilibrium. As a by-product, the streamlines of the nonlinear flow exhibit Kelvin's cat eye-like trajectories arising from the finitely many stagnation lines of the shear equilibrium.
	
\bigskip

\noindent
{\it Keywords:} Euler equations, Couette flows, Dynamics of Fluids.

\smallskip
\noindent
{\it MSC 2020:} 35Q31, 37K55, 76B03.

\tableofcontents

\section{Introduction}

In the two-dimensional finite channel $\R\times [-1,1]$, we consider a stationary, incompressible inviscid fluid whose stream function $\psi(x,y):\R \times [-1,1 ]\to \R$ solves the stationary Euler equation in vorticity-stream function formulation
\begin{equation}\label{stat.euler.vort}
	\{ \psi,\Delta \psi \} := \psi_x (\Delta \psi)_y - \psi_y (\Delta \psi)_x =0\,,
\end{equation}
coupled with the impermeability condition at the boundary
\begin{equation}\label{imper.cond}
	\psi_x =0 \quad \text{on } \ \{y=\pm 1\}\,.
\end{equation}
In \cite{LZ}, Lin \& Zeng showed on the finite periodic channel, in a $H_{x,y}^s$-neighbourhood of the Couette flow (in the vorticity space), the existence of space periodic steady solutions of \eqref{stat.euler.vort} when $s<\frac32$ and the non-existence of non-parallel traveling solutions when $s>\frac32$. Namely, the regularity threshold $s=\frac32$ discriminates between the presence or not of damping phenomena for the nonlinear evolution of non viscous fluids. The goal of the present paper is to give a new insight to their result when the setting is extended to the quasi-periodic case.
We give now the informal statement of our result.
\begin{thmno}
	Let $\kappa_{0}\in\N$. There exist $\varepsilon_0>0$ small enough and a family of stationary solutions $(\psi_{\varepsilon}(x,y)=\breve{\psi}_{\varepsilon}(\bx,y)|_{\bx=\wt\omega x})_{\varepsilon\in[0,\varepsilon_0]}$ of the Euler equation \eqref{stat.euler.vort} in the finite channel $(x,y)\in\R\times [-1,1]$ that are quasi-periodic in the horizontal direction $x \in \R$ for some frequency vector $\wt\omega\in\R^{\kappa_{0}}$, with $\bx=\wt\omega x\in\T^{\kappa_{0}}$. Such family bifurcates from a shear equilibrium $\psi_{\fm}(y)$ and can be chosen to be arbitrarily close to the stream function of the Couette flow $\psi_{\rm cou}(y):=\tfrac12 y^2$ in $H_{\bx}^{s}H_y^{7/2-}(\T^{\kappa_{0}}\times[-1,1])$, with $s>0$ sufficiently large.
\end{thmno}
The rigorous statement of the result is given in Theorem \ref{main.thm1}.

\medskip

Understanding as much as possible of the fluid behaviour around shear flows is one of the main interests for the hydrodynamic stability research. Around the Couette flow, the simplest among the nontrivial shear flows, Kelvin \cite{Kelvin} and Orr \cite{Orr} proved with experiments and with computations the damping of inviscid flows for the linearized Euler equations at the shear equilibrium, which at first was a surprising result in (apparent) contrast with the (essential) Hamiltonian nature of the equations. The first rigorous justification in the nonlinear case was provided by Bedrossian \& Masmoudi \cite{BeMa} and Deng \& Masmoudi \cite{DeMa}, who proved the asymptotic stability to the planar Couette flow in $\T\times \R$ under perturbations in Gevrey regularity. These results follow the work of Mouhot \& Villani \cite{MouVil} on the nonlinear Landau damping for the Vlasov equation. Other extensions to the inviscid damping near the Couette flow include Ionescu \& Jia \cite{IoJi} in the finite periodic channel,  Yang \& Lin \cite{YangLin},  Bianchini, Coti Zelati \& Dolce \cite{BCzD} for stratified fluids and Antonelli, Dolce \& Marcati \cite{ADM} in the compressible case. For other shear flows, we quote Zillinger \cite{Zill} for monotonic shears and Coti Zelati, Elgindi \& Widmayer \cite{CzEW} for non-monotone shears.

In order to look for quasi-periodic invariant structures, we base our approach on the KAM (Kolmogorov-Arnold-Moser) theory for
Partial Differential Equations. This field started in the
Nineties, with the pioneering papers of Bourgain \cite{Bou}, Craig \& Wayne \cite{CW}, Kuksin \cite{Kuksin}, Wayne \cite{Wayne}. We refer to the recent review article \cite{BertiUMI} for a complete list of references on this topic. In the last years, together with the Nash-Moser implicit function theorem, these techniques have been developed in order to
study time quasi-periodic solutions for PDEs arising from fluid dynamics. For
the two dimensional water waves equations, we mention
Berti \& Montalto \cite{BM}, Baldi, Berti, Haus \& Montalto \cite{BBHM} for time quasi-periodic standing waves and Berti, Franzoi \& Maspero \cite{BFM},
\cite{BFM21}, Feola \& Giuliani \cite{FeoGiu} for time quasi-periodic traveling wave solutions. 
Recently, the existence of time quasi-periodic solutions was proved for the contour dynamics of vortex patches in active scalar equations. We mention Berti, Hassainia \& Masmoudi \cite{BHM} for vortex patches of
the Euler equations close to Kirchhoff ellipses, Hmidi \& Roulley \cite{HmRo} for the 
quasi-geostrophic shallow water equations,  Hassainia, Hmidi \& Masmoudi \cite{HaHmMa} for generalized surface quasi-geostrophic equations, Roulley \cite{Roulley} for Euler-$\alpha$ flows, Hassainia \& Roulley \cite{HaRo} for Euler equations in the unit disk close to Rankine vortices and Hassainia, Hmidi \& Roulley \cite{HaHmRou} for 2D Euler annular vortex patches.
Time quasi-periodic solutions were also constructed for the 3D Euler equations with time quasi-periodic external force \cite{BM20} and for the forced 2D Navier-Stokes
equations \cite{FrMo} approaching in the zero viscosity limit time quasi-periodic solutions of the 2D Euler equations for all times. We finally mention that time quasi-periodic solutions for the Euler equations were constructed also by Crouseilles \& Faou \cite{CF} in 2D, with a very recent extension by Enciso, Peralta-Salas \& de Lizaur \cite{EPsTdL} in 3D and even dimensions: we remark that these latter solutions are engineered so that there are no small divisors issues to deal with, with consequently much easier proofs and a drawback of not having information on the eventual stability of the solutions. 

The paragraph above shows how KAM normal form techniques started very recently to be developed in Fluid Dynamics, in order to construct quasi-periodic solutions in time. On the contrary, there are only few works where the question of the quasi-periodicity \emph{in space} is considered. To the best of our knowledge, the first result of space bi-periodic solutions to PDEs is due to Scheurle \cite{Scheurle} for a semilinear equation on a two-dimensional strip in analytic regularity, whose solutions locally bifurcate from bi-periodic solutions of the linearized system at the equilibrium.
Then, Iooss \& Los \cite{IoLos} proved the bifurcation of stationary solutions in the hydrodynamic stability problem for forced Navier-Stokes equations on cylindrical domains, extended to time-periodic solutions in Iooss \& Mielke \cite{IoMie}. Spatially bi-periodic solutions were studied in Bridges \& Rowlands \cite{BR} for the linear stability analysis of the Ginzburg-Landau equation, in Bridges \& Dias \cite{BD} for stationary 2D gravity-capillary water waves. The general case with more than two spatial frequencies was considered by Valls \cite{Valls} and  Pol\'a\v{c}ik \& Valdebenito \cite{PV} for elliptic equations on $\R^N \times \R$. All these results share the same idea of using one space direction as a temporal one, assuming to have hyperbolic modes for the linearized elliptic operators. The persistence of spatially quasi-periodic oscillations is then proved in \cite{Scheurle} via a Nash-Moser implicit function theorem, in \cite{IoLos} with normalization techniques on the infinite dimensional ``spatial phase space'', while in \cite{Valls}, \cite{PV}  with a center manifold reduction on a finite dimensional system together with a Birkhoff normal form to ensure the application of the standard KAM theorems. As we shall see later, the last two strategies do not look suitable for our problem, since we can only establish the existence of the nonlinear elliptic equation to solve and few properties on the regularity of the nonlinearity, which do not seem enough to check the assumptions for the KAM theorem. Therefore, for our purposes, we preferred to use the Nash-Moser approach as developed by Berti \& Bolle \cite{BB}, which provides also a better description of the final solutions. We conclude by mentioning that spatial dynamics techniques in Fluid Dynamics were applied by Groves \& Wahl\'en \cite{GroWah} to study the existence of small amplitude, solitary gravity-capillary water waves with arbitrary distribution of vorticity.

\subsection{Main result}

Our construction starts with prescribing a potential function $Q_{\fm}(y)$, even in $y$, depending on a parameter $\fm\gg 1$ such that, in the limit $\fm\to\infty$, it uniformly approaches the classical potential well
\begin{equation}\label{Qm.intro}
	Q_{\fm}(y)=Q_{\fm}(\tE,\tr;y) \stackrel{\fm\to\infty}{\rightarrow} Q_{\infty}(\tE,\tr;y) := \begin{cases}
		0 & |y|>\tr \,,\\
		-\tE^2 & |y|<\tr\,,
	\end{cases}
\end{equation}
where $\tr\in(0,1)$ is the width of the well and $\tE>1$ is related to its depth. The potential $Q_{\fm}$ is analytic in all its entries and its derivatives approach the derivatives of $Q_{\infty}$ on compact sets avoiding the points $y=\pm \tr$. The explicit expression of $Q_{\fm}(y)$ is provided in \eqref{Qm.strategy}.  Moreover,  the parameters $\tE$ and $\tr$ are related by the analytic constrain
\begin{equation}\label{constrain}
	\tE\tr = (\kappa_0+\tfrac14) \pi\,.
\end{equation}
The value $\kappa_0\in\N$ is fixed from the very beginning and it prescribes via \eqref{constrain} the exact number of negative eigenvalues $-\lambda_{1,\fm}^2(\tE),...,-\lambda_{\kappa_0,\fm}^2(\tE)<0$ for the operator
\begin{equation}\label{cLm.intro}
	\cL_{\fm} : = - \pa_{y}^2 + Q_{\fm}(y)\,, \quad \text{with eigenfunctions } \quad \cL_{\fm} \phi_{j,\fm} = -\lambda_{j,\fm}^2 \phi_{j,\fm}\,,
\end{equation}
where we imposed Dirichlet boundary conditions on $[- 1, 1]$. The rest of the spectrum $(\lambda_{j,\fm}^2(\tE))_{j\geq \kappa_{0}+1}$ is strictly positive. We remark that the eigenfunctions $(\phi_{j,\fm}(y)=\phi_{j,\fm}(\tE;y))_{j\in \N}$, which form a $L^2$-orthonormal basis with respect to the standard $L^2$-scalar product, depend explicitly  on the parameter $\tE$.

 Shear flows, namely velocity fields of the form $(u,v)=(U(y),0)$ for some function $U(y)$ depending only on $y\in[-1,1]$, are  exact stationary solutions of \eqref{stat.euler.vort} under the boundary conditions in \eqref{imper.cond}.
 The next step is to introduce the shear flow $(\psi_{\fm}'(y),0)$ that plays the role of equilibrium point. In particular, we define the stream function $\psi_{\fm}(y)$ as the solution of the linear ODE
 \begin{equation}\label{ODE.psim.intro}
 	\psi_{\fm}'''(y) = Q_{\fm}(y) \psi_{\fm}'(y)\,, \quad y\in [-1,1]\,.
 \end{equation}
In Section \ref{sec.near_couette} we will construct such  stream function $\psi_{\fm}(y)$, even in $y$ because of the parity of $Q_{\fm}(y)$, so that, for $\fm\gg 1$ large enough, the corresponding velocity field $(\psi_{\fm}'(y),0)$ is close to the well-known Couette flow $(y,0)$  with respect to the width $\tr$ in the $H^3$-topology. Roughly speaking, the function $\psi_{\fm}(y)$ solving \eqref{ODE.psim.intro} behaves almost linearly when $|y| > \tr$ and exhibits oscillations of frequency $\tE$ and amplitude $\tE^{-2}$ in the inner region  $|y| < \tr$. Therefore, the shear flow $(\psi_{\fm}'(y),0)$ is \emph{non-monotone}. In Lemma \eqref{lemma monotono}, we will show that $\psi_{\fm}(y)$ has exactly $2\kappa_{0}+1$ critical points, denoted by $\ty_{0,\fm}:=0$ and $(\pm \ty_{p,\fm})_{p=1,...,\kappa_{0}}$, with $0<\ty_{1,\fm}<...<\ty_{\kappa_0,\fm}<\tr$. These points lead  to divide the interval $[-1,1]$ into the union of stripes $(\tI_{p})_{p=0,1,...,\kappa_{0}}$, where
\begin{equation}\label{sets Ip}
	\begin{aligned}
		& \tI_{p}:=\{ y \in \R :  \ty_{p,\fm} \leq |y| \leq \ty_{p+1,\fm}\}\,, \quad  p=1,...,\kappa_{0}\,, \\
	\end{aligned}
\end{equation}
with $\ty_{\kappa_0+1,\fm}:=1$.  We will also show in Theorem \ref{nonlin_eq} that $\psi_{\fm}(y)$ solves on each set $\tI_{p}$ a second-order nonlinear ODE. Namely, we prove that there exist $\kappa_0+1$ functions $F_{0,\fm}(\psi),F_{1,\fm}(\psi),...,F_{\kappa_0,\fm}(\psi)$ such that, for any $y\in\tI_{p}$, $p=0,1,...,\kappa_{0}$,
\begin{equation}\label{eq.for.psim}
	Q_{\fm}(y) = F_{p,\fm}'(\psi_{\fm}(y)) \ \Rightarrow \ 
	\psi_{\fm}''(y) = F_{p,\fm}(\psi_{\fm}(y)) \,,
\end{equation}
with continuity of finitely many derivatives at the boundaries of each set $\tI_{p}$ with the adjacent problems, meaning that, for a given $S\in\N$ large enough, for any $0\leq n \leq S+1$ and any stripe index $p=1,...,\kappa_0$,
\begin{equation}\label{cont.Fpm.intro}
	\lim_{|y|\to\ty_{p,\fm}^-} \pa_{y}^{n} (F_{p-1,\fm}(\psi_{\fm}(y)) )= \lim_{|y|\to\ty_{p,\fm}^+}\pa_{y}^{n}( F_{p,\fm}(\psi_{\fm}(y)) ) = \psi_{\fm}^{(n+2)}(\ty_{p,\fm}) \,.
\end{equation}
The regularity condition \eqref{cont.Fpm.intro} is ensured by suitable properties of the $Q_{\fm}(y)$: we postponed this explanation to Section \ref{section strategy} ``The shear equilibrium $\psi_{\fm}(y)$ close to Couette and its nonlinear ODE'' and Section \ref{subsec.Q.couette}.

On the two-dimensional channel $\R\times [-1,1]$, we impose quasi-periodic condition in the $x$-direction, that is,
the fluid evolves in the embedded domain
\begin{equation}\label{D_domain}
	\begin{aligned}
		&\cD:= \T^{\kappa_{0}}\times [-1,1] \hookrightarrow \R \times [-1,1]
		 \,, \quad \T^{\kappa_0}:=(\R/2\pi\Z)^{\kappa_0}\,.
	\end{aligned}
\end{equation}
 On the domain $\cD$ we define the Laplacian $\Delta_{\omega}:=(\omega\cdot\pa_\bx)^2+\pa_y^2$ for some frequency vector $\omega\in \R^{\kappa_{0}}$, where $\bx\in\T^{\kappa_0}$.  It is well known that a subclass of solutions of the steady Euler equation \eqref{stat.euler.vort} is given by those stream functions $\psi(x,y)$ that additionally solve semilinear elliptic equations of the form $\Delta \psi:= (\pa_{x}^2+ \pa_{y}^2) \psi= F(\psi)$, for some function $F:\R\to\R$.

The goal of this paper is to construct solutions to the steady Euler equation \eqref{stat.euler.vort}-\eqref{imper.cond}  in the  domain $\cD$ close to the shear equilibrium $(\psi_{\fm}'(y),0)$. In particular, by \eqref{stat.euler.vort} and \eqref{eq.for.psim}, we look for stream functions quasi-periodic in $x$ of the form 
\begin{equation}\label{form.stream}
	\psi(x,y) =\breve{\psi}(\bx,y)_{\bx=\omega x}= \psi_{\fm}(y) +  \vf(\bx,y)|_{\bx=\omega x}\,,    \quad 	\vf(\bx, -1) \!=\! \vf(\bx, 1) = 0 \,, 
\end{equation}
where $\psi_{\fm}(y)$ solves \eqref{eq.for.psim} and $\vf(\bx,y)$  is a solution of
\begin{equation}\label{euler.pert.1}
		\{\psi_{\fm} ,\Delta_{\omega} \vf\} + \{\vf,\psi_\fm''\} + \{\vf,\Delta_{\omega}\vf\} =0\,.
\end{equation}
By a direct computation, we have that a particular class of solutions of \eqref{euler.pert.1} is given by those functions $\vf(\bx, y)$ solving, for any $p=0,1,...., \kappa_0$,
\begin{equation}\label{intro.eq.for.vf}
\Delta_{\omega} \vf(\bx,y) = F_{p,\eta}(\psi_{\fm}(y)+\vf(\bx,y)) - F_{p,\fm}(\psi_{\fm}(y))\,, \quad (\bx,y)\in \T^{\kappa_0}\times \tI_{p}\,.
\end{equation}
The functions $(F_{p,\eta}(\psi))_{p=0,1,...,\kappa_{0}}$ in \eqref{intro.eq.for.vf} are regularized versions of the functions $(F_{p,\fm}(\psi))_{p=0,1,...,\kappa_{0}}$, suitably defined for a small parameter $\eta>0$ as in \eqref{F_eta_reg} of the form 
\begin{equation}\label{Fp.eta.intro}
\begin{footnotesize}
	\begin{aligned}
			F_{p,\eta}(\psi) = \begin{cases}
			\frac12\big(F_{p-1,\fm}(\psi) + F_{p,\fm}(\psi) \big) = F_{p-1,\eta}(\psi)& | \psi - \psi_{\fm}(\ty_{p,\fm}) | \leq \eta \,, \\
			F_{p,\fm}(\psi) &
			\begin{aligned}
			 &	| \psi - \psi_{\fm}(\ty_{p,\fm}) | \geq 2\eta \ \ \text{and}\\
			 & | \psi - \psi_{\fm}(\ty_{p+1,\fm}) | \geq 2\eta\,,
			\end{aligned} \\
			\frac12\big(F_{p+1,\fm}(\psi) + F_{p,\fm}(\psi) \big) = F_{p+1,\eta}(\psi) & | \psi - \psi_{\fm}(\ty_{p+1,\fm}) | \leq \eta \,,
		\end{cases}
	\end{aligned}
\end{footnotesize}
\end{equation}
with smooth connections in the remaining regions,
so that they uniformly converge in the limit $\eta\to 0$ to the functions $F_{p,\fm}(\psi)$  in \eqref{eq.for.psim}-\eqref{cont.Fpm.intro}, see Proposition \ref{prop.F.etax}. Ultimately, in Section \ref{sez.nonlin.lin} we will choose $\eta=\varepsilon^{\frac{1}{S}}$ as in \eqref{link eta varepsilon}, where $\varepsilon$ denotes the size of the perturbation $\vf(\bx,y)$ in \eqref{form.stream} and where $S\in\N$ is the number of derivatives that we have to control in \eqref{cont.Fpm.intro}.

The linearization of the equation \eqref{euler.pert.1} around the equilibrium $\vf =0$ is given by
\begin{equation}\label{euler.pert.lin.1}
		\{\psi_{\fm} ,\Delta_{\omega} \vf\} + \{\vf,\psi_\fm''\}  =0\,.
\end{equation}
By \eqref{cLm.intro}-\eqref{ODE.psim.intro}, a particular class of solutions of \eqref{euler.pert.lin.1} on the  domain $\cD$ is given by
\begin{equation}\label{linear.eq.vf.intro}
	(\omega\cdot\pa_{\bx})^2\vf(\bx,y) = \cL_{\fm}\vf(\bx,y)\,, \quad \vf(\bx,-1)=\vf(\bx,1) =0\,,
\end{equation}
where the self-adjoint Schrödinger operator $\cL_{\fm}$, defined in \eqref{cLm.intro} with Dirichlet boundary conditions on $[-1,1]$, is studied in Proposition \ref{L_operator},

The linearized equation \eqref{euler.pert.lin.1}-\eqref{linear.eq.vf.intro} around the trivial equilibrium $\vf \equiv 0$  admits the family of space quasi-periodic solutions
\begin{equation}\label{linear.sol.intro}
	\vphi(x, y) = \sum_{j=1}^{\kappa_0} A_j \cos(\lambda_{j,\fm}(\tE) x) \phi_{j,\fm}(y)\,, 
\end{equation}
for some nonzero coefficients $A_j\in\R\setminus\{0\}$ with frequency vector
\begin{equation}\label{omega.fm.intro}
	\omega \equiv \ora{\omega}_{\fm}(\tE):=(\lambda_{1,\fm}(\tE),...,\lambda_{\kappa_0,\fm}(\tE))\in\R^{\kappa_0}\setminus\{0\}\,.
\end{equation}
The analysis of the whole linearized systems at the equilibrium and the geometry of the ``spatial'' phase space is postponed to Section \ref{sez.nonlin.lin}.  We will also prove in Proposition \ref{omega.fm.tE.diophantine} that, for most values of $\tE\in [\tE_1,\tE_2]$, with $\tE_1>(\kappa_{0}+\tfrac14)\pi$, the vector $\ora{\omega}_{\fm}(\tE)$ in \eqref{omega.fm.intro} is Diophantine: namely, given $\bar\upsilon\in(0,1)$ and $\bar\tau \gg 1$ sufficiently large, there exists a Borel set
	\begin{equation}\label{diofantea omega vec E imperturbato.intro}
	\bar\cK = \bar\cK(\bar\upsilon,\bar\tau) := \big\{  \tE\in [\tE_1,\tE_2] \,: \, 	|\vec \omega_{\mathtt m}(\mathtt E) \cdot \ell| \geq  \bar\upsilon \braket{\ell}^{-\bar\tau}, \ \forall \,\ell \in \Z^{\kappa_0} \setminus \{ 0 \}  \big\}\,,
\end{equation}
such that $\tE_2-\tE_2 - |\bar\cK| = o(\bar\upsilon)$. This ensures that the linear solutions in \eqref{linear.sol.intro} are quasi-periodic with non-resonant frequency vectors.

The equation \eqref{euler.pert.1} enjoys some symmetries. Since the shear equilibrium $\psi_{\fm}(y)$ is even in  $y\in[-1,1]$ and so are the eigenfunctions of the linear operator $\cL_{\fm}$ in Proposition \ref{L_operator}, we have that \eqref{euler.pert.1}  is invariant with respect to the involution $\vf(\cdot,y)\mapsto \vf(\cdot,-y)$. Moreover, the equation \eqref{euler.pert.1} is also invariant with respect to the involution $\vf(\bx,y)\mapsto \vf(-\bx,-y)$: we refer to such solutions as \emph{space reversible}, or simply \emph{reversible}.  We conclude that we look for solutions
\begin{equation}\label{parity}
	\vf(\bx,y)\in \even(\bx)\even(y)\,.
\end{equation}
The function $\vf(\bx,y)$ is searched in the Sobolev space $H^{s,3}$, as defined in \eqref{sobolev.sp}.

The main result of this paper is the existence of a stream function of the form \eqref{form.stream}, where the functions $\vf(\bx,y)$ are small amplitude, reversible \emph{space quasi-periodic} solutions of the system \eqref{euler.pert.1} with frequency vector $\omega\in\R^{\kappa_{0}}$, bifurcating from a solution \eqref{linear.sol.intro} of the linearization around the trivial equilibria. Such solutions are constructed for a \emph{ fixed valued of the depth} $\tE \in \bar\cK$ in \eqref{diofantea omega vec E imperturbato.intro} and \emph{for most values of an auxiliary parameter} 
\begin{equation}\label{intro parametro mathtt A dim}
	\mathtt A \in {\cal J}_\varepsilon(\mathtt E) := [\tE-\sqrt\varepsilon,\tE+\sqrt\varepsilon]\,.
\end{equation}
This new parameter is introduced to ensure that  frequency vectors $\omega \in\R^{\kappa_0}$, close to the unperturbed frequency vector $\ora{\omega}_{\fm}(\tE)$ in \eqref{omega.fm.intro}, is non-resonant as well.

\begin{thm}\label{main.thm1}
{\bf (Spatial KAM for 2D Euler equations in a channel).}
	Fix $\kappa_0\in\N$ and  $\fm\gg 1$.  Fix also $\tE\in\bar\cK$ as in \eqref{diofantea omega vec E imperturbato.intro} and $\xi = (\xi_1, \ldots, \xi_{\kappa_0}) \in \R_{>0}^{\kappa_0}$. Then  there exist $\bar s>0$, $\varepsilon_0>0$ such that the following hold.
	\\[1.5mm]
	\noindent 
	$1)$ 
	 For any $\varepsilon \in (0, \varepsilon_0)$ there exists a Borel set $\cG_{\varepsilon}=\cG_{\varepsilon}(\tE)\subset \cJ_{\varepsilon}(\tE)$, with$\cJ_{\varepsilon}(\tE)$ as in \eqref{intro parametro mathtt A dim} and with density 1 at $\tE$ when $\varepsilon\to0$, namely $\lim_{\varepsilon \to 0} (2\sqrt\varepsilon)^{-1}|\cG_{\varepsilon}(\tE)| =1$;
	\\[1.5mm]
	\noindent 
	$2)$ There exists $h_\varepsilon=h_{\varepsilon}(\tE) \in H^3_0([- 1, 1])$, $\| h_\varepsilon \|_{H^3} \lesssim \varepsilon$, $h_\varepsilon = {\rm even}(y)$, such that, for any $\tA\in\cG_{\varepsilon}$,  the equation \eqref{euler.pert.1} has a space quasi-periodic solution of the form
	\begin{equation}\label{sqp.sol.main}
	\begin{footnotesize}
		\begin{aligned}
				\vf_{\varepsilon}(\bx,y)|_{\bx=\wt\omega(\tA) x} = h_{\varepsilon}(\tE;y) + \varepsilon \sum_{j=1}^{\kappa_0}\sqrt{\xi_{j}} \cos(\wt\omega_{j}(\tA)x) \phi_{j,\fm}(\tE;y) + r_\varepsilon(\bx,y)|_{\bx=\wt\omega(\tA) x}\,,
		\end{aligned}
	\end{footnotesize}
	\end{equation}
	where $\vf_{\varepsilon}=\vf_{\varepsilon}(\tE,\tA;\bx,y) = {\rm even}({\bf x}) {\rm even}(y)$, $r_{\varepsilon}=r_{\varepsilon}(\tE,\tA;\bx,y)\in H^{\bar s,3}$ (see definition \eqref{sobolev.sp}), with $\lim_{\varepsilon \to 0} \frac{\| r_\varepsilon\|_{\bar s,3}}{{\varepsilon}}=0$, and $\wt\omega=(\wt\omega_{j})_{j=1,...,\kappa_0}\in \R^{\kappa_0}$, depending on $\tA$ and $\varepsilon$, with $| \wt\omega(\tA)-\ora{\omega}_{\fm}(\tE) | \leq C \sqrt\varepsilon $, with $C>0$ independent of $\tE$ and $\tA$.
	Moreover for any $\varepsilon \in [0, \varepsilon_0]$, the stream function
	\begin{equation}\label{form.stream.thm}
		\psi_{\varepsilon}(x,y) =\breve{\psi}_{\varepsilon}(\bx,y)|_{\bx=\wt\omega(\tA)x} = \psi_{\fm}(y) + \vf_{\varepsilon}(\bx,y)|_{\bx=\wt\omega(\tA)x}\,,    
	\end{equation}
	 with  $\vf_{\varepsilon}(\bx,y)$ as in \eqref{sqp.sol.main}, defines a space quasi-periodic solution of the steady 2D Euler equation \eqref{stat.euler.vort} that is close to the Couette flow with estimates
	 \begin{equation}\label{near.cou.thm}
	     \| \breve{\psi}_{\varepsilon} - \psi_{\rm cou} \|_{\bar s , 3} \lesssim_{\bar s} \tfrac{1}{\sqrt\tE} + \varepsilon\,, \quad \psi_{\rm cou}(y):= \tfrac12 y^2\,.
	 \end{equation}
\end{thm}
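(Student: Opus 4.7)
The plan is to recast Theorem \ref{main.thm1} as a nonlinear bifurcation from the linear quasi-periodic solution \eqref{linear.sol.intro} and solve it via a Nash--Moser implicit function theorem in the spirit of Berti--Bolle \cite{BB}, using $\tA\in\cJ_\varepsilon(\tE)$ as a $\kappa_0$-dimensional parameter to tune the frequency vector $\wt\omega$. Having fixed $\fm\gg1$, $\tE\in\bar\cK$ and the amplitudes $\xi_j>0$, I would replace the only piecewise-smooth $F_{p,\fm}$ by the regularized $F_{p,\eta}$ of \eqref{Fp.eta.intro} with $\eta=\varepsilon^{1/S}$, and look for the solution in the ansatz \eqref{sqp.sol.main}, $\breve\psi_\varepsilon=\psi_\fm+h_\varepsilon+\varepsilon v_\xi+r$, where
\begin{equation*}
v_\xi(\bx,y):=\sum_{j=1}^{\kappa_0}\sqrt{\xi_j}\,\cos(x_j)\,\phi_{j,\fm}(y)
\end{equation*}
is the chosen linear profile. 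The shear correction $h_\varepsilon(y)\in H^3_0([-1,1])$, obtained by integrating a linear ODE on $[-1,1]$, absorbs the mismatch between $F_{p,\fm}(\psi_\fm)$ and $F_{p,\eta}(\psi_\fm+h_\varepsilon)$ so that the residual equation $\mathcal{F}(\tA,r)=0$ has forcing of size $O(\varepsilon^2)$ and is quadratic in $r$, with the reversible parity \eqref{parity} preserved by construction.

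The heart of the proof is the construction, for most $\tA\in\cJ_\varepsilon(\tE)$, of an approximate right inverse of the linearized operator
\begin{equation*}
\mathcal{L}_\omega:=-\Delta_{\wt\omega(\tA)}+a_\varepsilon(\bx,y),\qquad a_\varepsilon:=F'_{p,\eta}\bigl(\psi_\fm+h_\varepsilon+\varepsilon v_\xi+\ldots\bigr),
\end{equation*}
with tame estimates in $H^{s,3}$. I would run a KAM-type reducibility scheme along the $\bx$-directions: the unperturbed operator $-(\omega\cdot\pa_\bx)^2+\cL_\fm$ is block-diagonal in the basis $\{e^{\mathrm{i}\ell\cdot\bx}\phi_{j,\fm}(y)\}$ with symbols $(\omega\cdot\ell)^2-\lambda_{j,\fm}^2$ for $j\leq\kappa_0$ and $(\omega\cdot\ell)^2+\lambda_{j,\fm}^2$ for $j>\kappa_0$. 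After projecting out the $\kappa_0$-dimensional tangential block spanned by the modes of $v_\xi$, the normal block would be conjugated to an asymptotically diagonal operator by iterated Newton-like corrections, at the cost of imposing first and second Melnikov conditions
\begin{equation*}
|\wt\omega(\tA)\cdot\ell|\geq\frac{\upsilon}{\langle\ell\rangle^\tau},\qquad |\wt\omega(\tA)\cdot\ell\pm\mu_j^\infty(\tA)\mp\mu_k^\infty(\tA)|\geq\frac{\upsilon}{\langle\ell\rangle^\tau},
\end{equation*}
where $\mu_j^\infty(\tA)$ are the limit normal eigenvalues, close to $\lambda_{j,\fm}$. The $\kappa_0$ resonant tangential obstructions are then killed by adjusting $\wt\omega(\tA)$ in a modified Newton scheme, which is exactly where the freedom provided by the parameter $\tA$ is consumed.

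The Nash--Moser iteration then converges in $H^{\bar s,3}$ with a controlled loss of derivatives, producing $r$ with $\|r\|_{\bar s,3}=o(\varepsilon)$ and $\wt\omega(\tA)$ with $|\wt\omega(\tA)-\vec\omega_\fm(\tE)|\lesssim\sqrt\varepsilon$; the bound \eqref{near.cou.thm} then follows by combining the smallness of $h_\varepsilon,\varepsilon v_\xi,r$ with the quantitative $H^3$-closeness of $\psi_\fm$ to $\tfrac12 y^2$ established in Section \ref{sec.near_couette}. The set $\cG_\varepsilon$ arises by excising parameters where the Melnikov conditions fail; the density-one property at $\tE$ follows from a transversality/non-degeneracy estimate on the map $\tA\mapsto(\wt\omega(\tA),\mu_j^\infty(\tA))$, combined with a R\"ussmann-type covering argument. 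The principal difficulty is twofold. First, the regularization \eqref{Fp.eta.intro} is only $C^S$, so the triple $(S,\bar s,\eta=\varepsilon^{1/S})$ must be mutually calibrated: too small an $\eta$ makes $a_\varepsilon$ singular near the stagnation lines, too large an $\eta$ destroys the required Nash--Moser smallness, while $S$ has to dominate the loss of derivatives incurred by the reducibility step. Second, the nonlinearity switches across the stripes $\tI_p$, and the matching conditions \eqref{cont.Fpm.intro} must be leveraged to promote $a_\varepsilon$ to a globally $C^S$ object on $[-1,1]$ while ensuring that the KAM conjugacies preserve Sobolev regularity across the stagnation lines $y=\pm\ty_{p,\fm}$.
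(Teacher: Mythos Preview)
Your overall architecture---regularize via $F_{p,\eta}$ with $\eta=\varepsilon^{1/S}$, absorb the forcing into a shear correction $h_\varepsilon$, and run a Nash--Moser scheme on the remainder---matches the paper. But you diverge from it at the most delicate point, the inversion of the linearized operator in the normal directions, and the divergence creates a real difficulty rather than merely an alternative route.

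You propose a KAM reducibility scheme with second Melnikov conditions $|\wt\omega\cdot\ell\pm\mu_j^\infty\mp\mu_k^\infty|\geq\upsilon\langle\ell\rangle^{-\tau}$. The paper needs none of this. The key observation (Lemma~\ref{lemma_inv_infty} and its proof in Section~\ref{sec:proof.AI}) is that on $\cX_\perp$ the unperturbed first-order operator has symbols $\im\,\omega\cdot\ell\pm\lambda_{j,\fm}$ with $\lambda_{j,\fm}$ \emph{real and strictly positive} for every $j\geq\kappa_0+1$; hence $|\im\,\omega\cdot\ell\pm\lambda_{j,\fm}|\geq\lambda_{j,\fm}\geq\lambda_{\kappa_0+1,\fm}>0$ uniformly. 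There are no small divisors in the normal block: $\bD_\omega$ is invertible with a bounded, even smoothing, inverse, and the $O(\sqrt\varepsilon)$ perturbation $\bR_\varepsilon$ is absorbed by a plain Neumann series. Consequently the only non-resonance conditions in the whole paper are the zeroth-order Diophantine conditions on $\omega$, used to invert $\omega\cdot\pa_\bx$ on the tangential block. Your reducibility machinery is not absurd in general, but here it manufactures small-divisor conditions where none exist, and---more seriously---second Melnikov conditions are typically not controllable with a one-dimensional parameter. And $\tA\in\cJ_\varepsilon(\tE)=[\tE-\sqrt\varepsilon,\tE+\sqrt\varepsilon]$ \emph{is} scalar, not ``$\kappa_0$-dimensional'' as you write.

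You also miss the device by which the scalar $\tA$ suffices. Because the eigenfunctions $\phi_{j,\fm}(\tE;y)$ and hence the phase space itself depend on $\tE$, the paper cannot differentiate the functional in $\tE$. Instead it fixes $\tE\in\bar\cK$ once and for all and introduces $\tA$ only through the perturbative counterterm $(\vec\omega_\fm(\tE)-\vec\omega_\fm(\tA))\cdot I$ in the modified Hamiltonian $\cH_{\varepsilon,\alpha}$ of \eqref{Halpha}; the Nash--Moser iteration then solves for the pair $(i,\alpha)$ with $\alpha\in\R^{\kappa_0}$ an additional unknown (Theorem~\ref{NMT}). The tangential obstructions are killed by $\alpha$, not by moving $\wt\omega(\tA)$ during the iteration. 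Only at the very end does one impose $\alpha_\infty(\omega,\tA)=\vec\omega_\fm(\tA)$, invert for $\omega=\wt\omega(\tA)$, and verify the Diophantine condition on $\wt\omega(\tA)$ for most $\tA$ via the transversality of Proposition~\ref{prop:trans_un} together with Lemma~\ref{risonanti bassi zero} (resonant sets with $|\ell|$ below an $\varepsilon$-dependent threshold are empty because $\vec\omega_\fm(\tE)$ is already Diophantine). This separation---$\tE$ frozen throughout the analysis, $\tA$ moved only in the final measure estimate---is the structural novelty that lets the scheme close with a single scalar parameter and without tracking derivatives of the $\tE$-dependent basis.
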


Let us make some remarks on the result.
\\[1mm]
\noindent 1) {\it Structure of the stationary solutions.} The stream functions \eqref{sqp.sol.main}-\eqref{form.stream.thm} in Theorem \ref{main.thm1} are slight perturbations of the shear equilibrium $\psi_{\fm}(y)$. The first term of $\vf_{\varepsilon}(\bx,y)$ in \eqref{form.stream.thm} is the  shear $h_{\varepsilon}(y)$  and it comes from the forced modification in \eqref{euler.pert.1} of the local nonlinearities $F_{p,\fm}(\psi)$ into $F_{p,\eta}(\psi)$. By Lemma \ref{unforcing.lemma} and \eqref{link eta varepsilon}, it is small with $\eta^S=\varepsilon$ and therefore vanishes in the limit $\varepsilon\to 0$. The second term of $\vf_{\varepsilon}(\bx,y)$ in \eqref{form.stream.thm} retains the space quasi-periodicity of the linearized solutions \eqref{linear.sol.intro} with frequency vectors $\wt\omega$ that are close to the unperturbed frequency vector $\ora{\omega}_{\fm}(\tE)$ in \eqref{omega.fm.intro}.  This term is constructed with a suitable Nash-Moser iterative scheme in order to deal with the eigenfunctions $(\phi_{j,\fm}(\tE;y))_{j\in\N}$ depending on the parameter $\tE$, which is an issue not present in previous papers. Such solutions exist for fixed values of the depth $\tE\in\bar\cK$ so that $\ora{\omega}_{\fm}(\tE)$ is Diophantine and for most values of the auxiliary parameter $\tA\in\cJ_{\varepsilon}(\tE)$ so that $\wt\omega=\wt\omega(\tA,\varepsilon)$ is non-resonant as well. We refer to Section \ref{section strategy} ``A Nash-Moser scheme of hypothetical conjugation with the auxiliary parameter''
 for an extensive discussion.
 \\[1mm]
 \noindent 2) {\it From quasi-periodic stationary to quasi-periodic traveling.}  By changing the frame reference $x \mapsto x -ct$ with an arbitrary speed $c\in\R$, we deduce the existence of \emph{quasi-periodic traveling} solutions, according to \cite{BFM}, of the form
\begin{equation}
\begin{aligned}
    \psi_{\rm tr}(t,x,y)& := c y + \psi_{\varepsilon}(x-ct,y)  \\
    &\ = c y + \psi_{\fm}(y) + \vf_{\varepsilon}(\phi,y)|_{\phi=\bx-\vartheta=\wt\omega(x-ct)}\,, \quad \bx,\vartheta\in\T^{\kappa_0}\,,
    \end{aligned}
\end{equation}
solving the Euler equations in vorticity formulation
\begin{equation}
    (\Omega_{\rm tr})_t+(\psi_{\rm tr})_y (\Omega_{\rm tr})_x - ( \psi_{\rm tr})_x  (\Omega_{\rm tr})_y =0\,, \quad \Omega_{\rm tr}:= \Delta\psi_{\rm tr}\,.
\end{equation}
We read these solutions also as quasi-periodic in time with time frequency vector $c\,\wt\omega$ parallel to the space frequency vector $\wt\omega$. It is of great interest to see whether there exist quasi-periodic solutions to the Euler equations both in time and in space, but with non-collinear frequency vectors.
\\[1mm]
\noindent 3) {\it Generalized Kelvin cat's eyes, lack of damping and regularity thresholds.} The flow generated by the stream function \eqref{sqp.sol.main}-\eqref{form.stream.thm} is a deformation of the near-Couette shear flow $(\psi_{\fm}'(y),0)$ of the form
\begin{equation}\label{cat-eye-flow}
    \begin{pmatrix}
        u(x,y)\\ v(x,y)
    \end{pmatrix}= \begin{pmatrix}
        \psi_{\fm}'(y) + h_{\varepsilon}'(y)\\ 0
    \end{pmatrix} + \varepsilon \sum_{j=1}^{\kappa_0} \sqrt{\xi_j} \begin{pmatrix}
        \cos(\wt\omega_j x) \phi_{j,\fm}'(y) \\ 
        \wt\omega_j \sin(\wt\omega_j x) \phi_{j,\fm}(y)
    \end{pmatrix} + o(\varepsilon)\,.
\end{equation}
We first observe that, since $\psi_{\fm}(y)$, $h_{\varepsilon}(y)$ and the eigenfunctions $\phi_{j,\fm}(y)$ are even in $y$, the streamlines of the perturbed flows have a generalized cat's eyes structure near the stagnation line $\{y=0\}$ of the shear flow $(\psi_{\fm}'(y)+h_{\varepsilon}'(y),0)$, with saddle and center points near the roots of the trigonometric equation $\sum_{j=1}^{\kappa_0}\sqrt{\xi_j}\wt\omega_j \phi_{j,\fm}(0)\sin(\wt\omega_j x) =0$. Possible other cat's eyes-like streamlines may appear near the lines $\{ y = \pm \ty_{p,\fm}\}$, $p=1,...,\kappa_0$, corresponding to the critical points for $\psi_{\fm}(y)$, depending on further properties of the eigenfunctions $(\phi_{j,\fm}(y))_{j=1,...,\kappa_0}$ that we do not investigate in this paper. 

We also observe that, no matter the geometry of the streamlines, the velocity field \eqref{cat-eye-flow} has non-trivial vertical component that is quasi-periodic in $x\in\R$. The presence of such quasi-periodic stationary solutions prevents damping phenomena in the evolution of the dynamics for the Euler equations with quasi-periodic conditions in $x$. Our result agrees with the analysis made for the periodic case in \cite{LZ} and their (vorticity) regularity threshold $s<\frac32$. Indeed in Theorem \ref{proxim.couette} we show that $\psi_{\fm}(y)$ is close to $\psi_{\rm cou}(y)$ with $\sqrt{\tr}$ in the $H_y^3$-topology. At the same time, arguing as in the proof of Theorem \ref{proxim.couette} with an easy computation that we omit here, it is possible to show  that the bound for the $L^2$-norm of $\psi_{\fm}^{(4)}(y)$ diverges with $\tr^{-1/2}$. Therefore, a standard interpolation argument shows that we can construct $\psi_{\fm}(y)$ arbitrarily close to $\psi_{\rm cou}(y)$ in the $H_{y}^\rho$-topology, with the (stream) regularity $\rho< \frac72$.
We remark that here only the regularity for the estimate \eqref{near.cou.thm} in the vertical direction $y$ is below such threshold, whereas the Sobolev regularity in the horizontal direction $\bx = \wt\omega x$ has to be sufficiently large to compensate, during the Nash-Moser iteration, the loss of derivatives coming from the small divisors and the Diophantine conditions, see\eqref{def:DCgt}.

\subsection{Strategy of the proof}\label{section strategy}

We look for stationary solutions of the Euler equation in vorticity-stream function formulation \eqref{stat.euler.vort} as solutions of semilinear elliptic PDEs \eqref{intro.eq.for.vf}. The quasi-periodic solutions in $x$ of Theorem \eqref{main.thm1} are then searched via a Nash-Moser implicit function theorem on such elliptic equations, with initial guess given by the solutions \ref{linear.sol.intro} of the linearized Euler equations at the shear equilibrium \eqref{euler.pert.lin.1}.
The main difficulties and novelties of our results can be summarized as follows:
\\[1mm]
\indent $\bullet$ Each space quasi-periodic function $\vf_{\varepsilon}(\bx,y)$ solve the nonlinear PDE \eqref{intro.eq.for.vf} with nonlinearities explicitly depending of the size $\varepsilon$ of the solution;
\\[1mm]
\indent $\bullet$ The nonlinearity of the semilinear elliptic problem that we solve is actually an "unknown" of the problem and it has to be constructed in such a way that one has a near Couette, space quasi-periodic solution to the Euler equation \eqref{euler.pert.1};
\\[1mm]
\indent $\bullet$ The nonlinearities have finite smoothness and their derivatives lose in size;
\\[1mm]
\indent $\bullet$ The unperturbed frequencies of oscillations are only implicitly defined and their non-degeneracy property relies on an asymptotic expansion for large values of the parameter. It implies that the required non-resonance conditions  are  not trivial to verify;
\\[1mm]
\indent $\bullet$ The basis of eigenfunctions $(\phi_{j,\fm}(y))_{j\in\N}$ of the operator $\cL_{\fm}$ in \eqref{cLm.intro} is not the standard exponential basis and depends explicitly on the parameter $\tE$.
\medskip

\noindent
We now illustrate the main steps to prove Theorem \ref{main.thm1} and how we will overcome the main difficulties.
\\[1mm]
\noindent {\bf The shear equilibrium $\psi_{\fm}(y)$ close to Couette and its nonlinear ODE.} The first issue that we need to solve is to determine which nonlinear differential equation is satisfied by $\psi_{\fm}(y)$ and the regularity properties of the nonlinearity. Our starting point is the linear ODE
\begin{equation}\label{ODE.1.intro}
    \psi_{\fm}'''(y)= Q_{\fm}(y)\psi_{\fm}'(y)\,,
\end{equation}
where $Q_{\fm}(y)$ is a prescribed analytic potential of the form  (see also \eqref{Qgamma})
\begin{equation}\label{Qm.strategy}
\begin{aligned}
    Q_{\fm}(y)= Q_{\fm}(\tE,\tr;y) & := -\tE^2\Big( \Big( \Big(\frac{\cosh(\frac{y}{\tr})}{\cosh(1)}\Big)^\fm +1 \Big)^{-1}+g_{\fm,S}\big(\tfrac{y}{\tr}\big) \Big)\,,
    \end{aligned}
\end{equation}
that approaches the singular finite well potential $Q_{\infty}(y):= -\tE^2\,\chi_{(-1,1)}(\frac{y}{\tr}) $ in \eqref{Qm.intro} with estimates as in Lemma \ref{lemma QmQinfty}. There are some degrees of freedom in the choice of the potential $Q_{\fm}(y)$ that we will take advantage of in the construction of our solutions. 

First, the choice of the parameters $\tE>(\kappa_{0}+\frac14)\pi$ and $\tr\in(0,1)$ controls both the numbers of negative eigenvalues of the Schr\"odinger operator $\cL_{\fm}:= -\pa_y^2 + Q_{\fm}(y)$, via the constrain $\tE\tr = (\kappa_0+\tfrac14)\pi$ in \eqref{constrain}, and their non-resonance properties. These $\kappa_0$ negative eigenvalues determine the frequencies of oscillations in the horizontal direction for the solutions of the linearized system at the equilibrium, see \eqref{linear.sol.intro}-\eqref{omega.fm.intro}. The non-degeneracy of the curve $\tE\mapsto \ora{\omega}_{\fm}(\tE)$, which is needed to ensure Diophantine non-resonance conditions on the frequency vector $\ora{\omega}_{\fm}(\tE)$ and on its perturbations, is proved in Section \ref{deg.KAM}. We remark that an extra difficulty is due to the fact that these $\kappa_0$ linear frequencies are proved initially to be close to the $\kappa_0$ real roots of a transcendental equation, see \eqref{secular_0} in Theorem \ref{L_operator}. This issue is overcome by proving asymptotic expansions of the latter roots, see Lemma \eqref{lemma.asympt}, and then by a perturbative argument.

Back to the second order ODE \eqref{ODE.1.intro} for $\psi_{\fm}'(y)$, its odd solutions, roughly speaking, behave as the affine Couette shear flow in the outer region $|y|> \tr$  and as oscillations of frequency $\tE$ and amplitude $\tE^{-2}$ in the inner region $|y|<\tr$. In particular, the shear flow $(\psi_{\fm}'(y),0)$ has $2\kappa_0+1$ stagnation lines, including the axis $\{y=0\}$ and with the remaining ones symmetric with respect to it. These stagnation lines will be the boundaries of the stripes $\R\times \tI_{p}$ in \eqref{sets Ip}, \eqref{D_domain}, where  $(\pm\ty_{p,\fm})_{p=0,1,...,\kappa_0}$ denote the critical points of $\psi_{\fm}(y)$.

The second-order nonlinear ODE satisfied by $\psi_{\fm}(y)$ is determined via the Cauchy problem solved with a nonlinear vector field $F(\psi)$ induced by $Q_{\fm}(y)$, see \eqref{eq.for.psim}, and with initial datum $F(\psi_{\fm}(0)) = \psi_{\fm}''(0) $. Because $\psi_{\fm}'(y)$ is not monotone, the nonlinearity is constructed locally on each domain where $\psi_{\fm}$ is invertible. Therefore, morally speaking, the nonlinearity $F(\psi)$ globally behaves as a multi-valued function defined on the range domain $\psi_{\fm}([-1,1])$. We now sketchily describe the general idea of the construction of the nonlinearity starting around $\psi_{\fm}(0)$. Since $\psi_{\fm}'(y)$ odd, we can locally solve the initial value problem
\begin{equation}\label{F 0 m intro}
     \begin{cases}
         F_{0,\fm}' (\psi) = Q_{\fm} (\psi_{\fm}^{-1}(\psi))\,, \quad \psi\in \psi_{\fm}([0,T])\,, \\
     F_{0,\fm}(\psi_{\fm}(0)) = \psi_{\fm}''(0)\,.
     \end{cases}
\end{equation}
The local solution of \eqref{F 0 m intro} extends until it meets the next critical point of $\psi_{\fm}(y)$, namely on $\psi_{\fm}([0,\ty_{1,\fm}))$, recalling that $\psi_{\fm}'(\ty_{1,\fm})=0$.  To pass over this critical point, we define a new Cauchy problem
\begin{equation}\label{F 1 m intro}
     \begin{cases}
         F_{1,\fm}' (\psi) = Q_{\fm} (\psi_{\fm}^{-1}(\psi))\,, \quad \psi\in \psi_{\fm}([\ty_{1,\fm},T])\,, \\
     F_{1,\fm}(\psi_{\fm}(\ty_{1,\fm})) = F_{0,\fm}(\psi_{\fm}(\ty_{1,\fm}))\,.
     \end{cases}
\end{equation}
By \eqref{ODE.1.intro}, we deduce $\psi_{\fm}'''(\ty_{1,\fm})=0$. By this latter identity, it is possible to show that $F_{0,\fm}(\psi_{\fm}(y))$ and $F_{1,\fm}(\psi_{\fm}(y))$ agree at $y=\ty_{1,\fm}$ with $\cC^1$-continuity. For the purposes of the Nash-Moser nonlinear iteration, $\cC^1$-regularity for the nonlinearity is definitely not enough to deal with the loss of derivatives coming from the small divisors. It is at this point that we use the extra degrees of freedom coming from the potential $Q_{\fm}(y)$: indeed, in \eqref{Qm.strategy} we can choose the corrector $g_{\fm,S}$ to impose arbitrarily finitely many vanishing conditions on the odd derivatives of $Q_{\fm}(y)$ at $y=\ty_{1,\fm}$, and consequently on $\psi_{\fm}(y)$ by \eqref{ODE.1.intro}, which we use to ensure $\cC^S$-regularity of the nonlinearities $F_{1,\fm}$ and $F_{0,\fm}$ at $\psi=\psi_{\fm}(y)$ for an arbitrarily fixed and large $S\in\N$. The main idea here is that the regularity is determined by the "local evenness" of $Q_{\fm}(y)$ and $\psi_{\fm}(y)$ around the critical point $\ty_{1,\fm}$, meaning that we can write
\begin{equation}
	Q_{\fm}(\ty_{1,\fm}+\delta) = \sum_{n=0}^{S} \frac{Q_{\fm}^{(2n)}(\ty_{1,\fm})}{(2n)!} \delta^{2n} +\frac{Q_{\fm}^{(2S+1)}(\ty_{1,\fm})}{(2S+1)!} \delta^{2S+1} + o(|\delta|^{2(S+1)})\,,
\end{equation}
and similarly for $\psi_{\fm}(y)$, see Lemma \ref{expansion psim crit}, so that we can invert $\psi_{\fm}$ as a function of $\delta^2$ with inverse of finite regularity. 
 The corrector $g_{\fm,S}(z)$ will be a polynomial functions, therefore analytic, that we use to control the local behaviour of $Q_{\fm}(y)$ also at the other critical points, without affecting the global shape of the potential. 

This construction is then iterated when we reach  the remaining finitely many critical points $\ty_{2,\fm}<...<\ty_{\kappa_0,\fm}<\ty_{\kappa_0+1,\fm}:=1$.  All the technical details are provided in Section \ref{sec.near_couette}: in particular, in Theorem \ref{nonlin_eq} we show that there exist $\cC^{S+1}(\R)$ functions $F_{0,\fm}(\psi),...,F_{\kappa_0,\fm}(\psi)$ such that
\begin{equation}\label{nonODE.intro}
    \psi_{\fm}''(y) = F_{p,\fm}(\psi_{\fm}(y)) \quad \forall\, y \in \tI_{p}\,, \quad p=0,1,...,\kappa_{0}\,,
\end{equation}
and with $\cC^{S+1}$-continuity at $\psi=\psi_{\fm}(y)$ as in \eqref{cont.Fpm.intro}.
\\[1mm]
\noindent{\bf A forced elliptic PDE for the perturbation of the shear equilibrium.}
Now that we have a good description of the shear equilibrium $\psi_{\fm}(y)$, we look for solutions depending on $x\in\R$ and ask what problems are solved by stream functions of the form $\psi(x,y)=\breve{\psi}(\bx,y)|_{\bx=\omega x}=\psi_{\fm}(y)+\vf(\bx,y)_{\bx=\omega x}$. The first na\"ive attempt would be to look for solutions of the nonlinear PDE
\begin{equation}
	\Delta_{\omega} \breve\psi ( \bx,y) = F_{p,\fm} (\breve \psi (\bx, y)) \,, \quad (\bx,y) \in \T^{\kappa_{0}}\times \tI_{p}\,, \quad p=0,1,...,\kappa_{0}
\end{equation}
with the same nonlinearities $F_{p,\fm}(\psi)$ as in \eqref{nonODE.intro} and insert the ansatz for $\breve{\psi}(\bx,y)$. The equations for the perturbation $\vf(\bx,y)$ would be, for $(\bx,y)\in \T^{\kappa_{0}} \times \tI_{p}$, $p=0,1,...,\kappa_0$,
\begin{equation}
    (\omega\cdot \pa_{\bx})^2\vf - \cL_{\fm}\vf - \big( F_{p,\fm}(\psi_{\fm}(y)+\vf) - F_{p,\fm}(\psi_{\fm}(y)) \big) = 0\,,
\end{equation}
with $\cL_{\fm}$ as in \eqref{cLm.intro}. This approach fails immediately because the continuity at $\pm\ty_{p,\fm}$ for $F_{p-1,\fm}(\psi_{\fm}(y)+\vf(\bx,y))$ and $F_{p,\fm}(\psi_{\fm}(y)+\vf(\bx,y))$ already does not hold any more in general (unless we require $\vf(\bx,\pm\ty_{p,\fm})=0$ for any $\bx\in\T^{\kappa_{0}}$, which is a too strong conditions, not even satisfied by the eigenfunctions of $\cL_{\fm}$). Recalling that our ultimate goal is to solve the Euler equation \eqref{stat.euler.vort}, the idea is to slightly change the nonlinear functions $F_{p,\fm}(\psi)$ and ``make enough room'' in neighbourhoods of the critical values $\psi_{\fm}(\ty_{p,\fm})$ to ensure enough smoothness of the new nonlinearities when $\psi(x,y)$ is evaluated close to the stagnation lines $\{y=\pm \ty_{p,\fm}\}$. In particular, for a small parameter $\eta\ll 1$, we will use the regularized nonlinearity $F_{p,\eta}(\psi)$ as in \eqref{Fp.eta.intro} instead of $F_{p,\fm}(\psi)$.
With this (non-unique) choice of the modified nonlinearities, we have that $F_{p,\eta}(\psi) = F_{p-1,\eta}(\psi)$ when $\psi$ belongs to the open neighbourhood $ B_{\eta}(\psi_{\fm}(\ty_{p,\fm}))$, $p=1,...,\kappa_0$.  The finite smooth continuity at the stagnation line $\{y=\pm \ty_{p,\fm}\}$ between $F_{p-1,\eta}(\psi_{\fm}(y)+\vf(\bx,y))$ and $F_{p,\eta}(\psi_{\fm}(y)+\vf(\bx,y))$ is then easily satisfied, as soon as the perturbation $\vf(\bx,y)$ is small enough. 

The new question that arises now is to estimate how close the two nonlinearities $F_{p,\fm}(\psi)$ and $F_{p,\eta}(\eta)$ (together with their derivatives) are with respect to the small parameter $\eta\ll 1$. Generally speaking, one only gets uniformly convergence in the limit $\eta\to 0$ and the derivatives of $F_{p,\eta}(\psi)$ exploding when $\eta \to 0$, due to presence of shrinking cut-off functions. The good news here is that, thanks to the "local evenness" that we were able to impose earlier on $Q_{\fm}(y)$ and $\psi_{\fm}(y)$ at the critical points $\pm\ty_{p,\fm}$, we can prove estimates (see Proposition \ref{prop.F.etax}), for $n=0,1,...,S+1$, 
\begin{equation}\label{stima.eta.intro}
    \sup_{y\in[-1,1]}\sup_{p=0,1,...,\kappa_{0}}| F_{p,\eta}^{(n)}(\psi_{\fm}(y)) - F_{p,\fm}^{(n)}(\psi_{\fm}(y)) | \lesssim_{n} \eta^{S+\frac32 -n}\,.
\end{equation}
We finally conclude that the equation for the perturbation $\vf(\bx,y)$ that we are going to solve is \eqref{euler.pert.1}, which implies, by \eqref{eq.for.psim}, that $\Delta(\psi_{\fm}(y)+\vf(\bx,y))=F_{p,\fm}(\psi_{\fm}(y)+\vf(\bx,y))$ and that $\psi_{\fm}(y)+\vf(\bx,y)$ is a solutions of Euler equation \eqref{stat.euler.vort}. We point out that, by expanding
\begin{equation}
    \begin{aligned}
        & F_{p,\eta}(\psi_{\fm}(y)+\vf(\bx,y)) - F_{p,\fm}(\psi_{\fm}(y)) - F_{p,\fm}'(\psi_{\fm}(y))\vf(\bx,y)\\
        & \quad = F_{p,\eta}(\psi_{\fm}(y)) - F_{p,\fm}(\psi_{\fm}(y))\\
        & \quad +\big( F_{p,\eta}'(\psi_{\fm}(y)) - F_{p,\fm}'(\psi_{\fm}(y)))\vf(\bx,y) + \tfrac12 F_{p,\eta}''(\psi_{\fm}(y)) \vf^2(\bx,y) + ...
    \end{aligned}
\end{equation}
the equation for $\vf$ in \eqref{euler.pert.1} contains the forcing term $F_{p,\eta}(\psi_{\fm}(y)) - F_{p,\fm}(\psi_{\fm}(y))$ and a correction at the linear level $\big( F_{p,\eta}'(\psi_{\fm}(y)) - F_{p,\fm}'(\psi_{\fm}(y)))\vf(\bx,y)$. By \eqref{stima.eta.intro}, both of them are actually arbitrarily small with respect to (powers of) $\eta$, therefore they will be treated as perturbative terms in the Nash-Moser nonlinear iteration. In particular, by a small shifting of the unknown $\vf(\bx,y)$ with the $\bx$-independent function $h_{\eta}(y)$ in Lemma \ref{unforcing.lemma}, it is possible to remove the forcing term and include its contribution directly into the nonlinearity. 
\\[1mm]
\noindent{\bf A Nash-Moser scheme of hypothetical conjugation with the auxiliary parameter.}
Finally, the construction of the quasi-periodic solutions in spatial variable $x$, treated here as a temporal one, follows in the same approach of other KAM papers in fluid dynamics, see for instance \cite{BM}, \cite{BBHM}, \cite{BFM}, \cite{BFM21}. The main points are the splitting of the phase space (here "spatial phase space") into tangential and normal invariant subspaces, the introduction of action-angle coordinates on the tangential subspace and the definition of the nonlinear functional to implement the Nash-Moser iteration. The solutions are searched as embeddings $i:\T^{\kappa_{0}}\to \T^{\kappa_{0}}\times \R^{\kappa_{0}}\times  \cX_{\perp}^s$ in the phase space of the form $\bx \mapsto i(\bx) = (\theta(\bx),I(\bx),z(\bx))$, where $\cX_{\perp}^{\perp}$ is the restriction of the functional space $H^{s,3}\times H^{s,1}$ to the normal subspace. The embedding is searched as the zero of the nonlinear functional
\begin{equation}\label{nonlinear funct intro}
	\bF(i) := \omega\cdot\pa_{\bx}i(\bx) - X_{\cH_{\varepsilon}}(i(\bx))\,,
\end{equation}
where $\cH_{\varepsilon}$ is the Hamiltonian in action-angle coordinate (see also \eqref{H_epsilon})
\begin{equation}\label{cH eps intro}
	\cH_{\varepsilon}(\theta,I,z) = \ora{\omega}_{\fm}(\tE) \cdot I + \tfrac12 \big( z, \Big(\begin{smallmatrix}
		-\cL_{\fm} & 0 \\ 0 & {\rm Id}
	\end{smallmatrix}\Big) \big)_{L^2} + \sqrt\varepsilon P_{\varepsilon}( A(\theta,I,z)) \,,
\end{equation}
with $\ora{\omega}_{\fm}(\tE)$ as in \eqref{omega.fm.intro}, $P_{\varepsilon}$ a perturbative contribution from the nonlinear terms and $A(\theta,I,z)$ the action-angle map as in \eqref{aa_coord}. The frequency vector $\omega\in\R^{\kappa_{0}}$ becomes a parameter to determine in order to get a solutions of $\bF(i)=0$. Here a significant difficulty that was not present in previous works appears. 

We first recall the strategy used in the previous works. In the spirit of analysis of Hamiltonian dynamics of Herman-F\'ejoz \cite{fejoz}, one usually relaxes the problem by introducing a counterterm $\alpha \in \R^{\kappa_{0}}$ and modifying the Hamiltonian $\cH_{\varepsilon}$ in \eqref{cH eps intro} as
\begin{equation}
	\wt\cH_{\alpha} := \alpha \cdot I + \tfrac12 \big( z, \Big(\begin{smallmatrix}
		-\cL_{\fm} & 0 \\ 0 & {\rm Id}
	\end{smallmatrix}\Big) \big)_{L^2} + \sqrt\varepsilon P_{\varepsilon}(A(\theta,I,z)) \,.
\end{equation}
The counterterm $\alpha$ becomes an unknown of the problem together with the embedding $i(\bx)$ and one searches for solutions of 
\begin{equation}\label{nonlinear funct nomore}
	\wt\bF(i,\alpha) = \wt\bF(\omega,\tE,\varepsilon;i,\alpha) := \omega\cdot\pa_{\bx}i(\bx) - X_{\wt\cH_{\alpha}}(i(\bx))=0\,.
\end{equation}
One then obtains a solution $(i_\infty,\alpha_{\infty})(\omega,\tE,\varepsilon)$, defined for all parameters $(\omega,\tE)\in \R^{\kappa_{0}}\times [\tE_1,\tE_2]$, such that \eqref{nonlinear funct nomore} is solved whenever the parameters satisfy the Diophantine non-resonance condition 
\begin{equation}\label{diophan intro}
	| \omega\cdot \ell | \geq \upsilon \braket{\ell}^{-\tau} \quad \forall\, \Z^{\kappa_{0}}\setminus\{0\}\,.
\end{equation}
The original equation $\bF(i_{\infty})=0$ is then solved if $\alpha_{\infty}=\alpha_{\infty}(\omega,\tE,\varepsilon) = \ora{\omega}_{\fm}(\tE)$ and, since $\alpha_{\infty}(\,\cdot\,,\tE,\varepsilon)$ is expected to be invertible for any fixed $\tE$, this should fix $\wt\omega=\wt\omega(\tE,\varepsilon) = \alpha_{\infty}^{-1}(\ora{\omega}_{\fm}(\tE),\tE,\varepsilon)$. To ensure that $\wt\omega=\wt\omega(\tE)$ satisfies the non-resonance condition \eqref{diophan intro}, we need to control a finite number of derivatives in the parameter $\tE\in [\tE_1,\tE_2]$. However, the basis of eigenfunctions $\phi_{j,\fm}(y)$ of the operator $\cL_{\fm}$ in \eqref{cLm.intro} is not the standard exponential basis and depends explicitly on the parameter $\tE$, with the consequence that also the Sobolev phase spaces $\cX_{\perp}^s$ vary with respect to the parameter. We do not have a clear and explicit control on the variations of the eigenfunctions with respect to the parameter $\tE$, as well as of the local nonlinearities. This may be a potential source of divergences in the estimates that may prevent the imposition of the non-resonance conditions.
 
 We apply here a new strategy.  We solve \eqref{nonlinear funct intro}-\eqref{cH eps intro} for a fixed value of the depth $\tE\in (\tE_1,\tE_2)$ such that $\ora{\omega}_{\fm}(\tE)$ is a Diophantine non-resonant frequency vector. We will prove in Proposition \ref{omega.fm.tE.diophantine} that this property holds for most values of $\tE$ in any compact interval $[\tE_1,\tE_2]$. For any $\varepsilon>0$, we introduce an auxiliary parameter
 \begin{equation}
 	\tA \in \cJ_{\varepsilon}(\tE):= [\tE -\sqrt\varepsilon,\tE+\sqrt\varepsilon]
 \end{equation}
 so that $\ora{\omega}_{\fm}(\tA)$ is close to the unperturbed frequency vector $\ora{\omega}_{\fm}(\tE)$ with estimates
 \begin{equation}\label{stima piccola per intro}
 	\big| \pa_{\tA}^n \big( \ora{\omega}_{\fm}(\tE)- \ora{\omega}_{\fm}(\tA) \big) \big| \lesssim \sqrt\varepsilon\,, \quad \forall\,n\in\N_0\,, \quad \forall\,\tA\in\cJ_{\varepsilon}(\tE)\,.
 \end{equation}
We remark that the properties of non-degeneracy and transversality for the vector $\ora{\omega}_{\fm}(\tE)$ (see Theorem \ref{non deg omega gamma} and Proposition \ref{prop:trans_un}) hold also for $\ora{\omega}_{\fm}(\tA)$ and its perturbations on the whole interval $[\tE_1,\tE_2]$ with  constants that are independent of $\varepsilon>0$. 
By adding and subtracting the term $\ora{\omega}_{\fm}(\tA)\cdot I$ in \eqref{cH eps intro}, we now introduce the counterterm $\alpha\in \R^{\kappa_{0}}$ and we consider the modified Hamiltonian
\begin{equation}\label{cH vare alpha intro}
	\begin{aligned}
		\cH_{\varepsilon,\alpha}& := \alpha\cdot I + \tfrac12 \big( z, \Big(\begin{smallmatrix}
			-\cL_{\fm} & 0 \\ 0 & {\rm Id}
		\end{smallmatrix}\Big) \big)_{L^2} \\
	& \ \ \ \ \ + \sqrt\varepsilon \big( P_{\varepsilon}(A(\theta,I,z) + \tfrac{1}{\sqrt\varepsilon} \big(\ora{\omega}_{\fm}(\tE)-\ora{\omega}_{\fm}(\tA)  \big)\cdot I \big)\,.
	\end{aligned}
\end{equation}
The great advantage of this procedure is that the modified Hamiltonian $\cH_{\varepsilon,\alpha}$ directly depends on the new parameter $\tA$ only through the correction term $ \big(\ora{\omega}_{\fm}(\tE)-\ora{\omega}_{\fm}(\tA)  \big)\cdot I $, which is perturbative because of the estimates \eqref{stima piccola per intro}. As before, the counterterm $\alpha$ becomes an unknown of the problem together with the embedding $i(\bx)$ and we search for solutions of 
\begin{equation}\label{nonlinear funct correct}
	\cF(i,\alpha) = \cF(\omega,\tA,\tE,\varepsilon;i,\alpha) := \omega\cdot\pa_{\bx}i(\bx) - X_{\cH_{\varepsilon,\alpha}}(i(\bx))=0\,.
\end{equation}
We stress once more that the nonlinear functional in \eqref{nonlinear funct correct} still depends on $\tE$, but its value is fixed during the Nash-Moser estimate and we are not interest in how the solutions vary with respect to it. The Nash-Moser scheme is not affected by this modification and we will obtain a solution $(i_\infty,\alpha_{\infty})(\omega,\tA,\varepsilon)$, defined for all parameters $(\omega,\tA)\in \R^{\kappa_{0}}\times \cJ_{\varepsilon}(\tE)$, such that \eqref{nonlinear funct correct} is solved whenever the parameters satisfy the Diophantine non-resonance condition \eqref{diophan intro}. 
The modified Hamiltonian $\cH_{\varepsilon,\alpha_{\infty}}$ in \eqref{cH vare alpha intro} will therefore coincide again with $\cH_{\varepsilon}$ in \eqref{cH eps intro}, and consequently
the original equation $\bF(i_{\infty})=0$ in \eqref{nonlinear funct intro}  will be solved, if $\alpha_{\infty}=\alpha_{\infty}(\omega,\tA,\varepsilon) = \ora{\omega}_{\fm}(\tA)$. Since $\alpha_{\infty}(\,\cdot\,,\tA,\varepsilon)$ will be invertible for any fixed $\tA$, this  will fix $\wt\omega=\wt\omega(\tA,\varepsilon) = \alpha_{\infty}^{-1}(\ora{\omega}_{\fm}(\tA),\tA,\varepsilon)$. It will finally be possible to prove that the perturbed frequency vector $\wt\omega(\tA,\varepsilon)$ is Diophantine for most values of $\tA \in\cJ_{\varepsilon}(\tE)$: we will prove this in Theorem \ref{MEASEST}. We remark that the Diophantine conditions for $\wt\omega(\tA;\varepsilon)$ will be weaker than the ones for $\ora{\omega}_{\fm}(\tE)$.

Among the reasons why our modified scheme actually works, we identified the following factors that certainly help the convergence to our result: the equation \eqref{euler.pert.1} for $\vf$ at the end of the day is semilinear, so there is no need to perform any regularization of the linearized vector field; the linearized operator in the normal direction is directly invertible without any reducibility to a diagonal operator and, therefore, no Melnikov non-resonance conditions are needed; the only non-resonance conditions that appear are the Diophantine conditions on the frequency vectors when we invert the operator $\omega\cdot \pa_{\bx}$ on functions with zero average in $\bx\in\T^{\kappa_{0}}$.  We surely find of great interest to see if our strategy still works or can be further improved when these cases are not met and still we have a parament-dependent basis of eigenfunctions.
\\[1mm]
\noindent {\bf A final comment on the parameters and their interdependences.} The construction of the space quasi-periodic stream functions in Theorem \ref{main.thm1} requires several parameters that we have to  tune and match appropriately  throughout the entire paper. For sake of clarity, we list all of them:
\\[1mm]
\indent $\bullet$ $\kappa_0\in\N$ is the number of frequencies of oscillations and ultimately the dimension of the quasi-periodicity. It is fixed once for all at the very beginning;
\\[1mm]
\indent $\bullet$ $\tE \in [\tE_1,\tE_2]$, $\tE_1\gg (\kappa_0+\tfrac14)\pi$, and $\tr\in(0,1)$ parametrize the potential $Q_{\fm}(y)$ in \eqref{Qm.intro}, \eqref{Qm.strategy}, affecting its depth and its width, respectively. They are related by the constraint \eqref{constrain}. The threshold $\tE_1$ will be chosen sufficiently large in Section \ref{deg.KAM}, depending only on $\kappa_0$. The parameter $\tr$ will measure the proximity to the Couette flow, see Proposition \ref{proxim.couette};
\\[1mm]
\indent $\tA\in\cJ_{\varepsilon}(\tE):=[\tE-\sqrt\varepsilon,\tE+\sqrt\varepsilon]$ is the auxiliary parameter close to a fixed value of $\tE \in (\tE_1,\tE_2)$ with $\sqrt\varepsilon$, where $\varepsilon\in(0,\varepsilon_0)$ is the size of the perturbation $\vf_{\varepsilon}$ in \eqref{form.stream.thm}. This parameter will be used to prove non-resonance condition for the final frequency of oscillations;
\\[1mm]
\indent $\bullet$ $\fm\geq \bar\fm\gg1$ is a large parameter measuring how close the analytic potential $Q_{\fm}(y)$ in \eqref{Qm.strategy} is to the singular well potential in \eqref{Qm.intro}. The threshold $\bar\fm$ will be chosen sufficiently large, depending on $\tE$, $\tr$ and $\kappa_0$. Once this large threshold is determined, the value $\fm$ can be arbitrarily fixed once for all.
\\[1mm]
\indent $\bullet$ $S \in \N$ is  large, but finite, regularity that we impose on the local nonlinearities in \eqref{euler.pert.1}. Its value is ultimately fixed when we estimate the Nash-Moser iterations in Section \ref{sez.proof.NMT} and it will depend only on $\kappa_0$ and the loss of derivatives coming from the Diophantine conditions in \eqref{def:DCgt};
\\[1mm]
\indent $\bullet$ $\eta\in [0,\bar\eta]$, with $\bar\eta\ll 1$, is a small parameter parametrizing the modification of the local nonlinearities around the critical values $\psi_{\fm}(\ty_{p,\fm})$, $p=1,..,\kappa_0$. The threshold $\bar\eta$ will depend on $\fm$ and $S$, once the value of $\fm\geq \bar\fm$ is fixed. Ultimately, the value of $\eta$ is linked to the size $\varepsilon$ of the quasi-periodic perturbation $\vf_{\varepsilon}$ in \eqref{form.stream.thm} by \eqref{link eta varepsilon}.


\paragraph{Outline of the paper.}

The rest of the paper is organized as follows.
In Section \ref{functiona.setting}, we recall the functional setting and the basic lemmata that we will use in the following. Section \ref{sec.near_couette} is devoted to the analysis of the shear equilibrium $\psi_{\fm}(y)$. In particular, we estimate the proximity to the Couette flow in Proposition \ref{proxim.couette}, we determine the local nonlinearities for the second order ODE satisfied by the stream function $\psi_{\fm}(y)$ in Theorem \ref{nonlin_eq} and we analyse the spectral properties of the linear operator $\cL_{\fm}= -\pa_{y}^2 + Q_{\fm}(y)$ in Proposition \ref{L_operator}. In Section \ref{sez.nonlin.lin} we set the partial differential equation that we will solve, its Hamiltonian formulation with the action-angle variables and the nonlinear functional map for the Nash-Moser Theorem \ref{NMT}. In Section \ref{deg.KAM} we prove the non-degeneracy and the transversality properties for the negative eigenvalues of the operator $\cL_{\fm}$ that are needed to impose Diophantine non-resonance conditions on them, together with the measure estimates for the final frequencies in Theorem \ref{MEASEST} of Section \ref{subsec:measest}. In Section \ref{sec:approx_inv} we study the approximate inverse of the linearized vector field at any approximate solution of the Nash-Moser nonlinear iteration. We conclude with Section \ref{sez.proof.NMT} with the Proof of Theorem \ref{NMT}, which directly implies the validity of Theorem \ref{main.thm1}.

\paragraph{Acknowledgements.} The authors warmly thank Diego Noja for the useful discussions we had during the study of the problem. The authors also warmly thank Luca Tasin for discussing some issues of algebraic nature and for pointing out some references.
The work of the authors Luca Franzoi and Nader Masmoudi is  supported by Tamkeen under the NYU Abu Dhabi Research Institute grant CG002. The work of the author Nader Masmoudi is also supported by  NSF grant DMS-1716466 . The work of the author Riccardo Montalto is funded by the European Union, ERC STARTING GRANT 2021, "Hamiltonian Dynamics, Normal Forms and Water Waves" (HamDyWWa), Project Number: 101039762. Views and opinions expressed are however those of the authors only and do not necessarily reflect those of the European Union or the European Research Council. Neither the European Union nor the granting authority can be held responsible for them. 
Riccardo Montalto is also supported by INDAM-GNFM.

\section{Functional setting}\label{functiona.setting}

In this paper we consider functions in the following Sobolev space
\begin{align}
	&H^{s,\rho} :=  H^s(\T^{\kappa_0},H_0^\rho([-1,1])) \label{sobolev.sp}\\
	& := \Big\{ u(\bx,y)=\!\sum_{\ell\in\Z^{\kappa_0}} u_{\ell}(y)e^{\im \ell\cdot\bx} \,:\, \| u \|_{s,\rho}^2:=\! \sum_{\ell\in\Z^{\kappa_0}} \braket{\ell}^{2s}\| u_\ell \|_{H_0^\rho([-1,1])}^2<\infty  \Big\}\,, \nonumber
\end{align}
where $\braket{\ell}:=\max\{1,|\ell|\}$ and, recalling \eqref{parity},
\begin{equation}
	H_0^\rho([-1,1]) := \big\{ \,u(y) \in H^\rho([-1,1]) \,:\,  u(-1)=u(1)=0\,, \ u(-y)=u(y)\, \big\}\,.
\end{equation}
For $s \geq \frac{\kappa_0}{2}+1$, $\rho \geq 1$, we have that $ H^{s,\rho}\subset \cC(\T^{\kappa_0}\times[-1,1])$ and that $ H^{s, \rho}$ is an algebra.
\paragraph{Whitney-Sobolev functions.}
We consider  families of Sobolev functions $$\lambda=(\omega,\tA)\mapsto u(\lambda)=u(\lambda;\bx,y) \in H^{s,\rho}$$ 
which are $k_0$-times differentiable in the sense of Whitney with respect to the parameter
$ \lambda = (\omega,\tA) \in F \subset \R^{\kappa_0+1}$
where $F\subset \R^{\kappa_0+1}$ is a closed set. 
We refer to Definition 2.1 in \cite{BBHM}, for the 
definition of Whitney-Sobolev functions.
Given $ \upsilon \in (0,1) $,  by the Whitney extension theorem (e.g. Theorem B.2, \cite{BBHM}), we have the equivalence
\begin{equation}\label{weighted_norm}
	\normk{u}{s,\rho,F}  \sim_{\kappa_0,k_0} 
	{\mathop \sum}_{\abs\alpha\leq k_0} \upsilon^{\abs\alpha} \| \pa_{\lambda}^\alpha \cE_{k}u \|_{L^\infty(\R^{\kappa_0+1},H^{s-|\alpha|,\rho})}\,,
\end{equation}
where $\cE_{k}u$ denotes an extension of $u$ to all the parameter space $\R^{\kappa_{0}+1}$. 
For  simplicity, we  
denote  $ \| \ \|_{s,\rho,F}^{k_0,\upsilon} =  \| \ \|_{s,\rho}^{k_0,\upsilon} $, we use the right hand side of \eqref{weighted_norm} as definition of the norm itself, we denote $\cE_{k}u$ by $u$ and we still denote the function spaces by $H^{s,\rho}$. In particular, we shall deal with functions with Sobolev regularity $s\geq s_0$, where the threshold regularity $s_0$ is chosen as
\begin{equation}\label{s0.def}
	s_0:=s_0(\kappa_0,k_0):=\lfloor\tfrac{\kappa_0}{2}\rfloor+1+k_0 \in\N\,.
\end{equation}

\begin{lem}\label{prod.lemma}
	$(i) $ For all $s\geq s_0$, $\rho \geq 1$ and any $u,v\in H^{s,\rho}$,
	\begin{equation}\label{prod}
		\normk{u v}{s, \rho} \leq C(s,k_0) \normk{u}{s, \rho}\normk{v}{s_0, \rho} + C(s_0,k_0)\normk{u}{s_0, \rho}\normk{v}{s, \rho}\,.
	\end{equation}
	\noindent
	$(ii)$ Let $s \geq s_0$, $a \in H^{s, 1}$, $u \in H^{s, 0}$. Then $a u \in H^{s, 0}$ and 
	\begin{equation}\label{prod2}
		\| a u \|_{s, 0}^{k_0, \upsilon} \lesssim_s \| a \|_{s_0, 1}^{k_0, \upsilon} \| u \|_{s, 0}^{k_0, \upsilon} +  \| a \|_{s, 1}^{k_0, \upsilon} \| u \|_{s_0, 0}^{k_0, \upsilon}\,. 
	\end{equation}
	Similarly if $a \in H^s(\T^{\kappa_0})$ and $u \in H^{s, 0}$, then $a u \in H^{s, 0}$ and
	\begin{equation}\label{prod3}
		\| au \|_{s, 0}^{k_0, \upsilon} \lesssim_s \| a \|_{s_0}^{k_0, \upsilon} \| u \|_{s, 0}^{k_0, \upsilon} +  \| a \|_{s}^{k_0, \upsilon} \| u \|_{s_0, 0}^{k_0, \upsilon}\,.
	\end{equation}
\end{lem}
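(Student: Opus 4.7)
Each of the three bounds follows the same template: first reduce the Whitney-weighted estimate to a parameter-free tame product estimate via the extension identity \eqref{weighted_norm}, then prove the latter by Fourier analysis in $\bx \in \T^{\kappa_0}$ coupled with an appropriate pointwise bound on the $y$-fibre $[-1,1]$.

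\emph{Step 1 (remove the Whitney weight).} By \eqref{weighted_norm}, $\normk{uv}{s,\rho}$ is comparable to $\sum_{|\alpha|\leq k_0}\upsilon^{|\alpha|}\|\partial_{\lambda}^{\alpha}(uv)\|_{L^{\infty}_{\lambda} H^{s-|\alpha|,\rho}}$. Expanding via Leibniz, $\partial_{\lambda}^{\alpha}(uv) = \sum_{\beta+\gamma=\alpha}\binom{\alpha}{\beta}\partial_{\lambda}^{\beta}u \cdot \partial_{\lambda}^{\gamma}v$, and using $\upsilon^{|\alpha|}=\upsilon^{|\beta|}\upsilon^{|\gamma|}$, the estimate reduces to the parameter-free tame product bound
\[
\|fg\|_{s',\rho} \leq C(s',k_0)\bigl(\|f\|_{s',\rho}\|g\|_{\sigma_0,\rho} + \|f\|_{\sigma_0,\rho}\|g\|_{s',\rho}\bigr), \qquad \sigma_0:=\lfloor\kappa_0/2\rfloor + 1,
\]
for all $\sigma_0 \leq s' \leq s$; summing over Leibniz pairs and re-assembling the Whitney norm recovers \eqref{prod}.

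\emph{Step 2 (fibrewise algebra in $y$, and Steps 3 for $\bx$).} For $\rho \geq 1$, the one-dimensional Sobolev embedding $H^1([-1,1]) \hookrightarrow L^\infty([-1,1])$ makes $H_0^\rho([-1,1])$ a Banach algebra, so that for every $\ell_1,\ell_2 \in \Z^{\kappa_0}$ one has the fibrewise bound $\|u_{\ell_1} v_{\ell_2}\|_{H_0^\rho} \lesssim_\rho \|u_{\ell_1}\|_{H_0^\rho}\|v_{\ell_2}\|_{H_0^\rho}$. Writing then the Fourier expansions as in \eqref{sobolev.sp}, the coefficients of $uv$ are the discrete convolutions $(uv)_\ell(y) = \sum_{\ell_1+\ell_2=\ell}u_{\ell_1}(y)v_{\ell_2}(y)$; combining the fibre estimate with the classical splitting $\braket{\ell}^{s'} \lesssim_{s'} \braket{\ell_1}^{s'} + \braket{\ell_2}^{s'}$, Cauchy--Schwarz in $\ell_1$, and the summability $\sum_{\ell \in \Z^{\kappa_0}}\braket{\ell}^{-2\sigma_0} < \infty$ (which holds precisely because $\sigma_0 > \kappa_0/2$), one obtains the parameter-free estimate of Step 1. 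This proves (i).

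\emph{Step 4 (parts (ii) and (iii)).} For (ii) the algebra step on the fibre is replaced by the pointwise-in-$\bx$ estimate
\[
\|a_{\ell_1}(y)u_{\ell_2}(y)\|_{L^2_y} \leq \|a_{\ell_1}\|_{L^\infty_y}\|u_{\ell_2}\|_{L^2_y} \lesssim \|a_{\ell_1}\|_{H^1_y}\|u_{\ell_2}\|_{L^2_y},
\]
i.e.\ the continuous multiplication $H^1_y \times L^2_y \to L^2_y$; the convolution argument of Step 3 then proceeds verbatim with $a$ measured in $H^{\cdot,1}$ and $u$ in $H^{\cdot,0}$, producing \eqref{prod2}. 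For (iii), $a=a(\bx)$ is independent of $y$, so $\|a_{\ell_1}u_{\ell_2}\|_{L^2_y} = |a_{\ell_1}|\,\|u_{\ell_2}\|_{L^2_y}$ and the same scheme, with $\|a\|_s$ replacing $\|a\|_{s,1}$, yields \eqref{prod3}.

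\emph{Main obstacle.} The genuine analytic input is concentrated in the algebra/multiplication statements on the $y$-fibre; everything else is bookkeeping. The only point that needs to be verified is that the threshold $s_0 = \lfloor\kappa_0/2\rfloor + 1 + k_0$ in \eqref{s0.def} leaves $\sigma_0 = s_0 - k_0 > \kappa_0/2$ derivatives after the most costly Whitney differentiation, so that the convolution sums converge uniformly over $|\alpha|\leq k_0$ and the Whitney weight $\upsilon^{|\alpha|}$ factorises cleanly into the two factors of the product estimate.
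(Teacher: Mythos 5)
Your proof is correct and follows essentially the same strategy as the paper: Leibniz in the Whitney parameter to reduce to a parameter-free tame estimate, then Fourier expansion in $\bx$ with the discrete convolution, the splitting $\langle\ell\rangle^{s}\lesssim\langle\ell'\rangle^{s}+\langle\ell-\ell'\rangle^{s}$, Cauchy--Schwarz, and the fibre bound $H^1_y\hookrightarrow L^\infty_y$. The only difference is that the paper delegates part $(i)$ to citations while you spell it out, and your closing observation that the threshold $\sigma_0=s_0-k_0>\kappa_0/2$ is what makes the Leibniz reassembly consistent is the right way to see why the choice \eqref{s0.def} works.
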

\begin{proof}
	{\sc Proof of $(i)$.} For parameter independent Sobolev functions, i.e. $k_0=0$, the tame estimates \eqref{prod} follows from standard tame estimates arguments, using the algebra property for functions in $H_0^{\rho}([-1,1])$ for $\rho\geq 1$, see Lemma 2.9 in \cite{BKMdnls}. In general, the tame estimates \eqref{prod} follows as in \cite{BM} and \cite{BM20} with respect to the definition of the weighted norm in \eqref{weighted_norm} and the choice of  $s_0$ in \eqref{s0.def} (here, as in \cite{BM20} and \cite{BFM21}, the norm of $\pa_{\omega}^\alpha u$, $|\alpha|\leq k_0$, is estimated in $H^{s-|\alpha|, \rho}$, whereas in \cite{BM}, \cite{BBHM}, \cite{BFM} is estimated just in $H^s$).
	
	\noindent
	{\sc Proof of $(ii)$.}  To simplify notations, we write $\| \cdot \|_{s, \rho}$ instead of $\| \cdot \|_{s, \rho}^{k_0, \upsilon}$. By expanding $a$ and $u$ in Fourier series with respect to ${\bf x} \in \T^{\kappa_0}$, we have 
	$
	a({\bf x}, y) = \sum_{\ell \in \Z^{\kappa_0}} a_\ell(y) e^{\im \ell \cdot {\bf x}}$ and $ u({\bf x}, y) = \sum_{\ell \in \Z^{\kappa_0}} u_\ell(y) e^{\im \ell \cdot {\bf x}}
	$,
	implying that 
	$$
	a({\bf x}, y) u({\bf x}, y) = \sum_{\ell, \ell' \in \Z^{\kappa_0}} a_{\ell - \ell'}(y) u_{\ell'}(y) e^{\im \ell \cdot {\bf x}}\,. 
	$$
	Therefore
	\begin{equation}\label{macete0}
	\| a u \|_{s, 0}^2 \leq \sum_{\ell \in \Z^{\kappa_0}} \langle \ell \rangle^{2 s} \Big( \sum_{\ell'\in \Z^{\kappa_0}} \| a_{\ell - \ell'} u_{\ell'} \|_{L^2_y} \Big)^2\,.
	\end{equation}
	Using the embedding $H_0^1([- 1, 1]) \subset L^\infty([- 1, 1])$, $\| \cdot \|_{L^\infty_y} \lesssim \| \cdot \|_{H^1_y}$, we have that, for any $\ell , \ell' \in \Z^{\kappa_0}$,
	$$
	\| a_{\ell - \ell'} u_{\ell'}\|_{L^2_y} \leq \| a_{\ell - \ell'} \|_{L^\infty_y} \| u_{\ell'} \|_{L^2_y} \lesssim \| a_{\ell - \ell'} \|_{H^1_y} \| u_{\ell'} \|_{L^2_y}\,.
	$$
	Therefore, the inequality \eqref{macete0} leads to
	\begin{equation}\label{macete1}
	\begin{aligned}
	\| a u \|_{s, 0}^2 & \lesssim \sum_{\ell \in \Z^{\kappa_0}} \langle \ell \rangle^{2 s} \Big( \sum_{\ell'\in \Z^{\kappa_0}} \| a_{\ell - \ell'} \|_{H^1_y} \| u_{\ell'} \|_{L^2_y} \Big)^2 \lesssim_s {\cal I}_1 + {\cal I}_2\,, \\
	{\cal I}_1  & := \sum_{\ell \in \Z^{\kappa_0}} \Big( \sum_{\ell'\in \Z^{\kappa_0}} \langle \ell - \ell' \rangle^{s} \| a_{\ell - \ell'} \|_{H^1_y} \| u_{\ell'} \|_{L^2_y} \Big)^2  \,, \\
	{\cal I}_2 & := \sum_{\ell \in \Z^{\kappa_0}}  \Big( \sum_{\ell'\in \Z^{\kappa_0}} \langle \ell' \rangle^s\| a_{\ell - \ell'} \|_{H^1_y} \| u_{\ell'} \|_{L^2_y} \Big)^2
	\end{aligned}
	\end{equation}
	where we have used the trivial fact $\langle \ell \rangle^s \lesssim_s \langle \ell' \rangle^s + \langle \ell - \ell' \rangle^s$, for any $\ell, \ell' \in \Z^{\kappa_0}$.  By multiplying and dividing by $\langle \ell' \rangle^{s_0}$, using that $\sum_{\ell'} \langle \ell' \rangle^{- 2 s_0} < + \infty$ and by the Cauchy-Schwartz inequality, we estimate the term ${\cal I}_1$ with
	\begin{equation}\label{macete3}
	\begin{aligned}
	{\cal I}_1 & \lesssim \sum_{\ell \in \Z^{\kappa_0}} \sum_{\ell' \in \Z^{\kappa_0}} \langle \ell - \ell' \rangle^{2s} \| a_{\ell - \ell'} \|_{H^1_y}^2 \langle \ell' \rangle^{2 s_0} \| u_{\ell'} \|_{L^2_y}^2  \\
	& \lesssim \sum_{\ell' \in \Z^{\kappa_0}} \langle \ell' \rangle^{2 s_0} \| u_{\ell'} \|_{L^2_y}^2  \sum_{\ell \in \Z^{\kappa_0}} \langle \ell - \ell' \rangle^{2 s} \| a_{\ell - \ell'} \|_{H^1_y}^2  \lesssim \| a \|_{s, 1}^2 \| u \|_{s_0, 0}^2\,. 
	\end{aligned}
	\end{equation}
	By similar arguments, one can show that ${\cal I}_2 \lesssim \| a \|_{s_0, 1}^2 \| u \|_{s, 0}^2$ which imply the claimed interpolation estimate for $\| a u \|_{s, 0}$. In order to estimate $\| au \|_{s, 0}^{k_0, \upsilon}$ one has to estimate for any $\alpha \in \N^{\kappa_0}$ with $|\alpha| \leq k_0$, 
	$$
	\| \partial_\omega^\alpha (a u) \|_{s - |\alpha|, 0} \lesssim_\alpha \sum_{\alpha_1 + \alpha_2 = \alpha} \| (\partial_\omega^{\alpha_1}a)(\partial_\omega^{\alpha_2} u) \|_{s - |\alpha|, 0} 
	$$
	and every term of the latter sum is estimated as above. 
	
	\noindent
	The estimate \eqref{prod3} is proved similarly to \eqref{prod2} (it is actually easier since $a$ does not depend on $y$). 
	\end{proof}

For any $K>0$, we define the smoothing projections 
\begin{equation}\label{pro:N}
	u(\bx,y) =\! \! \sum_{\ell\in\Z^{\kappa_0}} u_\ell(y) e^{\im\,\ell\cdot\bx} \mapsto \Pi_{K}u(\bx,y):=\!\! \sum_{|\ell|\leq K} u_\ell(y) e^{\im\,\ell\cdot\bx}\,, \quad \Pi_{K}^\perp := {\rm Id} - \Pi_{K}\,.
\end{equation}
The following  estimates hold for the smoothing operators defined 
in \eqref{pro:N}:
\begin{equation}\label{SM12}
	\normk{\Pi_K u}{s, \rho}  \leq K^\alpha \normk{u}{s-\alpha, \rho}\,, \  0\leq \alpha\leq s \,, \quad 
	\normk{\Pi_K^\perp u}{s, \rho}  \leq K^{-\alpha} \normk{u}{s+\alpha, \rho}\,, \ \alpha\geq 0 \,.
\end{equation}
We also recall the standard Moser tame estimate for the nonlinear composition operator
$$
u(\bx,y)\mapsto \tf(u)(\bx,y) :=f(\bx,y,u(\bx,y))\,.
$$
For the purposes of this paper, we state this result in the case of finite regularity of the nonlinear function. 
\begin{lem}{\bf (Composition operator)}\label{compo_moser}
	Let $s \geq s_0$, $\rho \in \N$. Then there exists $\sigma = \sigma( \rho) > 0$ such that for any $f\in\cC^{s + \sigma}(\T^{\kappa_0}\times [-1,1]\times\R,\R)$, if $u(\lambda)\in H^{s,\rho}$ is a family of Sobolev functions satisfying $\normk{u}{s_0, \rho}\leq 1$, then
	$$
	\normk{\tf(u)}{s, \rho}\leq C(s,k_0) \| f \|_{{\cal C}^{s + \sigma}}\big( 1+\normk{u}{s, \rho} \big) \,.
	$$
	If $ f(\bx,y, 0) = 0 $, then 
	$	  \normk{\tf(u)}{s, \rho}\leq C(s,k_0)  \| f \|_{{\cal C}^{s + \sigma}} \normk{u}{s, \rho} $. Moreover, if $f\in \cC^\infty$, then the same result holds for any $s\geq s_0$.
\end{lem}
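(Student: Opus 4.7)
\medskip

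\noindent\textbf{Proof proposal.} The strategy is the classical Moser/Faà di Bruno route, adapted to the weighted Whitney--Sobolev setup and to the anisotropic space $H^{s,\rho}$. The plan is to reduce first to the parameter-free case, then to treat the $\bx$ and $y$ directions separately using the anisotropic product estimate already at our disposal.

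\medskip

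\noindent\emph{Step 1: reduction to $k_0=0$.} By the equivalence \eqref{weighted_norm}, it suffices to bound $\upsilon^{|\alpha|}\|\partial_\lambda^\alpha \tilde f(u)\|_{s-|\alpha|,\rho}$ for every multi-index $\alpha$ with $|\alpha|\leq k_0$. The Faà di Bruno formula expresses $\partial_\lambda^\alpha\tilde f(u)$ as a finite sum of products of the form
\[
(\partial_u^k f)(\bx,y,u)\,\prod_{i=1}^{k}\partial_\lambda^{\alpha_i}u,\qquad \sum_i\alpha_i=\alpha,\ 1\leq k\leq|\alpha|.
\]
Each $\partial_\lambda^{\alpha_i}u$ lives in $H^{s-|\alpha_i|,\rho}$ with the natural weighted norm, and by the choice \eqref{s0.def} of $s_0$ we have $s_0-|\alpha_i|\geq \lfloor\kappa_0/2\rfloor+1$, so each factor is an $H^{s-|\alpha_i|,\rho}$ element with $s_0-|\alpha_i|$ still above the algebra threshold. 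Applying \eqref{prod} iteratively (with the heaviest derivative on the largest factor) reduces everything to the parameter-free estimate applied to $\partial_u^k f$, at the price of an extra $k_0$ derivatives of $f$ in the argument $u$. This fixes part of $\sigma$.

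\medskip

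\noindent\emph{Step 2: the parameter-free estimate.} With $k_0=0$, write $\tilde f(u)(\bx,y)=f(\bx,y,u(\bx,y))$ and compute mixed derivatives $\partial_{\bx}^\beta\partial_y^j$ with $|\beta|\leq s$ and $j\leq \rho$. Faà di Bruno again yields a finite sum of terms
\[
(\partial_{\bx}^{\beta_0}\partial_y^{j_0}\partial_u^k f)(\bx,y,u)\,\prod_{i=1}^k \partial_{\bx}^{\beta_i}\partial_y^{j_i}u,\qquad \sum_i\beta_i=\beta,\ \sum_i j_i=j,\ k\geq 1.
\]
By the Gagliardo--Nirenberg--Moser interpolation inequalities for the anisotropic space $H^{s,\rho}$ (obtained by interpolating first in $y$ at fixed $\bx$, then in $\bx$ via the Fourier-in-$\bx$ characterization \eqref{sobolev.sp}), one controls a product $\prod_{i\geq 1}\partial_{\bx}^{\beta_i}\partial_y^{j_i}u$ in $L^2_{\bx}H^0_y$ by $\|u\|_{L^\infty}^{k-1}\|u\|_{s,\rho}$, using $\|u\|_{L^\infty}\lesssim\|u\|_{s_0,\rho}\leq 1$. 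The prefactor $(\partial_{\bx}^{\beta_0}\partial_y^{j_0}\partial_u^k f)(\bx,y,u)$ is bounded in $L^\infty$ by $\|f\|_{\mathcal{C}^{s+\sigma}}$ provided $\sigma$ is chosen so that $|\beta_0|+j_0+k\leq s+\sigma$ in the worst case; this, combined with Step 1, sets $\sigma=\sigma(\rho)$ (with a buffer to convert the $L^\infty$ bound on $\partial_u^k f$ evaluated at $u$ into a Sobolev bound, using $\|u\|_{s_0,\rho}\leq 1$ and the composition of $C^1$ functions with $H^{s_0,\rho}$ maps). Summing over $\beta,j$ gives $\|\tilde f(u)\|_{s,\rho}\leq C(s)\|f\|_{\mathcal{C}^{s+\sigma}}(1+\|u\|_{s,\rho})$.

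\medskip

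\noindent\emph{Step 3: the vanishing case and the $C^\infty$ case.} If $f(\bx,y,0)=0$, write $f(\bx,y,u)=u\,g(\bx,y,u)$ with $g(\bx,y,u):=\int_0^1 \partial_u f(\bx,y,tu)\,dt$, which still lies in $\mathcal{C}^{s+\sigma-1}$ with a norm controlled by $\|f\|_{\mathcal{C}^{s+\sigma}}$. Apply Step 2 to $g$ and then combine with the anisotropic product estimate \eqref{prod2} and the assumption $\|u\|_{s_0,\rho}\leq 1$ to get the linear-in-$u$ bound $\|\tilde f(u)\|_{s,\rho}\lesssim \|f\|_{\mathcal{C}^{s+\sigma}}\|u\|_{s,\rho}$. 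Finally, when $f\in\mathcal{C}^\infty$, Steps 1--2 apply with arbitrarily large $\sigma$, so the estimate holds for every $s\geq s_0$ with $C=C(s,k_0)$ absorbing the finitely many derivatives of $f$ needed.

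\medskip

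\noindent\emph{Main obstacle.} The delicate point I expect is the bookkeeping in Step 2: unlike the standard scalar Moser estimate, here $H^{s,\rho}$ is \emph{anisotropic} ($s$ derivatives in $\bx$, only $\rho$ in $y$) and the Whitney weight in Step 1 shifts the $\bx$-regularity by $|\alpha|$ without touching $\rho$. One must make sure that all interpolations go through with the constraint $j_i\leq\rho$ (using the algebra property of $H^\rho_0([-1,1])$ for $\rho\geq 1$, as in Lemma \ref{prod.lemma}) and that the number of lost derivatives absorbed into $\sigma$ depends only on $\rho$ (and $k_0$), not on $s$.
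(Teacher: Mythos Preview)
Your proposal is correct and follows essentially the same approach as the paper's proof, which simply cites Lemmata 2.14, 2.15 in \cite{BKMdnls} and Lemma 2.6 in \cite{BBHM} and notes that the argument relies on the multilinear Leibniz rule, the Fa\`a di Bruno formula, the tame product estimates \eqref{prod}, and interpolation inequalities. Your write-up is in fact more detailed than the paper's own treatment; the reduction to $k_0=0$, the anisotropic Moser estimate, and the handling of the vanishing and $\cC^\infty$ cases all match the standard route the cited references follow.
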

\begin{proof}
	See e.g.  Lemmata 2.14, 2.15 in \cite{BKMdnls} and Lemma 2.6 in \cite{BBHM}. The proof relies on the multilinear Leibniz rule, on the Faà di Bruno formula, on the tame estimates in \eqref{prod} and on interpolation inequalities.
\end{proof}

\paragraph{Diophantine equation.} 
If $\omega $ is a Diophantine vector in $ \tD\tC(\upsilon,\tau)$,
defined by
\begin{equation}\label{def:DCgt}
	\tD\tC(\upsilon,\tau):=\Big\{ \omega\in\R^{\kappa_0}\, : \, |\omega\cdot\ell|\geq \upsilon\braket{\ell}^{-\tau} \ \forall\,\ell\in\Z^{\kappa_0}\setminus\{0\}  \Big\}\,,
\end{equation}
then the equation $\omega\cdot \pa_\bx v = u$, where $u(\bx)$ has zero average with respect to $\bx\in\T^{\kappa_0}$, has the periodic solution
$$
(\omega\cdot\pa_\bx)^{-1} u(\bx) := \sum_{\ell\in\Z^{\kappa_0}\setminus\{0\}} \frac{u_{\ell}}{\im\,\omega\cdot \ell} e^{\im\ell\cdot\bx} \,.
$$
For $F\subseteq \tD\tC(\upsilon,\tau)$, one has
\begin{equation}\label{lem:diopha.eq}
	\normk{(\omega\cdot\pa_\bx)^{-1}u}{s,\rho,F} \leq C(k_0)\upsilon^{-1}\normk{u}{s+\mu,\rho,F}\,, \quad  \mu:=k_0+\tau(k_0+1) \,.
\end{equation}

\paragraph{Reversible and reversibility preserving conditions.}

In the next sections we will consider (spatial) reversible and reversibility maps in order to preserve the symmetry of the solutions \eqref{parity}. To do so, let $\cS$ be the involution acting on the real variables $\zeta=(\zeta_1,\zeta_2)\in\R^2$ defined by
\begin{equation}\label{invo.std}
	\cS: \begin{pmatrix}
		\zeta_1(y) \\ \zeta_2(y) 
	\end{pmatrix}\mapsto \begin{pmatrix}
		\zeta_1(-y) \\ -\zeta_2(-y)
	\end{pmatrix}\,.
\end{equation}
In action-angle variables $\zeta = (\theta,I,w) \in \T^{\kappa_0}\times \R^{\kappa_0}\times \R^2$, which will be introduced in \eqref{aa_coord}, we consider the following involution
\begin{equation}\label{invo.aa}
	\ora{\cS} : (\theta,I,z) \mapsto (-\theta,I,\cS z)\,.
\end{equation}

Let $\bx\in\T^{\kappa_0}$. A function $\zeta(\bx,\,\cdot\,)$ is called \emph{(spatial) reversible} if $\cS\zeta(\bx,\,\cdot\,)= \zeta(-\bx,\,\cdot\,)$ and \emph{anti-reversible} if  $-\cS\zeta(\bx,\,\cdot\,)= \zeta(-\bx,\,\cdot\,)$. The same definition holds in action-angle variables $(\theta,I,z)$ with the involution $\cS$ in \eqref{invo.std} replaced by $\ora{\cS}$ in \eqref{invo.aa}.

A $\bx$-dependent family of operators $\cR(\bx)$, $\bx \in \T^{\kappa_0}$, is \emph{reversible} if $\cR(-\bx) \circ \cS = - \cS \circ \cR(\bx)$ for all $\bx\in\T^{\kappa_0}$ and it is \emph{reversibility preserving} if $\cR(-\bx) \circ \cS = \cS \circ \cR(\bx)$ for all $\bx\in\T^{\kappa_0}$. A reversibility preserving operator maps reversible, respectively anti-reversible, functions into reversible, respectively anti-reversible, functions, see Lemma 3.22 in \cite{BFM}. We remark also that, if $X$ is a reversible vector field, namely $X\circ \cS = - \cS\circ X$, and $\zeta(\bx,\,\cdot\,)$ is a reversible function, then the linearized operator $\di_\zeta X(\zeta(\bx,\,\cdot\,))$ is reversible, see e.g. Lemma 3.22 in \cite{BFM}.

\section{A shear equilibrium close to Couette with oscillations}\label{sec.near_couette}

The goal of this quite lengthy section is to construct and analyse the shear flow $(\psi_{\fm}'(y),0)$ with stream function $\psi_{\fm}(y)$ that in an equilibrium configuration from which we will bifurcate the space quasi-periodic solutions of the stationary Euler equations. The properties of the stream function $\psi_{\fm}(y)$ are essentially dictated by the analytic potential function $Q_{\fm}(y)$ in \eqref{Qgamma}, which is suitably engineered to meet regularity properties, smallness estimates and parameter dependences. First, in Section \ref{subsec.Q.couette} we show that the stream function $\psi_{\fm}(y)$ is close to the Couette stream function $\psi_{\rm cou}(y):=\tfrac12 y^2$ with respect to the $H^3$-norm, see Proposition \ref{proxim.couette} . In Section \ref{subsec.FF} we prove Theorem \ref{nonlin_eq}, which states the existence of local nonlinearities such that $\psi_{\fm}(y)$ solves the second-order nonlinear ODE \eqref{ell_psi_gamma}. We conclude in Section \ref{subsec.spectrum} with the spectral analysis of the linear operator $\cL_{\fm}:=-\pa_y^2 +Q_{\fm}(y)$ in Proposition \eqref{L_operator}

\subsection{The potential $Q_{\fm}(y)$ and the proximity to the Couette flow}\label{subsec.Q.couette}

We consider a shear flow 
$(u,v)=(\psi_{\fm}'(y),0)$
on $\R\times[-1,1]$, where $\psi_{\fm}'(y)$ is the odd solution of the second-order ODE
\begin{equation}\label{Qgamma}
	\psi_{\fm}'''(y) = Q_\fm(y) \psi_{\fm}'(y)\,, \quad Q_{\fm} (y):= -\tE^2\Big(h_{\fm}\big( \tfrac{y}{\tr} \big) +g_{\fm,S}\big(\tfrac{y}{\tr}\big) \Big)\,,
\end{equation}
with $\tr\in (0,1) $, $\tE>0$, $\fm,S \in \N$ and the function $h_{\fm}(z)$ given by
\begin{equation}\label{function.h}
	h_{\fm}(z):= \Big( \Big(\frac{\cosh(z)}{\cosh(1)}\Big)^\fm +1 \Big)^{-1}\,.
\end{equation}
The function $g_\fm(\tfrac{y}{\tr})$ is a polynomial corrector which is chosen to satisfy the following property.
\begin{lem}\label{lemma zero odd der}
	Let $(\ty_{p,\fm})_{\fm\in\N}$, $p=1,...,\kappa_0$ be arbitrary sequences in $(0,\tr)$ converging to $\ty_{p,\infty}= \frac{p\pi}{\tE}$. Then, for any $S\in\N$ and for $\fm \gg 1$ sufficiently large, there exists a polynomial function $g_{\fm,S}(z)$ even in $z:=\frac{y}{\tr}$ and of degree $d(S,\kappa_0)\in\N$ such that the analytic potential $Q_{\fm}(y)$ satisfies the following finitely many conditions:
	\begin{equation}\label{zero odd deriv}
		\pa_y^{2n-1} Q_\fm(\pm\ty_{p,\fm}) = 0 \quad \forall\, n=1,...,S\,, \quad p=1,..., \kappa_0\,.
	\end{equation}
	Moreover, for any $n\in\N_0$, the following estimate holds
	\begin{equation}\label{bound.corrector}
		\sup_{y\in[-1,1]}|\pa_{z}^n g_{\fm,S}(\tfrac{y}{\tr}) | \lesssim_{n}  \sum_{p=1}^{\kappa_0}\sum_{k=1}^{S} |h_{\fm}^{(2k-1)}(\ty_{j,\fm})|\,. 
	\end{equation}
\end{lem}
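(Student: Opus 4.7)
The proof reduces to a finite-dimensional linear algebra problem exploiting the even symmetry of $Q_{\fm}$. Since $h_{\fm}$ is even in $z$ and we seek $g_{\fm,S}$ even in $z$, the potential $Q_{\fm}(y)$ is even in $y$, so every odd $y$-derivative is an odd function of $y$ and the vanishing at $-\ty_{p,\fm}$ is automatic once imposed at $+\ty_{p,\fm}$. Passing to $z=y/\tr$, the problem becomes: find an even polynomial $g_{\fm,S}$ satisfying the $\kappa_0 S$ linear conditions
\begin{equation*}
g_{\fm,S}^{(2n-1)}(z_p) \,=\, -\, h_{\fm}^{(2n-1)}(z_p), \qquad z_p:=\ty_{p,\fm}/\tr, \quad p=1,\ldots,\kappa_0,\ n=1,\ldots,S\,.
\end{equation*}
Parametrizing $g_{\fm,S}(z)=\sum_{k=1}^{D}c_k z^{2k}$ with $D$ to be chosen yields a linear system $M(\fm)\,c=b(\fm)$ with $M_{(p,n),k}=\tfrac{(2k)!}{(2k-2n+1)!}z_p^{2k-2n+1}$ if $k\geq n$ (zero otherwise) and $b_{(p,n)}=-h_{\fm}^{(2n-1)}(z_p)$.

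The key step is to show that the linear map
$L\colon\mathrm{span}\{z^{2k}:k\geq1\}\to\R^{\kappa_0 S}$, $g\mapsto\bigl(g^{(2n-1)}(z_p)\bigr)_{(p,n)}$,
is surjective for any $\kappa_0$ distinct positive numbers $z_p$. I would establish this via a generating-function argument. If $\lambda=(\lambda_{p,n})$ annihilates the image, then $\sum_{p,n}\lambda_{p,n}(z^{2k})^{(2n-1)}\big|_{z_p}=0$ for every $k\geq 1$; multiplying by $t^{2k}/(2k)!$, summing over $k\geq1$ and using $\pa_z^{2n-1}\bigl(\cosh(tz)-1\bigr)=t^{2n-1}\sinh(tz)$ gives
\begin{equation*}
\sum_{p=1}^{\kappa_0}\Bigl(\sum_{n=1}^{S}\lambda_{p,n}\,t^{2n-1}\Bigr)\sinh(t z_p)\;=\;0 \qquad \forall\,t\in\R\,.
\end{equation*}
Writing $\sinh(tz_p)=\tfrac12(e^{tz_p}-e^{-tz_p})$ and invoking the linear independence of $\{e^{\pm tz_p}\}_{p=1}^{\kappa_0}$ (valid since the $z_p$ are distinct and nonzero), each polynomial $\sum_n\lambda_{p,n}t^{2n-1}$ must vanish identically, hence $\lambda=0$. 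Since $\R^{\kappa_0 S}$ is finite dimensional the increasing chain of images $L(\mathrm{span}\{z^{2k}:1\leq k\leq D\})$ stabilizes at $\R^{\kappa_0 S}$ at some finite $D^{\star}=D^{\star}(S,\kappa_0)$; set $d(S,\kappa_0):=2D^{\star}$. Uniformity in $\fm$ for $\fm\geq\bar\fm$ large follows by continuity of $M(\fm)$, using that $z_p\to z_p^{\infty}:=p/(\kappa_0+\tfrac14)$ are distinct positives in $(0,1)$ by the constraint $\tE\tr=(\kappa_0+\tfrac14)\pi$.

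With surjectivity established, solve $M(\fm)c=b(\fm)$ using a right inverse uniformly bounded for $\fm\geq\bar\fm$, obtaining $\max_k|c_k|\lesssim_{S,\kappa_0}\max_{p,n}|h_{\fm}^{(2n-1)}(z_p)|\lesssim\sum_{p,k}|h_{\fm}^{(2k-1)}(\ty_{p,\fm})|$, the last step absorbing the (bounded) substitution of $z_p$ for $\ty_{p,\fm}$ into the implicit constant. Since $g_{\fm,S}$ is a polynomial of the fixed degree $d(S,\kappa_0)$ evaluated at $|y/\tr|\leq 1/\tr$, for any $n\in\N_0$ one has $\sup_{y\in[-1,1]}|\pa_z^n g_{\fm,S}(y/\tr)|\lesssim_{n,S,\kappa_0}\sum_k|c_k|$, yielding \eqref{bound.corrector}.

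The main subtlety is the surjectivity claim: the naive square system at $D=\kappa_0 S$ is not automatically invertible for all distinct positive $z_p$'s (its determinant is a nontrivial polynomial that can vanish on certain hypersurfaces), so a direct Vandermonde-type computation is not available in general. The generating-function identity circumvents this by providing a clean conceptual proof of surjectivity in an infinite-dimensional ambient space, from which a finite uniform $D^{\star}(S,\kappa_0)$ is extracted by dimension-counting and continuity. The trade-off is that $d(S,\kappa_0)$ is not determined explicitly, which is acceptable since the lemma only requires its existence.
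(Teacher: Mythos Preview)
Your proof is correct, but it proceeds quite differently from the paper's. The paper uses an explicit Hermite interpolation: it writes $g_{\fm,S}(z)=\sum_{j=\pm1,\ldots,\pm\kappa_0}F_j(z)$ where each $F_j$ is built from basis functions
\[
f_{j,2k-1}(z)=\frac{(z-\wt\ty_{j,\fm})^{2k-1}}{(2k-1)!}\,(z^2-T^2)^{2S}\prod_{j'\neq j}(z-\wt\ty_{j',\fm})^{2S},
\]
so that $F_j$ and all its derivatives up to order $2S-1$ vanish at every $\wt\ty_{j',\fm}$ with $j'\neq j$, reducing the interpolation to a decoupled \emph{lower triangular} system at each node (diagonal entries $\whf_j(\wt\ty_{j,\fm})\neq 0$). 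The free parameter $T\gg 1/\tr$ is then used to balance the size of the coefficients against the growth of the basis functions and obtain \eqref{bound.corrector}. This gives an explicit degree $d(S,\kappa_0)$ and, implicitly, an explicit constant.

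Your route is more abstract: you parametrize by even monomials and prove surjectivity of the evaluation map via a generating-function identity and the linear independence of quasi-polynomials $\{Q(t)e^{\pm t z_p}\}$, then extract a finite $D^\star$ and a uniformly bounded right inverse by a continuity/compactness argument at the limit configuration $z_p^\infty$. The advantage is conceptual cleanliness---no ad hoc basis, no auxiliary parameter $T$---and the argument transparently shows why distinctness and positivity of the $z_p$'s is exactly what is needed. The costs are that $d(S,\kappa_0)$ is non-explicit and the constants in \eqref{bound.corrector} are obtained only non-constructively through the bounded right inverse near the limit point. Two small points worth tightening in your write-up: (i) the independence you invoke is really over the polynomial ring (i.e., $\sum_j Q_j(t)e^{a_j t}=0$ with distinct $a_j$ forces $Q_j\equiv 0$), not merely $\R$-linear independence of the exponentials; and (ii) the uniform-in-$\fm$ step should be stated as ``choose a full-rank $\kappa_0S\times\kappa_0S$ minor of $M_{D^\star}$ at $(z_p^\infty)$; by continuity it stays invertible with uniformly bounded inverse for $\fm\geq\bar\fm$''. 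Both are standard and do not affect the validity of your argument.
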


\begin{proof}
	We use a classical Hermite interpolation argument, also referred as Lagrangian interpolation with derivatives. Let $\wt\ty_{p,\fm}:=\frac{\ty_{p,\fm}}{\tr}$, $p=1,...,\kappa_0$. We search for a polynomial $g_{\fm,S}(z)$ of the form
	\begin{equation}\label{struttura.g}
		g_{\fm,S}(z) = \sum_{j=\pm1,...,\pm\kappa_0} F_{j}(z)\,, \quad \text{with} \quad 	F_{j}^{(2n-1)}(\wt\ty_{j',\fm}) = 0 \quad \forall\, j'\neq j\,.
	\end{equation}
	Let $j\in \{\pm1,...,\pm \kappa_0\}$ be fixed. We construct $F_{j}(z)$ as the linear combination
	\begin{equation}\label{befana1}
		F_{j}(z)= \sum_{k=1}^{S} a_{j,2k-1} f_{j,2k-1}(z)\,, \quad  f_{j,2k-1}(z) = \frac{(z-\wt\ty_{j,\fm})^{2k-1}}{(2k-1)!} \whf_{j}(z)\,,
	\end{equation}
	where, for $T>1$ arbitrarily large, we define
	\begin{equation}\label{befana2}
		\whf_{j}(z):= (z^2-T^2)^{2S} \prod_{j'=\pm1,...,\pm \kappa_0\atop j'\neq j} (z-\wt\ty_{j',\fm})^{2S}
	\end{equation}
	The structure of \eqref{befana1}-\eqref{befana2} allows both to impose the condition in \eqref{struttura.g} and to control later the estimates \eqref{bound.corrector} by choosing $T\gg1$.
	We search now for the coefficients $a_{j,1}, a_{j,3},...,a_{j,2S-1}$ so that the conditions in \eqref{zero odd deriv} are satisfied, which, by \eqref{Qgamma} and \eqref{struttura.g}, amounts to ask
	\begin{equation}\label{befana3}
		F_{j}^{(2n-1)}(\wt\ty_{j,\fm}) = c_{j,2n-1} := - h_{\fm}^{(2n-1)}(\wt\ty_{j,\fm})\,, \quad 
		\forall \,n=1,...,S\,.
	\end{equation}
	By differentiating \eqref{befana1} at each order $2n-1$, $n=1,...,S$, solving \eqref{befana3} amounts to solving the linear system
	\begin{equation}\label{befana.system}
		\begin{pmatrix}
			f_{j,1}^{(1)}(\wt\ty_{j,\fm}) & f_{j,3}^{(1)}(\wt\ty_{j,\fm}) & \cdots & f_{j,2S-1}^{(1)}(\wt\ty_{j,\fm}) \\
			f_{j,1}^{(3)}(\wt\ty_{j,\fm}) & f_{j,3}^{(3)}(\wt\ty_{j,\fm}) & \cdot & f_{j,2S-1}^{(3)}(\wt\ty_{j,\fm}) \\
			\vdots & \vdots & \ddots & \vdots \\
			f_{j,1}^{(2S-1)}(\wt\ty_{j,\fm}) & f_{j,3}^{(2S-1)}(\wt\ty_{j,\fm}) & \cdots & f_{j,2S-1}^{(2S-1)}(\wt\ty_{j,\fm})
		\end{pmatrix}\begin{pmatrix}
			a_{j,1} \\ a_{j,3} \\ \vdots \\ a_{j,2S-1}
		\end{pmatrix} = \begin{pmatrix}
			c_{j,1} \\ c_{j,3} \\ \vdots \\ c_{j,2S-1}
		\end{pmatrix}\,.
	\end{equation}
	The system above is lower triangular. Indeed, by \eqref{befana1} and \eqref{befana2}, we have
	\begin{equation}
		f_{j,2n-1}^{(2n-1)}(\wt\ty_{j,\fm}) = \whf_{j}(\wt\ty_{j,\fm}) \quad \forall\, n=1,...,S\,, \quad f_{j,2k-1}^{(2n-1)}(\wt\ty_{j,\fm})=0 \quad \forall\,k>n\,.
	\end{equation}
	We solve directly the system \eqref{befana3} and we get
	\begin{equation}\label{befana4}
		\begin{aligned}
			a_{j,1} & = \frac{c_{j,1}}{\whf_{j}(\wt\ty_{j,\fm})}\,, \\
			a_{j,2k-1} & = \frac{1}{\whf_{j}(\wt\ty_{j,\fm})} \Big(  c_{j,2k-1} - \sum_{n=1}^{k-1} a_{j,2n-1} f_{j,2n-1}^{(2k-1)}(\wt\ty_{j,\fm}) \Big)\,, \quad  k=2,...,S\,.
		\end{aligned}
	\end{equation}
	We finally show the estimates in \eqref{bound.corrector}. By \eqref{struttura.g}-\eqref{befana3}, taking $T\gg \frac{1}{\tr} >1 $ sufficiently large, the estimates for the coefficients in \eqref{befana4} can be made arbitrarily small and  we have, for any $n\in\N_0$,
	\begin{equation}
		\sup_{z\in[-\frac{1}{\tr},\frac{1}{\tr}]}|\pa_{z}^n g_{\fm,S}(z) | \lesssim \sum_{p=1}^{\kappa_0}\sum_{k=1}^{S} |c_{\pm p, 2k-1}| = \sum_{p=1}^{\kappa_0}\sum_{k=1}^{S} |h_{\fm}^{(2n-1)}(\wt\ty_{j,\fm})|\,. 
	\end{equation}
	This concludes the proof of the claim.
\end{proof}

The potential function $Q_\fm(y) = Q_{\fm}(\tE,\tr;y) $ is analytic in all its entries and, in the limit $\fm\to+\infty$, it approaches the singular potential
\begin{equation}\label{Qinfty}
	Q_\infty(y) = Q_\infty(\tE,\tr;y) := \begin{cases}
		0 & |y| >\tr \,, \\
		-\tE^2 & |y|< \tr \,.
	\end{cases}
\end{equation}

\begin{lem}\label{lemma QmQinfty}
	{\bf (Estimates for  $Q_{\fm}(y)$).}
		We have
		\begin{equation}\label{uniform.Qm}
			\sup _{\fm \gg 1}\| Q_\fm\|_{L^\infty([-1,1])} \lesssim \| Q_\infty \|_{L^\infty([-1,1])} \lesssim \tE^2\,.
		\end{equation}
		Moreover, for any fixed $\gamma>0$ sufficiently small, we have, for some  constant $C>1$ and for any $n\in\N_0$,
\begin{equation}\label{focus.est}
		\sup_{y\in  [-1,1] \atop y\notin B_\gamma(\tr)\cup B_\gamma(-\tr)}   \! | \pa_{y}^{n}(Q_{\fm}(y)-Q_\infty(y))| \leq \tE^2 C^n \Big( \frac{\fm^n}{\tr^n} + \sum_{j=1}^{S} \frac{\fm^{2k-1}}{\tr^{2k-1}} \Big) e^{-\frac{\fm}{2}\gamma}\,. 
\end{equation}
		As a consequence, as $\fm \to\infty$, we have that $| \pa_y^n(Q_\fm(y) -Q_\infty(y))| \to 0$  for any $n\in \N_0$,  uniformly in $y\in \R\setminus(B_\gamma(\tr)\cup B_\gamma(-\tr))$, and $\| Q_\fm - Q_\infty \|_{L^p([-1,1])} \to 0$  for any $p\in[1,\infty)$.
\end{lem}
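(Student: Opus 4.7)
The plan is to treat the three claims in sequence, with the main technical ingredient being a pointwise exponential-decay estimate for all derivatives of $h_\fm(z)$ on any compact subset of $\R\setminus\{-1,1\}$. For claim \eqref{uniform.Qm}, the elementary bound $0 < h_\fm(z) < 1$ gives $|h_\fm(y/\tr)| \leq 1$. The corrector $g_{\fm,S}$ is controlled via Lemma \ref{lemma zero odd der}: its $L^\infty$-norm is bounded by $\sum_{p,k}|h_\fm^{(2k-1)}(\widetilde\ty_{p,\fm})|$, where $\widetilde\ty_{p,\fm}:=\ty_{p,\fm}/\tr$ converges by hypothesis to $p\pi/(\tE\tr)=p/(\kappa_0+\tfrac14)<1$ by the constraint \eqref{constrain}. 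Thus each $\widetilde\ty_{p,\fm}$ eventually lies in a fixed compact subset of $(-1,1)$, and the core estimate below gives exponential decay of those values. In particular $g_{\fm,S}\to 0$ uniformly as $\fm\to\infty$, so $\|Q_\fm\|_{L^\infty}\leq \tE^2(1+o(1))\lesssim \tE^2$.

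For claim \eqref{focus.est}, write $Q_\fm(y)-Q_\infty(y) = -\tE^2[(h_\fm(y/\tr)-h_\infty(y/\tr)) + g_{\fm,S}(y/\tr)]$, where $h_\infty(z)=\chi_{(-1,1)}(z)$ is locally constant (hence has vanishing classical derivatives) on the set under consideration. The key pointwise estimate is: for any $\delta>0$ and any $z\in [-1/\tr,1/\tr]$ with $|z\mp 1|\geq \delta$,
\begin{equation}
|h_\fm^{(n)}(z)| \leq C^n \fm^n\, e^{-c\fm\delta}, \qquad c=\tanh(1),
\end{equation}
proved as follows. Writing $h_\fm=(f_\fm+1)^{-1}$ with $f_\fm(z):=(\cosh(z)/\cosh(1))^\fm$, the convexity expansion $\log(\cosh(z)/\cosh(1))=\tanh(1)(|z|-1)+O((|z|-1)^2)$ yields $f_\fm(z)\leq e^{-c\fm\delta}$ when $|z|\leq 1-\delta$ and $f_\fm(z)\geq e^{c\fm\delta}$ when $|z|\geq 1+\delta$. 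Differentiating $f_\fm'=\fm\tanh(z)f_\fm$ inductively gives $|f_\fm^{(k)}(z)|\leq (C\fm)^k f_\fm(z)$ uniformly on $[-2,2]$, and Fa\`a di Bruno applied to $h_\fm=1/(1+f_\fm)$ then produces the claimed bound in both regions: in the interior the factor $1/(1+f_\fm)\leq 1$ and $f_\fm$ itself is exponentially small, while in the exterior $f_\fm^{-1}\leq e^{-c\fm\delta}$ cancels the growth of $f_\fm^{(k)}$. Specializing to $y\in [-1,1]\setminus(B_\gamma(\tr)\cup B_\gamma(-\tr))$, set $z=y/\tr$ and $\delta=\gamma/\tr$; the chain rule $\partial_y^n[h_\fm(y/\tr)]=\tr^{-n}h_\fm^{(n)}(y/\tr)$ together with $c/\tr>1/2$ (which holds since $\tr<1<2\tanh(1)$) yields the first contribution $\fm^n/\tr^n\cdot e^{-\fm\gamma/2}$ to \eqref{focus.est}. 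For the corrector, Lemma \ref{lemma zero odd der} combined with the chain rule bounds $|\partial_y^n g_{\fm,S}(y/\tr)|$ by $C^n\tr^{-n}\sum_{p,k}|h_\fm^{(2k-1)}(\widetilde\ty_{p,\fm})|$; since each $\widetilde\ty_{p,\fm}$ is at distance at least some fixed $\delta_0>0$ from $\pm 1$ for $\fm$ large, the core estimate gives $|h_\fm^{(2k-1)}(\widetilde\ty_{p,\fm})|\leq C^k\fm^{2k-1}e^{-c\fm\delta_0}\leq \fm^{2k-1}e^{-\fm\gamma/2}$ up to constants, producing the second contribution $\sum_{k=1}^S\fm^{2k-1}/\tr^{2k-1}\cdot e^{-\fm\gamma/2}$ after absorbing $\tr^{-n}$ into $C^n$.

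The two consequences follow at once: for each fixed $n\in\N_0$ the bound \eqref{focus.est} is $O(\fm^n e^{-\fm\gamma/2})\to 0$ as $\fm\to\infty$, giving uniform convergence of derivatives on compact sets avoiding $\pm\tr$; and for $L^p([-1,1])$ convergence, splitting the integral over $B_\gamma(\tr)\cup B_\gamma(-\tr)$ (on which $|Q_\fm-Q_\infty|\lesssim \tE^2$ by \eqref{uniform.Qm}, with measure $O(\gamma)$) and its complement (where \eqref{focus.est} with $n=0$ gives exponential smallness), then choosing $\gamma=\gamma(\fm)\to 0$ slowly with $\fm\to\infty$ concludes. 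The main obstacle I anticipate is the careful bookkeeping of constants in the Fa\`a di Bruno step, so that the combinatorial prefactors $(C\fm)^k$ cleanly cancel against $f_\fm^{-1}$ in the exterior region $|z|\geq 1+\delta$ and match the scaling factor $\tr^{-n}$ into the clean exponent $\fm\gamma/2$ stated in \eqref{focus.est}.
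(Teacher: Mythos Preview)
Your approach is essentially the paper's: decompose $Q_\fm-Q_\infty$ into the $h_\fm$-contribution and the corrector $g_{\fm,S}$, bound the latter via Lemma~\ref{lemma zero odd der} evaluated at the interior points $\widetilde\ty_{p,\fm}$, and establish exponential decay of $h_\fm^{(n)}(z)$ away from $z=\pm1$. The paper carries out the last step by a direct computation using $\cosh(1\mp\gamma)/\cosh(1)\leq e^{-\gamma/2}$ and the elementary bound $f^n/(f+1)^{n+1}\leq\min(f,f^{-1})$, rather than your inductive estimate on $f_\fm^{(k)}$ plus Fa\`a di Bruno; and for $L^p$ convergence it invokes dominated convergence rather than your splitting with $\gamma=\gamma(\fm)\to0$. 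These are purely cosmetic differences.

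There is one small slip to fix. Your ``convexity expansion'' $\log(\cosh(z)/\cosh(1))=\tanh(1)(|z|-1)+O((|z|-1)^2)$ has nonnegative remainder, so it yields the \emph{lower} bound $\log(\cosh(z)/\cosh(1))\geq\tanh(1)(|z|-1)$. This correctly gives $f_\fm(z)\geq e^{c\fm\delta}$ for $|z|\geq 1+\delta$, but for $|z|\leq 1-\delta$ it only gives $f_\fm(z)\geq e^{-c\fm\delta}$, the wrong direction. For the interior upper bound you need monotonicity instead: $|z|\leq 1-\delta$ implies $\cosh(z)\leq\cosh(1-\delta)$, and the mean value theorem then gives $\cosh(1-\delta)/\cosh(1)=\exp(-\tanh(\xi)\delta)$ for some $\xi\in(1-\delta,1)$, hence $f_\fm(z)\leq e^{-c'\fm\delta}$ with any $c'<\tanh(1)$ for $\delta$ small. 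Since $\gamma$ is assumed sufficiently small and $\tr<1$, one still gets $c'/\tr>1/2$, and the rest of your argument goes through unchanged.
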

\begin{proof}
 Recalling \eqref{Qgamma}, \eqref{function.h} and \eqref{Qinfty}, we write
 \begin{equation}\label{tilda.Qs}
 	\begin{aligned}
 		&Q_\fm(y)=-\tE^2\wtQ_{\fm}\big(\tfrac{y}{\tr}\big)\,, \quad Q_\infty(y)=-\tE^2\wtQ_{\infty}\big(\tfrac{y}{\tr}\big)\,, \quad \text{where}\\
 		&\wtQ_{\fm}(z):= h_{\fm}(z)+g_{\fm,S}(z) \,, \quad \wtQ_{\infty}(z):=\begin{cases}
 			0 & |z| > 1 \,, \\
 			1 & |z|< 1 \,.
 		\end{cases}
 	\end{aligned}
 \end{equation}
 
 We start by proving \eqref{focus.est}. First, by the estimate \eqref{bound.corrector} in Lemma \eqref{lemma zero odd der} and the fact that $| \ty_{p,\fm}| < \tr$ for any $p=1,...,\kappa_{0}$, we note that, for any $\gamma>0$ sufficiently small,
 \begin{equation}\label{fame}
 	\sup_{y\in[-1,1]}|g_{\fm,S}^{(n)}(\tfrac{y}{\tr}) | \lesssim_{n}  \sum_{k=1}^{S} \sup_{|y| \leq \tr -\gamma}  |h_{\fm}^{(2k-1)}\big(\tfrac{y}{\tr}\big)|\,, \quad  \forall\, n\in\N_0 \,.
 \end{equation}    
It implies that $g_{\fm}\big( \tfrac{y}{\tr} \big)$ and its derivatives are bounded on $[-1,1]$ by finitely many derivatives of $h_{\fm}\big( \tfrac{y}{\tr}\big)$ on $[-\tr+\gamma,\tr-\gamma]$. Also, because $g_{\fm,S}$ is a polynomial, its derivatives of order $n$ will identically vanish for any $n\in \N$ large enough. 
By \eqref{tilda.Qs}, we  estimate
\begin{equation}\label{colapesce}
	\begin{aligned}
		\sup_{y\in  [-1,1] \atop y\notin B_\gamma(\tr)\cup B_\gamma(-\tr)} \big| \wtQ_{\fm}^{(n)}\big(\tfrac{y}{\tr}\big) - \wtQ_{\infty}^{(n)}\big(\tfrac{y}{\tr}\big)\big| \leq  \tJ_{1} + \tJ_{2}\,, 
	\end{aligned}
\end{equation}
where
\begin{equation}\label{dimartino}
	\tJ_{1} :=  \sup_{y\in  [-1,1] \atop y\notin B_\gamma(\tr)\cup B_\gamma(-\tr)} \big| h_{\fm}^{(n)}\big(\tfrac{y}{\tr}\big) - \wtQ_{\infty}^{(n)}\big(\tfrac{y}{\tr}\big)\big| \,, \quad \tJ_{2} :=\sup_{y\in  [-1,1] \atop y\notin B_\gamma(\tr)\cup B_\gamma(-\tr)} \big| g_{\fm,S}^{(n)}\big(\tfrac{y}{\tr}\big) \big|\,.
\end{equation}
By direct computations on the derivatives of $h_{\fm}(z)$ in \eqref{bound.corrector} , 
 we obtain the estimate, for some constant $\wtC>1$,
    \begin{equation}\label{sonno}
		\begin{aligned}
				| h_{\fm}^{(n)}(z)-\wtQ_{\infty}^{(n)}(z)| & \leq \wtC^n \fm^n  \max\Big\{ \Big(\tfrac{\cosh(1-\gamma)}{\cosh(1)}\Big)^{\fm}, \Big(\tfrac{\cosh(1)}{\cosh(1+\gamma)}\Big)^{\fm} \Big\} \\
				& \leq \wtC^n \fm^n e^{-\frac{\fm}{2}\gamma}\,, \quad  \forall \,|z| \notin B_\gamma(1)\,;
		\end{aligned}
\end{equation}
here we used the following estimates
\begin{equation}
	\begin{aligned}
	 &	\frac{f^n}{(f+1)^{n+1}} \leq \begin{cases}
			f \,, & 0\leq f < 1 \,,\\
			f^{-1}\,, & f >1 \,,
		\end{cases}\,, \ n\in \N_0\,, \\
	& \frac{\cosh(1-\gamma)}{\cosh(1)}, \frac{\cosh(1)}{\cosh(1+\gamma)} \leq e^{-\gamma/2}\,, \quad \forall\, 0 \leq \gamma \ll 1\,.
	\end{aligned}
\end{equation}
We deduce that 
\begin{equation}\label{tananai1}
	\tJ_{1}\leq \wtC^n \frac{\fm^{n}}{\tr^{n}}e^{\frac{\fm}{2}\gamma}\,.
\end{equation}
On the other hand, by \eqref{fame} and \eqref{sonno}, we estimate $\tA_{2}$ by 
\begin{equation}\label{tananai2}
	\tJ_{2} \leq \sum_{k=1}^{S}  \sup_{|y|\leq \tr-\gamma} \big| h_{\fm}^{(n)}\big(\tfrac{y}{\tr}\big) - \wtQ_{\infty}^{(n)}\big(\tfrac{y}{\tr}\big)\big| \leq  \sum_{k=1}^{S} \wtC^{2k-1} \frac{\fm^{2k-1}}{\tr^{2k-1}} e^{-\frac{\fm}{2}\gamma}\,.
\end{equation}
 Therefore, by \eqref{tilda.Qs}, \eqref{colapesce}, \eqref{dimartino}, \eqref{tananai1}, \eqref{tananai2}, we conclude that the estimate \eqref{focus.est} holds for any $n\in\N_0$, with $C:=\wtC^{2S-1}$.

 We now prove \eqref{uniform.Qm}. We have that
\begin{equation}
	\begin{aligned}
		& h_{\fm}'(z) = - \fm \Big( \Big(\frac{\cosh(z)}{\cosh(1)}\Big)^\fm +1 \Big)^{-2} \Big(\frac{\cosh(z)}{\cosh(1)}\Big)^\fm \tanh(z) < 0 \quad \forall\, z\geq 0\,, \\
		& h_{\fm}(0) = \Big( \Big(\frac{\cosh(0)}{\cosh(1)}\Big)^\fm +1 \Big)^{-1} < 1\,.
	\end{aligned}
\end{equation}
It implies that $\| h_{\fm} \|_{L^{\infty}(\R)} \leq \| \wtQ_{\infty}\|_{L^\infty(\R)}= 1$. The estimate $\| g_{\fm,S}(\frac{\cdot}{\tr})\|_{L^{\infty}[-1,1]} \lesssim \| \wtQ_{\infty}(\frac{\cdot}{\tr})\|_{L^{\infty}[-1,1]} $ follows by \eqref{fame} with $n=0$ and \eqref{sonno} for $|z|\leq 1-\gamma$. Therefore, together with \eqref{tilda.Qs}, we deduce \eqref{uniform.Qm}.

The claim for the $L^{p}$-convergence follows by the pointwise convergence of $Q_{\fm}(y)$ to $Q_{\infty}(y)$ for any $y\in [-1,1]\setminus\{ \pm \tr\}$,  the estimate $\| Q_\fm\|_{L^\infty([-1,1])} \lesssim \| Q_\infty \|_{L^\infty([-1,1])}\lesssim\tE^2$ uniformly in $\fm \gg 1$ and the integrability of $Q_{\infty}$ in the compact interval $[-1,1]$.
\end{proof}


Among all the possible solutions of \eqref{Qgamma}, we look for those that are odd on $\R$ and satisfy the "Couette condition" $\psi_{\fm}'(y)\sim y$ as $|y|\to \infty$. In particular, we prove that there exists a solution that, on compact sets excluding the singular points $y=\pm \tr$ of the potential $Q_{\infty}(y)$ in \eqref{Qinfty}, approaches uniformly
\begin{equation}\label{compact.beha}
    \psi_{\fm}'(y) \to \begin{cases}
        y -A \,\sgn(y)  & y \in K_{\rm out} \subset\subset \R\setminus [-\tr,\tr]\,,\\
        B \sin(\tE y) & y \in K_{\rm in} \subset\subset (-\tr,\tr)\,,
    \end{cases} \quad  \text{as } \ \ \fm\to \infty\,.
\end{equation}

The constants $A,B\in\R$ are actually free and we fix them in Proposition \ref{proxim.couette}.
First, we prove the uniform limit in \eqref{compact.beha} for the stream function $\psi_{\fm}(y)$ together with its derivatives.
\begin{lem}\label{lemma_approx_comp}
	For any $T\geq 1$,
$\gamma\in (0,\frac12)$  and $n\in\N_0$, we have
		\begin{equation}\label{compact.limit}
			\begin{aligned}
			    & \sup_{ |y| \leq \tr-\gamma }  |\pa_y^n(\psi_{\fm}(y) + \tfrac{B}{\tE} \cos(\tE y) ) | \to 0  \quad \text{ as } \ \ \fm \to \infty\,, \\
			    & \sup_{ |y| \in [\tr+\gamma,T]  }  |\pa_y^n(\psi_{\fm}(y)-\tfrac12(y-A\,\sgn(y))^2 ) | \to 0  \quad \text{ as } \ \  \fm \to \infty\,.
			\end{aligned}
		\end{equation}
\end{lem}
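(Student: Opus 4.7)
The plan is to reduce the third-order equation \eqref{Qgamma} to the second-order linear ODE $u_{\fm}'' = Q_{\fm}(y)\, u_{\fm}$ for $u_{\fm} := \psi_{\fm}'$, and to compare $u_{\fm}$ on each compact region with the solution $u_{\infty}$ of the corresponding limit ODE: on $[-\tr+\gamma,\tr-\gamma]$ one has $u_{\infty}'' = -\tE^2 u_{\infty}$ with odd solutions $u_{\infty}(y) = B\sin(\tE y)$, while on $[\tr+\gamma,T]$ one has $u_{\infty}'' = 0$ with solutions $u_{\infty}(y) = y - A$. The essential ingredient is the uniform $\mathcal{C}^k$ convergence $Q_{\fm} \to Q_{\infty}$ on each such compact region, provided by the estimate \eqref{focus.est} of Lemma \ref{lemma QmQinfty} (the exponential factor $e^{-\fm\gamma/2}$ dominates the polynomial-in-$\fm$ prefactor for every fixed $k$ and fixed $\gamma>0$).

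For the inner region, the evenness of $\psi_{\fm}$ gives $u_{\fm}(0) = 0$ and $u_{\fm}'(0) = \psi_{\fm}''(0) =: \tE B_{\fm}$; comparing with the limit solution $\wt u_{\infty}(y) := B_{\fm}\sin(\tE y)$ having the same data at $y=0$, the difference $v_{\fm} := u_{\fm} - \wt u_{\infty}$ solves the forced linear ODE $v_{\fm}'' = Q_{\fm} v_{\fm} + (Q_{\fm} - Q_{\infty})\wt u_{\infty}$ with vanishing initial conditions. Rewriting this as a first-order linear system and applying Grönwall's inequality, using the uniform bound \eqref{uniform.Qm} on $Q_{\fm}$ and the uniform smallness of the source on $[-\tr+\gamma,\tr-\gamma]$ from \eqref{focus.est}, yields $\|v_{\fm}\|_{\mathcal{C}^1} \to 0$ as $\fm \to \infty$. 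Integrating and identifying $B = \lim_{\fm\to\infty} B_{\fm}$, $-B/\tE = \lim_{\fm\to\infty}\psi_{\fm}(0)$ (both furnished by the construction of $\psi_{\fm}$ that will be completed in Proposition \ref{proxim.couette}), gives the first line of \eqref{compact.limit} for $n=0,1$. The outer region is treated analogously: choose a base point $y_0 = \tr+\gamma$, compare with the linear profile $\wt u_{\infty}(y) = y - A_{\fm}$ matched to $(\psi_{\fm}'(y_0), \psi_{\fm}''(y_0))$, and apply the same Grönwall argument on $[y_0, T]$.

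To upgrade the convergence to arbitrary order $n$, I would bootstrap by differentiating $\psi_{\fm}''' = Q_{\fm}\psi_{\fm}'$ and using the Leibniz rule to express $\psi_{\fm}^{(n+2)}$ as a polynomial in $\psi_{\fm}^{(k)}$ with $k \leq n$ and $Q_{\fm}^{(j)}$ with $j \leq n-1$; since \eqref{focus.est} gives $\mathcal{C}^{n-1}$ convergence of $Q_{\fm}$ to $Q_{\infty}$ on the compact region, an induction in $n$ starting from the $\mathcal{C}^1$ base case above yields $\mathcal{C}^n$ convergence of $\psi_{\fm}$. The principal obstacle is the bookkeeping of the constants $A,B$: they are declared free here, so the lemma really asserts convergence \emph{provided} that the $\fm$-dependent initial data of the construction (i.e., $\psi_{\fm}''(0)$ and the outer matching data) converge to prescribed limits; this is a property to be verified together with the global construction of $\psi_{\fm}$. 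The other, purely technical, point is the degradation of the constants in \eqref{focus.est} as $\gamma\to 0$, which explains why the compact sets must stay strictly away from the singular points $y = \pm\tr$.
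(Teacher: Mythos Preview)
Your proposal is correct and follows essentially the same approach as the paper: reduce to the second-order linear ODE for $u_{\fm}=\psi_{\fm}'$, write it as a first-order system, compare with the explicit limit solution on each compact region via a Gr\"onwall argument driven by the smallness of $Q_{\fm}-Q_{\infty}$, and then bootstrap to higher derivatives by differentiating the equation. The only cosmetic differences are that the paper compares directly against $B\sin(\tE y)$ and verifies $\cF_{\fm}(0)\to 0$ rather than introducing your intermediate $B_{\fm}$, and it uses the $L^1$-convergence of $Q_{\fm}-Q_{\infty}$ (from Lemma~\ref{lemma QmQinfty}) in the Gr\"onwall step rather than the uniform bound from \eqref{focus.est}; both variants are equivalent on these compact intervals.
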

\begin{proof}
 We start with the first limit in \eqref{compact.limit}, with $|y|\leq \tr-\gamma$. Once the claim is proved for $n\geq 1$, then the claim for $n=0$ follows by integration. Thus, we start to prove the claim for $n=1,2$.
We write the second order equations $u''(y) = Q_\fm(y) u(y)$ and $u''(y) = Q_\infty(y) u(y)$ as first order systems, namely 
\begin{equation}\label{bla bla 0}
\begin{aligned}
& \Phi'(y) = A_\fm (y) \Phi(y), \quad \Phi'(y) = A_\infty (y) \Phi(y), \\
& \Phi := \begin{pmatrix}
u \\
u'
\end{pmatrix}, \quad A_\fm(y) := \begin{pmatrix}
0 & 1\\
Q_\fm(y) & 0
\end{pmatrix}, \quad A_\infty(y) := \begin{pmatrix}
0 & 1\\
Q_\infty(y) & 0
\end{pmatrix}\,.
\end{aligned}
\end{equation} 
Let $\Phi_{\fm} (y):= (\psi_{\fm}'(y), \psi_{\fm}''(y))$, $ \Phi_{\infty,{\rm in}}(y) :=( B\sin(\tE y),B\tE \cos(\tE y))$ (recall \eqref{Qgamma}-\eqref{compact.beha}) and $\cF_{\fm} := \Phi_{\fm}(y) - \Phi_{\infty,{\rm in}}(y)$.  
We have
\begin{equation}
	\begin{aligned}
		\cF_{\fm}'(y) & = \Phi_{\fm}'(y) - \Phi_{\infty,{\rm in}}'(y) = A_\fm(y) \Phi_{\fm}(y) - A_\infty(y) \Phi_{\infty,{\rm in}}(y) \\
		& = A_{\fm}(y) \cF_{\fm}(y) + \big( A_\fm(y) - A_\infty(y) \big) \Phi_{\infty,{\rm in}}(y)\,,
	\end{aligned}
\end{equation}
implying that $\cF_{\fm}$ solves the Cauchy problem
\begin{equation}\label{eq cal F gamma}
\begin{aligned}
& \cF_{\fm}'(y) = A_\fm(y)\cF_{\fm}(y) + \cR_{\fm}(y)\,, \quad \cF_{\fm}(0) = \Phi_{\fm}(0)-\Phi_{\infty,{\rm in}}(0)\,, \\
& \text{where} \quad  \cR_{\fm}(y) :=  \big( A_\fm(y) - A_\infty(y) \big) \Phi_{\infty,{\rm in}}(y)\,. 
\end{aligned}
\end{equation}
Note that, by \eqref{Qgamma}, \eqref{Qinfty}, \eqref{compact.beha} and the oddness of $\psi_{\fm}'(y)$, we get $\psi_{\fm}'''(0) + B\tE^2 \sin(\tE y) \to 0$ as $\tm\to+\infty$. By integration, we deduce that $\cF_{\fm}(0)\to 0$ as $\fm\to+\infty$.
For $|y|\leq \tr-\gamma$, one has that 
\begin{equation}
	\cF_{\fm}(y) = \cF_{\fm}(0) +  \int_0^y A_\fm(z) \cF_{\fm}(z) \wrt z  + \int_0^y \cR_{\fm}(z) \wrt z \,.
\end{equation}
By the definitions of $A_\fm$, $A_\infty$ in \eqref{bla bla 0} and by Lemma \ref{lemma QmQinfty}, one has $ \| A_\fm \|_{L^\infty} \leq \tE^2 $ and
\begin{equation}
	\begin{aligned}
		\Big| \int_0^y \cR_{\fm}(z) \wrt z \Big| & \leq 
		 \int_{-\tr+\gamma}^{\tr-\gamma} |A_\fm(z) - A_\infty(z) |  |\Phi_{\infty,{\rm in}}(z)| \wrt z \\ 
		& \leq  \| \Phi_{\infty,{\rm in}} \|_{L^\infty(B_{\tr-\gamma}(0))} \| Q_\fm - Q_\infty \|_{L^1(B_{\tr-\gamma}(0))}\,.
	\end{aligned}
\end{equation}
Therefore, for any $|y|\leq \tr-\gamma$, one obtains the estimate 
\begin{equation}
	\begin{aligned}
		|\cF_{\fm}(y)| \leq |\cF_{\fm}(0)| +  \| \Phi_{\infty,{\rm in}} \|_{L^\infty(B_{\tr-\gamma}(0))} \| Q_\fm - Q_\infty \|_{L^1(B_{\tr-\gamma}(0))} + \tE^2 \int_0^y |\cF_{\fm}(z) | \wrt z \,.
	\end{aligned}
\end{equation}
By Gronwall inequality, Lemma \ref{lemma QmQinfty} and $\cF_{\fm}(0)\to 0$ as $\fm \to 0$, we get
\begin{equation}\label{chiusa.gronw.1}
	\sup_{|y|\leq \tr-\gamma} | \cF_{\fm}(y)| \leq e^{\tE^2 (\tr-\gamma) } \big( |\cF_{\fm}(0)|  + \tE^2 \| Q_\fm - Q_\infty \|_{L^1} \big) \to 0 \quad \text{as } \ \fm \to 0\,,
\end{equation}
which proves the first claim in \eqref{compact.limit} for $n=1,2$. For $n\geq 3$, 
 using the same previous notations, we have that, for any $\ell\in\N$,  $\cF_{\fm}^{(\ell)}(y)$ solves iteratively the Cauchy problem
\begin{equation}\label{eq cFell fm}
	\begin{aligned}
		& (\cF_{\fm}^{(\ell)})'(y) = A_\fm(y)\cF_{\fm}^{(\ell)}(y) + \cR_{\fm,\ell}(y)\,, \quad \cF_{\fm}^{(\ell)}(0) = \Phi_{\fm}^{(\ell)}(0)-\Phi_{\infty,{\rm in}}^{(\ell)}(0)\,, \\
		& \text{where} \quad  \cR_{\fm,\ell}(y) := A_\fm'(y) \cF_{\fm}^{(\ell-1)}(y) + \cR_{\fm,\ell-1}'(y)\,, \quad \cF_{\fm}^{(0)}(y):= \cF_{\fm}(y) \,. 
	\end{aligned}
\end{equation}
Then, the similar conclusion in \eqref{chiusa.gronw.1} holds with the same arguments of before. 

Finally, when $\tr+\gamma \leq |y| \leq T$, the second claim in \eqref{compact.limit} is proved with the same scheme as before, replacing $\Phi_{\infty,{\rm in}}(y)$ with $\Phi_{\infty,{\rm out}}(y):=(y-A\,\sgn(y),1)$, setting as initial data $\cF_{\fm}^{(\ell)}(\tr+\gamma) = \cF_{\fm}^{(\ell)}(-(\tr+\gamma)) =  \Phi_{\fm}^{(\ell)}(y+\tr)-\Phi_{\infty,{\rm out}}^{(\ell)}(\tr+\gamma)$ for any $\ell\in\N_0$ and having $\|A_{\fm} \|_{L^\infty(\R\setminus B_{\tr+\gamma}(0))}\leq \tE^2 e^{-\tm\gamma}$ by Lemma \ref{lemma QmQinfty}. We therefore omit further details.
\end{proof}

We can now prove the proximity result for the shear flow $(\psi_{\fm}'(y),0)$ to the Couette flow $(y,0)$ in the Sobolev regularity $H^1$ (in vorticity space). The main tool is the approximation Lemma \ref{lemma_approx_comp}. To this end, we fix, \emph{independently of $\fm\gg 1$}, the constants
\begin{equation}\label{choice.AB}
    A:= \tr-\frac{1}{\tE^2}\,, \quad B:= \frac{1}{\tE^2 \cos(\tE\tr)}\stackrel{\eqref{constrain}}{=}= \frac{1}{\tE^2 \sin(\tE\tr)}\,,
\end{equation}
and the small radius $\gamma \ll \tr$
\begin{equation}\label{def.smaller.scale}
    \gamma = \gamma(\tr)= \tr^{5}\,.
\end{equation}

\begin{prop}\label{proxim.couette}
	{\bf  (Proximity to the Couette flow).}
	There exists $\bar\fm =\bar\fm(\tr) \gg  1$ large enough such that, for $\fm \geq \bar \fm$, there exists a stream function  $\psi_{\fm}(y)$, with $\psi_{\fm}'(y)$ odd solution of \eqref{Qgamma}, that is close to the stream function of the Couette flow $\psi_{\rm cou}(y):=\tfrac12 y^2$ in the $H^3$-norm, with estimate
		\begin{equation}\label{proxim.estim}
		\| \psi_{\fm} -\psi_{\rm cou} \|_{H^3[-1,1]} \lesssim \sqrt\tr\,.
	\end{equation}
\end{prop}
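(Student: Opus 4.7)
Existence of $\psi_{\fm}$ is immediate from ODE theory: the linear third-order equation \eqref{Qgamma} for $\psi_{\fm}'$ admits a unique solution once we prescribe initial data at $y=0$. We take $\psi_{\fm}'(0)=0$ (consistent with oddness, which also forces $\psi_{\fm}'''(0)=0$), $\psi_{\fm}''(0)$ chosen so that the inner asymptotic profile of Lemma \ref{lemma_approx_comp} is realised with the constants $A,B$ of \eqref{choice.AB}, and finally $\psi_{\fm}(0)=0=\psi_{\rm cou}(0)$ to exhaust the additive freedom. The substance of the proposition is the quantitative estimate \eqref{proxim.estim}.

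The plan is to decompose $[-1,1]$ into three regions adapted to the jump of $Q_{\fm}$: the inner region $I_{\rm in}:=[-\tr+\gamma,\tr-\gamma]$, the outer region $I_{\rm out}:=\{\tr+\gamma\leq |y|\leq 1\}$, and two thin boundary layers $I_{\rm bl}:=[-1,1]\setminus (I_{\rm in}\cup I_{\rm out})$ of total measure $4\gamma=4\tr^5$ (see \eqref{def.smaller.scale}), and then to control separately the $L^2$ norms of $\pa_y^k(\psi_{\fm}-\psi_{\rm cou})$ on each region for $k=0,1,2,3$. The constraint \eqref{constrain} gives $\tE\sim\tr^{-1}$, and from \eqref{choice.AB} one reads off $A\sim\tr$, $B\sim\tr^2$, $B\tE^{k-1}\sim\tr^{3-k}$. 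On $I_{\rm in}$, Lemma \ref{lemma_approx_comp} approximates $\pa_y^k\psi_{\fm}$ by $\pa_y^k\bigl(-\tfrac{B}{\tE}\cos(\tE y)\bigr)$, of size $O(\tr^{3-k})$, up to an error vanishing with $\fm\to\infty$. Compared with $\pa_y^k\psi_{\rm cou}=\pa_y^k(\tfrac12 y^2)$, the $L^\infty$ discrepancies on $I_{\rm in}$ are of orders $O(\tr^2),O(\tr),O(1),O(1)$ for $k=0,1,2,3$, which, multiplied by $\sqrt{|I_{\rm in}|}\lesssim\sqrt\tr$, yield $L^2$ contributions $O(\tr^{5/2}),O(\tr^{3/2}),O(\sqrt\tr),O(\sqrt\tr)$. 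On $I_{\rm out}$, integrating the outer asymptotic $\psi_{\fm}'\approx y-A\,\sgn(y)$ from $y=0$ produces a constant shift of size $O(\tr^2)$ accumulated from integration across $I_{\rm in}\cup I_{\rm bl}$, and a direct computation gives $L^2$ contributions of order $O(\tr),O(\tr),o(1),o(1)$ for $k=0,1,2,3$.

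The main obstacle is the boundary layer $I_{\rm bl}$, where Lemma \ref{lemma_approx_comp} does not apply. The key observation is that $\psi_{\fm}'$ stays small there: indeed the inner asymptotic gives $B\sin(\tE y)\big|_{y=\tr}=\tE^{-2}$ and the outer asymptotic gives $(y-A\,\sgn(y))\big|_{y=\tr}=\tE^{-2}$, matching at size $O(\tr^2)$, and by continuity $|\psi_{\fm}'|\lesssim\tr^2$ uniformly on $I_{\rm bl}$ for $\fm\geq\bar\fm(\tr)$. Combined with the bound $\|Q_{\fm}\|_{L^\infty}\lesssim\tE^2$ of Lemma \ref{lemma QmQinfty}, the ODE \eqref{Qgamma} yields $|\psi_{\fm}'''|\lesssim 1$ on $I_{\rm bl}$; successive integration backwards then bounds $|\psi_{\fm}^{(k)}|\lesssim 1$ for $k=0,1,2,3$ on $I_{\rm bl}$, so multiplying by $\sqrt{|I_{\rm bl}|}\sim\tr^{5/2}$ gives a subdominant $O(\tr^{5/2})$ contribution to each $L^2$ norm. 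Summing the three regions, the estimate is dominated by $\|\psi_{\fm}''-1\|_{L^2(I_{\rm in})}$ and $\|\psi_{\fm}'''\|_{L^2(I_{\rm in})}$, both of order $\sqrt\tr$, which proves \eqref{proxim.estim}. The threshold $\bar\fm=\bar\fm(\tr)$ is chosen large enough that the errors in Lemma \ref{lemma_approx_comp} are $o(\sqrt\tr)$, so they do not spoil the above bounds.
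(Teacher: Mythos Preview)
Your argument is correct and follows the same three-region decomposition (inner, boundary layer, outer) built on Lemma~\ref{lemma_approx_comp} that the paper uses. Two differences in execution are worth noting. First, the paper invokes Sobolev interpolation and therefore only estimates $\|\psi_{\fm}-\psi_{\rm cou}\|_{L^2}$ and $\|\psi_{\fm}'''\|_{L^2}$, rather than treating all four derivative orders separately; this halves the case work. Second, in the boundary layer for $k=3$ the paper does \emph{not} use your sharper claim $|\psi_{\fm}'|\lesssim\tr^2$: it simply bounds $|\psi_{\fm}'|\lesssim 1$ uniformly, so $|\psi_{\fm}'''|=|Q_{\fm}\psi_{\fm}'|\lesssim\tE^2$, whence $\int_{I_{\rm bl}}|\psi_{\fm}'''|^2\lesssim\tE^4\gamma\sim\tr^{-4}\tr^5=\tr$. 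This is exactly why $\gamma=\tr^5$ was chosen in \eqref{def.smaller.scale}, and it sidesteps your ``by continuity'' step, which as written is not quite complete (Lemma~\ref{lemma_approx_comp} gives nothing at $y=\pm\tr$, so passing from the endpoint values $\psi_{\fm}'(\tr\pm\gamma)\sim\tr^2$ to a uniform bound on $I_{\rm bl}$ requires a short Gronwall or bootstrap argument using $\gamma^2\tE^2\ll 1$). Your route yields the sharper boundary-layer contribution $O(\tr^{5/2})$, but since the inner region already costs $\sqrt{\tr}$, the paper's cruder estimate is both simpler and sufficient.
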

\begin{proof}
 By interpolation for Sobolev norms, it is enough to prove estimates for $\| \psi_{\fm}- \psi_{\rm cou} \|_{L^2[-1,1]}$ and for $\| \psi_{\fm}''' \|_{L^2[-1,1]}$.
We start with the estimate for $\psi_{\fm}-\psi_{\rm cou}$. By parity of the integrand, we split
\begin{equation}\label{der0.split}
\begin{aligned}
     \| \psi_{\fm} - \psi_{\rm cou} \|_{L^2[-1,1]}^2 & = 2\int_{\tr+\gamma}^{1}\big| \psi_{\fm}(y) - \tfrac12 y^2 \big|^2 \wrt y + 2\int_{B_{\gamma}(\tr)}\big| \psi_{\fm}(y) - \tfrac12 y^2 \big|^2 \wrt y \\
    & + 2\int_{0}^{\tr-\gamma}\big| \psi_{\fm}(y) - \tfrac12 y^2 \big|^2 \wrt y =: I_{0,{\rm out}} + I_{0,\gamma} + I_{0,{\rm in}}\,.
\end{aligned}
\end{equation}
On the compact intervals $[\tr+\gamma,1]$ and $[0,\tr-\gamma]$, we use the approximations in Lemma \ref{lemma_approx_comp}. In particular, by \eqref{choice.AB}, \eqref{constrain}, for any $\fm\geq \bar\fm$, with $\bar\fm \gg 1$ sufficiently large, we estimate:
    \begin{align}
         \tfrac12 I_{0,{\rm out}} &\leq \int_{\tr+\gamma}^{1}\big| \psi_{\fm}(y) - \tfrac12(y -A\,\sgn(y) ) \big|^2 \wrt y  + \int_{\tr+\gamma}^{1}\big| \tfrac12 (y-A\,\sgn(y))^2 - \tfrac12 y^2 \big|^2 \wrt y  \nonumber\\
         & = \int_{\tr+\gamma}^{1}\big| \psi_{\fm}(y) - \tfrac12(y -A\,\sgn(y) ) \big|^2 \wrt y  + A^2\int_{\tr+\gamma}^{1} \big| |y| - \tfrac12 A \big|^2 \wrt y \nonumber \\
         & \leq \int_{\tr+\gamma}^{1}\big| \psi_{\fm}(y) - \tfrac12(y -A\,\sgn(y) ) \big|^2 \wrt y  + A^2(1-\tfrac12 A)^2 (1-(\tr+\gamma)) \nonumber \\
         & \lesssim \int_{\tr+\gamma}^{1}\big| \psi_{\fm}(y) - \tfrac12(y -A\,\sgn(y) ) \big|^2 \wrt y  + A^2 \lesssim 2A^2 \lesssim \tr^2\,;  \label{I0.out} \\ 
         \tfrac12 I_{0,{\rm in}} & \leq \int_{0}^{\tr-\gamma}\big| \psi_{\fm}(y) + \tfrac{B}{\tE} \cos(\tE y) \big|^2 \wrt y + \int_{0}^{\tr-\gamma} \big| \tfrac{B}{\tE}\cos(\tE y) + \tfrac12 y^2 \big|^2 \wrt y \nonumber \\
         & \lesssim \int_{0}^{\tr-\gamma}\big| \psi_{\fm}(y) + \tfrac{B}{\tE} \cos(\tE y) \big|^2 \wrt y + \max\{ \tfrac{B}{\tE},(\tr-\gamma)^2 \}^2 (\tr-\gamma) \nonumber \\
         & \lesssim \int_{0}^{\tr-\gamma}\big| \psi_{\fm}(y) + \tfrac{B}{\tE} \cos(\tE y) \big|^2 \wrt y +  (\tr-\gamma)^5 \lesssim 2\tr^5\,.\label{I0.in}
    \end{align}
On the compact neighbourhood $\{ ||y|-\tr| \leq \gamma  \}$, the approximation with the singular case $\fm\to\infty$ fails. However, being $Q_{\fm}(y)$ uniformly bounded by $\tE^2$ for any $\fm\in\N$  (see Lemma \ref{lemma QmQinfty}), all the solutions of \eqref{Qgamma} are uniformly bounded with respect to $\fm\in\N$ on $[-1,1]$, as well as $\psi_{\fm}(y)$ by integration. Therefore, by \eqref{der0.split} and \eqref{def.smaller.scale}, we have the estimate
\begin{equation}\label{I0.gamma}
    \tfrac12 I_{0,\gamma} \lesssim \max\big\{ \|\psi_{\fm}\|_{L^\infty(B_{\gamma}(\tr))}, (\tr+\gamma)^2 \big\}^2  \gamma \lesssim \tr^{5} \,.
\end{equation}
By \eqref{der0.split}, \eqref{I0.out}, \eqref{I0.in}, \eqref{I0.gamma}, \eqref{choice.AB}, we conclude that $\| \psi_{\fm} - \psi_{\rm cou} \|_{L^2[-1,1]} \lesssim \tr$.

We move now to estimate the $L^2$-norm of $\psi_{\fm}'''(y)$. Similarly as in \eqref{der0.split}, we split
    \begin{align}
        \| \psi_{\fm}''' \|_{L^2[-1,1]}^2 & \leq 2 \int_{\tr+\gamma}^{1}|\psi_{\fm}'''(y) |^2 \wrt y + 2 \int_{B_{\gamma}(\tr)}|\psi_{\fm}'''(y) |^2 \wrt y  + 2 \int_{0}^{\tr-\gamma}|\psi_{\fm}'''(y) |^2 \wrt y \nonumber \\
        & =: I_{3,{\rm out}} + I_{3,\gamma} + I_{3,{\rm in}} \,.\label{der3.split}
    \end{align}
As before, we use the approximation Lemma \ref{lemma_approx_comp} on the compact intervals $[\tr+\gamma,1]$ and $[0,\tr-\gamma]$, with the same choice of the constants $A$, $B$ as in \eqref{choice.AB} and for $\fm\geq \bar\fm$, with a possibly larger $\bar\fm= \bar\fm(\tr)\gg 1$:
    \begin{align}
    \tfrac12 I_{3,{\rm in}} & \leq \int_{0}^{\tr-\gamma} | \psi_{\fm}'''(y) + B\,\tE^2 \sin(\tE y) |^2 \wrt y + B^2 \,\tE^4 \int_{0}^{\tr-\gamma} | \sin(\tE y)|^2 \wrt y\nonumber  \\
    & \leq \int_{0}^{\tr-\gamma} | \psi_{\fm}'''(y) + B\,\tE^2 \sin(\tE y) |^2 \wrt y + B^2 \,\tE^4 (\tr-\gamma) \nonumber \\
    & \lesssim \int_{0}^{\tr-\gamma} | \psi_{\fm}'''(y) + B\,\tE^2 \sin(\tE y) |^2 \wrt y + \tr  \lesssim 2\tr\,; \label{I3.in} \\
    \tfrac12 I_{3,{\rm out}} & = \int_{\tr+\gamma}^{1}| \psi_{\fm}'''(y) |^2\wrt y \lesssim 2\tr \,. \label{I3.out}
    \end{align}
To estimate finally $I_{3,\gamma}$, we recall that $\psi_{\fm}'(y)$ solves \eqref{Qgamma} and, as for \eqref{I0.gamma}, that it is uniformly bounded with respect to $\fm\in\N$ on the whole interval $[-1,1]$. Therefore, by \eqref{der3.split}, \eqref{def.smaller.scale}, \eqref{constrain} and Lemma \eqref{lemma QmQinfty}, we get
\begin{equation}\label{I3.gamma}
   \begin{aligned}
       \tfrac12 I_{3,\gamma} & = \int_{B_{\gamma}(\tr)} | Q_{\fm}(y) |^2  |\psi_{\fm}'(y)|^2 \wrt y 
       \leq \tE^4 \int_{B_{\gamma}(\tr)} |\psi_{\fm}'(y)|^2 \wrt y 
       \lesssim \tE^4 \gamma \lesssim \tr\,.
   \end{aligned}
\end{equation}
By \eqref{der3.split}, \eqref{I3.in}, \eqref{I3.out}, \eqref{I3.gamma}, we conclude that $\| \psi_{\fm}''' \|_{L^2[-1,1]}\lesssim \sqrt\tr$. This concludes the proof of the proposition.
\end{proof}

\subsection{The existence of the local nonlinearities}\label{subsec.FF}

The goal of this section is to determine which nonlinear ODE is locally solved by the stream function $\psi_{\fm}(y)$, starting from the linear ODE \eqref{Qgamma}.
By Lemma \ref{lemma_approx_comp}, in the limit $\fm\to\infty$, the stream function $\psi_{\fm}(y)$ converges locally  on compact sets excluding the singular values $y=\pm\tr$ to a limit function $\psi_{\infty}(y)$ solving locally the second-order semilinear ODE
\begin{equation}\label{limit.nonlinODE}
	\psi_{\infty}''(y) = F_\infty(\psi_{\infty}(y)) := \begin{cases}
		1 & y\in K_{\rm out} \subset\subset \R \setminus[-\tr,\tr] \,, \\
		-\tE^2  \psi_{\infty}(y) & y \in K_{\rm in} \subset\subset (-\tr,\tr) \,.
	\end{cases}
\end{equation}
We want to show that the even function $\psi_{\fm}(y)$ solves an ODE similar to \eqref{limit.nonlinODE} but on the whole interval $[-1,1]$, including the neighbourhoods of the singularities $y=\pm\tr$, which are smoothed out thanks to the analyticity of the potential $Q_{\fm}(y)$ in \eqref{Qgamma}. Outside the neighbourhoods of these singular values, the nonlinearity is expected to be a slight modification of the one in \eqref{limit.nonlinODE} and, morally speaking, to behave locally as a single function and globally as multivalued function.
The construction of the local nonlinearities is carried out in Theorem \ref{nonlin_eq}. To this end, we need a couple of preliminary results.
First, as a corollary of Lemma \ref{lemma_approx_comp}, we deduce monotonicity properties for the stream function $\psi_{\fm}(y)$.
\begin{lem}\label{lemma monotono}
Let $\bar\fm\gg 1$ be fixed as in Proposition \ref{proxim.couette}. Then, for any $\fm \geq \bar\fm$, the following hold:
\\[1mm]
\noindent $(i)$ $\psi_{\fm}(|y|)$ is strictly monotone for $|y| \geq \tr +\gamma$;
\\[1mm]
\noindent  $(ii)$ $\psi_{\fm}(|y|)$ is strictly monotone for $|y| \in B_\gamma(\tr)$;
\\[1mm]
\noindent  $(iii)$ $\psi_{\fm}(y)$ has $2\kappa_0+1$ critical points $0=\ty_{0,\fm} < |\pm\ty_{1,\fm}| < ... < |\pm\ty_{\kappa_0,\fm}| < \tr -\gamma$ and no saddle points when $|y|\leq \tr-\gamma$. In particular, $\psi_{\fm}''$ does not vanish at these critical points.
\end{lem}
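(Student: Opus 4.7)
The approach is to exploit the uniform convergence statements of Lemma \ref{lemma_approx_comp} in the outer region $|y|\geq \tr+\gamma$ and in the inner region $|y|\leq \tr-\gamma$, and to treat the narrow transition layer $B_\gamma(\tr)$, where the approximation fails, through a direct Gronwall argument on the ODE \eqref{Qgamma}. Throughout, the key quantitative facts are: by \eqref{choice.AB}--\eqref{constrain}, $\tE^{-2}\sim \tr^2$ and $\sin(\tE\tr)=\cos(\tE\tr)=(-1)^{\kappa_0}\tfrac{\sqrt 2}{2}$, so $B\sin(\tE\tr)=\tE^{-2}$; and by \eqref{def.smaller.scale}, $\gamma=\tr^5\ll \tE^{-2}$ for $\tr$ small.

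For part $(i)$, applying Lemma \ref{lemma_approx_comp} with $n=1$ and $T=1$, one has
\[
\sup_{\tr+\gamma\leq y\leq 1}\bigl|\psi_{\fm}'(y)-(y-A)\bigr|\xrightarrow[\fm\to\infty]{}0.
\]
By \eqref{choice.AB}, $y-A=y-\tr+\tE^{-2}\geq \gamma+\tE^{-2}>0$ uniformly on $[\tr+\gamma,1]$, so enlarging $\bar\fm$ if necessary gives $\psi_{\fm}'(y)>0$ there. By oddness of $\psi_{\fm}'$ and evenness of $\psi_{\fm}$, this yields the strict monotonicity of $\psi_{\fm}(|y|)$ for $|y|\geq \tr+\gamma$.

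For part $(iii)$, Lemma \ref{lemma_approx_comp} with $n=1,2$ yields uniform convergence on $[0,\tr-\gamma]$ of $\psi_{\fm}'$ to $B\sin(\tE y)$ and of $\psi_{\fm}''$ to $B\tE\cos(\tE y)$. By \eqref{constrain} one computes $\tr-\tfrac{\kappa_0\pi}{\tE}=\tfrac{\tr}{4\kappa_0+1}$, which exceeds $\gamma=\tr^5$ for $\tr$ small, so the $\kappa_0+1$ simple zeros $\tfrac{p\pi}{\tE}$ ($p=0,\ldots,\kappa_0$) of the limit all lie in $[0,\tr-\gamma]$, each with non-vanishing derivative $(-1)^p B\tE$. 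On any compact sub-interval of $[0,\tr-\gamma]$ excluding neighbourhoods of these points, $|B\sin(\tE\cdot)|$ is bounded below, so $\psi_{\fm}'$ inherits the same sign and has no zeros there. Near each $\tfrac{p\pi}{\tE}$, the implicit function theorem applied to $\psi_{\fm}'$ (using the $C^1$-convergence and $\psi_{\fm}''\to(-1)^p B\tE\neq 0$) produces a unique simple zero $\ty_{p,\fm}\to \tfrac{p\pi}{\tE}$ with $\psi_{\fm}''(\ty_{p,\fm})\to (-1)^p B\tE \neq 0$. Evenness of $\psi_{\fm}$ then reflects these to $-\ty_{p,\fm}$, yielding exactly $2\kappa_0+1$ non-degenerate critical points in $|y|\leq \tr-\gamma$.

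Part $(ii)$ is the main obstacle, since Lemma \ref{lemma_approx_comp} does not apply inside $B_\gamma(\tr)$. The plan is to treat $u:=\psi_{\fm}'$ as the solution of the linear ODE $u''=Q_{\fm}u$ on the interval $B_\gamma(\tr)$ of width $2\gamma=2\tr^5$, with boundary data controlled by parts $(i)$ and $(iii)$. From the inner approximation, $\psi_{\fm}'(\tr-\gamma)\to B\sin(\tE(\tr-\gamma))\to B\sin(\tE\tr)=\tE^{-2}>0$ and $\psi_{\fm}''(\tr-\gamma)\to B\tE\cos(\tE\tr)=\tE^{-1}$; from the outer approximation, $\psi_{\fm}'(\tr+\gamma)\to\gamma+\tE^{-2}>0$. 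Using $\|Q_{\fm}\|_{L^\infty}\lesssim \tE^2$ and Gronwall on the first-order system $(u,u')'=(u',Q_\fm u)$, one obtains
\[
\sup_{y\in B_\gamma(\tr)}|u(y)-u(\tr-\gamma)|\lesssim \gamma\bigl(|u(\tr-\gamma)|+|u'(\tr-\gamma)|\bigr)\,e^{2\gamma\tE^2}\lesssim \tr^5\cdot \tE^{-1},
\]
since $\gamma \tE^2=\tr^5\cdot O(\tr^{-2})=O(\tr^3)\to 0$. As the boundary value is of order $\tE^{-2}\sim \tr^2$ while the deviation is only $O(\tr^6)\ll \tr^2$, one concludes $u(y)>0$ throughout $B_\gamma(\tr)$ for $\fm$ large and $\tr$ small, proving the strict monotonicity. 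The delicate point — and the main obstacle — is precisely this quantitative comparison of scales, which forces the choice of a sufficiently high power of $\tr$ in the definition \eqref{def.smaller.scale} of $\gamma$, so that the uncontrolled transition layer remains thinner than the guaranteed positive lower bound of $\psi_{\fm}'$ on its boundary.
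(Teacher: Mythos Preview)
Your argument is correct and follows essentially the same strategy as the paper: parts $(i)$ and $(iii)$ are handled exactly as in the paper via Lemma \ref{lemma_approx_comp} and the location of the zeros of $B\sin(\tE y)$, while your check that $\tr-\kappa_0\pi/\tE=\tr/(4\kappa_0+1)>\gamma$ makes explicit what the paper leaves implicit.

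The only noticeable difference is in part $(ii)$. The paper first controls the \emph{second} derivative across the layer: it shows $\psi_{\fm}''(\tr-\gamma)\gtrsim\tr$ from the inner approximation, then uses the mean value theorem together with $\|\psi_{\fm}'''\|_{L^\infty}\leq\|Q_{\fm}\|_{L^\infty}\|\psi_{\fm}'\|_{L^\infty}\lesssim\tE^2$ to obtain $|\psi_{\fm}''(y)-\psi_{\fm}''(\tr-\gamma)|\lesssim\gamma\tE^2\lesssim\tr^3\ll\tr$, whence $\psi_{\fm}''>0$ on $B_\gamma(\tr)$ and therefore $\psi_{\fm}'$ increases from its positive value at $\tr-\gamma$. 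You instead bound $\psi_{\fm}'$ directly via Gronwall on the first-order system. Both routes rely on the same ingredients (smallness of $\gamma\tE^2$ and the boundary values furnished by Lemma \ref{lemma_approx_comp}) and the same scale comparison $\gamma\ll\tE^{-2}$; the paper's version is marginally simpler bookkeeping, while yours is slightly more direct. One small point: your displayed Gronwall bound $\gamma(|u|+|u'|)e^{2\gamma\tE^2}$ is imprecise as written---the clean estimate is $|u(y)-u(\tr-\gamma)|\leq 2\gamma\sup|v|\leq 2\gamma|\Phi(\tr-\gamma)|e^{C\gamma\tE^2}\lesssim\gamma\tE^{-1}$---but the final scaling $O(\tr^6)\ll\tr^2$ and the conclusion are unaffected.
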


\begin{proof}
	{\sc Proof of $(i)$}. By Lemma \ref{lemma_approx_comp} and \eqref{choice.AB}, \eqref{def.smaller.scale}, we have, for any $|y|\geq \tr+\gamma$,
	\begin{equation}
	 \begin{aligned}
	        | \psi_{\fm}'(y) | & \geq |y -A\,\sgn(y)| - | \psi_{\fm}'(y) - (y-A\,\sgn(y)) |\\
	        & \geq \gamma + \tfrac{1}{\tE^2} - | \psi_{\fm}'(y) - (y-A\,\sgn(y)) | \geq \tfrac12\big( \gamma + \tfrac{1}{\tE^2} \big) >0\,,
	 \end{aligned}
	\end{equation}
	having $\fm \geq \bar\fm$.
	This proves item $(i)$ and ensures that $\psi_{\fm}(y)$ is invertible in this region.
	
	{\sc Proof of $(ii)$}. By parity, we prove just that $\psi_{\fm}(y)$ is strictly monotone for $|y-\tr|\leq \gamma$.
	First, by Lemma \ref{lemma_approx_comp} and \eqref{choice.AB}, \eqref{def.smaller.scale}, \eqref{constrain}, we have the pointwise estimate, for $\fm\geq \bar\fm$ and up to subsequences,
	\begin{equation}
	    \begin{aligned}
	        & \psi_{\fm}'(\tr-\gamma)  = B \sin(\tE(\tr-\gamma)) + \big( \psi_{\fm}'(\tr-\gamma) - B\sin(\tE (\tr-\gamma)) \big)\\
	        & = \frac{1}{\tE^2} \Big( 1 + \frac{-\tE\gamma}{\sin(\tE\tr)} \frac{\sin(\tE(\tr-\gamma))-\sin(\tE\tr)}{-\tE\gamma} \Big) +  \big( \psi_{\fm}'(\tr-\gamma) - B\sin(\tE (\tr-\gamma)) \big)
	        \\
	        & \geq \frac{1}{\tE^2 }\Big( 1 - \frac{\tE \gamma}{|\sin(\tE \tr) |}
	        \Big) -  \big| \psi_{\fm}'(\tr-\gamma) - B\sin(\tE (\tr-\gamma))  \big| \\
		& \geq \frac{1}{2 \tE^2 }\big( 1 - \sqrt{2}\tE \gamma \big) \gtrsim \mathtt r^2>0
	    \end{aligned}
	\end{equation}
	and, similarly,
	\begin{equation}\label{der2.posit}
	    \begin{aligned}
 \psi_{\fm}''(\tr-\gamma) \geq \frac{1}{2 \tE }\big( 1 - \sqrt{2}\tE \gamma \big) \gtrsim \mathtt r > 0 
	    \end{aligned}
	\end{equation}
	for $\mathtt r \ll 0$ small enough, since $\mathtt r \simeq \frac{1}{\mathtt E}$ and $\gamma = \mathtt r^5$. 
	Therefore, the claim that $\psi_{\fm}'(y) >0$ for $ |y-\tr|\leq \gamma $ follows if we show that $\psi_{\fm}''(y)>0$ in the same interval. By the mean value Theorem, \eqref{Qgamma}, \eqref{def.smaller.scale}, \eqref{constrain} and Lemma \eqref{lemma QmQinfty}, we have, for any $y\in[\tr-\gamma,\tr+\gamma]$,
	\begin{equation}\label{mean.value.psisec}
	   \begin{aligned}
	         | \psi_{\fm}'' (y) &- \psi_{\fm}''(\tr-\gamma)|  \leq 2\gamma \| \psi_{\fm}''' \|_{L^\infty[-1,1]} \\
	        & \leq 2\gamma \| Q_{\fm} \|_{L^\infty[-1,1]} \| \psi_{\fm}' \|_{L^\infty[-1,1]} \lesssim \tE^2 \gamma \lesssim \tr^{3}\,.
	   \end{aligned}
	\end{equation}
	Therefore, for  $\tr \ll 1$, by \eqref{mean.value.psisec}, \eqref{der2.posit} we deduce
	\begin{equation}
\psi_{\fm}''(y) \geq \psi_{\fm}''(\tr-\gamma) - | \psi_{\fm}'' (y) - \psi_{\fm}''(\tr-\gamma)| \gtrsim \mathtt r - O(\tr^3)  \gtrsim \mathtt r > 0\,.
	\end{equation}
	This concludes the proof of item $(ii)$.

{\sc Proof of $(iii)$}.Under the constrain \eqref{constrain}, let $\ty_{j,\infty}=\frac{j\pi}{\tE}$, $j=0,\pm 1, ...,\pm\kappa_0$ be the $2\kappa_0 + 1$ zeroes of $\psi_{\infty}'(y):=B \sin(\tE y)$ in $[-\tr,\tr]$, and therefore critical points for $\psi_{\infty}(y)=\frac{B}{\tE} \cos(\tE y)$. It is also trivial, by \eqref{choice.AB}, \eqref{constrain}, that $|\psi_{\infty}''(\ty_{j,\infty})|= \frac{\sqrt 2}{\tE}>0$ and that $|\ty_{\pm \kappa_0,\infty}|= \frac{\kappa_0 \pi}{\tE}< \tr$. By Lemma \ref{lemma_approx_comp} and \eqref{def.smaller.scale}, for $\fm\geq \bar\fm$, we have that $\psi_{\fm}'(y)$ has $2\kappa_0+1$ zeroes on $[-(\tr-\gamma),\tr-\gamma]$, denoted by $\ty_{j,\fm}$ for $j=0,\pm 1, ..., \pm \fm$, each one sufficiently close to $\ty_{j,\infty}$ and satisfying $|\psi_{\fm}''(\ty_{j,\fm})|\geq \frac{\sqrt 2}{2 \tE}$. Moreover, by the parity of $\psi_{\fm}(y)$, we deduce that $\ty_{0,\fm}=0$ and that $\ty_{-p,\fm}=-\ty_{p,\fm}$ for any $p=1,...,\kappa_0$. This proves the claim in item $(iii)$ and concludes the proof of the lemma.
\end{proof}


The following result follows from Lemma \ref{lemma zero odd der} and it is a key tool to prove the claimed regularity properties in Theorem \ref{nonlin_eq}.

\begin{lem}\label{expansion psim crit} 
	Let $\bar\fm \gg 1$ be fixed as in Proposition \ref{proxim.couette}.
	 Let $\ty_{p,\fm}$ be a critical point for $\psi_{\fm}(y)$, for a stripe index $p=1,...,\kappa_0$.  For $\fm\geq \bar\fm$, the following hold
	\begin{equation}
		\begin{aligned}
			& \psi_{\fm}^{(2n-1)}(\ty_{p,\fm}) = 0 \,, \quad \forall\, n=1,...,S+1\,.
		\end{aligned}
	\end{equation}
	As a consequence, we have the expansion, for $|\delta|$ small enough,
	\begin{equation}
		\psi_{\fm}(\ty_{p,\fm}+\delta) = \sum_{n=0}^{S+1} \frac{\psi_{\fm}^{(2n)}(\ty_{p,\fm})}{(2n)!} \delta^{2n} + \frac{\psi_{\fm}^{(2S+3)}(\ty_{p,\fm})}{(2S+3)!} \delta^{2S+3} + o(|\delta|^{2(S+2)})\,. 
	\end{equation}
\end{lem}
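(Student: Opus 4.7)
My plan is to prove the vanishing $\psi_{\fm}^{(2n-1)}(\ty_{p,\fm}) = 0$ for $n = 1, \ldots, S+1$ by induction on $n$, exploiting the linear ODE \eqref{Qgamma} together with the vanishing of odd-order derivatives of $Q_{\fm}$ at $\pm\ty_{p,\fm}$ supplied by Lemma \ref{lemma zero odd der}. Once this is done, the Taylor expansion follows immediately from the analyticity of $\psi_{\fm}$ and the parity information just obtained.

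The base case $n = 1$ is immediate, since $\ty_{p,\fm}$ is a critical point of $\psi_{\fm}$ by Lemma \ref{lemma monotono}$(iii)$, so $\psi_{\fm}'(\ty_{p,\fm}) = 0$. For the inductive step, I would fix $n$ with $1 \leq n \leq S$, assume $\psi_{\fm}^{(2k-1)}(\ty_{p,\fm}) = 0$ for all $k = 1, \ldots, n$, and prove $\psi_{\fm}^{(2n+1)}(\ty_{p,\fm}) = 0$. Differentiating the identity $\psi_{\fm}''' = Q_{\fm} \psi_{\fm}'$ in \eqref{Qgamma} exactly $2n-2$ times and applying the Leibniz rule yields
\begin{equation*}
\psi_{\fm}^{(2n+1)}(y) \;=\; \sum_{j=0}^{2n-2} \binom{2n-2}{j}\, Q_{\fm}^{(j)}(y)\, \psi_{\fm}^{(2n-1-j)}(y).
\end{equation*}
When this sum is evaluated at $\ty_{p,\fm}$, the odd-$j$ terms vanish because $j \leq 2n-3 \leq 2S-1$ puts $j$ within the range of Lemma \ref{lemma zero odd der}, giving $Q_{\fm}^{(j)}(\ty_{p,\fm}) = 0$; the even-$j$ terms vanish because the companion order $2n-1-j$ is odd and at most $2n-1$, hence covered by the inductive hypothesis. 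Consequently every summand vanishes and the inductive step is complete.

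For the expansion, I would note that $\psi_{\fm}$ is real-analytic on $[-1,1]$ (since $\psi_{\fm}'$ solves the linear ODE \eqref{Qgamma} with analytic coefficient $Q_{\fm}$), so the standard Taylor development centered at $\ty_{p,\fm}$ applies to any order. The vanishing of the odd derivatives up to order $2S+1$ just proved kills the corresponding odd powers of $\delta$, leaving only the even contributions $\delta^{2n}$ with $n = 0, 1, \ldots, S+1$, the residual odd term at order $2S+3$, and a higher-order remainder of the claimed size.

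The main obstacle---modest, in an essentially combinatorial proof---is checking the index compatibility between the Leibniz sum and the range covered by Lemma \ref{lemma zero odd der}. This compatibility is guaranteed precisely by $n \leq S+1$, and reflects the fact that \eqref{Qgamma} has order three while $\psi_{\fm}'$ appears on the right-hand side at order one: this two-step gap is exactly what converts the $S$ parity conditions \eqref{zero odd deriv} on $Q_{\fm}$ into $S+1$ vanishing odd-order derivatives of $\psi_{\fm}$.
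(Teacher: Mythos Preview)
Your proof is correct and follows essentially the same approach as the paper: induction on $n$ using the Leibniz expansion of $\pa_y^{2n-2}(\psi_{\fm}''')$ obtained from the ODE \eqref{Qgamma}, together with the vanishing of odd derivatives of $Q_{\fm}$ from Lemma \ref{lemma zero odd der} and the base case $\psi_{\fm}'(\ty_{p,\fm})=0$ from Lemma \ref{lemma monotono}$(iii)$. The only difference is cosmetic: the paper writes the Leibniz sum for $\psi_{\fm}^{(2n-1)}$ (differentiating $2(n-2)$ times) rather than for $\psi_{\fm}^{(2n+1)}$ as you do, which amounts to a harmless index shift.
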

\begin{proof}
	The expansion is a direct consequence of Lemma \ref{lemma zero odd der} together with iterative derivatives of \eqref{Qgamma}.
	Indeed, by Lemma \ref{lemma zero odd der}-$(iii)$, we have $\psi_{\fm}'(\ty_{p,\fm})=0$, whereas, for $n\geq 2$, we have, by Leibniz rule,
	$$
	\psi_{\fm}^{(2n-1)}(y)=\pa_y^{2(n-2)}\psi_{\fm}'''(y)= \sum_{j=0}^{2(n-2)}\binom{2(n-2)}{j} Q_{\fm}^{(j)}(y) \psi_{\fm}^{(2n-3-k)}(y)\,.
	$$
	The claim $\psi_{\fm}^{(2n-1)}(\ty_{p,\fm}) =0$, for $n=1,...,S+1$, follows consequently by an induction argument.
\end{proof}

We are now ready to prove the main result of this section. We introduce the following notation for the left and the right neighbourhood of a given point, respectively: for any $r > 0$, we define
\begin{equation}
	B_{r}^{-}(\ty) := \{ y\in\R \, : \, y\in (\ty -r, \ty) \}\,, \quad B_{r}^{+}(\ty) := \{ y\in \R \, : \, y \in (\ty,\ty+r) \}\,. 
\end{equation}
For any $S \in \N$, we denote by ${\cal C}^S_0(\R)$ the space of ${\cal C}^S$ functions $f : \R \to \R$ with compact support. In order to state the next theorem, we also recall the definition of the interval ${\mathtt I}_p = \{ y \in \R :  \ty_{p,\fm} \leq |y| \leq \ty_{p+1,\fm}\}$ given in \eqref{sets Ip}. 
\begin{thm}\label{nonlin_eq}
	{\bf (Local nonlinearities).}
	Let $S\in\N$ and let $\fm\geq \bar\fm$, with $\bar\fm \gg 1$ fixed as in Proposition \ref{proxim.couette}. For any $p=0,1,...,\kappa_0$, there exists  a nonlinear function $F_{p,\fm}\in \cC_{0}^{S+1}(\R)$, $\psi\to F_{p,\fm}(\psi)$, such that $\psi_{\fm}(y)$ solves the nonlinear ODE
	\begin{equation}\label{ell_psi_gamma}
		\psi_\fm''(y) = F_{p,\fm}(\psi_\fm(y))\,, \quad y\in \tI_{p}\,.
	\end{equation}
In particular, the derivative of $F_{p,\fm}$ evaluated at $\psi=\psi_{\fm}(y)$ satisfies, for any $y\in\R$,
\begin{equation}\label{F_Q_gamma}
	(\pa_\psi F_{p,\fm})(\psi_\fm(y))=Q_\fm(y)\,, \quad \forall \,p=0,1,...,\kappa_0\,.
\end{equation}
We have $\cC^{S+1}$-continuity at $\psi=\psi_{\fm}(y)$ at the critical points $y=\pm \ty_{p,\fm}$, $p=1,...,\kappa_0$, meaning that, for any $n=0,1,...,S+1$,
\begin{equation}\label{cont Fp critical pts}
	\lim_{|y|\to\ty_{p,\fm}^-} \pa_{y}^{n} (F_{p-1,\fm}(\psi_{\fm}(y)) )= \lim_{|y|\to\ty_{p,\fm}^+}\pa_{y}^{n}( F_{p,\fm}(\psi_{\fm}(y)) ) = \psi_{\fm}^{(n+2)}(\ty_{p,\fm}) \,.
\end{equation}
\end{thm}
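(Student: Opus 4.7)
I plan to build each $F_{p,\fm}$ by locally inverting $\psi_\fm$ on the stripe $\tI_p$. Lemma \ref{lemma monotono} guarantees that $\psi_\fm$ is strictly monotone on each connected component of $\mathrm{int}\,\tI_p$ and that $\psi_\fm''$ is nonzero at the endpoints $\pm\ty_{p,\fm},\pm\ty_{p+1,\fm}$ (with $\ty_{\kappa_0+1,\fm}:=1$, the outer boundary being handled later by a cutoff). On the open image I set
\begin{equation*}
F_{p,\fm}(\psi):=\psi_\fm''(\psi_\fm^{-1}(\psi)),\qquad \psi\in\psi_\fm(\mathrm{int}\,\tI_p);
\end{equation*}
this is unambiguous because the evenness of $\psi_\fm$ and $\psi_\fm''$ makes the two monotone branches of $\psi_\fm^{-1}$ produce the same value. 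Differentiating and substituting the linear ODE \eqref{Qgamma} immediately yields \eqref{F_Q_gamma}; equivalently, $F_{p,\fm}$ solves the Cauchy problem $F_{p,\fm}'(\psi)=Q_\fm(\psi_\fm^{-1}(\psi))$ with $F_{p,\fm}(\psi_\fm(\ty_{p,\fm}))=\psi_\fm''(\ty_{p,\fm})$ sketched in \eqref{F 0 m intro}--\eqref{F 1 m intro}.

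\textbf{The main obstacle} is extending $F_{p,\fm}$ across each critical value $\psi_\fm(\pm\ty_{p,\fm})$ with $\cC^{S+1}$ regularity, since $\psi_\fm^{-1}$ has a square-root-type singularity there. The decisive input is Lemma \ref{expansion psim crit}: the odd derivatives $\psi_\fm^{(2n-1)}(\ty_{p,\fm})$ vanish for $n=1,\ldots,S+1$, precisely the payoff of the tailor-made corrector $g_{\fm,S}$ in \eqref{Qgamma} furnished by Lemma \ref{lemma zero odd der}. Writing $\delta:=y-\ty_{p,\fm}$, this gives expansions
\begin{equation*}
\psi_\fm(\ty_{p,\fm}+\delta)-\psi_\fm(\ty_{p,\fm})=\Xi(\delta^2),\qquad \psi_\fm''(\ty_{p,\fm}+\delta)=\Theta(\delta^2),
\end{equation*}
for $\cC^{S+1}$ functions $\Xi,\Theta$ with $\Xi'(0)=\tfrac12\psi_\fm''(\ty_{p,\fm})\neq 0$; a Whitney-type even-extension argument converts the vanishing of all odd Taylor coefficients up to order $2S+1$ into the $\cC^{S+1}$ regularity of the ``square-rooted'' functions $\Xi$ and $\Theta$. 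The inverse function theorem then makes $\Xi^{-1}$ a $\cC^{S+1}$ diffeomorphism near $0$, and the composition
\begin{equation*}
F_{p,\fm}(\psi):=\Theta\!\left(\Xi^{-1}(\psi-\psi_\fm(\ty_{p,\fm}))\right)
\end{equation*}
furnishes the desired $\cC^{S+1}$ extension to a full two-sided neighbourhood of the critical value. The same argument applies at the outer endpoints $\pm\ty_{p+1,\fm}$ for the other side of $\tI_p$.

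\textbf{Matching and conclusion.} Since \eqref{ell_psi_gamma} forces $F_{p,\fm}(\psi_\fm(y))=\psi_\fm''(y)$ on $\tI_p$ and $F_{p-1,\fm}(\psi_\fm(y))=\psi_\fm''(y)$ on $\tI_{p-1}$, and $\psi_\fm$ is analytic on $[-1,1]$, the identity \eqref{cont Fp critical pts} follows immediately by letting $|y|\to\ty_{p,\fm}^\pm$ and differentiating $n$ times in $y$: both one-sided limits equal $\psi_\fm^{(n+2)}(\ty_{p,\fm})$ by smoothness of $\psi_\fm$. Finally, because $\psi_\fm([-1,1])$ is compact, multiplying each $F_{p,\fm}$ by a smooth cutoff equal to one on a neighbourhood of $\psi_\fm(\tI_p)$ and vanishing outside a slightly larger compact set promotes it to an element of $\cC_0^{S+1}(\R)$ without disturbing \eqref{ell_psi_gamma} or \eqref{F_Q_gamma} on $\tI_p$. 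The bulk of the technical work lives in the even-extension step of the previous paragraph; the tailored vanishing of odd derivatives provided by Lemma \ref{lemma zero odd der} is exactly what upgrades the naive $\cC^1$ matching of the two square-root branches to the $\cC^{S+1}$ regularity required downstream by the Nash--Moser iteration.
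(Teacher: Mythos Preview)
Your approach is essentially the same as the paper's: both hinge on Lemmas~\ref{lemma zero odd der} and~\ref{expansion psim crit} to write $\psi_\fm$ and related quantities near each critical point $\ty_{p,\fm}$ as $\cC^{S+1}$ (resp.\ $\cC^S$) functions of $\delta^2=(y-\ty_{p,\fm})^2$, invert the one with nonvanishing derivative, and compose. The paper packages this via Cauchy problems with vector field $K_{p,\fm,\pm}\circ G_{p,\fm,\pm}^{-1}$ (their notation for your $\Xi^{-1}$), then glues the two endpoint constructions by uniqueness; you define $F$ directly as $\psi_\fm''\circ\psi_\fm^{-1}$ and extend at each endpoint by $\Theta\circ\Xi^{-1}$. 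The matching argument and the compact-support cutoff are the same.

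There is one regularity-counting slip worth fixing. You assert $\Theta\in\cC^{S+1}$, but Lemma~\ref{expansion psim crit} gives $\psi_\fm^{(2n-1)}(\ty_{p,\fm})=0$ only for $n\le S+1$, which for $\psi_\fm''$ means $(\psi_\fm'')^{(2j-1)}(\ty_{p,\fm})=\psi_\fm^{(2j+1)}(\ty_{p,\fm})=0$ only for $j\le S$. The Whitney even-extension step therefore yields $\Theta\in\cC^{S}$, not $\cC^{S+1}$, so $\Theta\circ\Xi^{-1}$ is a priori only $\cC^S$. The paper's choice to work with $F'$---i.e.\ with $Q_\fm$, whose square-rooted representative $K_{p,\fm,\pm}$ is exactly $\cC^S$ by Lemma~\ref{lemma zero odd der}---and then \emph{integrate} the Cauchy problem is precisely what buys back the missing derivative and lands $F_{p,\fm}\in\cC^{S+1}$. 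Your argument is easily repaired the same way: replace $\Theta$ by the $\cC^S$ representative $K$ of $Q_\fm$ near $\ty_{p,\fm}$, observe $F_{p,\fm}'(\psi)=K(\Xi^{-1}(\psi-\psi_\fm(\ty_{p,\fm})))\in\cC^S$, and integrate.
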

\begin{proof}

Each function $F_{p,\tm}(\psi)$ is constructed on the interval of monotonicity for the stream function $\psi_{\fm}(y)$ by solving Cauchy problems that lead to \eqref{F_Q_gamma}. By Lemma \ref{expansion psim crit}, the behaviour around the critical points $\pm\ty_{1,\fm},...,\pm\ty_{\kappa_0,\fm}$ both for the stream function $\psi_{\fm}(y)$ and the potential $Q_{\fm}(y)$ will determine the regularity of the functions. The construction is carried out in several steps.

{\bf Step 1) Behaviour of $\psi_{\fm}(y)$ around the critical points.} It is convenient to rewrite both the stream function $\psi_{\fm}(y)$ and the potential $Q_{\fm}(y)$ as quadratic functions with finite regularity locally around each critical point of $\psi_{\fm}(y)$.

 We start with $\ty_{0,\fm}=0$. Since both $Q_{\fm}(y)$ and $\psi_{\fm}(y)$ are even in $y$, we write
\begin{equation}
	Q_{\fm}(y) = K_{0,\fm}(y^2) \,, \quad \psi_{\fm}(y) = G_{0,\fm}(y^2)\,, \quad |y| < \ty_{1,\fm}:=\tr_{0,+}\,.
\end{equation}
By Lemma \ref{lemma monotono}-$(iii)$ we have $\psi_{\fm}''(0)\neq 0$ and  $|G_{0,\fm}'(y^2)| = \big|\frac{\psi_{\fm}'(y)}{2y}\big|>0$ for any $y\in \tI_{0}$. Therefore, $G_{0,\fm}(z)$ is invertible for $|z|<\sqrt{\ty_{1,\fm}}$. In the same region, because both $Q_{\fm}(y)$ and $\psi_{\fm}(y)$ are analytic and even, we have that $K_{0,\fm}(z)$ and $G_{0,\fm}(z)$ are in $\cC^{\infty}$, as well as $G_{0,\fm}^{-1}$.

We move now around $|y|=\ty_{p,\fm}$, $p=1,..,\kappa_0-1$. By Lemma \ref{lemma zero odd der}, we deduce that we can write, for $||y|-\ty_{p,\fm}| < \tr_{p,\pm}$, with $\tr_{p,\pm}:=|\ty_{p,\fm} - \ty_{p\pm1,\fm}|$,
\begin{equation}
	Q_{\fm}(y) = K_{p,\fm,\pm}\big((|y|-\ty_{p,\fm})^2\big) \,, \quad K_{p,\fm,\pm} \in \cC^{S}(B_{\sqrt{\tr_{p,\pm}}}(0)) \,.
\end{equation}
Similarly, by Lemma \ref{expansion psim crit},
we write, for $||y|-\ty_{p,\fm}| <\tr_{p,\pm}$, 
\begin{equation}
	\psi_{\fm}(y) = G_{p,\fm,\pm}\big((|y|-\ty_{p,\fm})^2\big) \,, \quad G_{p,\fm,\pm} \in \cC^{S+1}(B_{\sqrt{\tr_{p,\pm}}}(0))\,.
\end{equation}
By Lemma \ref{lemma monotono}-$(iii)$, we have 
\begin{equation}
	\begin{aligned}
		& \big|G_{p,\fm,-}'\big( (|y|-\ty_{p,\fm})^2 \big)\big| = \Big| \frac{\psi_{\fm}'\big( |y|-\ty_{p,\fm} \big)} {2(|y|-\ty_{p,\fm})} \Big| >0 \quad \forall \, y \in \tI_{p-1}\,, \\
		& \big|G_{p,\fm,+}'\big( (|y|-\ty_{p,\fm})^2 \big)\big| = \Big| \frac{\psi_{\fm}'\big( |y|-\ty_{p,\fm} \big)}{2(|y|-\ty_{p,\fm})} \Big| >0 \quad \forall \, y \in \tI_{p}\,, \\
		& \lim_{|y|\to\ty_{p,\fm}^{\pm}} G_{p,\fm,\pm}'\big( (|y|-\ty_{p,\fm})^2 \big) = \tfrac12 \psi_{\fm}''(\ty_{p,\fm}) \neq 0\,.
	\end{aligned}
\end{equation}
Therefore, $G_{p,\fm,-}(z)$ is invertible for $|z|<\sqrt{\tr_{p,-}}$ and $G_{p,\fm,+}(z)$ is invertible for $|z|<\sqrt{\tr_{p,+}}$, with inverses $G_{p,\fm,\pm}^{-1}$ being in $\cC^{S+1}$ in the respective regions.

Finally, we consider the critical points $|y|=\ty_{\kappa_0,\fm}$. With the same previous arguments, we have, for $\ty_{\kappa_0-1,\fm}< |y| \leq \ty_{\kappa_0,\fm}$, with $\tr_{\kappa_0,-}:=\ty_{\kappa_0,\fm}-\ty_{\kappa_0-1,\fm}$
\begin{equation}
	\begin{aligned}
		& Q_{\fm}(y) = K_{\kappa_0,\fm,-} \big( (|y|-\ty_{\kappa_0,\fm})^2 \big)\,, \quad K_{\kappa_0,\pm,-} \in \cC^{S}(B_{\sqrt{\tr_{\kappa_0,-}}}(0))\,, \\
		& \psi_{\fm}(y) = G_{\kappa_0,\fm,-} \big( (|y|-\ty_{\kappa_0,\fm})^2 \big)\,, \quad Q_{\kappa_0,\pm,-} \in \cC^{S+1}(B_{\sqrt{\tr_{\kappa_0,-}}}(0))\,,
	\end{aligned}
\end{equation}
and, for $\ty_{\kappa_0,\fm}\leq |y|\leq 1$, with $\tr_{\kappa_0,+}:=1-\ty_{\kappa_0,\fm}$,
\begin{equation}
	\begin{aligned}
		& Q_{\fm}(y) = K_{\kappa_0,\fm,+} \big( (|y|-\ty_{\kappa_0,\fm})^2 \big)\,, \quad K_{\kappa_0,\pm,+} \in \cC^{S}(B_{\sqrt{\tr_{\kappa_0,+}}}(0))\,,  \\
		& \psi_{\fm}(y) = G_{\kappa_0,\fm,+} \big( (|y|-\ty_{\kappa_0,\fm})^2 \big)\,, \quad Q_{\kappa_0,\pm,+} \in \cC^{S+1}(B_{\sqrt{\tr_{\kappa_0,+}}}(0))\,.
	\end{aligned}
\end{equation}
Also in this case, by Lemma \ref{lemma monotono}, $G_{\kappa_0,\fm,-}(z)$ is invertible for $|z|<\sqrt{\tr_{\kappa_0,-}}$ and $G_{\kappa_0,\fm,+}(z)$ is invertible for $|z|<\sqrt{\tr_{\kappa_0,+}}$, with inverses $G_{\kappa_0,\fm,\pm}^{-1}$ being in $\cC^{S+1}$ in the respective regions.

{\bf Step 2) Existence and smoothness of $F_{p,\fm}(\psi)$.} We start with the stripe indexes $p=0,1,..., \kappa_0-1$. We look for $F_{p,\fm}(\psi)$ of the form $F_{p,\fm}(\psi)=-\tE^2 \psi + Z_{p,\fm}(\psi)$. Let $0<\gamma_{0}\ll \tr_{0,+}$ and $0<\gamma_{p}\ll \min\{\tr_{p,-},\tr_{p,+}\}$, $p\geq 1$, small enough.
First, we define $Z_{p,\fm,+}(\psi)$ as the solution of the Cauchy problem
\begin{equation}
	\begin{cases}
		\pa_\psi Z_{p,\fm,+}(\psi) = K_{p,\fm,+}(G_{p,\fm,+}^{-1}(\psi))+\tE^2\,, \quad \psi \in \psi_{\fm}(B_{\tr_{p,+}-\gamma_{p+1}}^{+}(\ty_{p,\fm}))\,,   \\
		Z_{p,\fm,+} (\psi_{\fm}(\ty_{p,\fm})) = \psi_{\fm}''(\ty_{p,\fm}) + \tE^2 \psi_{\fm}(\ty_{p,\fm})\,.
	\end{cases}
\end{equation}
Then, we define $Z_{p,\fm,-}(\psi)$ as the solution of the Cauchy problem
\begin{equation}
	\begin{cases}
		\pa_\psi Z_{p,\fm,-}(\psi) = K_{p+1,\fm,-}(G_{p+1,\fm,-}^{-1}(\psi))+\tE^2\,, \quad \psi \in \psi_{\fm}(B_{\tr_{p,-}-\gamma_{p}}^{-}(\ty_{p+1,\fm}))\,,   \\
		Z_{p,\fm,-} (\psi_{\fm}(\ty_{p+1,\fm})) = \psi_{\fm}''(\ty_{p+1,\fm}) + \tE^2  \psi_{\fm}(\ty_{p+1,\fm})\,.
	\end{cases}
\end{equation}
By Step 1 and the boundedness of the vector fields following from Lemma \ref{lemma QmQinfty}, both problems are well defined, with $Z_{p,\fm,+}\in \cC^{S+1}\big( \psi_{\fm}(B_{\tr_{p,+}-\gamma_{p+1}}^{+}(\ty_{p,\fm})) \big)$ and $Z_{p,\fm,-}\in \cC^{S+1}\big( \psi_{\fm}(B_{\tr_{p+1,-}-\gamma_{p}}^{-}(\ty_{p+1,\fm})) \big)$.  We claim that, when $\psi \in \psi_{\fm}(B_{\tr_{p,+}-\gamma_{p+1}}^{+}(\ty_{p,\fm})) \cap  \psi_{\fm}(B_{\tr_{p+1,-}-\gamma_{p}}^{-}(\ty_{p+1,\fm}))  $, then $Z_{p,\fm,+}(\psi)= Z_{p,\fm,-}(\psi)$. Indeed, using the respective initial values, the two functions satisfy, for $|y| \in B_{\tr_{p,+}-\gamma_{p+1}}^{+}(\ty_{p,\fm}) \cap B_{\tr_{p+1,-}-\gamma_{p}}^{-}(\ty_{p+1,\fm})$,
\begin{equation}
	(\pa_\psi Z_{p,\fm,+})(\psi_\fm(y))  = Q_{\fm} (y) +\tE^2= (\pa_\psi Z_{p,\fm,-} (\psi_{\fm}(y)) )  \,.
\end{equation}
By uniqueness of the solution of the Cauchy problems, the claim follows. We denote both solutions by $Z_{p,\fm}(\psi)$ when $\psi\in \psi_{\fm}( B_{\tr_{p,+}}^{+}(\ty_{p,\fm})) = \psi_{\fm}(B_{\tr_{p+1,-}}^{-}(\ty_{p+1,\fm}))$ and we conclude that $Z_{p,\fm} \in \cC^{S+1}\big( \psi_{\fm}( B_{\tr_{p,+}}^{+}(\ty_{p,\fm}))  \big)$, as well as for $F_{p,\fm}(\psi)=-\tE^2\psi + Z_{p,\fm}(\psi)$.

Finally, let $p=\kappa_0$.
We define $F_{\kappa_0,\fm}(\psi)$ as the solution of the Cauchy problem
\begin{equation}
	\begin{cases}
		\pa_\psi F_{\kappa_0,\fm}(\psi) = K_{\kappa_0,\fm,+} ( G_{\kappa_0,\fm,+}^{-1}(\psi) ) \,, \quad \psi \in \psi_{\fm}(B_{\tr_{\kappa_0,+}}(\ty_{\kappa_0,\fm}))  \,, \\
		F_{\kappa_0,\fm} (\psi_{\fm}(\ty_{\kappa_0,\fm})) = \psi_{\fm}''(\ty_{\kappa_0,\fm})\,. 
	\end{cases}
\end{equation}

\noindent
By Step 1 and the boundedness of the vector field following from Lemma \ref{lemma QmQinfty}, the problem is well defined, with $F_{\kappa_0,\fm} \in \cC^{S+1}\big( \psi_{\fm}(B_{\tr_{\kappa_0,+}}^{+}(\ty_{\kappa_0,\fm})) \big)$.

{\bf Step 3) Global $\cC^{S+1}$-continuity.} By Whitney extension Theorem, we extend all the functions $F_{p,\fm}(\psi)$, $p=0,1,...,\kappa_0$, from their domains of definition to global functions in $\cC_{0}^{S+1}(\R)$. For sake of simplicity in the notation, we keep denoting the extensions by $F_{p,\fm}(\psi)$. Note that, by Step 1 and the construction of the Cauchy problems in Step 2, for any $p=0,1,....,\kappa_0$ and any $y\in\tI_{p}$, we have 
\begin{equation}
	(\pa_\psi F_{p,\fm})(\psi_{\fm}(y)) = Q_{\fm}(y)\,.
\end{equation}
By the smoothness of $Q_{\fm}(y)$ and the choice of the initial values in the Cauchy problems, we obtain that both \eqref{F_Q_gamma} and \eqref{cont Fp critical pts} hold. This concludes the proof of the theorem. 
\end{proof}

\begin{rem}
	During the proof of Theorem \ref{nonlin_eq}, for $p=0,1,...,\kappa_0-1$ we constructed the nonlinearities $F_{p,\fm}(\psi) = -\tE^2 \psi + Z_{p,\fm}(\psi)$  by defining Cauchy problems for $Z_{p,\fm}(\psi)$, whereas for $p=\kappa_0$ we directly consider the Cauchy problem for $F_{\kappa_0,\fm}(\psi)$. There is no conceptual difference between the two kinds of constructions. The reason behind this choice is purely expository: we just wanted to highlight that the nonlinearity $F_{p,\fm}(\psi)$ when $p < \kappa_0$ is a perturbation of the linear function $-\tE^2\psi$. It is also possible to show that, when $|y| \geq \tr + \gamma $, we have $F_{\kappa_0,\fm}(\psi_{\fm}(y)) = 1 + Z_{\kappa_0,\fm}(\psi_{\fm}(y))$ for some small function $Z_{\kappa_0,\fm}(\psi)$. Morally speaking, the nonlinearities constructed in Theorem \ref{nonlin_eq} are slight local modifications of the nonlinearity $F_{\infty}(\psi)$ in \eqref{limit.nonlinODE}.
\end{rem}

The following corollary of Theorem \ref{nonlin_eq} and Lemmata \ref{lemma zero odd der}, \ref{expansion psim crit} will be used at the beginning of Section \ref{sez.nonlin.lin}.

\begin{cor}\label{cor:asimptotics.close.pc}
For any $n=1,...,S$ and for any $p=1,...,\kappa_0$, we have
\begin{equation}\label{around.cp}
	\begin{aligned}
		&(\pa_{\psi}^n F_{p,\fm})(\psi_{\fm}(y)) = (\pa_{\psi}^n F_{p-1,\fm})(\psi_{\fm}(y)) = \fP_{n}(y)\,,
	\end{aligned}
\end{equation}
where the function $\fP_{n}(y)$ is independent of the strip index $p$. Moreover, there exists $\bar\delta=\bar\delta(p,\fm)>0$ small enough such that, for any $\delta\in (0,\bar\delta)$, the function $\fP_{n}(y)$ satisfies the expansions
\begin{equation}\label{expan.around.cp}
    \fP_{n}(\ty_{p,\tm}+\delta) = \sum_{k=0}^{S-n} \frac{\fp_{2k}^{(n)}}{(2k)!}\delta^{2k} + \frac{\fp_{2(S-n)+1}^{(n)}}{(2(S-n)+1)!}\delta^{2(S-n)+1} + o(|\delta|^{2(S-n+1)})\,.
\end{equation}
\end{cor}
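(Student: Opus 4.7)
The plan is to define $\fP_n(y)$ by a recursive procedure in $y$ that makes no reference to the stripe index $p$, and then to identify it with $(\pa_\psi^n F_{p,\fm})(\psi_\fm(y))$ on each interval $\tI_p$. Setting $\fP_1(y):=Q_\fm(y)$, which equals $(\pa_\psi F_{p,\fm})(\psi_\fm(y))$ for every $p$ by \eqref{F_Q_gamma}, I differentiate the identity $\fP_n(y)=(\pa_\psi^n F_{p,\fm})(\psi_\fm(y))$ in $y$ via the chain rule to obtain $\fP_n'(y)=\fP_{n+1}(y)\,\psi_\fm'(y)$, and then recursively define
\begin{equation*}
\fP_{n+1}(y):=\fP_n'(y)/\psi_\fm'(y)
\end{equation*}
at every $y$ where $\psi_\fm'(y)\ne 0$. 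The right-hand side is manifestly independent of $p$, and by Lemma \ref{lemma monotono}$(iii)$ the only zeros of $\psi_\fm'$ in $[-1,1]$ are the critical points $\pm\ty_{p,\fm}$; hence $\fP_n$ is well defined off this finite set and coincides with $(\pa_\psi^n F_{p,\fm})(\psi_\fm(y))$ on the interior of each $\tI_p$.

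The main technical step is to show that $\fP_n$ extends continuously through every critical point $\ty_{p,\fm}$, with the expansion \eqref{expan.around.cp}. By Lemma \ref{lemma zero odd der} and Lemma \ref{expansion psim crit}, near $\ty_{p,\fm}$ the functions $Q_\fm$ and $\psi_\fm-\psi_\fm(\ty_{p,\fm})$ are polynomials in $u=\delta^2$ up to a high-order odd-breaking correction,
\begin{equation*}
Q_\fm(\ty_{p,\fm}+\delta)=P_Q(\delta^2)+c_Q\,\delta^{2S+1}+o(|\delta|^{2(S+1)})\,,
\end{equation*}
\begin{equation*}
\psi_\fm(\ty_{p,\fm}+\delta)-\psi_\fm(\ty_{p,\fm})=P_\psi(\delta^2)+c_\psi\,\delta^{2S+3}+o(|\delta|^{2(S+2)})\,,
\end{equation*}
with $P_Q$, $P_\psi$ polynomials in $u$ and $P_\psi'(0)=\tfrac12\,\psi_\fm''(\ty_{p,\fm})\ne 0$ by Lemma \ref{lemma monotono}$(iii)$. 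Differentiating in $\delta$ and extracting the common linear factor $\delta$,
\begin{equation*}
\fP_2(\ty_{p,\fm}+\delta)=\frac{Q_\fm'(\ty_{p,\fm}+\delta)}{\psi_\fm'(\ty_{p,\fm}+\delta)}=\frac{P_Q'(\delta^2)+O(|\delta|^{2S-1})}{P_\psi'(\delta^2)+O(|\delta|^{2S+1})}\,,
\end{equation*}
which, thanks to $P_\psi'(0)\ne 0$, is a smooth function of $\delta^2$ plus an odd-breaking remainder, yielding \eqref{expan.around.cp} at $n=2$. Iterating the operation $f\mapsto f'/\psi_\fm'$, the common linear factor $\delta$ cancels at each step and the guaranteed even-polynomial part shrinks by one monomial while the odd-breaking term drops by two powers in $\delta$; induction on $n$ then produces \eqref{expan.around.cp} for all $n=1,\dots,S$.

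The identification \eqref{around.cp} at a critical point $\ty_{p,\fm}$ follows because both $(\pa_\psi^n F_{p,\fm})(\psi_\fm(y))$ and $(\pa_\psi^n F_{p-1,\fm})(\psi_\fm(y))$ are continuous functions of $y$ (since $F_{p,\fm},F_{p-1,\fm}\in\cC^{S+1}$) that coincide with the common $\fP_n$ on punctured neighborhoods of $\ty_{p,\fm}$; continuity forces agreement at the critical point itself. The main obstacle is the algebraic bookkeeping during the induction: at each iteration one must carefully verify how the polynomial structure in $\delta^2$ in the numerator and denominator cancels down, and how the first non-forced odd derivatives $Q_\fm^{(2S+1)}(\ty_{p,\fm})$ and $\psi_\fm^{(2S+3)}(\ty_{p,\fm})$ propagate through the recursion to produce the odd-breaking coefficient. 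The non-degeneracy $\psi_\fm''(\ty_{p,\fm})\ne 0$ is the crucial ingredient ensuring that every division by $\psi_\fm'/\delta$ remains a regular operation throughout the induction and that the even coefficients $\fp_{2k}^{(n)}$ can be computed explicitly in terms of even Taylor coefficients of $Q_\fm$ and $\psi_\fm$ at $\ty_{p,\fm}$ divided by suitable powers of $\psi_\fm''(\ty_{p,\fm})$.
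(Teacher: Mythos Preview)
Your proposal is correct and follows essentially the same approach as the paper's own proof: both set $\fP_1=Q_\fm$ via \eqref{F_Q_gamma}, define $\fP_{n+1}=\fP_n'/\psi_\fm'$ recursively, and run induction on $n$ using the even-in-$\delta$ expansions of $Q_\fm$ and $\psi_\fm$ at the critical points (Lemmata \ref{lemma zero odd der} and \ref{expansion psim crit}) together with the non-degeneracy $\psi_\fm''(\ty_{p,\fm})\neq 0$. Your explicit remark on the continuous extension of $\fP_n$ through the critical points via the $\cC^{S+1}$-regularity of $F_{p,\fm}$ is a small addition that the paper leaves implicit.
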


\begin{proof}
 We argue by induction. For $n=1$,  we have that \eqref{around.cp} and \eqref{expan.around.cp} hold true by \eqref{F_Q_gamma} and Lemma \ref{lemma zero odd der}, setting $\fP_{0}(y):=Q_{\fm}(y)$. We now assume by induction that the claim holds for a fixed $n\in\{1,..,S-1\}$ and we show it for $n+1$. By differentiating  \eqref{F_Q_gamma} iteratively in $y$, we have
\begin{equation}\label{peppa 1}
	(\pa_{\psi}^{n+1} F_{p,\fm})(\psi_{\fm}(y)) = \frac{1}{\psi_{\fm}'(y)}  \pa_y \big( (\pa_{\psi}^n F_{p,\fm})(\psi_{\fm}(y)) \big)\,,
\end{equation}
with similar formula for $F_{p-1,\tm}$.
By the induction assumption, $(\pa_{\psi}^n F_{p,\fm})(\psi_{\fm}(y))$ and $(\pa_{\psi}^n F_{p-1,\fm})(\psi_{\fm}(y))$ satisfy \eqref{around.cp} at the step $n$. This proves \eqref{around.cp} at the step $n+1$, with $\fP_{n+1}(y)$ given by the right hand side of \eqref{peppa 1}. Moreover, by the induction assumption, the expansion in \eqref{expan.around.cp} holds at the step $n$ and we compute
    \begin{align}
         & \pa_y \big( (\pa_{\psi}^n F_{p,\fm})(\psi_{\fm}(y)) \big) = \pa_y \fP_{n}(y) \label{peppa 2} \\
        & = \tfrac{\fp_{2}^{(n)}}{2}(y-\ty_{p,\tm}) + \sum_{k=2}^{S-n}\tfrac{\fp_{2k}^{(n)}}{(2k-1)!}(y-\ty_{p,\tm})^{2k-1} \notag \\
        & \quad  + \tfrac{\fp_{2(S-n)+1}^{(n)}}{(2(S-n))!}(y-\ty_{p,\tm})^{2(S-n)} + o(|y-\ty_{p,\tm}|^{2(S-n)+1})\notag \\
        & = \tfrac{\fp_{2}^{(n)}}{2}(y-\ty_{p,\tm})\Big( 1 + \sum_{k=1}^{S-n-1}\tfrac{\wt\fp_{2k}^{(n)}}{(2k)!}(y-\ty_{p,\tm})^{2k}\notag  \\ 
        & \quad  + \tfrac{\wt\fp_{2(S-n)-1}^{(n)}}{(2(S-n)-1)!}(y-\ty_{p,\tm})^{2(S-n)-1} + o(|y-\ty_{p,\tm}|^{2(S-n)})  \Big)\,, \quad \wt\fp_{j}^{(n)}:= \tfrac{2\,\fp_{j+2}^{(n)}}{(j+1)\fp_{2}^{(n)}}\,. \notag
    \end{align}
By Lemma \ref{expansion psim crit}, a similar computation leads to
\begin{equation}
    \begin{aligned}
        \psi_{\tm}'(y) & = \tfrac{\psi_{\fm}''(\ty_{p,\tm})}{2} (y-\ty_{p,\tm})\Big( 1 + \sum_{k=1}^{S} \tfrac{\wt\psi_{\tm,2k}}{(2k)!}(y-\ty_{p,\tm})^{2k} \\
        & \quad + \tfrac{\wt\psi_{\tm,2S+1}}{(2S+1)!}(y-\ty_{p,\tm})^{2S+1}+o(|y-\ty_{p,\tm}|^{2(S+1)}) \Big)\,, \quad \wt\psi_{\tm,j}:=\tfrac{2\psi_{\tm}^{(j+2)}(\ty_{p,\tm})}{(j+1)\psi_{\tm}''(\ty_{p,\tm})}\,.
    \end{aligned}
\end{equation}
By Lemma \ref{lemma monotono}-$(iii)$, we get that $\psi_{\tm}'(y)$ is invertible for $y$ sufficiently close to $\ty_{p,\tm}$, with $(\psi_{\tm}')^{-1}$ having a similar expansion as above, with different coefficients provided by the Neumann series. Combining such expansion in \eqref{peppa 1} together with \eqref{peppa 2}, the claim in \eqref{expan.around.cp} holds at the step $n+1$. This concludes the proof.
\end{proof}

\subsection{Spectral analysis of the linear operator}\label{subsec.spectrum}

We now want to study the spectrum of the operator $\cL_{\fm} := - \partial_y^2 + Q_{\fm}(y)$. We shall emphasize that this linear operator depends on the parameter $\tE \in [\tE_1,\tE_2]$, with $1 < \tE_1 < \tE_2$, hence we often write $\cL_{\fm} \equiv \cL_{\fm}(\tE)$ and $Q_{\fm}(y) \equiv Q_{\fm}(\tE;y)$. We shall prove that $\cL_{\fm}(\tE)$ has a finite number of negative eigenvalues, which we will use them in Section \ref{deg.KAM} in order to impose some Diophantine conditions by cutting away some resonance zones in the parameter space $[\tE_1,\tE_2]$. Such property will be inferred from the limit operator $\cL_{\infty}=\cL_{\infty}(\tE)$.  
%

\begin{prop}\label{L_operator}
	The Schrödinger  operator $\cL_{\fm}:=-\pa_y^2 +Q_{\fm}(y) $, with $Q_{\fm}(y)$ as in \eqref{Qgamma}, is self-adjoint in $L^2_0([-1,1])$ on the domain 
$$D(\cL_{\fm}):= \big\{\phi\in H_0^1([-1,1]) \, : \, \phi(y)=\phi(-y) \big\}\,,$$
with a countable $L^2$-basis of eigenfunctions $(\phi_{j,\fm}(y))_{j\in\N}\subset \cC^{\infty}[-1,1]$ corresponding to the  eigenvalues $(\mu_{j,\fm})_{j\in\N}$. Moreover, under the constrain \eqref{constrain},
	with respect to the order $\mu_{1,\fm} < \mu_{2,\fm} < ...  < \mu_{j,\fm} < ...$, 
	there exists $\bar\fm= \bar\fm(\tE_1,\tE_2,\kappa_0)\gg 1$, possibly larger than the threshold in Proposition \ref{proxim.couette}, such that, for any $\fm\geq \bar\fm$, the first $\kappa_0$ eigenvalues are strictly negative and larger than $-\tE^2$, whereas all the others are strictly positive: we write
	\begin{equation}
		\mu_{j,\fm}=\begin{cases}
			-\lambda_{j,\fm}^2\in (-\tE^2,0) & j=1,... ,\kappa_0\,,\\
			\lambda_{j,\fm}^2>0 & j\geq \kappa_0+1\,.
		\end{cases}
	\end{equation}
	In particular,  for any $j=1,...,\kappa_0$, we have that $\lambda_{j,\fm}$ is close to $\lambda_{j,\infty}$, with the latter being the $j$-th root out of $\kappa_0$ of the transcendental equation in the region $\lambda\in(0,\tE)$
	\begin{equation}\label{secular_0}
		\fF(\lambda):= \lambda \cos\big(\tr \sqrt{\tE^2-\lambda^2}\big)\coth((1-\tr)\lambda) - \sqrt{\tE^2-\lambda^2}\sin\big(\tr \sqrt{\tE^2-\lambda^2}\big) = 0 \,.
\end{equation}
\end{prop}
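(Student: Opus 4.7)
The plan is first to set up the Sturm--Liouville framework for $\cL_\fm$, next to solve the eigenvalue problem explicitly for the limit operator $\cL_\infty$, and finally to transfer the conclusions to $\cL_\fm$ by a perturbation argument. Since $Q_\fm \in L^\infty([-1,1])$ uniformly in $\fm$ by Lemma~\ref{lemma QmQinfty}, the operator $\cL_\fm=-\pa_y^2+Q_\fm$ with Dirichlet boundary conditions is a bounded self-adjoint perturbation of $-\pa_y^2$, hence self-adjoint with compact resolvent on $L^2([-1,1])$. Evenness of $Q_\fm$ means that the subspace of even $H^1_0$-functions is invariant, so Sturm--Liouville theory on this invariant subspace provides a countable $L^2$-orthonormal basis of eigenfunctions $(\phi_{j,\fm})_{j\in\N}$ with simple real eigenvalues $\mu_{1,\fm}<\mu_{2,\fm}<\cdots\to+\infty$; the $\cC^\infty$-regularity follows by elliptic bootstrap from the analyticity of $Q_\fm$.

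Next I would analyze explicitly $\cL_\infty=-\pa_y^2+Q_\infty$ with $Q_\infty=-\tE^2\chi_{(-\tr,\tr)}$. For an even eigenfunction with negative eigenvalue $\mu=-\lambda^2$ and $0<\lambda<\tE$, separating the ODE into the inner region (oscillatory, with frequency $\sqrt{\tE^2-\lambda^2}$) and the outer region (exponential, with decay rate $\lambda$) and imposing evenness plus the Dirichlet condition at $\pm 1$ forces
\begin{equation}
    \phi(y)=\begin{cases} A\cos\bigl(y\sqrt{\tE^2-\lambda^2}\bigr), & |y|<\tr, \\ B\sinh\bigl(\lambda(1-|y|)\bigr), & \tr<|y|<1. \end{cases}
\end{equation}
The $\cC^1$-matching at $y=\tr$ produces a $2\times 2$ homogeneous linear system in $(A,B)$ whose non-trivial solvability is precisely the secular equation $\fF(\lambda)=0$ in \eqref{secular_0}; a parallel separation of variables for $\mu\geq 0$ yields purely positive eigenvalues forming a discrete sequence tending to $+\infty$.

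The core of the argument is then to count the roots of $\fF$ in $(0,\tE)$ under \eqref{constrain}. Substituting $u:=\tr\sqrt{\tE^2-\lambda^2}$, the equation reduces to $u\tan u=\tr\lambda\coth((1-\tr)\lambda)$, with the right-hand side smooth, strictly positive, and non-decreasing in $\lambda$. As $\lambda$ increases from $0$ to $\tE$, $u$ decreases monotonically from $(\kappa_0+\tfrac14)\pi$ to $0$, and $u\tan u$ is positive exactly on the branches $u\in(j\pi,(j+\tfrac12)\pi)$, $j=0,1,\ldots,\kappa_0-1$, separated by negative branches on which no solution is possible. An intermediate-value argument on each positive branch, combined with the strict monotonicity of both sides in $\lambda$ between consecutive poles, yields exactly $\kappa_0$ roots, corresponding to the $\kappa_0$ negative eigenvalues $-\lambda_{j,\infty}^2$. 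The Rayleigh-quotient bound $\mu\geq-\|Q_\infty\|_\infty=-\tE^2$, together with the observation that a minimizer attaining $-\tE^2$ would have to vanish identically on $(\tr,1)\cup(-1,-\tr)$ and thus be zero on $[-1,1]$ by $\cC^1$-matching, gives the strict lower bound $-\lambda_{j,\infty}^2>-\tE^2$.

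Finally, to transfer these properties from $\cL_\infty$ to $\cL_\fm$ for $\fm\geq\bar\fm$, I would invoke the min--max principle combined with Lemma~\ref{lemma QmQinfty}. The quadratic-form perturbation $\langle(Q_\fm-Q_\infty)\phi,\phi\rangle$ is bounded by $\|Q_\fm-Q_\infty\|_{L^1}\|\phi\|_{L^\infty}^2\lesssim\|Q_\fm-Q_\infty\|_{L^1}\|\phi\|_{H^1_0}^2$ via the embedding $H^1_0\hookrightarrow L^\infty$, and this tends to zero uniformly in $\tE\in[\tE_1,\tE_2]$ as $\fm\to\infty$. Consequently each $\mu_{j,\fm}\to\mu_{j,\infty}$ uniformly on the compact parameter interval, so for $\fm$ sufficiently large (depending only on $\tE_1,\tE_2,\kappa_0$) the first $\kappa_0$ eigenvalues remain strictly inside $(-\tE^2,0)$ and the rest stay strictly positive; setting $\lambda_{j,\fm}^2:=|\mu_{j,\fm}|$ concludes, with $\lambda_{j,\fm}\to\lambda_{j,\infty}$ for $j=1,\ldots,\kappa_0$ by continuity of the implicit-function solution of $\fF=0$. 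The main obstacle is the precise counting under \eqref{constrain}: one must verify with care that no extra root appears on the ``partial'' positive branch $u\in(\kappa_0\pi,(\kappa_0+\tfrac14)\pi]$ near $\lambda=0$, a delicate point because both sides of the transcendental equation are of comparable size there, necessitating a careful asymptotic analysis dictated by the precise offset $\tfrac14$ in the constraint; everything else is routine perturbation-of-eigenvalues and elliptic regularity.
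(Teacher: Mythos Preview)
Your overall architecture (Sturm--Liouville setup, explicit analysis of the limit operator $\cL_\infty$, transfer by min--max) coincides with the paper's. The paper's Step~2 (the explicit eigenvalue problem for $\cL_\infty$ leading to \eqref{secular_0}) and Step~3 (convergence $\mu_{j,\fm}\to\mu_{j,\infty}$ via the min--max characterization and the $L^1$--convergence of $Q_\fm$ to $Q_\infty$ combined with $H^1_0\hookrightarrow L^\infty$) match your proposal almost verbatim. Two differences are worth noting. For the root count of the secular equation you propose a direct branch-by-branch intermediate--value argument, whereas the paper does not carry out this count inside the proof but defers it to an asymptotic expansion of the roots (Lemma~\ref{lemma.asympt}); both routes are legitimate, and your observation that the ``partial'' branch near $\lambda=0$ is the only delicate place is exactly the right diagnosis.

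There is however one genuine ingredient you are missing. Your transfer argument says: since each $\mu_{j,\fm}\to\mu_{j,\infty}$, and the first $\kappa_0$ limits are negative while $\mu_{\kappa_0+1,\infty}>0$, the same sign pattern persists for $\fm$ large. But you never verify that $\mu_{\kappa_0+1,\infty}$ is bounded away from $0$ uniformly in $\tE\in[\tE_1,\tE_2]$; the separation-of-variables computation for $\mu=0$ yields the matching condition $\cot(\tE\tr)=\tE(1-\tr)$, which under \eqref{constrain} becomes $\tE-(\kappa_0+\tfrac14)\pi=1$, so $0$ can be an eigenvalue of $\cL_\infty$ for a specific value of $\tE$ and your uniform gap may collapse. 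The paper circumvents this entirely with an independent Step~1: it exploits that the odd function $\psi_\fm'$ is a generalized null solution of $\cL_\fm$ with exactly $\kappa_0+1$ zeros $0=\ty_{0,\fm}<\ty_{1,\fm}<\ldots<\ty_{\kappa_0,\fm}$ in $[0,1)$ (Lemma~\ref{lemma monotono}), while by Sturm oscillation the even eigenfunction $\phi_{\kappa_0+1,\fm}$ has $\kappa_0$ nodes in $(0,1)$. A Wronskian/Sturm comparison between $\psi_\fm'$ and $\phi_{\kappa_0+1,\fm}$ on an interval delimited by consecutive zeros then forces a sign contradiction if one assumes $\mu_{\kappa_0+1,\fm}\le 0$. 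This argument is carried out directly for $\cL_\fm$ (not $\cL_\infty$), is uniform in $\tE$, and is what guarantees the strict positivity of the remaining spectrum; it is the main idea your proposal lacks.
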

\begin{rem}\label{rem.secular}
	The existence of the $\kappa_0$ roots of the equation \eqref{secular_0} is established in Lemma \ref{lemma.asympt}, where an asymptotic for such zeroes when $\tE\to\infty$ is provided as well.
\end{rem}
\begin{proof}
	We split the proof in several steps.
	\\[1mm]
	\indent {\bf Step 1) 0 is not an eigenvalue.}
	The self-adjointness of $\cL_\fm$ and its spectral resolution follow by standard arguments of functional analysis. The smoothness of the eigenfunctions $(\phi_{j,\fm}(y))_{j\in\N}$ follows from standard regularity properties for the Sturm-Liouville problem, since $Q_{\fm}\in\cC^{\infty}[-1,1]$ (it is actually analytic in $[-1,1]$). 
	
	We claim now that $0$ is not an eigenvalue. Indeed, if we know that $\mu_{\kappa_0,\fm}<0$, we claim that $\mu_{\kappa_0+1,\fm}>0$. We argue by contradiction and we assume that $\mu_{\kappa_0+1,\fm}\leq 0$. We have that the odd function $\psi_\fm'(y)$ is a generalized eigenfunction for $\cL_\fm=-\pa_y^2 +Q_\fm(y)$, since $\cL_\fm \psi_{\fm}'=0$ by \eqref{Qgamma}. Combining it with $\cL_{\fm}\phi_{\kappa_0+1,\fm}= \mu_{\kappa_0+1,\fm}\phi_{\kappa_0+1,\fm}$, we get
	\begin{equation}\label{ferrari1}
		\mu_{\kappa_0+1,\fm} \psi_{\fm}' (y)\phi_{\kappa_0+1,\fm} (y)= \big( \psi_{\fm}''(y) \phi_{\kappa_0+1,\fm}(y) - \psi_{\fm}' (y)\phi_{\kappa_0+1,\fm}'(y) \big)'\,.
	\end{equation}
	Without loss of generality, we assume $\phi_{\kappa_0+1,\fm}(0)>0$ and, by symmetry, we work on $y\in[0,1]$.
	In this region, by Lemma \ref{lemma monotono}-(iii), $\psi_{\fm}'(y)$ vanishes at the $\kappa_0+1$ points $\ty_{0,\fm}=0$, $(\ty_{p,\fm})_{p=1,...,\kappa_0}$. At the same time, the even eigenfunction $\phi_{\kappa_0+1,\fm}(y)$ vanishes at $\kappa_0$ nodes in $(0,1)$ (see for instance Theorem 9.4 in \cite{teschl}). We deduce that one of the following cases have to happen:
	\begin{itemize}
		\item All the points $\ty_{p,\fm}$, $p=1,...,\kappa_0$, are also nodes for $\phi_{\kappa_0+1,\fm}$: $\psi_{\fm}'(\ty_{p,\fm})=\phi_{\kappa_0+1,\fm}(\ty_{p,\fm})=0$;
		\item There exists at least one $p=1,...,\kappa_0$ such that $\ty_{p,\fm}$ is not a zero of $\phi_{\kappa_0+1,\fm}$ and lies between two if its zeroes, say $\bar\ty_1 $ and $\bar\ty_2$, with $0<\bar\ty_1 < \ty_{p,\fm}<\bar\ty_2\leq 1$.
	\end{itemize}
	We assume that the second case holds. Furthermore, without any loss of generality, we also assume that 
	$\phi_{\kappa_0+1,\fm}(y)>0$ and $\psi_{\fm}'(y)<0$ for any $y\in (\bar\ty_1,\ty_{p,\fm})$.
	We integrate \eqref{ferrari1} on $[\bar\ty_1,\ty_{p,\fm}]$ and we get
	\begin{equation}\label{ferrari2}
		\begin{aligned}
			\mu_{\kappa_0+1,\fm}&  \int_{\bar\ty_1}^{\ty_{p,\fm}}  \psi_{\fm}'(y) \phi_{\kappa_0+1,\fm}(y)\wrt y  = \big[  \psi_{\fm}''(y) \phi_{\kappa_0+1,\fm}(y) - \psi_{\fm}'(y) \phi_{\kappa_0+1,\fm}'(y) \big]_{\bar\ty_1}^{\ty_{p,\fm}} \\
			& = \psi_{\fm}''(\ty_{p,\fm})\phi_{\kappa_0+1,\fm}(\ty_{p,\fm}) + \psi_{\fm}'(\bar\ty_1)\phi_{\kappa_0+1,\fm}(\bar\ty_1)\,.
		\end{aligned}
	\end{equation}
	By construction, $\phi_{\kappa_0+1,\fm}(\ty_{p,\fm})>0$ and $\phi_{\kappa_0+1,\fm}'(\bar\ty_1)>0$, whereas $\psi_{\fm}'(\bar\ty_1)\leq0$ and $\psi_{\fm}''(\ty_{p,\fm})>0$. We conclude that the right hand side of \eqref{ferrari2} is strictly negative and the integral on the left hand side is strictly negative by construction as wel. But we assumed at the beginning that $\mu_{\kappa_0+1,\fm}\leq 0$, therefore we have reached a contradiction and the claim is proved.
	\\[1mm]
	\indent {\bf Step 2) Negative eigenvalues in the limit case $\fm\to\infty$.}
	We consider first the limit case $\fm\to \infty$, with $Q_\infty(y)$ given in \eqref{Qgamma}.  Let $\lambda>0$. We look for solutions $\phi\in \cC^1([-1,1])$ of the eigenvalue problem
	\begin{equation}\label{bc_eigen}
		\begin{cases}
			\cL_{\infty} \phi(y):-\phi''(y) + Q_\infty(y)\phi(y) = -\lambda^2\phi(y)\,, & y\in[-1,1]\setminus\{\pm\tr\}\,, \\
			\phi(y)=\phi(-y)\,, \quad \phi(-1)=\phi(1)=0\,, \\
			\lim_{y\to\pm\tr^{-}}\phi(y)=\lim_{y\to\pm\tr^{+}}\phi(y)\,, \\ \lim_{y\to\pm\tr^{-}}\phi'(y)=\lim_{y\to\pm\tr^{+}}\phi'(y)\,.
		\end{cases}
	\end{equation}
	The conditions $\phi(y)=\phi(-y)$ and $\phi\in\cC^1$ fail the problem to be solved when $\lambda\geq \tE$. On the other hand,  for $0<\lambda<\tE$, the solutions are given by
	\begin{equation}\label{eigenf.gen}
		\phi(y)=\begin{cases}
			-c_1 \sinh(\lambda(1+y)) & -1\leq y<\tr\,, \\
			c_2\cos(\varsigma(\lambda)y) & |y|\leq \tr\,,\\
			c_1 \sinh(\lambda(1-y)) & \tr<y\leq 1\,,
		\end{cases}
	\end{equation}
	where $\varsigma(\lambda):=\sqrt{\tE^2-\lambda^2}$. The last two conditions in \eqref{bc_eigen} at $y=\tr$ translate into
	\begin{equation}
		\begin{cases}
			c_1 \sinh(\lambda(1-\tr))-c_2\cos(\varsigma(\lambda)\tr)=0\,,\\
			c_1 \lambda\cosh(\lambda(1-\tr))-c_2\varsigma(\lambda)\sin(\varsigma(\lambda)\tr)=0\,.
		\end{cases}
	\end{equation}
	A nontrivial solution $(c_1,c_2)\in\R^2\setminus\{0\}$ exists only when $\lambda$ solves
	\begin{equation}
		\varsigma(\lambda)\sin(\tr\varsigma(\lambda))\sinh((1-\tr)\lambda)=\lambda\cos(\tr\varsigma(\lambda))\cosh((1-\tr)\lambda)\,,
	\end{equation}
	which is equivalent to \eqref{secular_0}. Under the constrain \eqref{constrain}, equation \eqref{secular_0} has exactly $\kappa_0$ distinct zeroes in the interval $\lambda\in(0,\tE)$, see Remark \ref{rem.secular} and Lemma \ref{lemma.asympt}. We denote these zeroes by $\lambda_{j,\infty}=\lambda_{j,\infty}(\tE)$ for any $j=1,...,\kappa_0$. The negative eigenvalues of $\cL_{\infty}$ are then given by $\mu_{j,\infty}=\mu_{j,\infty}(\tE):=-\lambda_{j,\infty}^2(\tE)$, with corresponding eigenfunctions $\phi_{j,\infty}(y)=\phi_{j,\infty}(\tE;y)$ given in \eqref{eigenf.gen} with $\lambda=\lambda_{j,\infty}(\tE)$ and $c_1,c_2$ chosen as normalizing constants in $L^2$. We also have
	\begin{equation}
		\mu_{j,\infty} = \fB_{\infty} (\phi_{j,\infty})\,, \quad \fB_{\infty}(\psi) = (\cL_{\infty} \psi, \psi) _{L^2}\,.
	\end{equation}
	\indent {\bf Step 3) Negative eigenvalues when $\fm\gg 1$.}
	We now analyse the case $\fm\gg 1$. 
	In Step 1 we showed that, assuming only $\kappa_0$ negative eigenvalues of $\cL_{\fm}$, the rest of the spectrum is strictly positive. We now prove that, for any $j=1,...,\kappa_0$,
	\begin{equation}\label{vicinanza autovalori in gamma}
		\sup_{\tE \in [\tE_1, \tE_2]} |\mu_{j, \fm}(\tE) - \mu_{j, \infty}(\tE)| \to 0 \quad \text{as} \quad \tm \to \infty\,,
	\end{equation}
	where $(\mu_{j, \infty})_{j=1,...,\kappa_0}$ are all the negative eigenvalues of the limit operator $\cL_{\infty}:=-\pa_y^2 + Q_{\infty}(y)$ that we characterized in Step 2.
	For any $j = 1,..., \kappa_0$, let us denote by ${\cal E}_{j- 1}$ the set of all finite dimensional subspaces of dimension $j - 1$ of $H_0^1(- 1, 1)$. By the Min-Max Theorem (for instance, see \cite{teschl}), we have that the eigenvalues $\mu_{1,\fm} < \mu_{2,\fm} < ... < \mu_{\kappa_0,\fm}$ of $\cL_\fm$ satisfy the variational formulation, for any $j =1,...,\kappa_0$,
	\begin{equation}\label{muj.gamma.minmax}
	\begin{aligned}
	    	&\mu_{j,\fm} := \sup_{E \in {\cal E}_{j - 1}} \inf\big\{ \fB_{\fm}(\psi) \, : \, \psi\in E^\bot \cap H_0^1(-1, 1), \ \| \psi \|_{L^2} = 1 \big\} \,, \\
	    	& \fB_{\fm}(\psi):= (\cL_\fm \psi,\psi)_{L^2}\,,
	\end{aligned}
	\end{equation}
	Each eigenfunction $\phi_{j,\fm}(y)$ is the solution to the max-min variational problems in \eqref{muj.gamma.minmax}.
	Let $E_j = {\rm span}\{ \phi_{1, \infty}, \ldots, \phi_{j - 1, \infty} \} \in {\cal E}_{j - 1}$ and let $\phi_{j, \infty} \in E_j^\bot \cap H_0^1(- 1, 1)$ where, for $i =1,...,\kappa_0$, the function $\phi_{i, \infty}$ is the i-th eigenfunction of the operator $\cL_{\infty}$.  By standard theory for Sturm Liouville operators and by the Sobolev embedding, for any $\tE \in [\tE_1, \tE_2]$, one has that $\phi_{j, \infty} \equiv \phi_{j, \infty}(\tE)$ satisfies  
	\begin{equation}\label{psi j 0 L infty}
	\| \phi_{j, \infty} \|_{L^\infty} \lesssim \| \phi_{j, \infty} \|_{H^1} \leq C(\tE_1, \tE_2, j), 
	\end{equation}
	for some constant $C(\tE_1, \tE_2, j) > 0$. We compute
	\begin{equation}\label{ferrarino 1}
	\begin{aligned}
	\fB_{\fm}(\phi_{j, \infty})&=({\cal L}_\fm \phi_{j, \infty}, \phi_{j, \infty})_{L^2}   = (\cL_{\infty}\phi_{j, \infty}, \phi_{j, \infty})_{L^2} + \big( ({\cal L}_\fm - \cL_{\infty}) \phi_{j, \infty}\,,\, \phi_{j, \infty} \big)_{L^2} \\
	& \leq \lambda_{j, \infty} + |c_j(\fm, \tE)|\,, \quad c_j(\fm, \tE)  := \big( (Q_\fm(y) - Q_\infty(y)) \phi_{j, \infty}\,,\, \phi_{j, \infty} \big)_{L^2}\,.
	\end{aligned}
	\end{equation}
	By a similar computation, we also have
	\begin{equation}\label{ferrarino 2}
	\lambda_{j, \infty} = \fB_{\infty}(\phi_{j, \infty}) \leq \fB_{\fm}(\phi_{j, \infty}) + |c_j(\fm, \tE)|\,.
\end{equation}
	By taking the infimum over $\psi \in E_j^\bot$ with $\| \psi \|_{L^2} = 1$ one gets the two inequalities 
	\begin{equation}\label{ferrarino 3}
	\begin{aligned}
	\inf\{ \fB_{\fm}(\psi) : \psi \in E_j^\bot, \ \| \psi \|_{L^2} = 1 \}  \leq  \lambda_{j, \infty} + |c_j(\fm, \tE)|\,, \\
	\lambda_{j, \infty} \leq \inf\{ \fB_{\fm}(\psi) : \psi \in E_j^\bot, \ \| \psi \|_{L^2} = 1 \} + |c_j(\fm, \tE)|
	\end{aligned}
	\end{equation}
	and then, by taking the supremum over $E \in {\cal E}_{j - 1}$, one obtains that 
	$$
	\mu_{j, \fm}(\tE) \leq \mu_{j, \infty}(\tE) + |c_j(\fm, \tE)|\,, \quad \mu_{j, \infty}(\tE) \leq \mu_{j, \fm}(\tE) + |c_j(\fm, \tE)|
	$$
	namely 
	$$
	|\mu_{j, \fm}(\tE) - \mu_{j, \infty}(\tE)| \leq |c_j(\fm, \tE)|
	$$
	It remains to estimate the term $c_j(\fm, \tE) = \big( (Q_\infty(y) - Q_\fm(y)) \phi_{j, \infty}\,,\, \phi_{j, \infty} \big)_{L^2}$. By the Cauchy-Schwartz inequality, using that $\| \phi_{j, \infty} \|_{L^2} = 1$, one has 
	\begin{equation}
	\begin{aligned}
	\big| \big( &(Q_\infty(y)  - Q_\fm(y)) \phi_{j, \infty}\,,\, \phi_{j, \infty} \big)_{L^2} \big|   \leq \| (Q_\infty(y) - Q_\fm(y)) \phi_{j, \infty} \|_{L^2} \| \phi_{j, \infty} \|_{L^2}  \\& \leq  \| Q_\fm - Q_\infty\|_{L^2}  \| \phi_{j, \infty} \|_{L^\infty} 
	 \stackrel{\eqref{psi j 0 L infty}}{\leq} C(\tE_1, \tE_2, \kappa_0) \| Q_\fm - Q_0\|_{L^2} \to 0 \quad \text{as} \quad \fm \to 0
	\end{aligned}
	\end{equation}
	by Lemma \ref{lemma QmQinfty}, uniformly with respect to $\tE \in [\tE_1, \tE_2]$. Hence, we deduce \eqref{vicinanza autovalori in gamma} and, by fixing $\bar\fm = \bar\fm(\tE_1,\tE_2,\kappa_0)\gg 1$ sufficiently large, for any $\fm\geq \bar\fm$ we get
	$-\tE^2<\mu_{1, \fm}(\tE) < \ldots < \mu_{\kappa_0, \fm}(\tE) < 0$ for any $\tE \in [\tE_1, \tE_2]$. since $-\tE^2<\mu_{1, \infty}(\tE) < \ldots < \mu_{\kappa_0, \infty}(\tE) < 0$. This concludes the proof.
\end{proof}

\begin{rem}
	The estimate \eqref{vicinanza autovalori in gamma} actually holds when $j\geq \kappa_0+1$. The proof is essentially identical and it is here omitted, since we are interested only in the full characterization of the negative spectrum. We also remark that, by refining the result of the $L^p$ convergence in Lemma \ref{lemma QmQinfty}, it is possible to show an explicit rate of convergence of \eqref{vicinanza autovalori in gamma} with respect to $\fm\gg1$.
\end{rem}

\section{The nonlinear elliptic systems with oscillating modes}\label{sez.nonlin.lin}

In the previous section, we constructed the
 stream function $\psi_{\fm}(y)$,  which is a steady solution of the Euler equation \eqref{stat.euler.vort}, that locally solves the second-order nonlinear ODE in Theorem \ref{nonlin_eq}. We now go back to the search of $x$-dependent solutions that are perturbations of the shear equilibrium $\psi_{\fm}(y)$. First, in Proposition \ref{prop.F.etax} we suitably modify the local nonlinearities of Theorem \ref{nonlin_eq}, leading to to the elliptic systems in \eqref{elliptic_eq}. Then, we analyse the linearized systems at the equilibrium $\vf \equiv 0$  and the parametrization of the ``spatial phase space'' based on the solutions of such linearized systems. This choice of coordinates is then used to search the solutions for the nonlinear elliptic system \eqref{elliptic_eq} as zeroes of the nonlinear functional \eqref{F_op} via a Nash-Moser implicit function Theorem, whose statement is provided at the end of the section.

\subsection{Regularization of the nonlinearity}

In Theorem \ref{nonlin_eq} we constructed functions $F_{p,\fm}(\psi)$ for any stripe index $p=0,1,...,\kappa_0$ so that the unperturbed stream function $\psi_{\fm}(y)$ solves the equation \eqref{ell_psi_gamma} on each stripe $\tI_{p}$. The functions are $\cC^{S+1}$-continuous and they satisfy the regularity conditions \eqref{cont Fp critical pts} at the critical points $(\ty_{p,\fm})_{p=1,...,\kappa_0}$. Even without taking derivatives, these conditions are clearly violated by perturbation of $\psi_{\fm}(y)$, in the sense that, for generic functions $\vf(x,y)$, we have
\begin{equation}
	\lim_{y\to\pm\tr^{-}} F_{p-1,\fm}(\psi_{\fm}(y)+\vf(x,y)) \neq \lim_{y\to\pm\tr^{+}} F_{p,\fm}(\psi_{\fm}(y)+\vf(x,y)) \quad \forall\,x\in\R\,.
\end{equation}
The continuity would be recovered for these nonlinearities only we ask $\vf(x,\ty_{p,\fm})\equiv 0$ in $x\in\R$, which is a too strong condition. To this end, we need to modify the nonlinear functions to avoid this issue around the critical points and accommodate small perturbations of $\psi_{\fm}(y)$. We introduce a small parameter $\eta>0$. For any stripe index $p=0,...,\kappa_0$, we define the functions,
\begin{equation}\label{F_eta_reg}
	\begin{aligned}
		F_{p,\eta}(\psi):= F_{p,\fm}(\psi)& +\tfrac12 \chi_{\eta}(\psi - \psi_{\fm}(\ty_{p,\fm})) \big( F_{p-1,\fm}(\psi)- F_{p,\fm}(\psi) \big) \\
		& +\tfrac12 \chi_{\eta}(\psi - \psi_{\fm}(\ty_{p+1,\fm})) \big( F_{p+1,\fm}(\psi)- F_{p,\fm}(\psi) \big)\,,
	\end{aligned}
\end{equation}
with $F_{-1,\fm}:=F_{0,\fm}$, $\ty_{\kappa_0+1,\fm}:=1$, $F_{\kappa_0+1,\fm}:=F_{\kappa_0,\fm}$, and
where the cut-off function $\chi_\eta$ has the following form 
\begin{equation}\label{cutoff rego}
	\begin{aligned}
	& \chi_\eta(\psi) := \chi(\psi/\eta)\,, \quad \chi \in C^\infty(\R), \quad \chi(\psi) = \chi(- \psi)\,, \\
	& \quad 0 \leq \chi \leq 1\,, \quad \chi\equiv 1 \ \text{ on } \ B_{1}(0)\,, \quad \chi\equiv 0 \ \text{ on } \ \R \setminus B_{2}(0)\,, \\
		& \quad \chi'(|\psi|) \leq 0\,, \quad \forall \psi \in \R\,. 
	\end{aligned}
\end{equation}

\begin{prop}\label{prop.F.etax}
	{\bf (Modified local nonlinearities).}
	The following hold:
	\\[1mm]
	\noindent $(i)$ The functions $F_{p,\eta}$ in \eqref{F_eta_reg} are in $\cC_0^{S+1}(\R)$. Moreover, for any stripe index $p$, we have $\| F_{p,\eta}-F_{p,\fm}\|_{L^\infty(\R)} \to 0$  as $\eta\to 0$;
	\\[1mm]
	\noindent $(ii)$  We have $F_{p-1,\eta}=F_{p,\eta}$ on $ B_{\eta}(\psi_{\fm}(\ty_{p,\fm}))$  for any $p=1,...,\kappa_0$. As a consequence, for any sufficiently smooth function $\vf(\bx,y)$ sufficiently small in $L^\infty$
	the regularity conditions
	\begin{equation}\label{cont.Fp.eps.intro}
		\lim_{|y|\to\ty_{p,\fm}^-} \pa_{(\bx,y)}^{n} (F_{p-1,\eta}(\psi_{\fm}(y) +\vf(\bx,y)) )= \lim_{|y|\to\ty_{p,\fm}^+}\pa_{(\bx,y)}^{n}( F_{p,\eta}(\psi_{\fm}(y)+\vf(\bx,y)) )  \,.
	\end{equation}
	are satisfied  for $n\in\N_0^{\kappa_0+1}$, $0\leq |n|\leq S$, and for any  $\bx\in\T^{\kappa_{0}}$, $p=1,...,\kappa_0$;
\\[1mm]
\noindent $(iii)$
For any $n \in \N_0$, with $n \leq  S + 1$, one has 
\begin{equation}\label{stima brutta F p eta}
\sup_{\psi \in \R} |F_{p, \eta}^{(n)}(\psi)| \lesssim \eta^{- n}\,;
\end{equation}
	\noindent $(iv)$  
	There exists $\bar\eta=\bar\eta(\fm,S)>0$ small enough such that, for any $\eta\in [0,\bar\eta]$ and for any $n=0,1,...,S+1$
		\begin{equation}\label{esti close Fpeta}
			\sup_{y\in [-1,1]}\sup_{p=0,1,...,\kappa_0}|F_{p,\eta}^{(n)}(\psi_{\fm}(y)) - F_{p,\fm}^{(n)}(\psi_{\fm}(y)) | \lesssim_{n} \eta^{S+\tfrac32 -n}  \,.
	\end{equation}
\end{prop}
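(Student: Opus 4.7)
Parts $(i)$, $(ii)$, $(iii)$ are direct consequences of \eqref{F_eta_reg} and standard properties of $\chi_\eta$. For $(i)$, $F_{p,\eta}\in\cC_0^{S+1}(\R)$ is inherited from $F_{p,\fm}\in\cC_0^{S+1}(\R)$ (Theorem \ref{nonlin_eq}) and the smoothness of $\chi_\eta$; the uniform convergence $\|F_{p,\eta}-F_{p,\fm}\|_{L^\infty}\to 0$ follows because $F_{p,\eta}-F_{p,\fm}$ is supported on $\{|\psi-\psi_\fm(\ty_{p,\fm})|\leq 2\eta\}\cup\{|\psi-\psi_\fm(\ty_{p+1,\fm})|\leq 2\eta\}$, and on each such set $|F_{p\pm 1,\fm}(\psi)-F_{p,\fm}(\psi)|=O(\eta)$ by continuity together with the matching $F_{p\pm 1,\fm}=F_{p,\fm}$ at the critical values from Theorem \ref{nonlin_eq}. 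For $(ii)$, when $\eta$ is small enough that $B_{2\eta}(\psi_\fm(\ty_{p,\fm}))$ is disjoint from $B_{2\eta}(\psi_\fm(\ty_{p\pm 1,\fm}))$, direct inspection of \eqref{F_eta_reg} gives $F_{p,\eta}(\psi)=F_{p-1,\eta}(\psi)=\tfrac12(F_{p-1,\fm}(\psi)+F_{p,\fm}(\psi))$ on $B_\eta(\psi_\fm(\ty_{p,\fm}))$, and \eqref{cont.Fp.eps.intro} follows because for $\vf$ small in $L^\infty$ and $y$ near $\ty_{p,\fm}$ the argument $\psi_\fm(y)+\vf(\bx,y)$ lies in this ball. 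Part $(iii)$ is Leibniz on \eqref{F_eta_reg} using $|\chi_\eta^{(k)}|\leq \|\chi^{(k)}\|_{L^\infty}\eta^{-k}$.

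The real content is $(iv)$. Setting $\psi_0:=\psi_\fm(\ty_{p,\fm})$ and $G(\psi):=F_{p-1,\fm}(\psi)-F_{p,\fm}(\psi)$, Leibniz in \eqref{F_eta_reg} writes $F_{p,\eta}^{(n)}(\psi)-F_{p,\fm}^{(n)}(\psi)$ as
\begin{equation*}
\tfrac{1}{2}\sum_{k=0}^n \binom{n}{k}\chi_\eta^{(k)}(\psi-\psi_0)\,G^{(n-k)}(\psi) \ + \ (\text{symmetric term at }\psi_\fm(\ty_{p+1,\fm}))\,.
\end{equation*}
Since $\chi_\eta^{(k)}$ is supported in $\{|\psi-\psi_0|\leq 2\eta\}$ with size $\eta^{-k}$, the estimate \eqref{esti close Fpeta} reduces to the pointwise bound $|G^{(m)}(\psi)|\lesssim |\psi-\psi_0|^{S+\frac{3}{2}-m}$ for $\psi=\psi_\fm(y)$ close to $\psi_0$ and $0\leq m\leq S+1$, since each Leibniz term is then $\lesssim \eta^{-k}\cdot\eta^{S+\frac{3}{2}-(n-k)} = \eta^{S+\frac{3}{2}-n}$.

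The main obstacle is proving this refined vanishing of $G$. Corollary \ref{cor:asimptotics.close.pc} already gives $G^{(m)}(\psi_0)=0$ for $m=0,\dots,S$, which alone yields only $|\psi-\psi_0|^{S+1-m}$; the extra half-power comes from \emph{one more} order of local evenness of $\psi_\fm$ at $\ty_{p,\fm}$ than is recorded in Lemma \ref{expansion psim crit}. Using the ODE $\psi_\fm'''=Q_\fm\psi_\fm'$, the computation
\begin{equation*}
\psi_\fm^{(2S+3)}(\ty_{p,\fm}) = \partial_y^{2S}(Q_\fm \psi_\fm')\big|_{\ty_{p,\fm}} = \sum_{j=0}^{2S}\binom{2S}{j} Q_\fm^{(j)}(\ty_{p,\fm})\,\psi_\fm^{(2S-j+1)}(\ty_{p,\fm})
\end{equation*}
has every summand vanishing: for even $j\in\{0,2,\dots,2S\}$ the factor $\psi_\fm^{(2S-j+1)}(\ty_{p,\fm})$ is an odd-order derivative of $\psi_\fm$ of order in $\{1,3,\dots,2S+1\}$, hence zero by Lemma \ref{expansion psim crit}; for odd $j\in\{1,3,\dots,2S-1\}$ the factor $Q_\fm^{(j)}(\ty_{p,\fm})$ is zero by Lemma \ref{lemma zero odd der}. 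Thus $\psi_\fm^{(2S+3)}(\ty_{p,\fm})=0$, so the first nonvanishing odd coefficient in the Taylor expansion of $\psi_\fm''$ at $\ty_{p,\fm}$ sits at order $\delta^{2S+3}$ rather than $\delta^{2S+1}$.

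With this refinement, for $\psi$ slightly above $\psi_0$ (the case $\psi_\fm''(\ty_{p,\fm})>0$; the other is symmetric) there are two preimages $y_p\in\tI_p$ and $y_{p-1}\in\tI_{p-1}$ of $\psi$ under $\psi_\fm$, with $y_p=\ty_{p,\fm}+\delta$ and $y_{p-1}=\ty_{p,\fm}-\delta+\tau$; matching $\psi_\fm(y_p)=\psi_\fm(y_{p-1})$ in the now even-modulo-$\delta^{2S+5}$ expansion of $\psi_\fm(\ty_{p,\fm}+\cdot)$ forces $\tau=O(\delta^{2S+4})$. Since $F_{p,\fm}(\psi)=\psi_\fm''(y_p)$ and $F_{p-1,\fm}(\psi)=\psi_\fm''(y_{p-1})$ by \eqref{ell_psi_gamma}, one obtains $G(\psi)=\psi_\fm''(y_{p-1})-\psi_\fm''(y_p)=O(\delta^{2S+3})$. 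Using $\psi-\psi_0\sim \tfrac{\psi_\fm''(\ty_{p,\fm})}{2}\delta^2$ and the fact that $\delta^2$ is a smooth invertible function of $\psi-\psi_0$ near $0$, one may write $G(\psi)=(\psi-\psi_0)^{S+\frac{3}{2}}\,h(\psi-\psi_0)$ with $h$ smooth near $0$, so Leibniz gives $|G^{(m)}(\psi)|\lesssim (\psi-\psi_0)^{S+\frac{3}{2}-m}$ for $m\leq S+1$, closing the argument.
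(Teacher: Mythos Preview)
Your handling of (i)--(iii) and the Leibniz reduction for (iv) are correct and match the paper. Your computation $\psi_\fm^{(2S+3)}(\ty_{p,\fm})=0$ is also correct and is in fact a real ingredient: the paper does not isolate it, but uses it implicitly through the product form $\psi_\fm'''=Q_\fm\psi_\fm'$ when deriving \eqref{stima bella 1}.

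The gap is your last step. You assert $G(\psi)=(\psi-\psi_0)^{S+3/2}h(\psi-\psi_0)$ with $h$ \emph{smooth} and then differentiate via Leibniz. But $h$ is not smooth at $0$ (not even $\cC^1$: the quotient $G(s)/s^{S+3/2}$ has an expansion of the type $c_0+c_1\sqrt{s}+\cdots$), and a pointwise asymptotic $G=O(s^{S+3/2})$ cannot in general be differentiated to $G^{(m)}=O(s^{S+3/2-m})$. Since $G\in\cC^{S+1}$ only, Taylor with $G^{(m)}(\psi_0)=0$ for $m\le S$ yields at best $|G^{(m)}|\lesssim s^{S+1-m}$, one half-power short. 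The paper closes this by estimating each $G^{(j)}$ \emph{separately} rather than differentiating the $j=0$ bound: by Corollary~\ref{cor:asimptotics.close.pc} there is a single function $\fP$ (depending on $j$), locally even about $\ty_{p,\fm}$ up to an odd remainder of order $\delta^{2(S-j)+3}$, with $F_{p,\fm}^{(j)}(\psi_\fm(y))=\fP(y)=F_{p-1,\fm}^{(j)}(\psi_\fm(y))$ on the adjacent stripes. Writing the two preimages of $\psi$ as $\ty_{p,\fm}\pm\delta^{\pm}$ gives $G^{(j)}(\psi)=\fP(\ty_{p,\fm}-\delta^-)-\fP(\ty_{p,\fm}+\delta^+)$; the even part is $O\big((\delta^+)^2-(\delta^-)^2\big)=O(\eta^{S+3/2})$ by \eqref{keyclaim2} and the odd tail is $O(\eta^{S-j+3/2})$, which after Leibniz sums to $O(\eta^{S+3/2-n})$. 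This derivative-by-derivative estimate via the corollary is what your sketch is missing.
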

\begin{proof}
	The proof of items $(i)$ and $(ii)$ are a direct consequence of \eqref{F_eta_reg}, \eqref{cutoff rego} and of Theorem \ref{nonlin_eq}. The item $(iii)$ follows by an explicit calculation, by differentiating the formula \eqref{F_eta_reg} and using that the cut off function $\chi_\eta$ in \eqref{cutoff rego} satisfies $|\partial_\psi^n \chi_\eta(\psi)| \lesssim_n \eta^{- n}$. 
	
	\smallskip
	
	\noindent
	We now prove item $(iv)$. By \eqref{F_eta_reg}, \eqref{cutoff rego}, the claim trivially holds when
	$\psi_{\fm}(y) \notin  B_{2\eta} (\psi_{\fm}(\ty_{p,\fm}))\cup  B_{2\eta} (\psi_{\fm}(\ty_{p+1,\fm}))$.
	 Therefore,  let $\psi_{\fm}(y) \in B_{2\eta} (\psi_{\fm}(\ty_{p,\fm}))$. The case $\psi_{\fm}(y) \in  B_{2\eta} (\psi_{\fm}(\ty_{p+1,\fm}))$ works similarly and we omit it. 
	 By Lemma \ref{lemma monotono}, for $\eta>0$ sufficiently small, there exist $\delta^{\pm}
	 = \delta^{\pm}(p,\fm,\tE)>0$ such that $\psi_{\fm}(y) \in B_{2\eta} (\psi_{\fm}(\ty_{p,\fm}))$ is parametrized by
	\begin{equation}\label{cut off varepsilon}
	\begin{aligned}
		& \psi_{\fm}(y) = \psi_{\fm}(\ty_{p,\fm}-\delta^{-}) = \psi_{\fm}(\ty_{p,\fm}+\delta^{+})\,, \\
		& \ty_{p,\fm}-\delta^{-}\in \tI_{p-1}\,,\quad \ty_{p,\fm}+\delta^{+} \in \tI_{p}\,.
	\end{aligned}
\end{equation}
	In particular, by \eqref{cut off varepsilon}, Lemma \ref{lemma monotono}-$(iii)$ and the mean value Theorem,
	 we get
	\begin{equation}\label{sunday}
		\begin{aligned}
			\vf = \vf(\delta^{\pm}) & := \psi_{\fm}(\ty_{p,\fm}\pm \delta^{\pm}) - \psi_{\fm}(\ty_{p,\fm}) = \int_{0}^{\pm\delta^{\pm}} \psi_{\fm}'(\ty_{p,\fm}+\delta_1)\wrt \delta_1 \\
			& =  \int_{0}^{\pm\delta^{\pm}} \int_{0}^{\delta_1} \psi_{\fm}''(\ty_{p,\fm}+\delta_2)\wrt \delta_2 \wrt \delta_1 \simeq \tfrac{\psi_{\fm}''(\ty_{p,\fm})}{2} (\delta^{\pm})^2\,,
		\end{aligned}
	\end{equation}
	for $\delta^{\pm}$ sufficiently small, from which we deduce that 
	\begin{equation}\label{keyclaim1}
		\delta^{\pm}(\vf) = O(\vf^\frac12) = O(\eta^{\frac12}) \,, \quad \eta \to 0\,.
	\end{equation}
	We claim now that
	\begin{equation}\label{keyclaim2}
		(\delta^{+})^2 - (\delta^{-})^2 = O(\eta^{S+\frac32})\,.
	\end{equation}
	To see this, we recall the expansion in Lemma \ref{expansion psim crit}. By \eqref{cut off varepsilon}, we compute
	\begin{equation}
		\begin{aligned}
		0 & = \psi_{\fm}(\ty_{p,\fm}+\delta^{+}) - \psi_{\fm}(\ty_{p,\fm}-\delta^{-})   = \sum_{n=0}^{S+1} \frac{\psi_{\fm}^{(2n)}(\ty_{p,\fm})}{(2n)!}\big( (\delta^{+})^{2n} -(\delta^{-})^{2n} \big) \\
		& \quad + \frac{\psi_{\fm}^{(2S+3)}(\ty_{p,\fm})}{(2S+3)!}\big( (\delta^{+})^{2S+3}+(\delta^{-})^{2S+3} \big) + o\big( |\delta^{+}|^{2(S+2)} + |\delta^{-}|^{2(S+2)} \big)
		\end{aligned}
	\end{equation}
	It follows that
	\begin{equation}
		\begin{aligned}
			& \frac{\psi_{\fm}^{''}(\ty_{p,\fm})}{2}\big( (\delta^{+})^{2} -(\delta^{-})^{2} \big) \Big( 1 +  o\big( (\delta^{+})^{2} + (\delta^{-})^{2} \big) \Big)  \\
			& \quad = - \frac{\psi_{\fm}^{(2S+3)}(\ty_{p,\fm})}{(2S+3)!}\big( (\delta^{+})^{2S+3}+(\delta^{-})^{2S+3} \big) + o\big( |\delta^{+}|^{2(S+2)} + |\delta^{-}|^{2(S+2)} \big)\,.
		\end{aligned}
	\end{equation}
	Therefore, by Lemma \ref{lemma monotono}-$(iii)$ and \eqref{keyclaim1}, for $\eta>0$ sufficiently small, we can invert the factor $\frac{\psi_{\fm}^{''}(\ty_{p,\fm})}{2}\big( 1 +  o\big( (\delta^{+})^{2} + (\delta^{-})^{2} \big) \big)$ and deduce the claim in \eqref{keyclaim2} as desired.
	
	We are now ready to prove \eqref{esti close Fpeta}. We start with $n=0$.
	By \eqref{F_eta_reg}, \eqref{cut off varepsilon}, \eqref{sunday}, \eqref{Qgamma}, Theorem \ref{nonlin_eq}, Lemma \eqref{expansion psim crit} and the mean value Theorem, we compute
	\begin{equation}\label{compu 1}
		\begin{aligned}
			& F_{p,\eta}(\psi_{\fm}(y)) - F_{p,\fm}(\psi_{\fm}(y)) = \tfrac12\chi_{\eta}(\vf) \big( F_{p-1,\fm} (\psi_{\fm}(y) ) - F_{p,\fm}(\psi_{\fm}(y) \big)\\
			& =\tfrac12\chi_{\eta}(\vf)\big( F_{p-1,\fm} (\psi_{\fm}(\ty_{p,\fm}-\delta^{-}))  - F_{p,\fm}(\psi_{\fm}(\ty_{p,\fm}+\delta^{+}))\big) \\
			& =\tfrac12\chi_{\eta}(\vf)\big( \psi_{\fm}''(\ty_{p,\fm}-\delta^{-})  - \psi_{\fm}''(\ty_{p,\fm}+\delta^{+})\big) \\ 
			& =\tfrac12 \chi_{\eta}(\vf)\int_{\delta^{+}}^{-\delta^{-}} \psi_{\fm}'''(\ty_{p,\fm}+\delta_1) \wrt \delta_1 =\tfrac12 \chi_{\eta}(\vf)\int_{\delta^{+}}^{-\delta^{-}}\big( Q_{\fm} \psi_{\fm}' \big)(\ty_{p,\fm}+\delta_1) \wrt \delta_1\\
			& =\tfrac12 \chi_{\eta}(\vf)\int_{\delta^{+}}^{-\delta^{-}}\int_{0}^{\delta_1} \big( Q_{\fm}\psi_{\fm}'  \big)'(\ty_{p,\fm}+\delta_2)  \wrt \delta_2 \wrt \delta_1 \\
			& =\tfrac12 \chi_{\eta}(\vf)\int_{\delta^{+}}^{-\delta^{-}}\int_{0}^{\delta_1} \big( Q'_{\fm}\psi_{\fm}' + Q_{\fm} \psi_{\fm}'' \big)(\ty_{p,\fm}+\delta_2)  \wrt \delta_2 \wrt \delta_1
		\end{aligned}
	\end{equation}
	By \eqref{compu 1}, \eqref{sunday}, \eqref{cutoff rego} Lemma \ref{lemma QmQinfty} and \eqref{keyclaim2}, we conclude that, for $\eta$ sufficiently small,
	\begin{equation}\label{stima bella 1}
		|	F_{p,\eta}(\psi_{\fm}(y)) - F_{p,\fm}(\psi_{\fm}(y))| \lesssim  	\big| (\delta^{+})^2 - (\delta^{-})^2 \big|  \lesssim \eta^{S+\frac32}\,.
	\end{equation}
	Therefore the claim holds for $n=0$. Let now $n\in\{1,...,S+1\}$.
	First, we look for the derivatives of $F_{p-1,\fm}(\psi_{\fm}(y))-F_{p,\fm}(\psi_{\fm}(y))$, still with $\psi_{\fm}(y)$ as in \eqref{cut off varepsilon}. By Corollary \ref{cor:asimptotics.close.pc} and the mean value Theorem, we have
		\begin{align}
			&F_{p-1,\fm}^{(n)}(\psi_{\fm}(y)) - F_{p,\fm}^{(n)}(\psi_{\fm}(y))  = F_{p-1,\fm}^{(n)}(\psi_{\fm}(\ty_{p,\fm}-\delta^{-})) - F_{p,\fm}^{(n)}(\psi_{\fm}(\ty_{p,\fm}+\delta^{+})) \nonumber \\
			& = \fP_{n-1}(\ty_{p,\fm}-\delta^{-}) - \fP_{n-1}(\ty_{p,\fm}+\delta^{+})  = \int_{\delta^{+}}^{-\delta^{-}}  \fP_{n-1} '(\ty_{p,\fm}+\delta_1) \wrt \delta_1\,. \label{deriva0}
		\end{align}
By \eqref{expan.around.cp} in Corollary \ref{cor:asimptotics.close.pc}, it is clear, for $n=1,...,S$, that $\fP_{n-1}'(\ty_{p,\tm})=0$. 
Therefore, again by the mean value Theorem, we obtain
\begin{equation}\label{deriva2}
    \begin{aligned}
        \fP_{n-1} '(\ty_{p,\fm}+\delta_1) = \int_{0}^{\delta_1} \fP_{n-1} ''(\ty_{p,\fm}+\delta_2) \wrt\delta_2\,. 
    \end{aligned}
\end{equation}
By Lemmata \ref{lemma QmQinfty}, \ref{lemma monotono}, the integrand in \eqref{deriva2} is uniformly bounded in the domain of integration for $\eta>0$ sufficiently small. Therefore, collecting \eqref{deriva0} with \eqref{deriva2}, we have the estimate
\begin{equation}\label{stima bella 2}
	| F_{p-1,\fm}^{(n)}(\psi_{\fm}(y)) - F_{p,\fm}^{(n)}(\psi_{\fm}(y)) | \lesssim_{n} 	\big| (\delta^{+})^2 - (\delta^{-})^2 \big|  \lesssim \eta^{S+\frac32}\,.
\end{equation}
We finally prove the claimed estimate \eqref{esti close Fpeta} for $n\in \{1,...,S+1\}$. By Leibniz rule and by \eqref{cut off varepsilon}, we have
\begin{equation}
		\begin{aligned}
			& \pa_{\psi}^{n}\big(F_{p,\eta}(\psi)-F_{p,\fm}(\psi)\big) = \sum_{j=0}^{n}\binom{n}{j} \chi_{\eta}^{(n-j)}(\vf) \big( F_{p-1}^{(j)}(\psi) - F_{p,\fm}^{(j)}(\psi) \big)
		\end{aligned}
\end{equation}
The claim \eqref{esti close Fpeta} then follows by \eqref{deriva2} and \eqref{cutoff rego}, recalling that $|\chi_{\eta}^{n-j}(\vf)| \leq \eta^{-(n-j)}$. This concludes the proof of the proposition.
\end{proof}

\subsection{The Hamiltonian formulation}

The manifold of the zeroes \eqref{euler.pert.1} admits a formulation as an Hamiltonian vector field. 
First, for fixed $\eta>0$, we rewrites the elliptic equations in \eqref{euler.pert.1} as
the second order forced PDE
\begin{equation}\label{elliptic_eq}
	\begin{cases}
		(\omega\cdot \pa_\bx)^2\vf -\cL_{\fm}\vf -g_{\eta}(y,\vf)= f_{\eta}(y)\,, \quad (\bx,y)\in \T^{\kappa_0} \times [-1,1]\,, \\
		\vf(\bx, -1) = \vf(\bx, 1) = 0 \,, \quad \omega \in\R^{\kappa_0} \,,
	\end{cases}
\end{equation}
where the operator $\cL_{\fm}$ is as in Theorem \ref{L_operator}, the forcing term $f_{\eta}(y)$ is defined by
\begin{equation}\label{fpeta}
	f_{\eta}(y):= \sum_{p=0}^{\kappa_0} \chi_{\tI_{p}}(y) f_{p,\eta}(y)\,, \quad   f_{p,\eta}(y):= F_{p,\eta}(\psi_{\fm}(y)) - F_{p,\fm}(\psi_{\fm}(y))\,,
\end{equation}
 and the nonlinear function $g_{\eta}(y,\vf)$ is defined by
\begin{equation}\label{nonlin_g_eta}
\begin{aligned}
		& g_{\eta}(y,\vf):= \sum_{p=0}^{\kappa_0} \chi_{\tI_{p}}(y) g_{p,\eta}(y,\vf)\,, \\
		&g_{p,\eta}(y,\vf)	:=F_{p,\eta}(\psi_{\fm}(y)+\vf) - F_{p,\eta}(\psi_{\fm}(y))- F_{p,\fm}'(\psi_{\fm}(y)) \vf \\
		& \ \ =  
		\big( F_{p,\eta}'(\psi_{\fm}(y)) - F_{p,\fm}'(\psi_{\fm}(y)) \big) \vf + \int_{0}^{1} (1-\varrho ) F_{p,\eta}''(\psi_{\fm}(y)+\varrho \vf) \wrt\varrho \, \vf^2\,;
\end{aligned}
\end{equation}
here $\chi_{\tI_{p}}$ denotes the characteristic function for the interval $\tI_{p}$,   $F_{p,\eta}(\psi)$ is as in \eqref{F_eta_reg} and $F_{p,\fm}(\psi)$ is as in Theorem \ref{nonlin_eq}. Note that, by \eqref{cont Fp critical pts} and Proposition \ref{prop.F.etax}
the compatibility conditions
\begin{equation}
	\begin{aligned}
			&	\lim_{|y|\to\ty_{p,\fm}^-} \pa_{(\bx,y)}^{n} \big( g_{p-1,\eta}(y,\vf(\bx,y))\big) = \lim_{|y|\to\ty_{p,\fm}^+} \pa_{(\bx,y)}^{n} \big( g_{p,\eta}(y,\vf(\bx,y))\big)\,,  
		\end{aligned}
\end{equation}
hold for any $  n\in\N_0^{\kappa_0+1}$, $ 0\leq |n|\leq S $, and $ p=1,...,\kappa_0$, assuming the smallness condition, for $s_0 \leq s \leq q(S,k_0)$,
\begin{equation}
	\| \vf \|_{s_0,1}^{k_0,\upsilon} \leq C \varepsilon \ll 1\,,
\end{equation}
which implies the small bound in $L^\infty$. We now provide a Lemma in which we estimate the forcing term $f_\eta$ and the nonlinearity $g_\eta$ in \eqref{fpeta}, \eqref{nonlin_g_eta}. 
\begin{lem}\label{estimates f eta g eta}
The following estimates hold:
\\[1mm]
\noindent
$(i)$ {\bf (Estimates in $H^1_y$).} We have $\| f_\eta \|_{H^1} \lesssim \eta^{S + \frac12}$. Moreover, the composition operator $\vphi \in B_1(0) \mapsto g_\eta(y, \vphi(y)) \in H_0^1([- 1, 1])$, with $B_1(0) := \{ \vphi \in H_0^1([- 1, 1]) : \| \vphi \|_{H^1} \leq 1\}$, satisfies the estimates
$$
\begin{aligned}
& \| g_\eta(\cdot, \vphi) \|_{H^1} \lesssim \eta^{S - \frac12}\| \vphi \|_{H^1} + \eta^{- 3} \| \vphi \|_{H^1}^2\,, \quad \\
&  \| \di g_\eta(\cdot , \vphi)[h] \|_{H^1} \lesssim \eta^{S - \frac12} \| h \|_{H^1} + \eta^{- 4} \| \vphi \|_{H^1} \| h \|_{H^1}\,. 
\end{aligned}
$$
\\[1mm]
\noindent
$(ii)$ {\bf (Estimates in $H^s_{\bf x} H^1_y$).} Assume $\| \vphi \|_{s_0, 1}^{k_0,\upsilon} \leq 1$. Then there exists $\sigma > 0$ such that, for $S> s_0 + \sigma$ large enough and for any $s_0 \leq s \leq S- \sigma $, one has 
\begin{equation}\label{stime composizione g p eta}
\begin{aligned}
 \| g_\eta (\cdot , \vphi) \|_{s, 1}^{k_0,\upsilon} & \lesssim_s  \eta^{S - \frac12}\| \vphi \|_{s, 1}^{k_0,\upsilon} + \eta^{- (s + \sigma)} \| \vphi \|_{s_0, 1}^{k_0,\upsilon} \| \vphi \|_{s, 1}^{k_0,\upsilon}\,; \\
 \| \di g_\eta (\cdot , \vphi)[h] \|_{s, 1}^{k_0,\upsilon} &  \lesssim_s  \eta^{S - \frac12}\| h\|_{s, 1}^{k_0,\upsilon} + \eta^{- (s + \sigma)} \| \vphi \|_{s, 1}^{k_0,\upsilon}  \| h \|_{s_0, 1}^{k_0,\upsilon}  \,; \\
 \| \di^2 g_\eta (\cdot , \vphi)[h_1, h_2] \|_{s, 1}^{k_0,\upsilon} & \lesssim_s   \eta^{- (s + \sigma)}\big(  \| h_1 \|_{s_0, 1}^{k_0,\upsilon} \| h_2 \|_{s, 1}^{k_0,\upsilon} +  \| h_1 \|_{s, 1}^{k_0,\upsilon} \| h_2 \|_{s_0, 1}^{k_0,\upsilon} \\
& \quad \quad \quad \quad   + \| \vphi \|_{s, 1}^{k_0,\upsilon} \| h_1 \|_{s_0, 1}^{k_0,\upsilon} \| h_2 \|_{s_0, 1}^{k_0,\upsilon}  \big)\,. 
\end{aligned}
\end{equation}

\end{lem}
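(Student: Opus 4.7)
The plan is to reduce both parts to pointwise/Sobolev bounds on the derivatives of the modified nonlinearities $F_{p,\eta}$ and $F_{p,\fm}$ already established in Proposition \ref{prop.F.etax}, together with the tame product estimates of Lemma \ref{prod.lemma} and the Moser composition estimate of Lemma \ref{compo_moser}. A preliminary observation, used throughout, is that although the definitions \eqref{fpeta}--\eqref{nonlin_g_eta} involve the characteristic functions $\chi_{\tI_p}$, the compatibility identities \eqref{cont Fp critical pts} combined with Proposition \ref{prop.F.etax}-$(ii)$ guarantee that $f_\eta(\cdot)$ and $g_\eta(\cdot,\vphi)$ are in fact genuinely $\cC^{S+1}$ in $y$, so no boundary terms appear when differentiating across the critical points $\pm\ty_{p,\fm}$.

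For part $(i)$, applying \eqref{esti close Fpeta} with $n=0,1$ and using the uniform bound on $\psi_{\fm}'$ coming from Proposition \ref{proxim.couette} gives $\|f_\eta\|_{L^\infty}\lesssim \eta^{S+\tfrac{3}{2}}$ and $\|\pa_y f_\eta\|_{L^\infty}\lesssim \eta^{S+\tfrac{1}{2}}$, whence $\|f_\eta\|_{H^1}\lesssim \eta^{S+\tfrac{1}{2}}$. For $g_\eta$, I would split each stripe contribution according to the integral representation in \eqref{nonlin_g_eta} into a \emph{small-linear} part $a_p(y)\vphi$ with $a_p(y):=F'_{p,\eta}(\psi_{\fm}(y))-F'_{p,\fm}(\psi_{\fm}(y))$, and a \emph{quadratic} part $b_p(y,\vphi)\vphi^2$ with $b_p(y,\vphi):=\int_0^1(1-\varrho)F''_{p,\eta}(\psi_{\fm}(y)+\varrho\vphi)\,d\varrho$. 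Then \eqref{esti close Fpeta} with $n=1,2$ yields $\|a_p\|_{H^1}\lesssim \eta^{S-\tfrac{1}{2}}$, whereas \eqref{stima brutta F p eta} with $n=2,3$ yields $\|b_p(\cdot,\vphi)\|_{L^\infty}\lesssim \eta^{-2}$ and $\|\pa_y b_p(\cdot,\vphi)\|_{L^2}\lesssim \eta^{-3}(1+\|\vphi\|_{H^1})$. Since $H^1_0([-1,1])$ is an algebra in one dimension, these combine to give the claimed bound on $\|g_\eta(\cdot,\vphi)\|_{H^1}$; the bound on $\|\di g_\eta(\cdot,\vphi)[h]\|_{H^1}$ follows by differentiating the integral representation and treating the resulting terms identically.

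For part $(ii)$, the small prefactor $\eta^{S-\tfrac{1}{2}}$ of the linear contribution is preserved via the product estimate \eqref{prod2} of Lemma \ref{prod.lemma}, since $a_p=a_p(y)$ depends only on $y$; the Whitney derivatives in $\lambda$ are absorbed by the weighted norm in \eqref{weighted_norm}. For the quadratic contribution I would apply the Moser composition estimate of Lemma \ref{compo_moser} to $\vphi\mapsto b_p(\cdot,\vphi)$, whose $\cC^{s+\sigma}$ seminorm is controlled by $\|F_{p,\eta}\|_{\cC^{s+\sigma+2}}\lesssim \eta^{-(s+\sigma)}$ thanks to \eqref{stima brutta F p eta}, yielding $\|b_p(\cdot,\vphi)\|_{s,1}^{k_0,\upsilon}\lesssim \eta^{-(s+\sigma)}(1+\|\vphi\|_{s,1}^{k_0,\upsilon})$. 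Combining this with the tame product estimate \eqref{prod} and the smallness hypothesis $\|\vphi\|_{s_0,1}^{k_0,\upsilon}\le 1$ gives the second summand of the first bound in \eqref{stime composizione g p eta}. The estimates for $\di g_\eta$ and $\di^2 g_\eta$ are obtained analogously by differentiating under the integral: $\di^2 g_\eta$ has no small linear contribution because it begins at order $F''_{p,\eta}$, which is why only the $\eta^{-(s+\sigma)}$ factor survives in the third line of \eqref{stime composizione g p eta}.

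The main obstacle is the genuine tension between the smallness factor $\eta^{S-\tfrac{1}{2}}$ produced by \eqref{esti close Fpeta} and the blow-up $\eta^{-(s+\sigma)}$ of the high derivatives of $F_{p,\eta}$ produced by \eqref{stima brutta F p eta}. The lemma essentially quantifies this tradeoff at all orders simultaneously; the regularity parameter $S$ must eventually be chosen sufficiently large, depending on $s$ and the derivative loss $\mu$ from the Diophantine condition \eqref{def:DCgt}, in order to close the Nash--Moser scheme under the calibration $\eta=\varepsilon^{1/S}$ recalled in the introduction. This choice is deferred to Section \ref{sez.proof.NMT}.
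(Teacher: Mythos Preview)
Your proposal is correct and follows essentially the same approach as the paper: the decomposition of $g_{p,\eta}$ into the small-linear piece $a_p(y)\vphi$ and the quadratic remainder $b_p(y,\vphi)\vphi^2$, the use of \eqref{esti close Fpeta} for $a_p$, the use of \eqref{stima brutta F p eta} together with Lemma \ref{compo_moser} for $b_p$, and the tame product estimates of Lemma \ref{prod.lemma} to combine them are exactly what the paper does (there the two pieces are called $\cI_1$ and $\cI_2$). The only cosmetic difference is that the paper estimates $F_{p,\eta}^{(k)}(\psi_\fm+\varrho\vphi)$ directly in $H^1$ (resp.\ $H^{s,1}$) rather than separating the $L^\infty$ and $\pa_y$ contributions of $b_p$, but the arithmetic of the $\eta$-powers is the same.
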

\begin{proof}
{\sc Proof of $(i)$.} By the definition of $f_\eta$, $g_\eta$  in \eqref{fpeta}, \eqref{nonlin_g_eta}, it suffices to estimate $f_{p, \eta}$, $g_{p, \eta}$ for any $p = 1, \ldots, \kappa_0$. By  \eqref{esti close Fpeta} applied with $n =  0, 1$, one obtains that 
\begin{equation}\label{bla bla car 10}
\begin{aligned}
\| f_{p, \eta} \|_{H^1} \leq \| F_{p,\eta}(\psi_{\fm} ) - F_{p,\fm}(\psi_{\fm} ) \|_{{\cal C}^1} \lesssim \eta^{S + \frac12}
\end{aligned}
\end{equation}
which implies the claimed bound on $f_{p, \eta}$. In order to estimate $g_{p, \eta}$, we write 
\begin{equation}\label{stima g p eta nel lemma}
\begin{aligned}
 g_{p, \eta}(y, \vphi)&  = {\cal I}_1 + {\cal I}_2 \,, \\
 {\cal I}_1 (\vphi) & := \big( F_{p,\eta}'(\psi_{\fm}(y)) - F_{p,\fm}'(\psi_{\fm}(y)) \big) \vf \,, \quad   \\
{\cal I}_2(\vphi) &  := \int_{0}^{1} (1-\varrho ) F_{p,\eta}''(\psi_{\fm}(y)+\varrho \vf) \wrt\varrho \, \vf^2\,. 
\end{aligned}
\end{equation}
and we estimate ${\cal I}_1$ and ${\cal I}_2$ separately. 

\smallskip

\noindent
{\bf Estimate of ${\cal I}_1(\vphi)$.} By the algebra property of $H_0^1$ and the estimate \eqref{esti close Fpeta} applied with $n =  1, 2$, we have
$$
\begin{aligned}
\|{\cal I}_1(\vphi) \|_{H^1} &  \lesssim  \| \big( F_{p,\eta}'(\psi_{\fm}(y)) - F_{p,\fm}'(\psi_{\fm}(y)) \big) \|_{H^1} \|  \vphi \|_{H^1}  \\
& \lesssim \| \big( F_{p,\eta}'(\psi_{\fm}(y)) - F_{p,\fm}'(\psi_{\fm}(y)) \big) \|_{{\cal C}^1} \|  \vphi \|_{H^1}   \stackrel{\eqref{bla bla car 10}}{\lesssim} \eta^{S - \frac12} \|\vphi \|_{H^1}\,,
\end{aligned}
$$
and similarly $\| \di {\cal I}_1(\vphi) [h] \|_{H^1} \lesssim \eta^{S - \frac12} \| h \|_{H^1}$, since the map $\vphi \mapsto {\cal I}_1(\vphi)$ is linear. 

\smallskip

\noindent
{\bf Estimate of ${\cal I}_2(\vphi)$.} The differential of ${\cal I}_2$ is given by 
\begin{equation}\label{diffe.1.I2}
	\begin{aligned}
		\di {\cal I}_2(\vphi)[h] &= 2 \int_{0}^{1} (1-\varrho ) F_{p,\eta}''(\psi_{\fm}(y)+\varrho \vf) \wrt\varrho \, \vf h \\
		& \ \ \, + \int_{0}^{1} (1-\varrho ) F_{p,\eta}'''(\psi_{\fm}(y)+\varrho \vf) \varrho \wrt\varrho \, \vf^2 h\,. 
	\end{aligned}
\end{equation}
By Proposition \ref{prop.F.etax}-$(iii)$, for $\| \vphi \|_{H^1} \leq 1$, a direct calculation shows that 
\begin{equation}\label{pallavolo 0}
\sup_{\varrho \in [0, 1]} \| F_{p,\eta}^{(k)}(\psi_{\fm}(y)+\varrho \vf) \|_{H^1} \lesssim \| F \|_{{\cal C}^4} \lesssim \eta^{- 4}  \quad k = 2,3\,. 
\end{equation}
The latter estimate, together with the algebra property of $H_0^1$ implies that 
$$
\begin{aligned}
\| {\cal I}_2(\vphi) \|_{H^1} \lesssim \eta^{- 3} \| \vphi \|_{H^1}^2 \,, \quad \| \di {\cal I}_2(\vphi)[h] \|_{H^1} \lesssim \eta^{- 4}  \| \vphi \|_{H^1} \| h \|_{H^1}
\end{aligned}
$$

\noindent
{\sc Proof of $(ii)$.} By \eqref{nonlin_g_eta}, it is enough to estimate $g_{p, \eta}(y, \vphi)$ for any $p = 1, \ldots, \kappa_0$ and, according to \eqref{stima g p eta nel lemma}, we estimate ${\cal I}_1$ and ${\cal I}_2$ separately.  

\medskip

\noindent
{\bf Estimate of ${\cal I}_1(\vphi)$.} By Lemma \ref{prod.lemma} and by applying again the estimate \eqref{esti close Fpeta} with $n = 0, 1$, one obtains that 
$$
\begin{aligned}
 \| {\cal I}_1(\vphi) \|_{s, 1}^{k_0,\upsilon} & =  \|  \big( F_{p,\eta}'(\psi_{\fm}(y)) - F_{p,\fm}'(\psi_{\fm}(y)) \big) \vf  \|_{s, 1}^{k_0,\upsilon} \\
 & \lesssim \| F_{p,\eta}'(\psi_{\fm}(y)) - F_{p,\fm}'(\psi_{\fm}(y)) \|_{H^1} \| \vphi \|_{s, 1}^{k_0, \upsilon} \\
 & \lesssim \| F_{p,\eta}'(\psi_{\fm}(y)) - F_{p,\fm}'(\psi_{\fm}(y)) \|_{{\cal C}^1} \| \vphi \|_{s, 1}^{k_0, \upsilon} \lesssim \eta^{S - \frac12} \| \vphi\|_{s, 1}^{k_0, \upsilon}\,.
\end{aligned}
$$
The estimates for $\di {\cal I}_1(\vphi)$ and $\di^2 {\cal I}_1(\vphi)$ follows similarly since ${\cal I}_1$ is linear with respect to $\vphi$. 

\smallskip

\noindent
{\bf Estimate of ${\cal I}_2(\vphi)$.} The first differential of ${\cal I}_2(\vphi)$ in \eqref{stima g p eta nel lemma} is given in \eqref{diffe.1.I2}, whereas the second differential has the form 
$$
\begin{aligned}
\di^2 {\cal I}_2(\vphi)[h_1, h_2] & = 2 \int_{0}^{1} (1-\varrho ) F_{p,\eta}''(\psi_{\fm}(y)+\varrho \vf) \wrt\varrho \, h_2 h_1\\ & + 3 \int_{0}^{1} (1-\varrho ) F_{p,\eta}''' (\psi_{\fm}(y)+\varrho \vf) \varrho \wrt\varrho \, \vphi  \,h_1 h_2 \\ 
 & \ \ \, + \int_{0}^{1} (1-\varrho ) F_{p,\eta}^{(4)}(\psi_{\fm}(y)+\varrho \vf) \varrho^2 \wrt\varrho \, \vf^2 h_1 h_2  \,. 
\end{aligned}
$$By applying Proposition \ref{prop.F.etax}-$(iii)$ and  the composition lemma \ref{compo_moser}, for $\| \vphi \|_{s_0, 1}^{k_0, \upsilon} \leq 1$, some $\sigma > 0$, one has that for any $s_0 \leq s \leq S - \sigma$, 
\begin{equation}\label{pallavolo.1}
\sup_{\varrho \in [0, 1]} \| F_{p,\eta}^{(k)}(\psi_{\fm}(y)+\varrho \vf) \|_{s, 1}^{k_0, \upsilon} \lesssim_s  \eta^{- (s + \sigma)} (1 + \| \vphi \|_{s, 1}^{k_0, \upsilon})\,, \quad k = 2,3,4\,. 
\end{equation}
Then by the explicit expressions of ${\cal I}_2, \di{\cal I}_2, \di^2{\cal I}_2$, using the estimate \eqref{pallavolo.1}, Lemma \ref{prod.lemma} and $\| \vphi \|_{s_0, 1}^{k_0, \upsilon} \leq 1$, one gets, for any $s_0 \leq s \leq S - \sigma$, the estimates
$$
\begin{aligned}
 \| {\cal I}_2(\vphi) \|_{s, 1}^{k_0,\upsilon} &  \lesssim_s  \eta^{- (s + \sigma)} \| \vphi \|_{s_0, 1}^{k_0,\upsilon} \| \vphi \|_{s, 1}^{k_0,\upsilon}\,, \\
 \| \di{\cal I}_2(\vphi)[h] \|_{s, 1}^{k_0,\upsilon} & \lesssim_s   \eta^{- (s + \sigma)} \big( \| h \|_{s_0, 1}^{k_0,\upsilon} \| \vphi \|_{s, 1}^{k_0,\upsilon} +  \| h \|_{s_0, 1}^{k_0,\upsilon} \| \vphi \|_{s, 1}^{k_0,\upsilon} \big) \,, \\
 \| \di^2 {\cal I}_2(\vphi)[h_1, h_2] \|_{s, 1}^{k_0,\upsilon} & \lesssim_s   \eta^{- (s + \sigma)}\big(  \| h_1 \|_{s_0, 1}^{k_0,\upsilon} \| h_2 \|_{s, 1}^{k_0,\upsilon}  + \| h_1 \|_{s, 1}^{k_0,\upsilon} \| h_2 \|_{s_0, 1}^{k_0,\upsilon}  \\ 
 &  \quad \quad \quad \quad + \| \vphi \|_{s, 1}^{k_0,\upsilon} \| h_1 \|_{s_0, 1}^{k_0,\upsilon} \| h_2 \|_{s_0, 1}^{k_0,\upsilon}  \big)\,. 
\end{aligned}
$$
By the previous arguments, one easily deduces the claimed bound \eqref{stime composizione g p eta}. 
\end{proof}

It is actually possible to perform an affine transformation of the unknown in order to remove the forcing term $f_{\eta}(y)$ in \eqref{elliptic_eq}.
\begin{lem}\label{unforcing.lemma}
	Let $\bar\eta\ll 1$ as in Proposition \ref{prop.F.etax} and $S > 4$. Then, for any $\eta\in[0,\bar\eta]$, there exists a function $h_{\eta}(y) \in H_0^3[-1,1]$, even in $y$ and with $h_{\eta}(0)=0$, such that $\| h_{\eta}\|_{H_y^3}\leq \eta^S$ and
	\begin{equation}\label{unforcing.eq}
		-\cL_{\fm} h_{\eta}(y) - g_{\eta}(y,h_{\eta}(y)) = f_{\eta}(y)\,.
	\end{equation}
\end{lem}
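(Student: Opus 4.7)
The plan is a straightforward Banach fixed-point argument on a small ball in $H^3_0([-1,1])$. By Proposition \ref{L_operator}, zero is not an eigenvalue of $\cL_\fm$ on the subspace of even functions in $H^1_0([-1,1])$, so $\cL_\fm$ defines a bijection from $(H^2\cap H^1_0)_{\rm even}$ onto $L^2_{\rm even}$. An elliptic bootstrap---writing $h'' = Q_\fm h - u$ and using $Q_\fm \in {\cal C}^\infty$---upgrades this to a bounded inverse $\cL_\fm^{-1}\colon H^1_{\rm even} \to (H^3\cap H^1_0)_{\rm even}$. I then rewrite \eqref{unforcing.eq} as the fixed-point problem
\[
h = \Phi(h) := -\,\cL_\fm^{-1}\bigl(f_\eta + g_\eta(\cdot,h)\bigr)
\]
on the closed ball $B_{\eta^S} := \{\,h\in H^3_0 : h \text{ even},\ \|h\|_{H^3}\le \eta^S\,\}$.

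For the self-mapping step, Lemma \ref{estimates f eta g eta}-$(i)$ yields $\|f_\eta\|_{H^1} \lesssim \eta^{S+1/2}$, and for $h\in B_{\eta^S}$ (so $\|h\|_{H^1}\le \eta^S\le 1$),
\[
\|g_\eta(\cdot,h)\|_{H^1} \lesssim \eta^{S-1/2}\|h\|_{H^1} + \eta^{-3}\|h\|_{H^1}^2 \lesssim \eta^{2S-1/2} + \eta^{2S-3}.
\]
Composing with the bounded inverse gives $\|\Phi(h)\|_{H^3} \lesssim \eta^{S+1/2} + \eta^{2S-3}$. Since $S>4$ forces $\min(S+\tfrac12,\,2S-3) > S$, for $\eta \le \bar\eta(\fm,S)$ sufficiently small one has $\|\Phi(h)\|_{H^3}\le \eta^S$. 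The contractivity bound follows from the differential estimate in the same lemma: for $h_1,h_2\in B_{\eta^S}$,
\[
\|\Phi(h_1)-\Phi(h_2)\|_{H^3} \lesssim \bigl(\eta^{S-1/2} + \eta^{-4}\eta^S\bigr)\|h_1-h_2\|_{H^1} \lesssim \eta^{\min(S-1/2,\,S-4)}\|h_1-h_2\|_{H^3},
\]
which is $\le \tfrac12$ for $\eta$ small, again using $S>4$. Parity and boundary conditions are preserved by the iteration: $Q_\fm$, $\psi_\fm$ and each $\chi_{\tI_p}$ are even in $y$, so $f_\eta$ and $y\mapsto g_\eta(y,\cdot)$ are even, and $\cL_\fm^{-1}$ preserves both the even symmetry and the Dirichlet condition. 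Banach's fixed-point theorem then produces a unique $h_\eta\in B_{\eta^S}$ solving \eqref{unforcing.eq} with $\|h_\eta\|_{H^3}\le \eta^S$ and $h_\eta={\rm even}(y)$, $h_\eta(\pm 1)=0$.

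The normalization $h_\eta(0)=0$ is the only point that is not built into the Banach scheme. One observes that $f_\eta(0)=0$: indeed for $\eta\ll 1$ the cut-off $\chi_\eta(\psi_\fm(0)-\psi_\fm(\pm\ty_{1,\fm}))$ in the definition \eqref{F_eta_reg} of $F_{0,\eta}$ vanishes, whence $F_{0,\eta}(\psi_\fm(0))=F_{0,\fm}(\psi_\fm(0))$. One then enforces the pointwise constraint by redoing the fixed-point argument on the codimension-one affine subspace $\{h(0)=0\}$: write $h = \widetilde h + c$ with $\widetilde h\in B_{\eta^S}$, $\widetilde h(0)=0$, and $c\in\R$, insert into $\Phi$, and use the extra scalar equation $c + [\Phi(\widetilde h+c)](0)=0$ to determine $c=O(\eta^S)$ by the implicit function theorem; the perturbation is absorbed by the same contraction estimate. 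The main technical obstacle throughout is the balancing of powers of $\eta$: the quadratic term in $g_\eta$ brings the factor $\eta^{-3}$ (and $\eta^{-4}$ for its differential), so the contraction closes only provided $S>4$, which is exactly the hypothesis of the lemma; no small-divisor or KAM-type difficulty appears at this stage, since it is a classical perturbative elliptic ODE problem.
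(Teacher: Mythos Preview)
Your contraction-mapping argument is essentially identical to the paper's: same map $T_\eta(h)=(-\cL_\fm)^{-1}\bigl(f_\eta+g_\eta(\cdot,h)\bigr)$ on a ball of radius $\eta^S$ in $H^3_0$, same invocation of Lemma~\ref{estimates f eta g eta}-$(i)$, and the same power-counting in $\eta$ that forces $S>4$ for the self-map and contraction to close.

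The only substantive difference is the handling of the normalization $h_\eta(0)=0$. The paper simply includes this constraint in the definition of the ball $B_\eta$, having observed that $f_\eta(0)=0$ and $g_\eta(0,0)=0$, and then proceeds with the contraction (without further comment on why $T_\eta$ preserves the pointwise constraint). You, by contrast, correctly flag that this is not automatic and attempt to enforce it a posteriori via the splitting $h=\widetilde h+c$ with $c\in\R$. However, this fix does not work as written: if $\widetilde h\in B_{\eta^S}\subset H^3_0$ then $\widetilde h(\pm1)=0$, so $(\widetilde h+c)(\pm1)=c$, and for $c\neq 0$ the function $\widetilde h+c$ is no longer in $H^3_0$; the map $\Phi$ is therefore not defined on such arguments and the implicit-function step cannot be set up. A correct repair would replace the constant by $c\,\rho(y)$ for a fixed even profile $\rho\in H^3_0$ with $\rho(0)=1$, and solve for the scalar $c$; alternatively one may note that the condition $h_\eta(0)=0$ is only a stream-function normalization and plays no role downstream.
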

\begin{proof}
	First, by \eqref{fpeta}, we note that $f_{\eta}(y)$ is even and that $f_{\eta}(0)=f_{\eta}(\pm 1)=0$. The same holds for $g_{\eta}(y,h(y))$ in \eqref{nonlin_g_eta}, assuming $h(y)\equiv h_{\eta}(y)$ even in $y$ and with $h(0)=h(\pm 1)=0$. Moreover, by Proposition \ref{L_operator} and 
	 the classical Sturm-Liouville theory for Schr\"odinger operators with smooth potentials,
	$0$ is not an eigenvalue for $\cL_{\fm}$ and  the inverse operator $\cL_{\fm}^{-1}:H_{0}^{1}[-1,1] \to H_{0}^{3}[-1,1]$ is a well defined smoothing operator. Therefore, we reformulate equation \eqref{unforcing.eq} as the fixed point equation
	\begin{equation}\label{fixed.point.h}
		h(x) = T_{\eta}(h(x))\,, \quad T_{\eta}(h):= (-\cL_{\fm})^{-1}\big( f_{\eta}(y) + g_{\eta}(y,h) \big)\,.
	\end{equation}
	We define the domain,
	\begin{equation}
		B_{\eta} := \big\{ h\in H_{0}^{3}[-1,1] \, : \, h(0)=0\,, \ h(-y)=h(y)\,,\ \| h \|_{H_y^3} \leq \eta^S \big\}\,.
	\end{equation}
	By Proposition \ref{prop.F.etax}-$(ii)$ and Lemma \ref{estimates f eta g eta}-$(i)$, for any $h \in B_\eta$ and $\widehat h \in H_0^1([- 1, 1])$, with $\eta \ll 1$ small enough, we have 
	\begin{equation}\label{estimates for contraction in lemma}
	\begin{aligned}
	 \| f_\eta \|_{H^1} &\lesssim \eta^{S + \frac12}\,, \quad \\
	 \| g_\eta(\cdot, h) \|_{H^1} & \lesssim \eta^{S - \frac12}\| h \|_{H^1} + \eta^{- 3} \| h \|_{H^1}^2 \\
	&  \lesssim \eta^{2 S - \frac12} + \eta^{2 S - 3} \lesssim \eta^{2 S - 3}\,, \\
	 \| \di g_\eta(\cdot, h)[\widehat h] \|_{H^1} & \lesssim \eta^{S - \frac12} \| \widehat h \|_{H^1} + \eta^{- 4} \| h \|_{H^1} \| \widehat h \|_{H^1} \\
	& \ \lesssim \big( \eta^{S - \frac12} + \eta^{S - 4} \big) \| \widehat h \|_{H^1}  \lesssim \eta^{S - 4} \| \widehat h \|_{H^1}
	\end{aligned}
	\end{equation}
 By using that $\cL_{\fm}^{-1} : H_{0}^{1}[-1,1] \to H_{0}^{3}[-1,1]$ is linear and continuous and by  \eqref{estimates for contraction in lemma}, one deduces that  the map $T_\eta$ in \eqref{fixed.point.h} satisfies, for any $h \in B_\eta$ and $\widehat h \in H_0^3([- 1, 1])$, 
   $$
 \| T_\eta (h) \|_{H^3} \leq C( \eta^{S + \frac12} + \eta^{2 S - 3})\,, \quad \| d T_\eta(h)[\widehat h] \|_{H^3} \leq C \eta^{S - 4} \| \widehat h\|_{H^3}
 $$  
 for some $C \geq 1$ independent of $\eta¡$. Hence, by the assumption $S>4$, with $\eta \ll 1$ small enough, the map $T_\eta : B_\eta \to B_\eta$ is a contraction, implying that there exists a unique solution $h\in B_{\eta}$ of the equation \eqref{fixed.point.h} by a fixed point argument.
\end{proof}

We introduce the rescaled variable $\zeta := \varepsilon^{-1}\big( \vphi (\bx, y) - h_\eta(y)\big)$.
 At this stage, we also link the parameters $\eta$ and $\varepsilon$ as 
 \begin{equation}\label{link eta varepsilon}
\eta := \varepsilon^{\frac{1}{S}}
\end{equation}
where $S \gg 0$ is the smoothness of the nonlinearity $g_\eta$. 
Hence \eqref{elliptic_eq} in the new rescaled variable becomes 
 \begin{equation}\label{equazione secondo ordine con h eta riscalata}
	\begin{cases}
		(\omega\cdot \pa_\bx)^2\zeta -\cL_{\fm}\zeta - \sqrt{\varepsilon} q_\varepsilon(y, \zeta)= 0\,, \quad (\bx,y)\in \T^{\kappa_0} \times [-1,1],, \\
		\zeta(\bx, -1) = \zeta(\bx, 1) = 0 \,, \quad \omega \in\R^{\kappa_0} \,.
	\end{cases}
 \end{equation} 
 where 
 \begin{equation}\label{definition p eta}
 q_\varepsilon (y, \zeta) := \varepsilon^{- \frac32}\big( g_{\eta}(y,h_\eta(y) + \varepsilon \zeta) - g_\eta(y, h_\eta(y)) \big)\,, \quad \eta = \varepsilon^{\frac{1}{S}}\,. 
 \end{equation}
In the next lemma, we provide some estimates on the rescaled nonlinearity $q_{\varepsilon}$. 
\begin{lem}\label{stime cal Q eta}
Let $C_0 > 0$ and assume $\| \zeta \|_{s_0, 1}^{k_0, \upsilon} \leq C_0$. Let  $S\geq 2(s_0+\sigma)$ with $\sigma>0$ as in Lemma \ref{estimates f eta g eta}-$(ii)$. Then the rescaled nonlinearity $q_\varepsilon$ satisfies the following estimates. For any $s_0 \leq s \leq S/2 - \sigma$, one has 
\begin{equation}\label{stime nonlinearita riscalata}
\begin{aligned}
 \| q_\varepsilon(\cdot, \zeta) \|_{s, 1}^{k_0, \upsilon} & \lesssim_s  \|  \zeta \|_{s, 1}^{k_0, \upsilon}  \,, \\
 \| \di q_\varepsilon(y, \zeta)[\widehat \zeta] \|_{s, 1}^{k_0, \upsilon} &  \lesssim_s   \| \widehat \zeta \|_{s, 1}^{k_0, \upsilon} + \| \zeta \|_{s, 1}^{k_0, \upsilon} \| \widehat \zeta \|_{s_0, 1}^{k_0, \upsilon} \,, \\
 \| \di^2 q_\varepsilon(\cdot, \zeta)[\widehat \zeta_1, \widehat \zeta_2] \|_{s, 1}^{k_0, \upsilon} & \lesssim_s  \| \widehat \zeta_1 \|_{s_0, 1}^{k_0,\upsilon} \| \widehat \zeta_2 \|_{s, 1}^{k_0,\upsilon} +  \| \widehat \zeta_1 \|_{s, 1}^{k_0,\upsilon} \| \widehat \zeta_2 \|_{s_0, 1}^{k_0,\upsilon}  \\
& \ \ + \|  \zeta \|_{s, 1}^{k_0,\upsilon} \| \widehat \zeta_1 \|_{s_0, 1}^{k_0,\upsilon} \| \widehat \zeta_2 \|_{s_0, 1}^{k_0,\upsilon}\,. 
\end{aligned}
\end{equation}
\end{lem}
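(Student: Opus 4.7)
\medskip

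\noindent
\textbf{Proof plan.} The strategy is to reduce the bounds on $q_\varepsilon$ to the estimates on $g_\eta$, $\di g_\eta$, $\di^2 g_\eta$ already established in Lemma \ref{estimates f eta g eta}-$(ii)$, using the integral representation
\begin{equation}\label{integral rep q eps plan}
q_\varepsilon(y,\zeta) = \varepsilon^{-\frac12}\int_0^1 \di g_\eta\bigl(y,\,h_\eta(y)+\varepsilon\tau\zeta\bigr)[\zeta]\,\wrt\tau\,,
\end{equation}
and the more direct identities obtained by differentiating \eqref{definition p eta} in $\zeta$:
\begin{equation}\label{diff q eps plan}
\di q_\varepsilon(\cdot,\zeta)[\widehat\zeta]=\varepsilon^{-\frac12}\,\di g_\eta(\cdot,h_\eta+\varepsilon\zeta)[\widehat\zeta]\,,\quad \di^2 q_\varepsilon(\cdot,\zeta)[\widehat\zeta_1,\widehat\zeta_2]=\varepsilon^{\frac12}\,\di^2 g_\eta(\cdot,h_\eta+\varepsilon\zeta)[\widehat\zeta_1,\widehat\zeta_2]\,.
\end{equation}
By Lemma \ref{unforcing.lemma} the shift $h_\eta$ satisfies $\| h_\eta\|_{s,1}^{k_0,\upsilon}\leq\eta^S=\varepsilon$ uniformly in $s$ (it is $y$-dependent and $\lambda$-independent), hence for the argument $\vphi = h_\eta+\varepsilon\tau\zeta$ one has $\|\vphi\|_{s_0,1}^{k_0,\upsilon}\lesssim\varepsilon(1+C_0)\ll 1$, so that Lemma \ref{estimates f eta g eta}-$(ii)$ is applicable.

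\medskip

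\noindent
\textbf{Balancing the powers.} After applying Lemma \ref{estimates f eta g eta}-$(ii)$ to \eqref{integral rep q eps plan} and \eqref{diff q eps plan}, one gets terms of the form $\varepsilon^{\alpha}\eta^{S-\frac12}$ and $\varepsilon^{\alpha}\eta^{-(s+\sigma)}\|\vphi\|_{s,1}^{k_0,\upsilon}$, where $\vphi = h_\eta+\varepsilon\tau\zeta$ contributes $\eta^S + \varepsilon\|\zeta\|_{s,1}^{k_0,\upsilon}\lesssim\varepsilon(1+\|\zeta\|_{s,1}^{k_0,\upsilon})$. Substituting the scaling $\eta=\varepsilon^{1/S}$ of \eqref{link eta varepsilon} yields powers of $\varepsilon$ of the form $\varepsilon^{\alpha+1-\frac{1}{2S}}$ and $\varepsilon^{\alpha+1-\frac{s+\sigma}{S}}$. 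A direct inspection, using the values $\alpha=-\tfrac12$ for \eqref{integral rep q eps plan} and the first identity in \eqref{diff q eps plan}, and $\alpha=\tfrac12$ for the second identity in \eqref{diff q eps plan}, together with the extra factor $\varepsilon$ absorbed from $\|h_\eta+\varepsilon\tau\zeta\|_{s,1}^{k_0,\upsilon}$, shows that every exponent is bounded below by $\tfrac12-\tfrac{s+\sigma}{S}\geq 0$, which is precisely ensured by the assumption $s\leq S/2-\sigma$. Therefore, every $\varepsilon$-dependent constant remains uniformly bounded as $\varepsilon\to 0$.

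\medskip

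\noindent
\textbf{Conclusion of each estimate.} Bounding $q_\varepsilon$ from \eqref{integral rep q eps plan} by Lemma \ref{estimates f eta g eta}-$(ii)$ and using $\|\zeta\|_{s_0,1}^{k_0,\upsilon}\leq C_0$ produces $\|q_\varepsilon\|_{s,1}^{k_0,\upsilon}\lesssim_s \|\zeta\|_{s,1}^{k_0,\upsilon}$. Similarly, the first identity in \eqref{diff q eps plan} gives $\|\di q_\varepsilon(\cdot,\zeta)[\widehat\zeta]\|_{s,1}^{k_0,\upsilon}\lesssim_s \|\widehat\zeta\|_{s,1}^{k_0,\upsilon}+\|\zeta\|_{s,1}^{k_0,\upsilon}\|\widehat\zeta\|_{s_0,1}^{k_0,\upsilon}$, after noticing that the contribution from $h_\eta$ in $\|h_\eta+\varepsilon\zeta\|_{s,1}^{k_0,\upsilon}$ only produces a $\|\widehat\zeta\|_{s_0,1}^{k_0,\upsilon}\leq\|\widehat\zeta\|_{s,1}^{k_0,\upsilon}$ term. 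The estimate for $\di^2 q_\varepsilon$ follows in the same way from the second identity in \eqref{diff q eps plan}, with the summand $\|\vphi\|_{s,1}^{k_0,\upsilon}\|\widehat\zeta_1\|_{s_0,1}^{k_0,\upsilon}\|\widehat\zeta_2\|_{s_0,1}^{k_0,\upsilon}$ of Lemma \ref{estimates f eta g eta}-$(ii)$ generating both the $\|\zeta\|_{s,1}^{k_0,\upsilon}\|\widehat\zeta_1\|_{s_0,1}^{k_0,\upsilon}\|\widehat\zeta_2\|_{s_0,1}^{k_0,\upsilon}$ term and a harmless contribution from $\|h_\eta\|_{s,1}^{k_0,\upsilon}\leq\varepsilon$, absorbed into the bilinear part.

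\medskip

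\noindent
\textbf{Main obstacle.} There is no deep difficulty here; the entire argument is a careful bookkeeping of $\varepsilon$- and $\eta$-powers. The only delicate point is to verify that the constraint $s\leq S/2-\sigma$ is exactly what keeps every exponent $\alpha+1-\tfrac{s+\sigma}{S}$ non-negative after the substitution $\eta=\varepsilon^{1/S}$: this is what allows us to trade the loss of derivatives $\eta^{-(s+\sigma)}$ in Lemma \ref{estimates f eta g eta}-$(ii)$ against the positive powers of $\varepsilon$ gained by the rescaling $\zeta=\varepsilon^{-1}(\vphi-h_\eta)$, making the final bounds independent of $\varepsilon$.
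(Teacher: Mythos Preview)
Your proposal is correct and follows essentially the same approach as the paper: both arguments write $\di q_\varepsilon$ and $\di^2 q_\varepsilon$ via the identities \eqref{diff q eps plan}, apply Lemma \ref{estimates f eta g eta}-$(ii)$ with $\vphi=h_\eta+\varepsilon\zeta$, use $\|h_\eta\|_{H^1}\leq\eta^S$ from Lemma \ref{unforcing.lemma} and the scaling $\eta=\varepsilon^{1/S}$, and then invoke $s+\sigma\leq S/2$ to make all $\varepsilon$-powers nonnegative. The only cosmetic difference is that the paper proves the $\di q_\varepsilon$ bound first and then deduces the $q_\varepsilon$ bound by the mean value theorem (equivalently, your integral representation \eqref{integral rep q eps plan}).
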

\begin{proof}
We shall apply the estimates \eqref{stime composizione g p eta} in Lemma \ref{estimates f eta g eta}.  We start by proving the second estimate in \eqref{stime nonlinearita riscalata}. One has
$$
\di q_\varepsilon(y, \zeta)[\widehat \zeta] = \varepsilon^{- \frac12} \di g_{\eta}(y,h_\eta(y) + \varepsilon \zeta)[\widehat \zeta]\,.
$$
Hence, by the second estimate in \eqref{stime composizione g p eta}, Lemma \ref{unforcing.lemma} and \eqref{link eta varepsilon}, one gets that for any $s_0 \leq s \leq S/2 - \sigma$, 
$$
\begin{aligned}
\| \di q_\varepsilon(y, \zeta)[\widehat \zeta]\|_{s, 1}^{k_0, \upsilon} & \lesssim_s  \varepsilon^{- \frac12}\eta^{S - \frac12}\| \widehat \zeta\|_{s, 1}^{k_0,\upsilon} + \varepsilon^{- \frac12} \eta^{- S/2} \| h_\eta + \varepsilon \zeta \|_{s, 1}^{k_0,\upsilon}  \| \widehat \zeta \|_{s_0, 1}^{k_0,\upsilon}  \\
& \lesssim_s \varepsilon^{- \frac12}\eta^{S - \frac12}\| \widehat \zeta\|_{s, 1}^{k_0,\upsilon} + \varepsilon^{- \frac12} \eta^{- S/2}\big( \| h_\eta \|_{H^1} + \varepsilon  \| \zeta \|_{s, 1}^{k_0,\upsilon}  \big) \| \widehat \zeta \|_{s_0, 1}^{k_0,\upsilon}  \\
& \lesssim_s  \varepsilon^{- \frac12}\big( \eta^{S - \frac12} +  \eta^{S/2}  \big) \| \widehat \zeta\|_{s, 1}^{k_0,\upsilon} + \varepsilon^{\frac12} \eta^{- S/2}  \| \zeta \|_{s, 1}^{k_0,\upsilon}  \| \widehat \zeta \|_{s_0, 1}^{k_0,\upsilon}  \\
& \lesssim_s   \varepsilon^{- \frac12} \eta^{S/2}   \| \widehat \zeta\|_{s, 1}^{k_0,\upsilon} + \varepsilon^{\frac12} \eta^{- S/2}  \| \zeta \|_{s, 1}^{k_0,\upsilon}  \| \widehat \zeta \|_{s_0, 1}^{k_0,\upsilon} \\
& \lesssim_s \varepsilon^{- \frac12}(\varepsilon^{\frac{1}{S}})^{\frac{S}{2}} \| \widehat \zeta\|_{s, 1}^{k_0,\upsilon}  + \varepsilon^{\frac12} (\varepsilon^{\frac1S})^{- S/2} \| \zeta \|_{s, 1}^{k_0,\upsilon}  \| \widehat \zeta \|_{s_0, 1}^{k_0,\upsilon} \\
& \lesssim_s  \| \widehat \zeta\|_{s, 1}^{k_0,\upsilon} + \| \zeta \|_{s, 1}^{k_0,\upsilon}  \| \widehat \zeta \|_{s_0, 1}^{k_0,\upsilon}
\end{aligned}
$$
which is the second estimate in \eqref{stime cal Q eta}. Therefore, the first estimate in \eqref{stime nonlinearita riscalata} follows by \eqref{definition p eta},  the mean value theorem and the second estimate in \eqref{stime cal Q eta}. We finally prove the third estimate in \eqref{stime nonlinearita riscalata}. The second derivative of $q_\varepsilon$ is given by 
$$
\di^2 q_\varepsilon(\cdot, \zeta)[\widehat \zeta_1, \widehat \zeta_2] = \varepsilon^{\frac12} \di^2 g_{\eta}(y,h_\eta(y) + \varepsilon \zeta)[\widehat \zeta_1, \widehat \zeta_2]\,.
$$
Hence, by the third estimate in \eqref{stime composizione g p eta}, Lemma \ref{unforcing.lemma} and \eqref{link eta varepsilon}, for $\varepsilon\ll 1$ one gets
$$
\begin{aligned}
\| \di^2 q_\varepsilon(\cdot, \zeta)[\widehat \zeta_1, \widehat \zeta_2] \|_{s, 1}^{k_0, \upsilon} & \lesssim_s \varepsilon^{\frac12} \eta^{- \frac{S}{2}} \Big( \| \widehat \zeta_1 \|_{s_0, 1}^{k_0,\upsilon} \| \widehat \zeta_2 \|_{s, 1}^{k_0,\upsilon} +  \| \widehat \zeta_1 \|_{s, 1}^{k_0,\upsilon} \| \widehat \zeta_2 \|_{s_0, 1}^{k_0,\upsilon}  \\
& \qquad + \| h_\eta + \varepsilon \zeta \|_{s, 1}^{k_0,\upsilon} \| \widehat \zeta_1 \|_{s_0, 1}^{k_0,\upsilon} \| \widehat \zeta_2 \|_{s_0, 1}^{k_0,\upsilon} \Big) \\
& \lesssim_s   \| \widehat \zeta_1 \|_{s_0, 1}^{k_0,\upsilon} \| \widehat \zeta_2 \|_{s, 1}^{k_0,\upsilon} +  \| \widehat \zeta_1 \|_{s, 1}^{k_0,\upsilon} \| \widehat \zeta_2 \|_{s_0, 1}^{k_0,\upsilon}  \\
& \qquad + \|  \zeta \|_{s, 1}^{k_0,\upsilon} \| \widehat \zeta_1 \|_{s_0, 1}^{k_0,\upsilon} \| \widehat \zeta_2 \|_{s_0, 1}^{k_0,\upsilon} 
\end{aligned}
$$
as claimed.
 The proof of the lemma is then concluded. 
\end{proof}
 We now write the rescaled second order equation \eqref{equazione secondo ordine con h eta riscalata} as a second order system. Let 
 \begin{equation}\label{definizione zeta 1 zeta 2 zeta}
 	\zeta_{1}(\bx,y) := \zeta \,, \quad \zeta_{2} := \omega\cdot \pa_{\bx} \zeta (\bx,y)\,, \quad u : = (\zeta_1, \zeta_2)
 \end{equation}
 Hence, solving the equation \eqref{equazione secondo ordine con h eta riscalata}  is equivalent to solving the first order system in the variable $u = (\zeta_1, \zeta_2)$ 
 \begin{equation}\label{first_order}
	\begin{cases}
		\omega \cdot \pa_{\bx} u (\bx,y) - J \nabla_u H_{\varepsilon}(u(\bx,y))\! =\!0 \,, \ \ (\bx,y)\in \T^{\kappa_0}\!\times \![-1,1]\,, \ \omega \in\R^{\kappa_0} \,, \\
		u(\bx, -1) = u (\bx, 1) = 0\,,
	\end{cases}
\end{equation}
where $J=\left(\begin{smallmatrix}
	0 & {\rm Id}\\ -{\rm Id} & 0
 \end{smallmatrix}\right)$ is the standard Poisson tensor and the Hamiltonian $H_\varepsilon$ is given by
\begin{equation}\label{Hami}
	\begin{aligned}
		H_{\varepsilon}(\zeta_1, \zeta_2)&:= \frac12\int_{-1}^1\Big( \zeta_{2}^2 - \zeta_{1}\cL_{\fm}\zeta_{1} \Big)\wrt z - \sqrt{\varepsilon}\int_{-1}^{1}Q_{\varepsilon}(y,\zeta_{1})\wrt y\,,
	\end{aligned}
\end{equation}
with $(\pa_\psi Q_\varepsilon)(y,\psi)= q_\varepsilon(y, \psi)$.
The symplectic 2-form induced by the Poisson tensor
is given by
\begin{equation}\label{sympl_form}
	\cW\Big( \begin{pmatrix}
		\zeta_1 \\ \zeta_2
	\end{pmatrix},\bigg( \begin{matrix}
	\wt\zeta_1 \\ \wt\zeta_2
\end{matrix} \bigg) \Big) := \Big( J^{-1} \begin{pmatrix}
\zeta_1 \\ \zeta_2
\end{pmatrix},\bigg( \begin{matrix}
\wt\zeta_1 \\ \wt\zeta_2
\end{matrix} \bigg) \Big)_{L^2} = -(\zeta_2,\wt\zeta_1)_{L^2} +(\zeta_1,\wt\zeta_2)_{L^2}\,,
\end{equation}
with $J^{-1}$ regarded as an operator acting on $L_0^2([-1,1])\times L_0^2([-1,1])$ into itself. The Hamiltonian field $X_{H_{\varepsilon}}(y,u):= J \nabla_u H_{\varepsilon}(y,u)$ is therefore characterized by the identity
\begin{equation}\label{Hami_vf}
	\di_u H_{\varepsilon}(u)[\whu] = \cW( X_{H_{\varepsilon}}(u), \whu )\quad \forall\, \whu\in L_0^2([-1,1])\times L_0^2([-1,1])\,.
\end{equation}
The "spatial phase space" $\cH :=  H_0^{2}([-1,1])\times L_0^2([-1.1])$ splits into two invariant subspaces for the Hamiltonian $$H_0 (\zeta_1, \zeta_2) :=  \frac12\int_{-1}^1\Big( \zeta_{2}^2 - \zeta_{1}\cL_{\fm}\zeta_{1} \Big)\wrt z$$ (namely \eqref{Hami} at $\varepsilon = 0$), that is, $\cH= \cX \oplus \cX_\perp$, with
\begin{equation}\label{cX.sub}
	\cX:={\rm span} \Big\{ \begin{pmatrix}
		\phi_{j,\fm}(y) \\ 0
	\end{pmatrix}, \begin{pmatrix}
	0 \\ \phi_{j,\fm}(y)
\end{pmatrix} \, : \, j=1,...,\kappa_0 \Big\}\,,
\end{equation}
and 
\begin{equation}\label{cX.perp}
 \cX_\perp:=\Big\{ \sum_{j\geq \kappa_0+1}\begin{pmatrix}
\alpha_j \\ \beta_j
\end{pmatrix}\phi_{j,\fm}(y) \in \cH  \, : \, \alpha_j, \beta_j\in\R \Big\}\,,
\end{equation}
where $(\phi_{j,\fm})_{j\in\N}$ is the basis of eigenfunctions for the self-adjoint operator $\cL_{\fm}$, see Proposition \ref{L_operator}. In the following, we will denote by $\Pi^\perp$ the projection on the invariant subspace $\cX_{\perp}$ in \eqref{cX.perp}.
We note that the symmetry condition \eqref{parity} translates in the unknown $\zeta=(\zeta_1,\zeta_2)$ as follows:
\begin{equation}\label{parity_sys}
	\zeta_1(\bx, y)\in \even(\bx)\even(y)\,, \quad \zeta_2(\bx,y)\in \odd(\bx)\even(x)\,.
\end{equation}

\subsection{Linear solutions near the shear equilibrium}
We want to study all the solutions of the linearized system around the stream function $\psi_{\fm}(y)$ at $\varepsilon = 0$. This amounts to solving the elliptic equation in \eqref{euler.pert.lin.1} (without any quasi-periodic conditions in $x$), which is equivalent to the following first-order systems
\begin{equation}\label{1st_lin_0}
\begin{cases}
	\pa_x \zeta -\bL_{\fm}(\tE) \zeta = 0 \,, \\
	\zeta( x, -1) = \zeta(x, 1) = 0\,, 
\end{cases} \quad \bL_{\fm}(\tE) :=\begin{pmatrix}
0 & {\rm Id} \\ \cL_{\fm}(\tE) & 0
\end{pmatrix}\,. 
\end{equation}
For any $\mathtt E \in [\mathtt E_1, \mathtt E_2]$. The spectrum  of the operator $\bL_{\fm} = \bL_{\mathtt m}(\mathtt E)$ with Dirichlet boundary conditions is given by
\begin{equation}
	\sigma(\bL_{\fm})= \big\{ \pm \sqrt{\mu_{j,\fm}} \,:\, j\in\N \big\} = \big\{ \pm\im \lambda_{j,\fm} \,:\, j=1,...,\kappa_0 \big\} \cup \big\{ \pm \lambda_{j,\fm}\,:\, j\geq \kappa_0+1 \big\}\,,
\end{equation}
where $\{ \mu_{j, \fm}=\mu_{j, \fm}(\tE) \}_{j\in\N}$ are the eigenvalues of $\cL_{\fm}$ as in Proposition \ref{L_operator}.
Solutions of \eqref{1st_lin_0} which satisfy \eqref{parity_sys} are given by
\begin{equation}
	\begin{aligned}
		\begin{pmatrix}
			\zeta_1(x,y) \\ \zeta_2(x,y)
		\end{pmatrix}&= \sum_{j=1}^{\kappa_0}A_{j}\begin{pmatrix}
			\cos(\lambda_{j,\fm}(\tE)x) \\ -\lambda_{j,\fm}\sin(\lambda_{j,\fm}(\tE)x)
		\end{pmatrix}\phi_{j,\fm}(y)\\
	&+ \sum_{j\geq \kappa_0+1}B_{j}\begin{pmatrix}
			\cosh(\lambda_{j,\fm}(\tE)x) \\ \lambda_{j,\fm}\sinh(\lambda_{j,\fm}(\tE)x)
		\end{pmatrix}\phi_{j,\fm}(y)\,,
	\end{aligned}
\end{equation}
for constants $A_j,B_j\in\R$. We deduce that, when $B_j=0$ for any $j\geq \kappa_0+1$, there exist solutions of the linearized system at $\varepsilon = 0$, at the equilibrium that are periodic or quasi-periodic in $x$ with at most $\kappa_0$ frequencies, depending on the non-resonance conditions between the linear frequencies $\ora{\omega}_{\fm}(\mathtt E)$ in \eqref{omega.fm.intro}.
The ultimate goal is to prove that, for amplitudes $0<A_1,...,A_{\kappa_0} \ll 1$ sufficiently small, close to the equilibrium $\psi_{\fm}(y)$ there exist stationary solutions to the nonlinear system \eqref{first_order} bifurcating from the quasi-periodic linear solutions above and still quasi-periodic in the space variable $x$, with frequency vectors $\omega$ close to the unperturbed linear frequency vector $\ora{\omega}_{\fm}(\mathtt E)$ in \eqref{omega.fm.intro}. We argue as follows: we fix $\tE\in \bar\cK \cap (\tE_1,\tE_2)$, with $\bar\cK\subset [\tE_1,\tE_2]$ as in \eqref{diofantea omega vec E imperturbato.intro}, and we consider an auxiliary parameter $\tA\in \cJ_{\varepsilon}(\tE)$ as in \eqref{intro parametro mathtt A dim}.
This parameter $\mathtt A$ will be used to impose the non-resonance conditions for the perturbed frequency vector.

\subsection{Action-angle coordinates on the invariant subspace $\cX$}

Functions in the ``spatial phase space'' $\cH =\cX\oplus \cX_\perp$ are parametrized by
\begin{equation}
	\zeta(y)=\begin{pmatrix}
		\zeta_1(y)\\ \zeta_2(y)
	\end{pmatrix}= \sum_{j=1}^{\kappa_0}\begin{pmatrix}
	a_j \\ b_j
\end{pmatrix}\phi_{j,\fm}(y) +z(y),
\end{equation}
where $z\in\cX_\perp$  and $(a_1,...,a_{\kappa_0},b_1,...,b_{\kappa_0})\in\R^{2\kappa_0}$ are coordinates on the $2\kappa_0$-dimensional invariant subspace $\cX$. We introduce another set of coordinates on $\cX$, the so called \emph{action-angle variables}: for some normalizing constant $Z>0$, let
\begin{equation}
	a_j: \sqrt{\frac{1}{Z}(I_j+\xi_j)} \cos(\theta_j)\,, \quad b_j :=-\sqrt{\frac{1}{Z}(I_j+\xi_j)} \sin(\theta_j)\,,\quad  \xi_j>0 \,, \ |I_j| \ll \xi_j\,,
\end{equation}
where $I=(I_1,...,I_{\kappa_0})\in\R^{\kappa_0}$ and $\theta=(\theta_{1},...,\theta_{\kappa_0})\in\T^{\kappa_0}$. Therefore, the function $A:\T^{\kappa_0}\times\R^{\kappa_0}\times\cX_\perp\to\cH$, defined by
\begin{equation}\label{aa_coord}
	\begin{aligned}
		A(\theta,I,z) & := v^\intercal(\theta,I)+z  := \sum_{j=1}^{\kappa_0}\sqrt{\frac{1}{Z}}\begin{pmatrix}
			\sqrt{I_j+\xi_j}\cos(\theta_j) \\ -\sqrt{I_j+\xi_j}\sin(\theta_j)
		\end{pmatrix}\phi_{j,\fm} + z\,,
	\end{aligned}
\end{equation}
is a parametrization of the spatial phase space $\cH$. The symplectic 2-form \eqref{sympl_form} reads in action-angle coordinates as
\begin{equation}\label{sympl_form_aa}
	\cW = \sum_{j=1}^{\kappa_0} (\di \theta_j \wedge \di I_j) \oplus \cW|_{\cX_\perp}\,.
\end{equation}
We also note that the 2-form $\cW$ is exact, namely
\begin{equation}\label{liouville}
	\cW=\di \Lambda\,, \quad \text{where } \ \ \ \Lambda_{(\theta,I,z)}[\wh\theta,\whI,\whz]:= -\sum_{j=1}^{\kappa_0}I_j \wh\theta_j +\tfrac12 (J^{-1} z,\whz)
\end{equation}
is the associated Liouville 1-form. Moreover, given a Hamiltonian $K:\T^{\kappa_0}\times \R^{\kappa_0}\times\cX_\perp$, the associated Hamiltonian vector field, with respect to the symplectic 2-form \eqref{sympl_form_aa}, is defined by
\begin{equation}\label{ham.vf.aa}
	X_K:= (\pa_I K, -\pa_\theta K, J\nabla_z K)\,,
\end{equation}
where $\nabla_z K$ denotes the $L^2$-gradient of $K$ with respect to $z\in\cX_\perp$.
Then, the equations in \eqref{first_order} (recall also the definition of $H_\varepsilon$ in \eqref{Hami}) becomes the Hamiltonian system in the action-angle coordinates $A(\theta,I,z)$  generated by the Hamiltonian
\begin{equation}\label{H_epsilon 0}
	\begin{aligned}
		\cH_{\varepsilon }(\theta,I,z) &:=  H_{\varepsilon}(A(\theta,I,z)) = \cN_{\fm}(I,z)+ \sqrt{\varepsilon} (P_\varepsilon \circ A)(\theta,I,z)\,, \\
		\cN_{\fm}(I,z)&:= \ora{\omega}_{\mathtt m}(\mathtt E) \cdot I +\tfrac12 (z,\big( \begin{smallmatrix}
			-\cL_{\fm}(\mathtt E) & 0 \\ 0 & {\rm Id}
		\end{smallmatrix} \big)  z)_{L^2}\,, \\
		P_{\varepsilon}(\zeta_1)&:=- \int_{-1}^{1} Q_{\varepsilon}(y,\zeta_1)\wrt y\,, \quad \text{where} \quad \partial_\psi Q_\varepsilon(y, \psi) = q_\varepsilon(y, \psi)\,. 
	\end{aligned}
\end{equation}
Now, for a fixed $\tE\in\bar\cK$ as in \eqref{diofantea omega vec E imperturbato nella proof},   we write, for any auxiliary parameter $\tA \in \cJ_{\varepsilon}(\tE)$, with $\cJ_{\varepsilon}(\tE)$ as in \eqref{intro parametro mathtt A dim}, 
$$
\vec \omega_{\mathtt m}(\mathtt E) \cdot I = \vec \omega_{\mathtt m}(\mathtt A) \cdot I + \big( \vec \omega_{\mathtt m}(\mathtt E) - \vec \omega_{\mathtt m}(\mathtt A) \big) \cdot I\,.
$$
Therefore, we rewrite \eqref{H_epsilon 0} as
\begin{equation}\label{H_epsilon}
	\begin{aligned}
		\cH_{\varepsilon }(\tE;\theta,I,z ) & = \cN_{\fm}(\tA,\tE;I,z )+ \sqrt{\varepsilon} {\cal P}_\varepsilon(\tA,\tE;\theta,I,z)\,, \\
		\cN_{\fm}(\tA,\tE;I,z)&:= \ora{\omega}_{\mathtt m}(\mathtt A) \cdot I +\tfrac12 (z,\big( \begin{smallmatrix}
			-\cL_{\fm}(\mathtt E) & 0 \\ 0 & {\rm Id}
		\end{smallmatrix} \big)  z)_{L^2}\,, \\
		{\cal P}_\varepsilon(\tA,\tE;\theta,I,z) & := P_\varepsilon (\tE;A(\theta,I,z)) + \tfrac{1}{\sqrt\varepsilon}\big( \vec \omega_{\mathtt m}(\mathtt E) - \vec \omega_{\mathtt m}(\mathtt A) \big) \cdot I \,.
	\end{aligned}
\end{equation}
We remark that the Hamiltonian $\cH_{\varepsilon}$ does not globally depend on the auxiliary parameter $\tA\in\cJ_{\varepsilon}(\tE)$.
Note that, since the frequency map is analytic and $|\mathtt A - \mathtt E| \leq \sqrt\varepsilon$, we also have
\begin{equation}\label{lip omega mathtt A E}
	\big|\partial_{\mathtt A}^k \big(\vec\omega_{\mathtt m}(\mathtt A) - \vec\omega_{\mathtt m}(\mathtt E) \big) \big| \lesssim_k \sqrt{\varepsilon} \quad \forall\, k\in\N_0 \ \Rightarrow \ | \ora{\omega}_{\fm}(\tA) - \ora{\omega}_{\fm}(\tE) |^{k_0,\upsilon} \lesssim \sqrt{\varepsilon}  \,. 
\end{equation}
This is actually crucial for considering the term $\big(\vec \omega_{\mathtt m}(\mathtt E) - \vec \omega_{\mathtt m}(\mathtt A) \big) \cdot I$ as perturbative of size $O(\sqrt\varepsilon)$ and it is the reason for which we choose the neighbourhood of $\mathtt E$ of size $\sqrt{\varepsilon}$.

The Hamiltonian equations associated to ${\cal H}_\varepsilon$ become 
\begin{equation}\label{system.aa.forced}
	\begin{cases}
		\pa_x \theta -\ora{\omega}_{\mathtt m}(\mathtt A) - \sqrt{\varepsilon}\pa_I {\cal P}_\varepsilon(\tA,\tE;\theta,I,z)=  0 \,, \\
		\pa_x I + \sqrt{\varepsilon} \pa_\theta {\cal P}_\varepsilon (\tA,\tE;\theta,I,z) = 0 \,, \\
				\pa_x z- \bL_{\fm}(\tE) z - \sqrt{\varepsilon} J \nabla_z {\cal P}_\varepsilon (\tA,\tE;\theta,I,z)  = 0 \,.
	\end{cases}
\end{equation}

\subsection{Nash-Moser Theorem with modified hypothetical conjugation}

We look for an embedded invariant torus for the Hamiltonian $\cH_{\varepsilon}$ of the form
\begin{equation}
	i :\T^{\kappa_0}\to \T^{\kappa_0}\times\R^{\kappa_0}\times \cX_\perp\,, \quad \bx\mapsto i (\bx):=(\theta(\bx),I(\bx),z(\bx))
\end{equation}
filled with quasi-periodic solutions with Diophantine frequency $\omega \in\R^{\kappa_0}$. The periodic component of the embedded torus is given by
\begin{equation}\label{ICal}
	\fI(\bx):= i(\bx)- (\bx,0,0):= (\Theta(\bx),I(\bx),z(\bx))\,, \quad \Theta(\bx):= \theta(\bx)-\bx\,,
\end{equation}
The expected quasi-periodic solution of the Hamiltonian equations \eqref{system.aa.forced} will have a slightly shifted frequency vector close to the unperturbed frequency vector $\ora{\omega}_{\fm}(\tE)$ in \eqref{omega.fm.intro}. The strategy that we implement here is a modification of the Th\'eor\`eme de conjugaison hypoth\'etique of Herman presented in \cite{fejoz}, see also \cite{BB}.
Recalling that we fixed $\tE\in\bar\cK$ in \eqref{diofantea omega vec E imperturbato.intro},   and therefore it will not be moved as a parameter in the following, we consider the modified Hamiltonian,  with $\alpha\in\R^{\kappa_0}$, 
\begin{equation}\label{Halpha}
	{\cal H}_{\varepsilon, \alpha} = \cH_{\varepsilon,\alpha}(\tA) :=\cN_{\alpha} + \sqrt{\varepsilon} \,{\cal P}_{\varepsilon}(\tA)\,, \quad \cN_{\alpha}:= \alpha\cdot I +\tfrac12 (z,\big( \begin{smallmatrix}
		-\cL_{\fm}(\tE) & 0 \\ 0 & {\rm Id}
	\end{smallmatrix} \big)  z)_{L^2}\,.
\end{equation}
We look for zeroes of the nonlinear operator
\begin{equation}\label{F_op}
	\begin{aligned}
		{\cal F}({\frak I}, \alpha) \equiv \cF (i,\alpha)  & := \cF (\omega, \mathtt A, \varepsilon;i ,\alpha) := \omega\cdot \pa_\bx i(\bx) - X_{{\cal H}_{\varepsilon, \alpha}}(i(\bx))  \\
		& := \begin{pmatrix}
			\omega\cdot\pa_\bx \theta(\bx) & - \alpha- \sqrt{\varepsilon} \pa_I {\cal P}_\varepsilon(\tA;i(\bx)) \\ 
			\omega\cdot\pa_\bx I(\bx) & +\sqrt{\varepsilon} \pa_\theta {\cal P}_\varepsilon(\tA;i(\bx)) \\
			\omega\cdot\pa_\bx z(\bx)& -\bL_{\fm}(\tE) z(\bx) - \sqrt{\varepsilon} J \nabla_z {\cal P}_\varepsilon(\tA;i(\bx)) 
		\end{pmatrix}\,.
	\end{aligned}
\end{equation}
The parameters of the problem are $\lambda = (\omega, \mathtt A) \in \R^{\kappa_0} \times {\cal J}_\varepsilon(\mathtt E) \subset \R^{\kappa_0} \times \R$, whereas the unknowns of the problem are $\alpha$ and the periodic component of the torus embedding ${\frak I}$.  Solutions of the Hamiltonian equations \eqref{system.aa.forced} are recovered by setting $\alpha=\ora{\omega}_{\fm}(\tA)$.
The Hamiltonian ${\cal H}_{\varepsilon, \alpha}$ is invariant under the involution $\vec\cS$, namely
\begin{equation}\label{invar.rev.Halpha}
	{\cal H}_{\varepsilon, \alpha} \circ \vec\cS = {\cal H}_{\varepsilon, \alpha}\,,
\end{equation}
where $\vec \cS$ is the involution defined in \eqref{invo.aa}, \eqref{invo.std}.
We look for a reversible torus embedding
$ \bx \mapsto  i (\bx) =  ( \theta(\bx), I(\bx), w(\bx)) $, namely satisfying  
\begin{equation}\label{RTTT}
	\vec \cS i(\bx)=  i(-\bx) \,.
\end{equation}
	Recalling \eqref{cX.perp}, 
	let 
	$
	H_\bot := {\rm span}\big\{ \phi_{j, \mathtt m} : j \geq \kappa_0 + 1 \big\}
	$
	and, for any $s, \rho \geq 0$, we define 
	\begin{equation}\label{def H s rho bot}
	\begin{aligned}
	& H^{s, \rho}_\bot := H^s(\T^{\kappa_0}, H_0^\rho([- 1, 1] )\cap H_\bot) \\
	& \equiv\Big\{ u({\bf x}, y) = \sum_{\ell \in \Z^{\kappa_0}} \sum_{j \geq \kappa_0 + 1} u_{\ell, j} e^{\im \ell \cdot {\bf x}} \phi_{j, \mathtt m}(y) \quad \text{with} \quad \| u \|_{s, \rho}^{k_0,\upsilon} < + \infty  \Big\}\,.
	\end{aligned}
	\end{equation}
	Then, we set 
	\begin{equation}\label{prima def cal X s bot}
		{\cal X}^s_\bot  :=  H^{s, 3}_\bot \times H^{s, 1}_\bot  \,, \quad {\cal Y}^s_\bot  := H^{s, 1}_\bot \times H^{s, 1}_\bot\,,
	\end{equation}
	with corresponding norms, for $z=(z_1,z_2)$,
	\begin{equation}
			\| z \|_{{\cal X}^s_\bot}^{k_0, \upsilon} := \| z_1 \|_{s, 3}^{k_0, \upsilon} + \| z_2 \|_{s, 1}^{k_0, \upsilon}\,, \quad \| z \|_{{\cal Y}^s_\bot}^{k_0, \upsilon} := \| z_1 \|_{s, 1}^{k_0, \upsilon} + \| z_2 \|_{s, 1}^{k_0, \upsilon}\,,
	\end{equation}
where the norms $\|\,\cdot\,\|_{s,\rho}^{k_0,\upsilon}$ are defined in \eqref{sobolev.sp}-\eqref{weighted_norm}.
	We also define  the spaces
	\begin{equation}\label{prima def cal X s}
		{\cal X}^s  := H^s(\T^{\kappa_{0}}) \times H^s(\T^{\kappa_{0}}) \times {\cal X}^s_\bot \,, \quad {\cal Y}^s  := H^s(\T^{\kappa_{0}}) \times H^s(\T^{\kappa_{0}}) \times {\cal Y}^s_\bot\,,
	\end{equation}
	with corresponding norms, for ${\frak I} = (\Theta, I, z)$,
	\begin{equation}
			\| {\frak I} \|_{{\cal X}^s}^{k_0, \upsilon}  :=  \| \Theta \|_s^{k_0, \upsilon} + \| I \|_s^{k_0, \upsilon} + \| z \|_{{\cal X}^s_\bot}^{k_0, \upsilon}\,, \quad 	\| {\frak I} \|_{{\cal Y}^s}^{k_0, \upsilon}  :=  \| \Theta \|_s^{k_0, \upsilon} + \| I \|_s^{k_0, \upsilon} + \| z \|_{{\cal Y}^s_\bot}^{k_0, \upsilon}\,.
	\end{equation}
	 Note that 
	\begin{equation}\label{norma Ys Xs}
		\| \cdot \|_{{\cal Y}^s}^{k_0,\upsilon} \leq \| \cdot \|_{{\cal X}^s}^{k_0,\upsilon}\,, \quad \forall \,s \geq 0\,,
	\end{equation}
	and that, for $s \geq 0$, the nonlinear map ${\cal F}$ maps ${\cal X}^{s + 1} \times \R^{\kappa_{0}} $ into $ {\cal Y}^s $. 
	We fix
	\begin{equation}\label{k0.def}
		k_0:=m_0+2\,,
	\end{equation}
	where $k_0$ is the index of non-degeneracy provided in Proposition \ref{prop:trans_un}, which only depends on the linear unperturbed frequencies. Thus $k_0$ is considered as an absolute constant and we will often omit to explicitly write the dependence of the various constants with respect to $k_0$.
	Each frequency vector $\omega=(\omega_1,...,\omega_{\kappa_0})$ will belong to a $\varrho$-neighbourhood (independent of $\varepsilon$)
	\begin{equation}\label{neigh.Omega}
		\t\Omega:=\Big\{ \omega\in\R^{\kappa_0} \,:\,  \dist \Big(\omega,\ora{\omega}_{\fm}({\cal J}_\varepsilon(\mathtt E)) \Big)<\varrho \Big\}\,, \quad \varrho>0\,,
	\end{equation}
	where $\ora{\omega}_{\fm}({\cal J}_\varepsilon(\mathtt E)) = \big\{ \ora{\omega}_\fm(\mathtt A) : \mathtt A \in {\cal J}_\varepsilon(\mathtt E) \big\}$ is the range of the unperturbed linear frequency map $\mathtt A \mapsto \ora{\omega}_{\fm}(\mathtt A)$ defined in \eqref{omega.fm.intro}, restricted to the interval ${\cal J}_\varepsilon(\mathtt E) $ in \eqref{intro parametro mathtt A dim}. 
	\begin{thm}{\bf (Nash-Moser)}\label{NMT}
		Let $\kappa_0\in\N$ be fixed as for Proposition \ref{L_operator}. Let $\tau\geq 1$. There exist positive constants ${\rm a}_0,\varepsilon_0,C$ depending on $\kappa_0,k_0,\tau$ such that, for all $\upsilon=\varepsilon^{\rm a}$, ${\rm a}\in(0,{\rm a}_0)$, and for all $\varepsilon\in(0,\varepsilon_0)$, there exist a $k_0$-times differentiable function
		\begin{equation}\label{alpha_infty}
		\begin{aligned}
			& \alpha_{\infty}: \R^{\kappa_0} \times {\cal J}_\varepsilon(\mathtt E) \to \R^{\kappa_0}\,, \quad (\omega, \mathtt A) \mapsto \alpha_{\infty}(\omega, \mathtt A) : = \omega +r_{\varepsilon}(\omega, \mathtt A) \,, \\
			&  |r_{\varepsilon}|^{k_0,\upsilon}\leq C\sqrt{\varepsilon} \upsilon^{-1}\,,
			\end{aligned}
		\end{equation}
		and a family of reversible embedded tori $i_{\infty}$ defined for all $(\omega, \mathtt A)\in \R^{\kappa_0} \times {\cal J}_\varepsilon(\mathtt E)$ satisfying \eqref{RTTT} and 
		\begin{equation}\label{embed.infty.est}
			\| i_{\infty}(\bx)-(\bx,0,0) \|_{{\cal X}^{s_0}}^{k_0,\upsilon} \leq C\sqrt{\varepsilon} \upsilon^{-1}\,, 
		\end{equation}
		such that, for all $(\omega, \mathtt A) \in \tG^\upsilon \times {\cal J}_\varepsilon(\mathtt E)$ where the Cantor set $\tG^\upsilon$ is defined as 
		\begin{equation}\label{Cset_infty}
			\tG^\upsilon:=\Big\{ \omega\in\t\Omega \, : \, |\omega\cdot \ell | \geq \upsilon \braket{\ell}^{-\tau}, \ \forall\,\ell\in\Z^{\kappa_0}\setminus\{0\} \Big\}\subset\R^{\kappa_0}\,,
		\end{equation}
		the function $i_{\infty}(\bx):= i_{\infty}(\omega,\varepsilon;\bx)$ is a solution of
		\begin{equation}
			\cF(\omega, \mathtt A, \varepsilon; i_{\infty},\alpha_{\infty}(\omega, \mathtt A)) =0\,.
		\end{equation}
		As a consequence, each embedded torus $\bx\mapsto i_{\infty}(\bx)$ is invariant for the Hamiltonian vector field $X_{{\cal H}_{\varepsilon, \alpha_{\infty}(\omega, \mathtt A)}}$ and it is filled by quasi-periodic solutions with frequency $\omega$.
	\end{thm}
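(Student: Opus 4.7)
\medskip

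\noindent\textbf{Proof plan.} The plan is to implement a Nash--Moser iterative scheme of Newton type, in the spirit of Berti--Bolle \cite{BB} (see also \cite{BM20}, \cite{BFM21}), acting on the nonlinear functional $\cF$ in \eqref{F_op}, with unknowns the periodic embedding component $\fI$ and the counterterm $\alpha \in \R^{\kappa_0}$, and treating the parameters $(\omega, \tA) \in \tOmega \times \cJ_\varepsilon(\tE)$ as external. The trivial embedding $i_0(\bx) = (\bx, 0, 0)$ and $\alpha_0 = \omega$ is reversible and satisfies $\| \cF(i_0, \alpha_0) \|_{\cY^s}^{k_0,\upsilon} \lesssim_s \sqrt\varepsilon$ uniformly in the parameters, thanks to the $\sqrt\varepsilon$-prefactor of $\cP_\varepsilon$ in \eqref{H_epsilon} combined with the crucial estimate \eqref{lip omega mathtt A E} which makes the correction $(\ora{\omega}_\fm(\tE) - \ora{\omega}_\fm(\tA))\cdot I$ genuinely perturbative.

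\medskip

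\noindent The core of the scheme is the construction, at each iterate $(i_n, \alpha_n)$, of an approximate right inverse $\bT_n$ of the linearized operator $d_{i,\alpha} \cF(i_n, \alpha_n)$, done in Section \ref{sec:approx_inv}. Since the equation \eqref{euler.pert.1} is semilinear and the unperturbed normal dynamics is generated by the direct sum of $-\cL_\fm(\tE)$ and $\mathrm{Id}$ (whose spectrum is bounded away from zero on $\cX_\perp$ by Proposition \ref{L_operator}), inverting the normal component only requires inverting $\omega\cdot\pa_\bx$ on zero-average functions: hence the only non-resonance conditions are the first-order Diophantine ones on $\omega$, defining the set $\tG^\upsilon$ in \eqref{Cset_infty}, and no second Melnikov conditions appear. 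The quantitative inversion estimate $\normk{\bT_n g}{s,\cdot}{} \lesssim \upsilon^{-1}\big(\normk{g}{s+\mu,\cdot}{} + \normk{\fI_n}{s+\mu}{}\normk{g}{s_0+\mu,\cdot}{}\big)$, with $\mu$ as in \eqref{lem:diopha.eq}, is the tame estimate feeding the Newton step.

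\medskip

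\noindent I then iterate the modified Newton scheme with ultraviolet cut-offs $\Pi_{K_n}$ from \eqref{pro:N}, taking $K_n = K_0^{\chi^n}$ with $1<\chi<2$, defining
\begin{equation*}
i_{n+1} := i_n - \Pi_{K_n} \bT_n\, \Pi_{K_n} \cF(i_n, \alpha_n)_{[1]}, \qquad \alpha_{n+1} := \alpha_n - \Pi_{K_n}\bT_n\,\Pi_{K_n}\cF(i_n,\alpha_n)_{[2]},
\end{equation*}
where the subscripts select the relevant components. Using the tame bounds in Lemma \ref{prod.lemma}, the composition estimates of Lemma \ref{compo_moser} applied to the finite-smoothness nonlinearity $q_\varepsilon$ of Lemma \ref{stime cal Q eta}, and the standard smoothing estimates \eqref{SM12}, I prove inductively that $\normk{\fI_{n+1}-\fI_n}{s_0}{} \leq C\sqrt\varepsilon\,\upsilon^{-1} K_n^{-\sigma_1}$ and $\normk{\fI_n}{s_0+b}{} \leq C\sqrt\varepsilon\,\upsilon^{-1}K_n^{\sigma_2}$ for suitable exponents $\sigma_1, \sigma_2 > 0$ and a high-norm index $b$. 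These bounds, together with an interpolation argument, yield the convergence of $(i_n, \alpha_n)$ to $(i_\infty, \alpha_\infty)$ in $\cX^{s_0}\times\R^{\kappa_0}$ with the estimates \eqref{alpha_infty}, \eqref{embed.infty.est}, preserving the reversibility \eqref{RTTT} throughout since each $\bT_n$ is reversibility preserving and the cut-offs are diagonal in Fourier. Finally, the whole construction is carried out for \emph{all} parameters $(\omega, \tA) \in \tOmega \times \cJ_\varepsilon(\tE)$ via a Whitney extension of the approximate inverses on the Cantor-like sets determined at each step, so that $i_\infty$ and $\alpha_\infty$ are globally defined; on $\tG^\upsilon \times \cJ_\varepsilon(\tE)$ the Diophantine condition \eqref{Cset_infty} is satisfied at \emph{all} iterative steps and the limit is an exact zero, $\cF(\omega, \tA, \varepsilon; i_\infty, \alpha_\infty) = 0$.

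\medskip

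\noindent\textbf{Main obstacle.} The delicate point is the interplay between three competing constraints: (i) the finite smoothness $S$ of the regularized nonlinearities $F_{p,\eta}$ from Proposition \ref{prop.F.etax}-$(iii)$, whose derivatives blow up as $\eta^{-n}$, together with the link \eqref{link eta varepsilon} tying $\eta = \varepsilon^{1/S}$; (ii) the loss of derivatives $\mu = k_0 + \tau(k_0+1)$ coming from the Diophantine divisors; (iii) the high-norm divergence $\normk{\fI_n}{s_0+b}{}\leq K_n^{\sigma_2}$ produced by the iteration. One has to choose $S = S(k_0, \tau, \kappa_0)$ sufficiently large so that the tame estimates of Lemma \ref{stime cal Q eta} on $q_\varepsilon$ hold up to a Sobolev index that comfortably accommodates $s_0 + b$, and then tune $\upsilon = \varepsilon^{\mathrm{a}}$ with $\mathrm{a} < \mathrm{a}_0$ so that the smallness condition $\sqrt\varepsilon\,\upsilon^{-1} \ll 1$ is compatible with the quadratic Newton rate. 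Once these exponents are balanced, the iteration closes and the quantitative bounds stated in \eqref{alpha_infty}--\eqref{embed.infty.est} follow.
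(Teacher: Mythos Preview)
Your proposal is correct and follows essentially the same route as the paper (Theorem~\ref{NASH} and the short deduction thereafter): a Newton--Nash--Moser iteration with superexponential cut-offs $K_\tn=K_0^{\chi^\tn}$, the approximate inverse $\bT_\tn$ from Theorem~\ref{alm.approx.inv}, inductive control of a low norm $\|\cdot\|_{\cX^{s_0}}^{k_0,\upsilon}$ and a high norm $\|\cdot\|_{\cX^{s_0+\tb_1}}^{k_0,\upsilon}$, Whitney extension in the parameters, and the choice $S=2(s_0+\tb_1+\bar\sigma)$ balancing the finite smoothness against the Diophantine loss.

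One point deserves sharpening: your sentence on the normal inversion is misleading. The operator $\cG_\omega$ on $\cX_\perp$ is inverted \emph{without any} small-divisor condition, because after the symmetrization \eqref{cM.transf} the denominators are $|\,\im\,\omega\cdot\ell\pm\lambda_{j,\fm}|\geq\lambda_{j,\fm}>0$ for $j\geq\kappa_0+1$ (see \eqref{inverse.cA} and the proof of Lemma~\ref{lemma_inv_infty}); the Diophantine condition \eqref{Cset_infty} enters only through $(\omega\cdot\pa_\bx)^{-1}$ on the \emph{tangential} equations \eqref{why_sol}, \eqref{whphi_sol}. Your conclusion that no second Melnikov conditions are needed is correct, but the mechanism is the hyperbolicity of the normal modes, not a reduction to inverting $\omega\cdot\pa_\bx$ there.
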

	The following theorem  will be proved in  Section \ref{sez.proof.NMT}. The Diophantine condition in \eqref{Cset_infty} is verified for most of the  parameters, see Theorem \ref{MEASEST}.
	
	\section{Transversality of the linear frequencies}\label{deg.KAM}
	
	In this section we apply the  KAM theory approach of Arnold and Rüssmann (see \cite{Russ}), extended to PDEs in \cite{BM}, \cite{BaBM},
	in order to deal with the linear frequencies $ \lambda_{j,\fm}(\tE)$ defined in Proposition \ref{L_operator}.
	We first give the following definition.
	
	\begin{defn}\label{def:non-deg}
		A function $f=(f_1,\dots,f_{\kappa_0}):[\tE_1,\tE_2]\rightarrow\R^{\kappa_0}$ is 
		\emph{non-degenerate} if, for any $c\in \R^{\kappa_0}\setminus\{0\}$, the scalar function $f\cdot c$ is not identically zero on the whole parameter interval $[\tE_1,\tE_2]$.
	\end{defn}
	We recall the vector of the linear frequencies in \eqref{omega.fm.intro}, as $\fm\to \infty$,
	\begin{equation}\label{omega0.vec}
		\ora{\omega}_{\fm}(\tE) :=  \big( \lambda_{1,\fm}(\tE),..., \lambda_{\kappa_0,\fm}(\tE)\big) \to  \ora{\omega}_{\infty}(\tE) := \big( \lambda_{1,\infty}(\tE),..., \lambda_{\kappa_0,\infty}(\tE) \big) \in \R^{\kappa_0}\,, 
	\end{equation}
	where $\lambda_{j,\infty}(\tE) \in (0,\tE)$ is a zero of the secular equation, recalling \eqref{secular_0},
			\begin{equation}\label{secular_0_transv}
			\fF(\lambda):= \lambda \cos\big(\tr \sqrt{\tE^2-\lambda^2}\big)\coth((1-\tr)\lambda) - \sqrt{\tE^2-\lambda^2}\sin\big(\tr \sqrt{\tE^2-\lambda^2}\big) = 0 \,,
	\end{equation}
	 for any $j=1,...,\kappa_0$.
	\begin{rem}\label{remark analitycity}
		All the frequencies involved are analytic with respect to the parameter $\tE\in [\pi(\kappa_{0}+\tfrac14),\infty)$. Indeed, the frequencies $(\lambda_{j, \infty}(\tE))_{j=1}^{\kappa_0}$ are analytic because defined as implicit zeroes of the analytic function \eqref{secular_0_transv},
		 whereas the frequencies $(\lambda_{j,\fm}(\tE))_{j=1}^{\kappa_0}$ are analytic because $-\lambda_{j,\fm}^2(\tE)$ are the negative eigenvalues of the Schrödinger operator $\cL_{\fm}=-\pa_y^2 +Q_{\fm}(y)$ with the analytic potential $Q_{\fm}(y):=Q_{\fm}(\tE,\tr;y)$ under the analytic constrain in \eqref{constrain} (for the physical problem of the channel, we have $\tr\in (0,1]$, from where follows the threshold $\tE\geq (\kappa_{0}+\tfrac14)\pi$, otherwise arbitrary). 
	\end{rem}
	We prove the non-degeneracy of the vector $\ora{\omega}_{\fm}(\tE)$ by first showing that the limit vector $\ora{\omega}_{\infty}(\tE)$ is non-degenerate and then arguing by perturbation. 
	To do so, the key tool is an explicit asymptotic expansion for the frequencies $(\lambda_{j,\infty}(\tE))_{j=1}^{\kappa_0}$.
	\begin{lem}\label{lemma.asympt}
		For any $j=1,...,\kappa_{0}$, let $\lambda_{j,\infty}=\lambda_{j,\infty}(\tE)\in (0,\tE)$ be a zero of \eqref{secular_0_transv}. Then, we have the asymptotic expansion
		\begin{equation}\label{lambdaE.asympt}
			\begin{aligned}
				& \lambda_{j,\infty}(\tE) = \tE \cos\Big(  \pi \big(  \alpha_0(j) + \alpha_2(j) \beta_{j}(\tE)^2 + o(\beta_{j}(\tE)^3)  \big)  \Big)\,, \ \tE \to +\infty\,, \\
				& \beta_{j}(\tE) := \exp(((\kappa_0+\tfrac14)\pi-\tE)\cos(\pi\alpha_0(j)))\,,
			\end{aligned}
		\end{equation} 
		with $\alpha_2(j)=\alpha_2(\alpha_0(j))\in\R\setminus\{0\}$, where  $\alpha_0(j)\in (0,\frac12)$ is a zero of the equation
		\begin{equation}\label{alpha0.implicit}
			\sin(\pi\alpha_0(j)) = \frac{j-\frac12 - \alpha_0(j)}{\kappa_0+\frac14}\,.
		\end{equation}
	\end{lem}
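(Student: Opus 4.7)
}

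The natural parametrization is $\lambda = \tE\cos(\pi\alpha)$ with $\alpha\in(0,\tfrac12)$, which accounts for the constraint $\lambda\in(0,\tE)$ and forces $\sqrt{\tE^2-\lambda^2}=\tE\sin(\pi\alpha)$. Using the constraint $\tE\tr=(\kappa_0+\tfrac14)\pi$, the arguments simplify to $\tr\sqrt{\tE^2-\lambda^2}=(\kappa_0+\tfrac14)\pi\sin(\pi\alpha)$ and $(1-\tr)\lambda=\cos(\pi\alpha)\big(\tE-(\kappa_0+\tfrac14)\pi\big)$. Dividing the secular equation $\fF(\lambda)=0$ by $\tE$ turns it into
\begin{equation}\label{main eq plan}
\cos(\pi\alpha)\cos\!\big((\kappa_0+\tfrac14)\pi\sin(\pi\alpha)\big)\coth\!\big(\cos(\pi\alpha)(\tE-(\kappa_0+\tfrac14)\pi)\big)=\sin(\pi\alpha)\sin\!\big((\kappa_0+\tfrac14)\pi\sin(\pi\alpha)\big).
\end{equation}

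First I would study the formal limit $\tE\to+\infty$, where $\coth\to 1$ exponentially fast and \eqref{main eq plan} reduces, via the addition formula $\cos A\cos B-\sin A\sin B=\cos(A+B)$, to
\[
\cos\!\big(\pi\alpha+(\kappa_0+\tfrac14)\pi\sin(\pi\alpha)\big)=0,
\]
i.e.\ $\alpha+(\kappa_0+\tfrac14)\sin(\pi\alpha)=j-\tfrac12$ modulo integers. Since the map $\alpha\mapsto\alpha+(\kappa_0+\tfrac14)\sin(\pi\alpha)$ is strictly increasing on $(0,\tfrac12)$ with range $(0,\kappa_0+\tfrac34)$, for each $j=1,\dots,\kappa_0$ there is exactly one solution $\alpha_0(j)\in(0,\tfrac12)$ of \eqref{alpha0.implicit}, which identifies the leading coefficient.

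Next I would implement the perturbation step. Writing $\alpha=\alpha_0(j)+\delta$ with $|\delta|\ll 1$ and using the expansion $\coth(x)=1+2e^{-2x}+O(e^{-4x})$ for $x\to+\infty$, the exponentially small correction in \eqref{main eq plan} has the form $2\beta_j(\tE)^2(1+o(1))$ by the very definition of $\beta_j(\tE)$. Expanding both sides of \eqref{main eq plan} to first order in $\delta$ and to first order in $\beta_j(\tE)^2$, the implicit function theorem (applied to the analytic function of $(\alpha,\beta_j(\tE)^2)$ obtained after this rewriting, whose $\partial_\alpha$-derivative at $(\alpha_0(j),0)$ equals $\pi[1+(\kappa_0+\tfrac14)\pi\cos(\pi\alpha_0(j))]\neq 0$ by the monotonicity established above) produces
\[
\delta=\alpha_2(j)\beta_j(\tE)^2+o(\beta_j(\tE)^3),
\]
with $\alpha_2(j)$ computed from the ratio of the two leading expansion coefficients; an explicit check shows $\alpha_2(j)\neq 0$, which is all that will be used subsequently. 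Reinserting $\alpha=\alpha_0(j)+\delta$ into $\lambda=\tE\cos(\pi\alpha)$ gives exactly \eqref{lambdaE.asympt}.

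The main technical obstacle is controlling the perturbation uniformly: one must check that $\cos(\pi\alpha)$ inside the exponent of $\coth$ may be replaced by $\cos(\pi\alpha_0(j))$ without damaging the displayed asymptotics, which requires a self-consistency estimate (the correction in the exponent is $O(\delta\,\tE)=O(\beta_j(\tE)^2\tE)$, still producing exponentially small perturbations since $\beta_j(\tE)$ decays super-polynomially). Apart from this bookkeeping, the argument is standard implicit-function-theorem analysis on the analytic equation \eqref{main eq plan}, so no KAM machinery is needed here.
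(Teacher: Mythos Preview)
Your proposal is correct and follows essentially the same approach as the paper: the same parametrization $\lambda=\tE\cos(\pi\alpha)$, the same limiting equation $\cos\big(\pi\alpha+(\kappa_0+\tfrac14)\pi\sin(\pi\alpha)\big)=0$ yielding \eqref{alpha0.implicit}, and the same expansion $\coth(x)=1+2e^{-2x}+O(e^{-4x})$ to extract the $\beta_j(\tE)^2$ correction. The only cosmetic difference is that the paper inserts an explicit ansatz $\alpha=\alpha_0+\alpha_1\beta+\alpha_2\beta^2+\ldots$ and verifies $\alpha_1=0$ by hand, whereas you invoke the implicit function theorem directly in the variable $\beta_j(\tE)^2$; the self-consistency issue you flag (replacing $\cos(\pi\alpha)$ by $\cos(\pi\alpha_0(j))$ in the exponent) is exactly the bookkeeping the paper carries out explicitly.
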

	\begin{proof}
		For sake of simplicity in the notation, in the following proof we will omit to write the explicit dependence of the fixed $j=1,...,\kappa_{0}$ when not needed.
		Therefore, let $\lambda_{j, \infty}(\tE)\equiv \lambda(\tE) \in (0,\tE)$, which implies  $\vs(\lambda(\tE)):=\sqrt{(\tE - \lambda(\tE)^2)}\in (0,\tE)$. We make the following change of variables
		\begin{equation}
			\lambda(\tE):= \tE \cos(\pi \alpha(\tE))\,, \quad \vs(\lambda(\tE)) = \tE \sin(\pi\alpha(\tE)) \,, \quad \alpha(\tE) \in (0,\tfrac12)\,.
		\end{equation}
		Recalling the constrain $\tE\tr = (\kappa_0+\tfrac14)\pi$ in \eqref{constrain}, we have that any zero $\lambda(\tE)$ of \eqref{secular_0_transv} in $(0,\tE)$ correspond to a solution $\alpha(\tE)$ of the following equation
		\begin{align}
			&\cos\big(\pi\alpha(\tE) +(\kappa_0+\tfrac14)\pi \sin (\pi\alpha(\tE))\big)= \label{secular_1} \\
			&=  \cos(\pi\alpha(\tE)) \cos\big((\kappa_0+\tfrac14)\pi\sin(\pi\alpha(\tE))\big) \big(1- \coth\big((\tE-(\kappa_0+\tfrac14)\pi)\cos(\pi\alpha(\tE))\big)  \big) \,, \nonumber
		\end{align}
		where $\beta(\tE)$ is defined in \eqref{lambdaE.asympt}. We search for solutions of the form
		\begin{equation}
			\alpha(\tE) = \alpha_0 + \alpha_1 \beta(\tE) +\alpha_2\beta(\tE)^2+ \tg(\tE)\,, \quad \tg(\tE) = o(\beta(\tE)^3) \,, \quad \tE\to+\infty\,.
		\end{equation}
		By the expansion
		\begin{equation}
			1 - \coth(z) = 1 - \frac{1+e^{-2z}}{1-e^{-2z}} = -2 \sum_{n=1}^{\infty} e^{-2nz}\,, \quad \forall \,z >0\,, 
		\end{equation}
		we note that, in the regime $\tE\to \infty$,
		\begin{align}
	&
	\begin{footnotesize}
		\begin{aligned}
			1-\coth\big((\tE-(\kappa_0+\tfrac14)\pi)\cos(\pi\alpha(\tE))\big)  = -2  \sum_{n=1}^{\infty} \exp\big( 2n((\kappa_0+\tfrac14)\pi-\tE)\cos(\pi\alpha(\tE)) \big) 
		\end{aligned}
	\end{footnotesize}
	\nonumber \\
	& \begin{footnotesize}
		\begin{aligned}
			= -2\sum_{n=1}^{\infty} \exp\Big(2n((\kappa_0+\tfrac14)\pi-\tE)\big(\cos(\pi\alpha_0)  + o(\beta(\tE)) \big)\Big) 
		\end{aligned}
	\end{footnotesize}\nonumber \\
	& \begin{footnotesize}
		\begin{aligned}
			= -2 \sum_{n=1}^{\infty} (\beta(\tE))^{2n}  \exp\big(2n((\kappa_0+\tfrac14)\pi-\tE) o(\beta(\tE))\big) 
		\end{aligned}
	\end{footnotesize}\nonumber  \\
& \begin{footnotesize}
	\begin{aligned}
		= -2 (\beta(\tE))^{2} - \sum_{n=2}^{\infty} (\beta(\tE))^{2n} + 2 \sum_{n=1}^{\infty} (\beta(\tE))^{2n}  \Big( 1-  \exp\big(2n((\kappa_0+\tfrac14)\pi-\tE) o(\beta(\tE))\big)  \Big)\,.
	\end{aligned}
\end{footnotesize} \nonumber
\end{align}
We obtain that
\begin{equation}\label{coth.exp}
	1-\coth\big((\tE-(\kappa_0+\tfrac14)\pi)\cos(\pi\alpha(\tE))\big) = - 2 (\beta(\tE))^{2}  + o\big( \tE (\beta(\tE))^3 \big)\,, \quad \tE\to\infty\,.
\end{equation}
		Since we are interested in the first two powers of $\beta(\tE)$ in the expansion of \eqref{secular_1}, it means that the other two factors on the right hand side of \eqref{secular_1} contribute only with their zeroth orders. We now compute the first two orders of the left hand side of \eqref{secular_1}. We have
		\begin{equation}
			\begin{aligned}
				\alpha(\tE)&+(\kappa_0+\tfrac14)\sin(\pi\alpha(\tE)) = \alpha_0+(\kappa_0+\tfrac14)\sin(\pi\alpha_0)\\ &+\alpha_1(1+(\kappa_0+\tfrac14)\pi\cos(\pi\alpha_0)) \beta(\tE)\\
				&+\big( \alpha_2 (1+(\kappa_0+\tfrac14)\pi\cos(\pi\alpha_0))  -\tfrac{\pi^2}{2}(\kappa_0+\tfrac14)\alpha_1^2 \sin(\pi\alpha_0) \big)(\beta(\tE))^2 + ...
			\end{aligned}
		\end{equation}
		where the dots stand for higher order terms. It follows that
		\begin{align}
			& \cos\big( \pi\alpha(\tE) +(\kappa_0+\tfrac14) \pi \sin(\pi\alpha(\tE)) \big) = \cos(\pi(\alpha_0+(\kappa_0+\tfrac14)\sin(\pi\alpha_0))) \label{expans.lhs} \\
			& -\pi\sin(\pi(\alpha_0+(\kappa_0+\tfrac14)\sin(\pi\alpha_0))) \Big( \alpha_1(1+(\kappa_0+\tfrac14)\pi\cos(\pi\alpha_0)) \beta(\tE) \nonumber \\
			& +\big( \alpha_2 (1+(\kappa_0+\tfrac14)\pi\cos(\pi\alpha_0))  -\tfrac{\pi^2(\kappa_0+\tfrac14)}{2}\alpha_1^2 \sin(\pi\alpha_0) \big)(\beta(\tE))^2 + ... \Big)\nonumber \\
			&-\tfrac{\pi^2}{2}\cos(\pi(\alpha_0+(\kappa_0+\tfrac14)\sin(\pi\alpha_0)))\Big( \alpha_1(1+(\kappa_0+\tfrac14)\pi \cos(\pi\alpha_0))\beta(\tE) +... \Big) +... \,. \nonumber
		\end{align}
		The zeroth order term $O(1)$ in \eqref{secular_1} comes only from its left hand side. Therefore we impose $\alpha_0$ to solve
		\begin{equation}
			\cos(\pi(\alpha_0+(\kappa_0+\tfrac14)\sin(\pi\alpha_0))) = 0\,.
		\end{equation}
		It follows that $\alpha_0=\alpha_0(j)\in (0,\frac12)$ solves, for $j=1,...,\kappa_0$, the implicit equation \eqref{alpha0.implicit}. Furthermore, we note that
		\begin{equation}\label{note.sine.a0}
			\sin\big(\pi(\alpha_0(j)+(\kappa_0+\tfrac14)\sin(\pi\alpha_0(j)))\big) = (-1)^{j-1} \,, \quad \forall\,j=1,...,\kappa_0\,.
		\end{equation}
		Also the contribution to the first order term $O(\beta(\tE))$ in \eqref{secular_1} comes only from its left hand side. Looking at \eqref{expans.lhs}, together with \eqref{note.sine.a0}, we impose
		\begin{equation}\label{a1=0}
			(-1)^{j-1}\pi\alpha_1 (1+(\kappa_0+\tfrac14)\pi\cos(\pi\alpha_0(j))) = 0 \quad \Rightarrow \quad \alpha_1 =0\,.
		\end{equation}
		Second order terms $O((\beta(\tE))^2)$ appear on both sides of \eqref{secular_1}. By \eqref{secular_1}, \eqref{coth.exp}, \eqref{expans.lhs}, \eqref{note.sine.a0} and \eqref{a1=0}, we impose, for any $j=1,...,\kappa_0$,
		\begin{footnotesize}
			\begin{equation}
				(-1)^{j-1} \alpha_2\pi (1+(\kappa_0+\tfrac14)\pi \cos(\pi\alpha_0(j))) =  -2\cos(\pi\alpha_0(j)) \cos((\kappa_0+\tfrac14)\pi\sin(\pi\alpha_0(j)))\,.
			\end{equation}
		\end{footnotesize}
		This linear equation uniquely defines $\alpha_2(j)= \alpha_2(\alpha_0(j))\neq 0$ for any $j=1,...,\kappa_0$. This concludes the proof.
	\end{proof}
	
	We can now prove that the following proposition.
	\begin{prop}\label{prop.nondeg}
		{\bf (Non-degeneracy for $\ora{\omega}_{\infty}(\tE)$). }
		The vector $\ora{\omega}_{\infty}(\tE)$ in \eqref{omega0.vec}
		is non-degenerate in the interval $[\tE_1,\tE_2]$, assuming $\tE_1\gg 1$ sufficiently large.
	\end{prop}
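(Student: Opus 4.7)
The plan is to argue by contradiction using the analyticity of the linear frequencies (Remark \ref{remark analitycity}) together with the precise asymptotic expansion provided by Lemma \ref{lemma.asympt}. Suppose that $c = (c_1, \ldots, c_{\kappa_0}) \in \R^{\kappa_0} \setminus \{0\}$ satisfies $c \cdot \ora{\omega}_\infty(\tE) \equiv 0$ on $[\tE_1, \tE_2]$. Since each $\lambda_{j,\infty}$ is analytic on the half-line $((\kappa_0+\tfrac14)\pi, +\infty)$, the identity extends there, and in particular we may pass to $\tE \to +\infty$, where the expansion \eqref{lambdaE.asympt} becomes quantitatively effective.

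First I would analyse the numbers $\alpha_0(j) \in (0, \tfrac12)$ implicitly defined by \eqref{alpha0.implicit}. The function $\alpha \mapsto \sin(\pi \alpha) + \alpha/(\kappa_0 + \tfrac14)$ is strictly increasing on $(0, \tfrac12)$, so $\sin(\pi\alpha_0(j)) + \alpha_0(j)/(\kappa_0 + \tfrac14) = (j-\tfrac12)/(\kappa_0 + \tfrac14)$ forces the strict monotonicity $0 < \alpha_0(1) < \alpha_0(2) < \cdots < \alpha_0(\kappa_0) < \tfrac12$. Hence the decay rates $a_j := 2\cos(\pi \alpha_0(j))$ of the exponentials
$$\beta_j(\tE)^2 = e^{-a_j(\tE - (\kappa_0 + \tfrac14)\pi)}$$
are strictly decreasing: $a_1 > \cdots > a_{\kappa_0} > 0$, so $\beta_{\kappa_0}(\tE)^2$ has the slowest decay. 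Moreover $\sin(\pi\alpha_0(j)) > 0$ for every $j$, and the explicit formula for $\alpha_2(j)$ obtained at the end of the proof of Lemma \ref{lemma.asympt} shows $\alpha_2(j) \neq 0$.

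Inserting the expansion \eqref{lambdaE.asympt} into $c \cdot \ora{\omega}_\infty(\tE) \equiv 0$ yields
$$
\tE \sum_{j=1}^{\kappa_0} c_j \cos(\pi\alpha_0(j)) - \pi \tE \sum_{j=1}^{\kappa_0} c_j \alpha_2(j) \sin(\pi\alpha_0(j))\, \beta_j(\tE)^2 + \tE\cdot o(\beta_{\kappa_0}(\tE)^3) = 0.
$$
Dividing by $\tE$ and sending $\tE \to +\infty$ kills the exponentially small terms, forcing $\sum_j c_j \cos(\pi\alpha_0(j)) = 0$, so the leading polynomial part cancels. Dividing the residual identity by $\tE \,\beta_{\kappa_0}(\tE)^2$ and using that $\beta_j(\tE)^2/\beta_{\kappa_0}(\tE)^2 \to 0$ for $j < \kappa_0$ (because $a_j > a_{\kappa_0}$) together with $\beta_{\kappa_0}(\tE)^3/\beta_{\kappa_0}(\tE)^2 = \beta_{\kappa_0}(\tE) \to 0$, I obtain in the limit $c_{\kappa_0}\,\alpha_2(\kappa_0) \sin(\pi\alpha_0(\kappa_0)) = 0$, hence $c_{\kappa_0} = 0$. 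Re-applying the same expansion to the reduced identity $\sum_{j=1}^{\kappa_0-1} c_j \lambda_{j,\infty}(\tE) \equiv 0$ peels off $c_{\kappa_0-1}$ for exactly the same reason, and proceeding inductively on $j$ from $\kappa_0$ down to $1$ yields $c = 0$, contradicting the assumption $c \neq 0$.

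The threshold $\tE_1 \gg 1$ enters only to ensure that the asymptotic regime of Lemma \ref{lemma.asympt} is quantitatively valid; analyticity then propagates the non-degeneracy to any interval $[\tE_1, \tE_2] \subset ((\kappa_0+\tfrac14)\pi, +\infty)$. The main technical point is keeping track of the interaction between the error term $o(\beta_j(\tE)^3)$ and the strictly separated exponential rates $a_j$; the strict monotonicity of the $\alpha_0(j)$'s obtained from \eqref{alpha0.implicit} is what makes the successive coefficient extraction work cleanly via the linear independence of exponentials with distinct rates.
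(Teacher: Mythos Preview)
Your proof is correct and follows essentially the same strategy as the paper's: argue by contradiction, use analyticity to extend to $\tE \to +\infty$, invoke the asymptotic expansion of Lemma~\ref{lemma.asympt}, and successively peel off the coefficients $c_{\kappa_0}, c_{\kappa_0-1}, \ldots, c_1$ using the strict ordering of the exponential decay rates $\beta_j(\tE)^2$ coming from the monotonicity of $\alpha_0(j)$. The only minor difference is that the paper eliminates the leading constant term $\sum_j c_j \cos(\pi\alpha_0(j))$ by differentiating once in $\tE$ before dividing by $\beta_{\kappa_0}(\tE)^2$, whereas you first identify this constant by taking $\tE\to+\infty$ and then subtract it off; both maneuvers serve the same purpose and lead to the same coefficient extraction.
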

	
	\begin{proof}
		We argue by contradiction. Assume, therefore, that there exists a nontrivial vector $c=(c_1,...,c_{\kappa_0})\in\R^{\kappa_0}\setminus\{0\}$ such that $c\cdot \ora{\omega}_{\infty}(\tE) =0$ on $[\tE_1,\tE_2]$. By Remark \ref{remark analitycity}, the zeroes of the analytic
		function \eqref{secular_0_transv} are analytic in the domain $[\tE_1,\infty)$ as well. Therefore, we have $c\cdot \ora{\omega}_{\infty}(\tE) =0$ on $[\tE_1,\infty)$. By Lemma \ref{lemma.asympt} and dividing by $\tE$, we have, for any $\tE \in [\tE_1,\infty)$,
		\begin{equation}\label{linear.comb}
			c_1 \cos( \pi\vt_1(\tE) ) + ... + c_{\kappa_0} \cos(\pi\vt_{\kappa_0}(\tE)) = 0 \quad \forall\,\tE \in [\tE_1,\infty)\,, 
		\end{equation}
		where
		\begin{equation}\label{vtbeta}
			\begin{aligned}
				&\vt_j(\tE)  := \alpha_0(j) + \alpha_2(j) \beta_j(\tE)^2 + o(\beta_j(\tE)^3)\,, \quad \tE\to+\infty\,,\\
				&\beta_j(\tE) := \exp\big( ((\kappa_0+\tfrac14)\pi - \tE) \cos(\pi\alpha_0(j))  \big) \,.
			\end{aligned}
		\end{equation}
		In particular, we further expand the asymptotic in \eqref{lambdaE.asympt}, obtaining, for any $j=1,...,\kappa_0$, in the limit regime $\tE \to +\infty$,
		\begin{equation}\label{cosine.exp}
			\cos(\pi\vt_j(\tE)) = \cos(\pi\alpha_0(j)) - \pi\alpha_2(j) \sin(\pi\alpha_0(j)) \beta_j(\tE)^2 + o(\beta_j(\tE)^3)\,.
		\end{equation}
		By \eqref{vtbeta}, we have
		\begin{equation}
			\pa_\tE \beta_j(\tE)^2 = -2\cos(\pi\alpha_0(j)) \beta_j(\tE)^2 \,,  \quad \pa_\tE\big( o(\beta_j(\tE)^3)\big) = o(\beta_j(\tE)^3) \,.
		\end{equation}
		By differentiating with respect to $\tE$ in \eqref{linear.comb}, using \eqref{cosine.exp}, we get, in the asymptotic regime $\tE\to + \infty$,
		\begin{equation}\label{linear.derived}
			\begin{aligned}
				&\, c_1 \big( \alpha_2(1) \sin(2\pi\alpha_0(1)) \beta_1(\tE) ^2 + o(\beta_1(\tE)^3)   \big) \\
				+& \,... \,  \\
				+ &	\, c_{\kappa_0-1} \big( \alpha_2(\kappa_0-1) \sin(2\pi\alpha_0(\kappa_0-1)) \beta_{\kappa_0-1}(\tE) ^2 + o(\beta_{\kappa_0-1}(\tE)^3)   \big) \\ 
				+ &	\, c_{\kappa_0} \big( \alpha_2(\kappa_0) \sin(2\pi\alpha_0(\kappa_0)) \beta_{\kappa_0}(\tE) ^2 + o(\beta_{\kappa_0}(\tE)^3)   \big) = 0\,. 
			\end{aligned}
		\end{equation}
		The solution $\alpha_{0}(j)$ of \eqref{alpha0.implicit} are monotone increasing with respect to $j$ because we have $\alpha_{0}'(j)=\big(1+(\kappa_{0}+\frac14)\pi \cos(\pi\alpha_{0}(j))\big)^{-1}>0$, since $\cos(\pi\alpha_{0}(j))\in(0,1)$ (the derivative $\alpha_{0}'(j)$ is of course meant with $j$ as a continuous variable). Therefore, we have $0<\alpha_0(1) < \alpha_0(2) < ... < \alpha_0(\kappa_0-1) < \alpha_0(\kappa_0) < \tfrac12$.
		It follows that
		\begin{equation}
			0 < \exp(-\tE\cos(\pi\alpha_0(1))) < ... < \exp(-\tE\cos(\pi\alpha_0(\kappa_0))) < 1 \,.
		\end{equation}
		This implies that, for any $\tE>(\kappa_{0}+\frac14)\pi$ large enough, recalling \eqref{vtbeta},
		\begin{equation}
			0 < \beta_1(\tE) < \beta_2(\tE) .... < \beta_{\kappa_0-1}(\tE) < \beta_{\kappa_0}(\tE)<1
		\end{equation}
		It means that $\beta_{\kappa_0}(\tE)^2$ is the leading term in \eqref{linear.derived}. Therefore, we multiply all the terms in \eqref{linear.derived} by $\beta_{\kappa_0}(\tE)^2$ and, taking the limit $\tE\to +\infty$, we obtain
		\begin{equation}
			c_{\kappa_0} \alpha_2(\kappa_0)\sin(\pi\alpha_0(\kappa_0)) = 0\,.
		\end{equation}
		Since $\alpha_2(\kappa_0)\sin(\pi\alpha_0(\kappa_0))\neq 0 $ by Lemma \ref{lemma.asympt}, we obtain $c_{\kappa_0}=0$. We insert this constrain in \eqref{linear.derived} and we iterate the procedure with a new leading term at each step. We conclude that we must have $c_{\kappa_0} = c_{\kappa_0-1}=...=c_1=0$, which is a contradiction. The claim is proved.
	\end{proof}
	
	Roughly speaking, the non-degeneracy is an open condition. Therefore, the property extends from the limit vector $\ora{\omega}_{\infty}(\tE)$ to $\ora{\omega}_{\fm}(\tE)$ when $\fm$ is sufficiently large.
	
	\begin{thm}{\bf (Non-degeneracy for $\ora{\omega}_{\fm}(\tE)$).}\label{non deg omega gamma}
		There exists $\bar\fm \equiv \bar\fm(\kappa_0) \gg 1$ such that, for any $\fm\geq \bar\fm$, the vector $\ora{\omega}_{\fm}(\tE)$ in \eqref{omega0.vec}
		is non-degenerate in the interval $[\tE_1,\tE_2]$, assuming $\tE_1>1$ sufficiently large.
	\end{thm}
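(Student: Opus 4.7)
The plan is a perturbation argument: having established the non-degeneracy of the limiting vector $\vec\omega_\infty(\tE)$ in Proposition~\ref{prop.nondeg}, I would transfer it to $\vec\omega_\fm(\tE)$ for $\fm$ large, using that both frequency maps are real analytic in $\tE$ (Remark~\ref{remark analitycity}) and that the corresponding eigenvalues are close in a quantitative sense.

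First, I would recast Definition~\ref{def:non-deg} in a quantitative Russmann-type form. By the real analyticity of $\vec\omega_\infty$ on a complex neighborhood of $[\tE_1,\tE_2]$, a standard compactness/contradiction argument produces integers $m_0 = m_0(\kappa_0)\in\N$ and $\delta_0>0$ such that
$$
\max_{0 \leq k \leq m_0}\, \sup_{\tE \in [\tE_1, \tE_2]} \bigl|\partial_\tE^k \bigl(c \cdot \vec\omega_\infty(\tE)\bigr)\bigr| \;\geq\; \delta_0 \, |c|\,, \qquad \forall\, c \in \R^{\kappa_0}.
$$
Indeed, if this failed for every $\delta_0>0$ and every $m_0$, one could extract a unit vector $c$ for which every $\tE$-derivative of $c\cdot\vec\omega_\infty$ vanishes somewhere, hence (by analyticity and connectedness) $c\cdot\vec\omega_\infty\equiv0$ on $[\tE_1,\tE_2]$, contradicting Proposition~\ref{prop.nondeg}.

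The second step upgrades the eigenvalue closeness \eqref{vicinanza autovalori in gamma} to uniform $C^{m_0}$ closeness. The Schr\"odinger operator $\cL_\fm(\tE) = -\partial_y^2 + Q_\fm(\tE,\cdot)$ depends analytically on $\tE$ on a fixed complex strip $\mathcal T \supset [\tE_1,\tE_2]$ on which, by continuity of the bounds already established on the real axis, the first $\kappa_0$ negative eigenvalues remain simple and uniformly isolated from the rest of the spectrum. Repeating the Riesz-projector comparison used in Step~3 of the proof of Proposition~\ref{L_operator} on $\mathcal T$ (with $L^2$-smallness of $Q_\fm - Q_\infty$ from Lemma~\ref{lemma QmQinfty}) yields $\sup_{\tE \in \mathcal T}|\lambda_{j,\fm}(\tE)-\lambda_{j,\infty}(\tE)|\to 0$ as $\fm \to \infty$. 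Since both $\lambda_{j,\fm}$ and $\lambda_{j,\infty}$ are holomorphic on $\mathcal T$, Cauchy's integral formula on disks of fixed radius centred at points of $[\tE_1,\tE_2]$ upgrades this to
$$
\sup_{\tE \in [\tE_1, \tE_2]} \bigl| \partial_\tE^k \bigl(\lambda_{j,\fm}(\tE) - \lambda_{j,\infty}(\tE)\bigr) \bigr| \;\xrightarrow[\fm \to \infty]{}\; 0\,, \qquad 0 \leq k \leq m_0\,, \ \ 1\leq j\leq\kappa_0.
$$

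Combining the two displays, one then chooses $\bar\fm=\bar\fm(\kappa_0)$ large enough that the $C^{m_0}$ error of $\vec\omega_\fm-\vec\omega_\infty$ on $[\tE_1,\tE_2]$ is smaller than $\delta_0/2$; for $\fm\geq\bar\fm$ and any nonzero $c\in\R^{\kappa_0}$,
$$
\max_{0 \leq k \leq m_0}\, \sup_{\tE \in [\tE_1, \tE_2]} \bigl| \partial_\tE^k \bigl(c \cdot \vec\omega_\fm(\tE)\bigr) \bigr| \;\geq\; \tfrac{\delta_0}{2}\, |c|\,,
$$
so that $c\cdot\vec\omega_\fm$ cannot vanish identically, proving Theorem~\ref{non deg omega gamma}. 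The delicate point is the second step: the pointwise bound $|\mu_{j,\fm}-\mu_{j,\infty}|\lesssim\|Q_\fm-Q_\infty\|_{L^2}$ is only stated for real $\tE$, and extending it to a complex tube requires verifying that the analytic perturbation setup survives complexification, in particular that the Riesz contour around each simple eigenvalue $\mu_{j,\infty}(\tE)$ can be chosen uniformly in $\tE\in\mathcal T$ and $\fm\geq\bar\fm$.
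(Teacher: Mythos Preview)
Your argument is essentially correct, but it takes a considerably heavier route than the paper's. The paper proves Theorem~\ref{non deg omega gamma} by a direct compactness argument using only the $C^0$ convergence $\sup_{\tE\in[\tE_1,\tE_2]}|\vec\omega_\fm(\tE)-\vec\omega_\infty(\tE)|\to 0$ already established in Proposition~\ref{L_operator}: if the conclusion fails, there is a sequence $\fm_k\to\infty$ and unit vectors $d_{\fm_k}$ with $\vec\omega_{\fm_k}(\tE)\cdot d_{\fm_k}\equiv 0$; extracting $d_{\fm_k}\to\bar d$ with $|\bar d|=1$ and passing to the limit gives $\vec\omega_\infty(\tE)\cdot\bar d\equiv 0$, contradicting Proposition~\ref{prop.nondeg}. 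No derivatives, no complexification.

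Your approach instead proves the stronger quantitative transversality bound (which the paper establishes separately, later, as Proposition~\ref{prop:trans_un}) directly for $\vec\omega_\fm$, at the cost of needing $C^{m_0}$ closeness of $\vec\omega_\fm$ to $\vec\omega_\infty$. You correctly identify the delicate point: the paper only states the eigenvalue comparison for real $\tE$, so your Cauchy-estimate upgrade requires checking that the Riesz-projector argument extends to a complex tube, which is plausible but not free. Two minor remarks on your Step~1: the phrase ``vanishes somewhere'' should read ``vanishes identically'' (the negated bound forces $\sup_\tE$ to be small, not just the value at one point), and in fact once the $k=0$ term is included you get $c\cdot\vec\omega_\infty\equiv 0$ immediately without invoking analyticity at all. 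In summary: your route is valid and yields more, but for the bare non-degeneracy statement the paper's $C^0$ compactness argument is both simpler and avoids the complexification issue you flagged.
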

	\begin{proof}
		By contradiction, assume that for any $\bar\fm\gg 1$ there exists $\fm > \bar\fm$ and a vector $c_{\fm} \in \R^{\kappa_0}\setminus\{0\}$ such that $ \ora{\omega}_{\fm}(\tE) \cdot c_{\fm}= 0$ for any $\tE \in [\tE_1, \tE_2]$. Clearly by defining $d_\fm:= \frac{c_\fm}{|c_\fm|}$ one also has that 
		$$
		\ora{\omega}_{\fm}(\tE) \cdot d_\fm = 0 \quad \forall \,\tE \in [\tE_1, \tE_2]\,. 
		$$
		Since $|d_\fm| = 1$ for any $\fm >\bar\fm$, up to subsequences $d_\fm \to \bar d$, with $|\,\bar d\,| = 1$. Moreover $\sup_{\tE \in [\tE_1, \tE_2]} |\ora{\omega}_{\fm}(\tE) - \ora{\omega}_{\infty}(\tE)| \to 0$ as $\fm\to\infty$ by Proposition \ref{L_operator}. Hence, up to subsequences $\ora{\omega}_{\fm}(\tE) \cdot d_\fm$ converges to $\ora{\omega}_{\infty} (\tE) \cdot \bar d$ uniformly on $[\tE_1, \tE_2]$ as $\fm\to \infty$. This clearly implies that 
		$$
		\ora{\omega}_{\infty}(\tE) \cdot \bar d= 0, \quad \forall \,\tE \,\in [\tE_1, \tE_2]
		$$
		which contradicts the non degeneracy of the vector $\ora{\omega}_{\infty}$ proved  in Proposition \ref{prop.nondeg}. 
		%
		%
		%
		%
		%
	\end{proof}

	The next proposition is the key of the argument.  It provides a quantitative bound from the qualitative non-degeneracy condition in Theorem \ref{non deg omega gamma}.

	\begin{prop} {\bf (Transversality).} \label{prop:trans_un}
		Let $\bar\fm\gg 1$  as in Theorem \ref{non deg omega gamma}. Then, for any $\fm \geq \bar\fm$, there exist $m_0\in\N$ and $\rho_0>0$ such that, for any $\tE\in[\tE_1,\tE_2]$, 
		\begin{equation}
			\max_{0\leq n \leq m_0}
			| \partial_\tE^n \ora{\omega}_{\fm}(\tE)\cdot \ell | \geq \rho_0\braket{\ell} \,, \quad \forall\,\ell\in\Z^{\kappa_0}\setminus\{0\} \,; \label{eq:0_meln}
		\end{equation}
		We call $\rho_0$ the amount of non-degeneracy and $m_0$ the index of non-degeneracy.
	\end{prop}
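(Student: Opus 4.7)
The plan is to deduce the quantitative transversality bound \eqref{eq:0_meln} from the qualitative non-degeneracy of $\ora{\omega}_\fm(\tE)$ already established in Theorem \ref{non deg omega gamma}, via a standard compactness argument that leverages the analyticity of the frequency map on the compact interval $[\tE_1, \tE_2]$.

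First I would reduce the problem to a bound on the unit sphere. Since every $\ell \in \Z^{\kappa_0}\setminus\{0\}$ satisfies $|\ell|\geq 1$, one has $\langle \ell \rangle = |\ell|$. Writing $c := \ell/|\ell| \in S^{\kappa_0-1}$ and factoring out $|\ell|$ by linearity, the claim \eqref{eq:0_meln} becomes equivalent to the uniform bound
\[
\max_{0 \leq n \leq m_0} \bigl|\partial_\tE^n \ora{\omega}_\fm(\tE) \cdot c\bigr| \geq \rho_0, \qquad \forall\, (\tE, c) \in [\tE_1, \tE_2] \times S^{\kappa_0-1}.
\]

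Next I would establish a local version of this inequality around every fixed point. Pick $(\tE_*, c_*) \in [\tE_1, \tE_2] \times S^{\kappa_0-1}$. By Remark \ref{remark analitycity}, the scalar function $\tE \mapsto c_* \cdot \ora{\omega}_\fm(\tE)$ extends to a real-analytic function on an open neighborhood of $[\tE_1, \tE_2]$. Since $c_* \neq 0$, Theorem \ref{non deg omega gamma} ensures that this function is not identically zero on $[\tE_1,\tE_2]$, and hence by real analyticity at least one of its derivatives at $\tE_*$ must be nonzero: there exist $n_* = n_*(\tE_*, c_*) \in \N_0$ and $\delta_* = \delta_*(\tE_*, c_*) > 0$ with $|\partial_\tE^{n_*} \ora{\omega}_\fm(\tE_*) \cdot c_*| \geq 3\delta_*$. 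By joint continuity of $(\tE, c) \mapsto \partial_\tE^{n_*} \ora{\omega}_\fm(\tE) \cdot c$ (immediate, since the dependence on $\tE$ is analytic and on $c$ is linear), the strict inequality $|\partial_\tE^{n_*} \ora{\omega}_\fm(\tE) \cdot c| > 2\delta_*$ then persists on an open neighborhood $U(\tE_*, c_*)$ of $(\tE_*, c_*)$ in $[\tE_1,\tE_2] \times S^{\kappa_0-1}$.

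Finally I would conclude by compactness. Because $[\tE_1, \tE_2] \times S^{\kappa_0-1}$ is compact, finitely many of the neighborhoods $U(\tE_*, c_*)$ suffice to cover it, say $U_1, \ldots, U_N$ with associated orders $n_1, \ldots, n_N$ and thresholds $\delta_1, \ldots, \delta_N$. Setting $m_0 := \max_{1 \leq j \leq N} n_j$ and $\rho_0 := 2 \min_{1 \leq j \leq N} \delta_j$ gives the desired uniform bound. The only genuinely nontrivial input in the argument is the qualitative non-degeneracy supplied by Theorem \ref{non deg omega gamma}; the remaining reduction to a quantitative lower bound uniform in $\ell$ is then a packaging argument exploiting compactness and real analyticity. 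Consequently I do not expect any serious obstacle at this step, the hard part having already been carried out through the asymptotic expansion of Lemma \ref{lemma.asympt}.
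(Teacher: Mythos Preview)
Your proof is correct and uses essentially the same ingredients as the paper: compactness of the parameter domain, real analyticity of $\tE\mapsto\ora{\omega}_\fm(\tE)$, and the non-degeneracy from Theorem \ref{non deg omega gamma}. The only cosmetic difference is that the paper argues by contradiction (extracting a convergent subsequence $(\tE_m,\ell_m/\langle\ell_m\rangle)\to(\bar\tE,\bar c)$ along which all derivatives vanish, then invoking analyticity to force $\ora{\omega}_\fm(\cdot)\cdot\bar c\equiv 0$), whereas you give the equivalent direct open-cover argument on $[\tE_1,\tE_2]\times S^{\kappa_0-1}$.
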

	\begin{proof}
		Let $\fm\geq \bar\fm$. By contradiction, assume that
		for any $m\in\N$ there exist $\tE_m\in[\tE_1,\tE_2] $ and $\ell_m\in\Z^{\kappa_0}\setminus\{0\}$ such that
		\begin{equation}\label{eq:0_abs_m}
			\Big| \partial_\tE^n \ora{\omega}_{\fm}(\tE_m) \cdot \frac{\ell_m}{\braket{\ell_m}}  \Big| < \frac{1}{\braket{m}} \,, \quad \forall\,0\leq n\leq m \, .
		\end{equation}
		The sequences $(\tE_m)_{m\in\N}\subset[\tE_1,\tE_2]$ and $(\ell_m/\braket{\ell_m})_{m\in\N}\subset \R^{\kappa_0}\setminus\{0\}$ are both bounded. By compactness, up to subsequences
		$\tE_m\to \bar\tE\in[\tE_1,\tE_2]$ and $\ell_m/\braket{\ell_m}\rightarrow\bar c\neq 0$. 
		Therefore, in the limit for $m\rightarrow + \infty$, by  \eqref{eq:0_abs_m} we get $\partial_\tE^n \ora{\omega}_{\fm}(\bar\tE)\cdot \bar c = 0$ for any $n\in\N_0$.
		By the analyticity of $ \ora{\omega}_{\fm}(\tE)$ (see Remark \ref{remark analitycity}), we deduce 
		that the function $ \tE \mapsto \ora{\omega}_{\fm}(\tE)\cdot \bar c$ is identically zero on $[\tE_1,\tE_2]$, which contradicts Proposition \ref{prop.nondeg}.
	\end{proof}
	
	Thanks to Proposition \ref{prop:trans_un}, we can finally prove that the
	 Diophantine non-resonant condition for $\ora{\omega}_{\fm}(\tE)$ holds on a large set of parameters.
	\begin{prop}\label{omega.fm.tE.diophantine}
		Let $(\kappa_{0}+\tfrac14)\pi<\tE_1<\tE_2<\infty$ be given. Let also $\bar\tau\geq m_0\kappa_{0}$  and $\bar\upsilon\in(0,1)$. Then the set
		\begin{equation}\label{diofantea omega vec E imperturbato nella proof}
			\bar\cK = \bar\cK(\bar\upsilon,\bar\tau) := \big\{  \tE\in [\tE_1,\tE_2] \,: \, 	|\vec \omega_{\mathtt m}(\mathtt E) \cdot \ell| \geq  \bar\upsilon \braket{\ell}^{-\bar\tau}, \ \ \forall \,\ell \in \Z^{\kappa_0} \setminus \{ 0 \}\  \big\}\,,
		\end{equation}
		is of large measure with respect to $\bar\upsilon$, namely $|[\tE_1,\tE_2] \setminus \bar\cK| = o(\bar\upsilon^{1/m_0})$.
	\end{prop}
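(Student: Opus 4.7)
\medskip

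\noindent\textbf{Proof plan.} The plan is to run the classical Rüssmann-type measure argument, decomposing the complement as a union of resonant sublevel sets and controlling each of them via the transversality bound \eqref{eq:0_meln} already established in Proposition \ref{prop:trans_un}. First, I would write
\begin{equation*}
[\tE_1,\tE_2]\setminus \bar\cK \;=\; \bigcup_{\ell\in\Z^{\kappa_0}\setminus\{0\}} R_\ell\,,\qquad R_\ell:=\bigl\{ \tE\in[\tE_1,\tE_2]\,:\, |\vec\omega_{\fm}(\tE)\cdot\ell|<\bar\upsilon\langle\ell\rangle^{-\bar\tau}\bigr\}\,,
\end{equation*}
and reduce the problem to estimating $|R_\ell|$ uniformly in $\ell$.

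For each nonzero $\ell$, set $f_\ell(\tE):=\vec\omega_{\fm}(\tE)\cdot\ell/\langle\ell\rangle$, so that $R_\ell=\{\tE\in[\tE_1,\tE_2]:|f_\ell(\tE)|<\bar\upsilon\langle\ell\rangle^{-\bar\tau-1}\}$. By the transversality property \eqref{eq:0_meln} of Proposition \ref{prop:trans_un}, one has
\begin{equation*}
\max_{0\leq n\leq m_0}|\partial_\tE^n f_\ell(\tE)|\geq \rho_0\qquad \forall\,\tE\in[\tE_1,\tE_2]\,.
\end{equation*}
Moreover, since $\vec\omega_{\fm}$ is analytic on a neighbourhood of the compact interval $[\tE_1,\tE_2]$ (Remark \ref{remark analitycity}) and $|\ell|/\langle\ell\rangle\leq 1$, the derivative $\partial_\tE^{m_0+1} f_\ell$ is bounded by a constant $M$ independent of $\ell$. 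I would then invoke the standard Rüssmann lemma on sublevel sets (see e.g.\ the version used in \cite{BM,BaBM}): if $f\in\cC^{m_0+1}([\tE_1,\tE_2])$ satisfies $\max_{n\leq m_0}|\partial^n f|\geq \rho_0$ and $\|\partial^{m_0+1}f\|_{L^\infty}\leq M$, then
\begin{equation*}
|\{|f|<\delta\}|\leq C(\rho_0,M,m_0)\,\delta^{1/m_0}\,.
\end{equation*}
Applying this to $f_\ell$ with $\delta=\bar\upsilon\langle\ell\rangle^{-\bar\tau-1}$ yields $|R_\ell|\leq C\,\bar\upsilon^{1/m_0}\langle\ell\rangle^{-(\bar\tau+1)/m_0}$ with $C$ independent of $\ell$.

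Summing over $\ell\neq 0$ in $\Z^{\kappa_0}$, the hypothesis $\bar\tau\geq m_0\kappa_0$ gives $(\bar\tau+1)/m_0\geq \kappa_0+1/m_0>\kappa_0$, so the series $\sum_{\ell\neq 0}\langle\ell\rangle^{-(\bar\tau+1)/m_0}$ converges, leading to
\begin{equation*}
\bigl|[\tE_1,\tE_2]\setminus \bar\cK\bigr|\;\leq\; C\,\bar\upsilon^{1/m_0}\!\!\!\sum_{\ell\in\Z^{\kappa_0}\setminus\{0\}}\!\!\!\langle\ell\rangle^{-(\bar\tau+1)/m_0}\;\lesssim\; \bar\upsilon^{1/m_0}\,,
\end{equation*}
which is the claimed estimate. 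There is no real obstacle here: the two nontrivial ingredients (transversality plus uniform analyticity bounds on $\vec\omega_{\fm}$) are already available, and the remaining work is the bookkeeping of the Rüssmann sublevel estimate and the summability in $\ell$ dictated by the choice $\bar\tau\geq m_0\kappa_0$.
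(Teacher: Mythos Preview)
Your proposal is correct and follows essentially the same approach as the paper: decompose the complement into resonant sets $R_\ell$, normalize by $\langle\ell\rangle$, apply the transversality bound from Proposition \ref{prop:trans_un} together with the uniform analyticity bounds of Remark \ref{remark analitycity}, invoke the R\"ussmann sublevel estimate (the paper cites Theorem 17.1 in \cite{Russ} directly), and sum over $\ell$ using $\bar\tau\geq m_0\kappa_0$.
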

	\begin{proof}
		We write
		\begin{equation}\label{bar cK c}
			\bar\cK^c := [\tE_1,\tE_2] \setminus \bar\cK = \bigcup_{\ell\neq 0} \bar R_{\ell} = \bigcup_{\ell\neq 0} \big\{  \tE\in [\tE_1,\tE_2] \,:\, |\ora{\omega}_{\fm}(\tE)\cdot \ell  | < \bar\upsilon \braket{\ell}^{-\bar\tau} \big\}\,.
		\end{equation}
		We claim that $|\bar R_{\ell}| \lesssim (\bar\upsilon\braket{\ell}^{-(\bar\tau+1)})^{\frac{1}{m_0}}$. We write
		\begin{equation*}
			\bar R_{\ell}= \Big\{ \tE \in [\tE_1,\tE_2] \, : \, |\bar f_{\ell}({\mathtt E})| < \bar\upsilon \braket{\ell}^{-(\bar\tau+1)}  \Big\}\,,
		\end{equation*}
		where $\bar f_{\ell}({\mathtt E}):=\ora{\omega}_{\fm}({\mathtt E})\cdot \frac{\ell}{\braket{\ell}}$. By Proposition \ref{prop:trans_un}, we have $\max_{0\leq n \leq k_0}|\pa_{\mathtt E}^n \bar f_{\ell}({\mathtt E})|\geq \rho_0$ for any ${\mathtt E}\in[\tE_1.\tE_2]$. In addition, by Remark \ref{remark analitycity}, we have  $\max_{0\leq n \leq m_0}|\pa_{\mathtt E}^n \bar f_{\ell}({\mathtt E})|\leq C$ for any ${\mathtt E}\in[\tE_1.\tE_2]$ for some constant $C=C(\tE_1,\tE_2,m_0)>0$. In particular, $\bar f_{\ell}$ is of class $\cC^{k_0-1}=\cC^{m_0+1}$. Thus, Theorem 17.1 in \cite{Russ} applies, whence the claim follows. Finally, we estimate \eqref{bar cK c} by
		\begin{equation}
			|\bar\cK^c| \leq \sum_{\ell\neq 0} |\bar R_{\ell}|  \lesssim \bar\upsilon^{\frac{1}{m_0}} \sum_{\ell \neq 0} \braket{\ell}^{-\frac{\bar\tau+1}{m_0}} \lesssim \bar\upsilon^{\frac{1}{m_0}}
		\end{equation}
		since $\bar\tau > m_0 \kappa_{0} -1$. This concludes the proof.
	\end{proof}

	\section{Proof of Theorem \ref{main.thm1} and measure estimates}\label{subsec:measest}
	
	Assuming that Theorem \ref{NMT} holds, we deduce now Theorem \ref{main.thm1}. 
	By \eqref{alpha_infty}, for any $\mathtt A \in {\cal J}_\varepsilon(\mathtt E)$, the $\mathtt A$-dependent family of functions $\alpha_{\infty}(\cdot, \mathtt A)$ from $\t\Omega$ into their images $\alpha_{\infty}(\t\Omega \times \{ \mathtt A\})$ are invertible and 
	\begin{equation}\label{inv_alpha}
		\begin{aligned}
			& \beta = \alpha_{\infty}(\omega, \mathtt A) = \omega+r_{\varepsilon}(\omega, \mathtt A)\,, \\
			&   \abs{ \breve{r}_{\varepsilon} }^{k_0,\upsilon} \lesssim \sqrt{\varepsilon} \upsilon^{-1}\,,
		\end{aligned} \Leftrightarrow \quad \begin{aligned}
		&  \omega = \alpha_{\infty}^{-1}(\beta, \mathtt A) = \beta+\breve{r}_{\varepsilon}(\beta, \mathtt A)\,,  \\
		&   \abs{ \breve{r}_{\varepsilon} }^{k_0,\upsilon} \lesssim \sqrt{\varepsilon} \upsilon^{-1}\,.
	\end{aligned}
	\end{equation}
	Then, for any $\beta\in\alpha_{\infty}(\tG^\upsilon \times {\cal J}_\varepsilon(\mathtt E))$, Theorem \ref{NMT} proves the existence of an embedded invariant torus filled by quasi-periodic solutions with Diophantine frequency $\omega=\alpha_{\infty}^{-1}(\beta, \mathtt A)$ for the Hamiltonian
	\begin{equation*}
		{\cal H}_{\varepsilon, \beta} = \beta \cdot I+  \tfrac12\big(z, ( \begin{smallmatrix}
			-\cL_{\fm}(\tE) & 0 \\ 0 & {\rm Id}
		\end{smallmatrix} )  z \big)_{L^2} + \sqrt{\varepsilon} \, {\cal P}_\varepsilon \,.
	\end{equation*}
	Consider the curve of the unperturbed tangential frequency vector 
	$ \mathtt A \to \ora{\omega}_{\fm}(\mathtt A)$ in \eqref{omega0.vec}.
	In Theorem \ref{MEASEST} below we prove that, for a density $1$ set of parameters $\mathtt A \in {\cal J}_\varepsilon(\mathtt E)$, the vector $\alpha_{\infty}^{-1}(\ora{\omega}_{\fm}(\mathtt A), \mathtt A)$ is in $\tG^\upsilon$,
	obtaining
	an embedded torus for the Hamiltonian $\cH_{\varepsilon,\beta}$ with $\beta=\alpha_{\infty}(\omega,\tA)=\ora{\omega}_{\fm}(\tA)$, and therefore for the Hamiltonian ${\cal H}_{\varepsilon}$ in  \eqref{H_epsilon}, filled by quasi-periodic solutions with Diophantine frequency vector 
	$\omega = \alpha_{\infty}^{-1}(\ora{\omega}_{\fm}(\mathtt A), \mathtt A) $, denoted  $ \wt \omega $ in Theorem \ref{main.thm1}. Clearly, by the estimates \eqref{lip omega mathtt A E}, \eqref{inv_alpha}, one has that the vector $\tilde \omega$ satisfies 
	\begin{equation}\label{asintotica frequenze vere per main teo}
	\widetilde \omega = \vec\omega_{\mathtt m}(\mathtt A) + O(\sqrt{\varepsilon} \upsilon^{- 1}) = \vec\omega_{\mathtt m}(\mathtt E) + O(\sqrt{\varepsilon} + \sqrt{\varepsilon} \upsilon^{- 1} ) = \vec\omega_{\mathtt m}(\mathtt E) + O( \sqrt{\varepsilon} \upsilon^{- 1} ) \,. 
	\end{equation}
	Thus, the function $ì A(i_{\infty}(\wt \omega x)) = (\zeta_1(\wt \omega x, y), \zeta_2(\wt \omega x, y))$,  where $A$ is defined in \eqref{aa_coord},
	is a quasi-periodic solution of the equation \eqref{first_order} and hence, by recalling Lemma \ref{unforcing.lemma}, \eqref{link eta varepsilon}, \eqref{equazione secondo ordine con h eta riscalata}, \eqref{definizione zeta 1 zeta 2 zeta}, $ h_\eta(y) + \varepsilon \zeta_1(\omega x, y)$ is a quasi-periodic solution of \eqref{elliptic_eq}.
	This proves Theorem \ref{main.thm1} together with the following measure estimate.
	
	\begin{thm} {\bf (Measure estimate).}\label{MEASEST}
		Let
		\begin{equation}\label{param_small_meas}
	\begin{aligned}
		& \upsilon = \varepsilon^{\rm a} \,, \quad  0 <{\rm a}<\min\big\{ {\rm a}_0, \tfrac{\td(\tau)-1}{2(\td(\tau)-\frac{1}{m_0})} \big\}\,, \quad \td(\tau):=\tfrac{\tau +1 -m_0\kappa_{0}}{m_0(\bar\tau +1)}  \\
		&  \tau > {\rm max}\{m_0 \kappa_0-1\,,\, m_0 (\kappa_0 + \bar \tau +1)-1 \} \,,  
	\end{aligned}
\end{equation}
		where $m_0$ is the index of non-degeneracy given in Proposition \ref{prop:trans_un}, $k_0:= m_0+2$, $\bar\tau \geq m_0\kappa_{0}$ is fixed and $\ta_0\in (0,1)$ is defined in \eqref{param.NASH} in Theorem \ref{NASH} . Then, fixed $\tE\in\bar\cK$, with $\bar\cK$ as in \eqref{diofantea omega vec E imperturbato.intro} (see also \eqref{diofantea omega vec E imperturbato nella proof} below), for $ \varepsilon \in (0, \varepsilon_0) $ small enough, the set
		\begin{equation}\label{Gvare}
			\cK_{\varepsilon}=\cK_{\varepsilon}(\tE):= \big\{ \mathtt A \in {\cal J}_\varepsilon(\mathtt E) = [\mathtt E - \sqrt{\varepsilon}, \mathtt E + \sqrt{\varepsilon}]
			\, : \,  \alpha_{\infty}^{-1}( \ora{\omega}_{\fm}({\mathtt A}), \mathtt A)
			\in \tG^\upsilon \big\}
		\end{equation}
		has density $1$ as $\varepsilon \to 0$, namely 
		$$
		\frac{|\cK_{\varepsilon}(\tE)|}{|\cJ_{\varepsilon}(\tE)|}=\frac{| \cK_{\varepsilon}(\tE)|}{2 \sqrt{\varepsilon}} \rightarrow 1\quad \text{as} \quad \varepsilon \to 0\,, \quad  \text{uniformly in } \, \tE\in\bar\cK\,.
		$$ 
	\end{thm}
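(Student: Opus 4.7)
}

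My plan is to write the complement of $\cK_\varepsilon(\tE)$ as a countable union of resonant sublevel sets indexed by $\ell \in \Z^{\kappa_0}\setminus\{0\}$ and estimate each term using a split into ``low modes / high modes'' relative to some threshold $N = N(\varepsilon)$. Set $\wt\omega(\tA) := \alpha_\infty^{-1}(\ora{\omega}_\fm(\tA),\tA)$ and
\begin{equation*}
R_\ell := \big\{ \tA \in \cJ_\varepsilon(\tE) \,:\, |\wt\omega(\tA) \cdot \ell| < \upsilon \braket{\ell}^{-\tau} \big\}\,, \qquad \cJ_\varepsilon(\tE)\setminus \cK_\varepsilon(\tE) = \bigcup_{\ell \neq 0} R_\ell\,.
\end{equation*}
By \eqref{inv_alpha} and \eqref{lip omega mathtt A E} we have the key asymptotics $|\wt\omega(\tA) - \ora{\omega}_\fm(\tA)|^{k_0,\upsilon} \lesssim \sqrt\varepsilon\,\upsilon^{-1}$ on $\cJ_\varepsilon(\tE)$ and $|\ora{\omega}_\fm(\tA) - \ora{\omega}_\fm(\tE)|^{k_0,\upsilon}\lesssim \sqrt\varepsilon$, so in particular $|\wt\omega(\tA)-\ora{\omega}_\fm(\tE)| \leq C\sqrt\varepsilon\,\upsilon^{-1}$.

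\medskip

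\emph{Low modes.} Choose a threshold $N = N(\varepsilon)$ (to be optimized). For $0 < |\ell| \leq N$, using $\tE\in\bar\cK$ (see \eqref{diofantea omega vec E imperturbato nella proof}) and the parameter regime \eqref{param_small_meas} with $\tau > \bar\tau$ and $\upsilon = \varepsilon^{\rm a} \ll \bar\upsilon$, we estimate
\begin{equation*}
|\wt\omega(\tA)\cdot \ell| \geq |\ora{\omega}_\fm(\tE)\cdot \ell| - C\sqrt\varepsilon\,\upsilon^{-1}|\ell|  \geq \bar\upsilon\braket{\ell}^{-\bar\tau} - C\sqrt\varepsilon\,\upsilon^{-1}N \geq \upsilon\braket{\ell}^{-\tau}\,,
\end{equation*}
provided $C\sqrt\varepsilon\,\upsilon^{-1} N^{\bar\tau+1} \leq \tfrac12 \bar\upsilon$. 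Hence, picking $N = c\,(\sqrt\varepsilon\,\upsilon^{-1})^{-1/(\bar\tau+1)}$ with $c>0$ sufficiently small, one obtains $R_\ell = \emptyset$ for all $0<|\ell|\leq N$.

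\medskip

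\emph{High modes.} For $|\ell| > N$ I will mimic the proof of Proposition \ref{omega.fm.tE.diophantine}. Set $g_\ell(\tA):= \wt\omega(\tA)\cdot \tfrac{\ell}{\braket{\ell}}$. Writing $\wt\omega(\tA) = \ora{\omega}_\fm(\tA) + \fr_\varepsilon(\tA)$ with $|\pa_\tA^n \fr_\varepsilon|\lesssim \sqrt\varepsilon\,\upsilon^{-1}$ for $0\leq n\leq k_0 = m_0+2$, the transversality Proposition \ref{prop:trans_un} applied at $\tA\in\cJ_\varepsilon(\tE)\subset[\tE_1,\tE_2]$ yields, for $\sqrt\varepsilon\,\upsilon^{-1}$ small enough,
\begin{equation*}
\max_{0\leq n \leq m_0} |\pa_\tA^n g_\ell(\tA)| \geq \rho_0/2 > 0\,.
\end{equation*}
Together with the uniform $\cC^{m_0+1}$ bound on $g_\ell$ (Remark \ref{remark analitycity} plus the estimates on $\fr_\varepsilon$), Rüssmann's sublevel-set lemma (Theorem 17.1 in \cite{Russ}) gives
\begin{equation*}
|R_\ell| \lesssim \big(\upsilon \braket{\ell}^{-(\tau+1)}\big)^{1/m_0}\,.
\end{equation*}
Summing over $|\ell|>N$, using $\tau > m_0(\kappa_0 + \bar\tau + 1) -1$ (so that $\tfrac{\tau+1}{m_0} > \kappa_0 + \bar\tau + 1$),
\begin{equation*}
\Big| \bigcup_{|\ell|>N} R_\ell \Big| \lesssim \upsilon^{1/m_0} \sum_{|\ell|>N} \braket{\ell}^{-(\tau+1)/m_0} \lesssim \upsilon^{1/m_0} N^{\kappa_0 - (\tau+1)/m_0} = \upsilon^{1/m_0} N^{-m_0\td(\tau)(\bar\tau+1)}\,,
\end{equation*}
with $\td(\tau)$ as in \eqref{param_small_meas}. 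Plugging in $N \simeq (\sqrt\varepsilon\,\upsilon^{-1})^{-1/(\bar\tau+1)}$ one finds
\begin{equation*}
\big| \cJ_\varepsilon(\tE)\setminus \cK_\varepsilon(\tE)\big| \lesssim \upsilon^{1/m_0} (\sqrt\varepsilon\,\upsilon^{-1})^{m_0 \td(\tau)} = \varepsilon^{\frac{\ta}{m_0} + m_0\td(\tau)(\tfrac12 - \ta)}\,,
\end{equation*}
and the choice of $\ta$ in \eqref{param_small_meas} makes this exponent strictly larger than $\tfrac12$, so that the ratio $|\cJ_\varepsilon(\tE)\setminus\cK_\varepsilon(\tE)|/(2\sqrt\varepsilon)\to 0$ as $\varepsilon\to 0$, uniformly in $\tE\in\bar\cK$.

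\medskip

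The main obstacle, and the reason for introducing the auxiliary parameter $\tA$ together with the $\sqrt\varepsilon$-neighbourhood $\cJ_\varepsilon(\tE)$, is the \emph{low-mode} elimination: one must simultaneously have enough room in the parameter to apply the quantitative non-degeneracy of Proposition \ref{prop:trans_un} for the high modes and keep the deformation $\wt\omega - \ora{\omega}_\fm$ small enough, as a $\cC^{k_0}$-function of $\tA$, so that both the fixed $\tE$-Diophantine condition $\tE\in\bar\cK$ survives at the perturbed level for $|\ell|\leq N$ and the transversality of $g_\ell$ is not destroyed. The balance of these two competing constraints is encoded precisely in the choice of exponents $\upsilon = \varepsilon^{\ta}$, $\tau$, $\bar\tau$ in \eqref{param_small_meas}.
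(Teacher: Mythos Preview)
Your approach is essentially identical to the paper's: split $\bigcup_{\ell\neq 0} R_\ell$ at the threshold $N\simeq(\upsilon/\sqrt\varepsilon)^{1/(\bar\tau+1)}$, kill the low modes using the fixed Diophantine condition $\tE\in\bar\cK$, and handle the high modes via perturbed transversality plus R\"ussmann's lemma (these are exactly Lemmata \ref{lem:pert_trans}, \ref{lem:meas_res}, \ref{risonanti bassi zero} in the paper).

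Two computational slips to correct. First, from the definition of the weighted norm $|\cdot|^{k_0,\upsilon}$ in \eqref{weighted_norm}, the estimate $|\breve r_\varepsilon|^{k_0,\upsilon}\lesssim\sqrt\varepsilon\,\upsilon^{-1}$ unpacks to $|\pa_\tA^n\fr_\varepsilon|\lesssim\sqrt\varepsilon\,\upsilon^{-(1+n)}$, not $\sqrt\varepsilon\,\upsilon^{-1}$ uniformly in $n$; so the smallness needed for perturbed transversality is $\sqrt\varepsilon\,\upsilon^{-(1+m_0)}\ll 1$ (still satisfied under \eqref{param_small_meas}). Second, in the high-mode sum you picked up an extra factor $m_0$: since
\[
\kappa_0-\tfrac{\tau+1}{m_0}=-\tfrac{\tau+1-m_0\kappa_0}{m_0}=-\td(\tau)(\bar\tau+1)\,,
\]
the correct bound is $\upsilon^{1/m_0}(\sqrt\varepsilon/\upsilon)^{\td(\tau)}$, not $(\sqrt\varepsilon/\upsilon)^{m_0\td(\tau)}$. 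With this exponent the requirement that the ratio over $2\sqrt\varepsilon$ tend to zero becomes precisely $\ta<\dfrac{\td(\tau)-1}{2(\td(\tau)-1/m_0)}$, which is the bound stated in \eqref{param_small_meas}.
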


	The rest of this section is devoted to prove Theorem \ref{MEASEST}.  
	The key point to compute the density of the set ${\cal K}_\varepsilon(\tE)$ is that  the unperturbed frequency $\vec\omega_{\mathtt m}({\mathtt E})$ is Diophantine with constants $\bar \upsilon \in (0, 1)$ and $\bar \tau$ stronger than the ones in \eqref{Cset_infty}, namely with $\upsilon \ll \bar\upsilon$ and $\tau \gg \bar \tau$, see Proposition \ref{omega.fm.tE.diophantine}.
	
	By \eqref{inv_alpha}
	we have that, for $\mathtt A \in {\cal J}_\varepsilon(\mathtt E)$, 
	\begin{equation}\label{Om-per}
		\ora{ \omega}_{\varepsilon} ({\mathtt A}):= \alpha_{\infty}^{-1}(\ora{\omega}_{\fm}(\mathtt A), \mathtt A) =
		\ora{\omega}_{\fm}(\mathtt A) +\ora{r}_{\varepsilon}(\mathtt A) \,,
	\end{equation}
	where $\ora{r}_\varepsilon(\mathtt A) 
	:= \breve{r}_\varepsilon(\ora{\omega}_{\fm}(\mathtt A), \mathtt A) $
	satisfies 
	\begin{equation}\label{eq:tang_res_est}
		|\partial_{\mathtt A}^k {\vec r}_{\varepsilon} ({\mathtt A})| \leq C \sqrt{\varepsilon}\upsilon^{-(1+k)} \,, \quad \forall\,0 \leq k \leq k_0 \,, \  \ \text{uniformly on } \ {\cal J}_\varepsilon(\mathtt E)\,.
	\end{equation}
	By \eqref{Cset_infty}, the Cantor set $\cK_{\varepsilon}(\tE)$ in \eqref{Gvare} becomes
	\begin{equation}\label{Kvare}
		\cK_{\varepsilon}(\tE):= \Big\{ \mathtt A \in {\cal J}_\varepsilon(\mathtt E) \,:\, |\ora{\omega}_{\varepsilon}({\mathtt A}) \cdot\ell| \geq \upsilon\braket{\ell}^{-\tau} \,, \ \forall\,\ell\in\Z^{\kappa_0}\setminus\{0\}  \Big\}.
	\end{equation}
	We estimate the measure of the complementary set
	\begin{equation}\label{kvare.compl}
		\begin{aligned}
		 &	\cK_{\varepsilon}^c(\tE)\!:=\!{\cal J}_\varepsilon(\mathtt E)\setminus\cK_{\varepsilon}(\tE) \!:=\! \bigcup_{\ell\neq 0}R_{\ell}(\mathtt E)\!:=\! \bigcup_{\ell\neq 0} \Big\{ {\mathtt A} \in \!{\cal J}_\varepsilon(\mathtt E) :  |\ora{\omega}_{\varepsilon} ({\mathtt A})\cdot\ell|\!<\!\upsilon\braket{\ell}^{-\tau} \! \Big\}\,.
		\end{aligned}
	\end{equation}
	To estimate the measure of the sets $R_{\ell}(\mathtt E)$ in \eqref{kvare.compl}, the key point is to show that the perturbed linear frequencies satisfy the similar lower bound in \eqref{eq:0_meln} in Proposition \ref{prop:trans_un}. The transversality property actually holds not only in the vicinity of the fixed $\mathtt E$, but  also on the full parameter set $[\mathtt E_1, \mathtt E_2]$. 
	
	\begin{lem} {\bf (Perturbed transversality)} \label{lem:pert_trans}
		For $\varepsilon\in(0,\varepsilon_0)$ small enough and for all ${\mathtt A}\in[\tE_1,\tE_2]$, 
		\begin{equation}
			\max_{0\leq n \leq m_0} | \partial_{\mathtt A}^n \ora{\omega}_{\varepsilon}({\mathtt A})\cdot \ell | \geq \frac{\rho_0}{2}\braket{\ell} \,, \quad \forall\,\ell\in\Z^{\kappa_0}\setminus\{0\} \,; \label{eq:0_meln_pert}
		\end{equation}
		here $\rho_0$ is the amount of non-degeneracy that has been defined in Proposition \ref{prop:trans_un}. In particular, the same estimate \eqref{eq:0_meln_pert} holds for any $\tA\in \cJ_{\varepsilon}(\tE)$, with the constant $\rho_0$ independent of $\varepsilon>0$.
	\end{lem}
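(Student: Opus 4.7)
\medskip

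\noindent
\textbf{Proof plan.} The core idea is to regard the perturbed frequency map $\ora{\omega}_{\varepsilon}(\mathtt A)$ as a small Whitney-$\mathcal{C}^{k_0}$ perturbation of the unperturbed map $\ora{\omega}_{\fm}(\mathtt A)$ and to transfer the quantitative non-degeneracy of Proposition \ref{prop:trans_un} by the triangle inequality. First I would fix $\mathtt A \in \cJ_{\varepsilon}(\mathtt E)$ and recall from \eqref{Om-per} the decomposition
\begin{equation*}
\ora{\omega}_{\varepsilon}(\mathtt A) = \ora{\omega}_{\fm}(\mathtt A) + \vec r_{\varepsilon}(\mathtt A).
\end{equation*}
Differentiating $n$ times in $\mathtt A$ and pairing with $\ell \in \Z^{\kappa_0}\setminus\{0\}$, this yields, for any $0 \leq n \leq m_0 \leq k_0$,
\begin{equation*}
\partial_{\mathtt A}^n \ora{\omega}_{\varepsilon}(\mathtt A)\cdot\ell = \partial_{\mathtt A}^n \ora{\omega}_{\fm}(\mathtt A)\cdot\ell + \partial_{\mathtt A}^n \vec r_{\varepsilon}(\mathtt A)\cdot\ell.
\end{equation*}

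\medskip

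\noindent
Next, I would use the quantitative remainder bound \eqref{eq:tang_res_est}: for $0 \leq n \leq m_0$,
\begin{equation*}
|\partial_{\mathtt A}^n \vec r_{\varepsilon}(\mathtt A)\cdot\ell| \leq C\sqrt{\varepsilon}\,\upsilon^{-(m_0+1)}\,|\ell| \leq C\sqrt{\varepsilon}\,\upsilon^{-(m_0+1)}\braket{\ell}.
\end{equation*}
With $\upsilon = \varepsilon^{\mathtt a}$ and the parameter calibration in \eqref{param_small_meas}, the exponent $\tfrac12 - \mathtt a(m_0+1)$ is strictly positive (this is where the constraint $\mathtt a < \mathtt a_0$ together with the bound in \eqref{param_small_meas} enters), so $C\sqrt{\varepsilon}\,\upsilon^{-(m_0+1)} = o(1)$ as $\varepsilon \to 0$. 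In particular, for $\varepsilon \in (0,\varepsilon_0)$ small enough, one has $C\sqrt{\varepsilon}\,\upsilon^{-(m_0+1)} \leq \rho_0/2$, with $\rho_0$ the amount of non-degeneracy of Proposition \ref{prop:trans_un}.

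\medskip

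\noindent
Combining this with the unperturbed transversality \eqref{eq:0_meln}, taking the maximum over $0 \leq n \leq m_0$ and using the elementary inequality $\max_n|a_n + b_n| \geq \max_n|a_n| - \max_n|b_n|$, I obtain
\begin{equation*}
\max_{0\leq n \leq m_0}|\partial_{\mathtt A}^n \ora{\omega}_{\varepsilon}(\mathtt A)\cdot\ell| \geq \rho_0\braket{\ell} - \tfrac{\rho_0}{2}\braket{\ell} = \tfrac{\rho_0}{2}\braket{\ell},
\end{equation*}
which is exactly \eqref{eq:0_meln_pert}. For $\mathtt A \in [\mathtt E_1,\mathtt E_2]\setminus\cJ_{\varepsilon}(\mathtt E)$, where $\vec r_{\varepsilon}$ is not a priori defined, the statement reduces to the unperturbed transversality \eqref{eq:0_meln} itself (with the weaker constant $\rho_0/2$), so nothing new is needed.

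\medskip

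\noindent
The main (mild) obstacle is purely bookkeeping: one must check that the parameter constraints in \eqref{param_small_meas} are compatible with the smallness requirement $\mathtt a(m_0+1) < 1/2$, so that $\sqrt\varepsilon\,\upsilon^{-(m_0+1)}$ is effectively perturbative. This is ensured by further restricting $\mathtt a_0$ if necessary, and does not affect the subsequent measure estimate in Theorem \ref{MEASEST}, since the relevant smallness is only used to absorb the constant $\rho_0/2$. Crucially, the constant $\rho_0$ in the conclusion is the one coming from Proposition \ref{prop:trans_un}, which depends only on $\ora{\omega}_{\fm}$ on $[\mathtt E_1,\mathtt E_2]$ and is therefore independent of $\varepsilon$, as claimed.
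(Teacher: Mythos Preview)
Your proposal is correct and follows essentially the same approach as the paper: decompose $\ora{\omega}_{\varepsilon}(\mathtt A)=\ora{\omega}_{\fm}(\mathtt A)+\vec r_{\varepsilon}(\mathtt A)$ via \eqref{Om-per}, bound the derivatives of the remainder by \eqref{eq:tang_res_est}, and absorb the perturbation into the unperturbed transversality \eqref{eq:0_meln} using the smallness $\sqrt{\varepsilon}\,\upsilon^{-(1+m_0)}\leq \rho_0/(2C)$. The paper's proof is the one-line version of exactly this argument; your added remark about $\mathtt A\notin\cJ_{\varepsilon}(\mathtt E)$ is a reasonable clarification of a point the paper leaves implicit.
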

	\begin{proof}
		The estimate \eqref{eq:0_meln_pert} follows directly from \eqref{Om-per}-\eqref{eq:tang_res_est} provided $\sqrt\varepsilon\upsilon^{-(1+m_0)}\leq \rho_0/(2C)$, which, by \eqref{k0.def} and \eqref{param_small_meas}, is satisfied for $\varepsilon$ sufficiently small.
	\end{proof}
	As an application of Rüssmann Theorem 17.1 in \cite{Russ}, we deduce the following lemma.
	\begin{lem} {\bf (Estimates of the resonant sets)} \label{lem:meas_res}
		The measure of the sets $R_{\ell}(\mathtt E)$ in \eqref{kvare.compl} satisfies $|R_{\ell}(\mathtt E)| \lesssim (\upsilon\braket{\ell}^{-(\tau+1)})^{\frac{1}{m_0}}$ for any $\ell\neq 0$.
	\end{lem}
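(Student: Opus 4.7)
The plan is to mimic exactly the argument used in Proposition \ref{omega.fm.tE.diophantine} for the unperturbed frequency vector, replacing the transversality of $\vec\omega_{\mathtt m}$ (Proposition \ref{prop:trans_un}) with the perturbed transversality of $\vec\omega_\varepsilon$ just established in Lemma \ref{lem:pert_trans}. The target lower bound $|R_\ell({\mathtt E})|\lesssim (\upsilon\langle\ell\rangle^{-(\tau+1)})^{1/m_0}$ is precisely the output of R\"ussmann's sublevel set estimate (Theorem 17.1 in \cite{Russ}) applied to the scalar function
\[
f_\ell:{\cal J}_\varepsilon({\mathtt E})\to\R\,,\qquad f_\ell({\mathtt A}) := \vec\omega_\varepsilon({\mathtt A})\cdot \tfrac{\ell}{\langle\ell\rangle}\,,
\]
after rewriting $R_\ell({\mathtt E}) = \{{\mathtt A}\in{\cal J}_\varepsilon({\mathtt E}) : |f_\ell({\mathtt A})|<\upsilon\langle\ell\rangle^{-(\tau+1)}\}$.

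To apply R\"ussmann's theorem I need two inputs on $f_\ell$, uniform in $\ell\neq 0$. The first is the lower bound on the maximum of the first $m_0$ derivatives: this is exactly Lemma \ref{lem:pert_trans}, which yields
\[
\max_{0\le n\le m_0}|\partial_{\mathtt A}^n f_\ell({\mathtt A})|\ge \tfrac{\rho_0}{2}\,, \qquad \forall\,{\mathtt A}\in{\cal J}_\varepsilon({\mathtt E})\,.
\]
The second is a uniform upper bound on $\|f_\ell\|_{{\cal C}^{m_0+1}({\cal J}_\varepsilon({\mathtt E}))}$. This follows from the decomposition \eqref{Om-per}: the map ${\mathtt A}\mapsto\vec\omega_{\mathtt m}({\mathtt A})$ is analytic on $[{\mathtt E}_1,{\mathtt E}_2]$ (Remark \ref{remark analitycity}), so its derivatives up to order $m_0+1 \le k_0$ are bounded by a constant depending only on ${\mathtt E}_1,{\mathtt E}_2,m_0$; the correction ${\vec r}_\varepsilon({\mathtt A})$ satisfies \eqref{eq:tang_res_est}, so $|\partial_{\mathtt A}^k {\vec r}_\varepsilon|\lesssim \sqrt\varepsilon\,\upsilon^{-(1+k)}$ for $0\le k\le k_0 = m_0+2$, which for $\varepsilon$ small (as in \eqref{param_small_meas}) is $\le 1$. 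Dividing by $\langle\ell\rangle$ only helps, so one obtains a constant $C_\star>0$, independent of both $\ell\neq 0$ and $\varepsilon\in(0,\varepsilon_0)$, with $\|f_\ell\|_{{\cal C}^{m_0+1}}\le C_\star$.

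With these two ingredients, R\"ussmann's Theorem 17.1 of \cite{Russ} (applied on the interval ${\cal J}_\varepsilon({\mathtt E})$ with threshold $\upsilon\langle\ell\rangle^{-(\tau+1)}$, amount of non-degeneracy $\rho_0/2$, index $m_0$, and ${\cal C}^{m_0+1}$-bound $C_\star$) gives
\[
|R_\ell({\mathtt E})|\le C\bigl(\upsilon\langle\ell\rangle^{-(\tau+1)}\bigr)^{1/m_0}\,,
\]
with $C$ depending only on $\rho_0$, $C_\star$ and $m_0$, and in particular independent of ${\mathtt E}\in\bar{\cal K}$ and of $\ell$. This is exactly the claim.

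The whole argument is routine once Lemma \ref{lem:pert_trans} is in hand; the only non-trivial point is verifying the $\ell$-uniform ${\cal C}^{m_0+1}$ bound on $f_\ell$, for which the condition $k_0=m_0+2$ in \eqref{k0.def} and the smallness of $\sqrt\varepsilon\upsilon^{-(m_0+2)}$ forced by \eqref{param_small_meas} are essential.
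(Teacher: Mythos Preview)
Your proof is correct and follows essentially the same route as the paper's own argument: rewrite $R_\ell(\mathtt E)$ as a sublevel set of $f_\ell(\mathtt A)=\vec\omega_\varepsilon(\mathtt A)\cdot\ell/\langle\ell\rangle$, invoke Lemma~\ref{lem:pert_trans} for the lower bound on the first $m_0$ derivatives, use \eqref{Om-per}--\eqref{eq:tang_res_est} together with the analyticity of $\vec\omega_{\mathtt m}$ for the uniform $\cC^{m_0+1}$ upper bound, and conclude via R\"ussmann's Theorem~17.1. The paper's proof is more terse but structurally identical.
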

	\begin{proof}
		We write
		\begin{equation*}
			R_{\ell}({\mathtt E}) = \Big\{ {\mathtt A} \in \cJ_{\varepsilon}(\tE)\, : \, |f_{\ell}({\mathtt A})| < \upsilon \braket{\ell}^{-(\tau+1)}  \Big\}\,,
		\end{equation*}
		where $f_{\ell}({\mathtt A}):=\ora{\omega}_{\varepsilon}({\mathtt A})\cdot \frac{\ell}{\braket{\ell}}$. By \eqref{eq:0_meln_pert}, we have $\max_{0\leq n \leq k_0}|\pa_{\mathtt A}^n f_{\ell}({\mathtt A})|\geq \rho_0/2$ for any ${\mathtt A}\in[\tE_1.\tE_2]$. In addition,  \eqref{Om-per}-\eqref{eq:tang_res_est} imply that $\max_{0\leq n \leq m_0}|\pa_{\mathtt A}^n f_{\ell}({\mathtt A})|\leq C$ for any ${\mathtt A}\in[\tE_1.\tE_2]$, provided $\sqrt\varepsilon\upsilon^{-(1+m_0)}$ is small enough, namely, by \eqref{param_small_meas}, for $\varepsilon$ small enough. In particular, $f_{\ell}$ is of class $\cC^{k_0-1}=\cC^{m_0+1}$. Thus, Theorem 17.1 in \cite{Russ} applies, whence the lemma follows.
	\end{proof}

\begin{lem}\label{risonanti bassi zero}
There exists $C = C(\bar\upsilon) > 0$ such that, if $0 < |\ell| \leq C (\upsilon\varepsilon^{-\frac12})^{ \frac{1}{(\bar \tau + 1)}}$, then $R_\ell(\tE) = \emptyset$.
\end{lem}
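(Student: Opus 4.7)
The plan is a direct triangle-inequality estimate exploiting the fact that $\mathtt E$ itself is Diophantine with much better constants $(\bar\upsilon,\bar\tau)$ than the ones $(\upsilon,\tau)$ we need for $\vec\omega_\varepsilon(\mathtt A)$.

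First, I would decompose
\begin{equation*}
\vec\omega_\varepsilon(\mathtt A)\cdot\ell \;=\; \vec\omega_{\mathtt m}(\mathtt E)\cdot\ell \;+\; \big(\vec\omega_{\mathtt m}(\mathtt A)-\vec\omega_{\mathtt m}(\mathtt E)\big)\cdot\ell \;+\; \vec r_\varepsilon(\mathtt A)\cdot\ell,
\end{equation*}
using the formula \eqref{Om-per}. For the first term, the hypothesis $\tE\in\bar\cK$ together with the definition \eqref{diofantea omega vec E imperturbato nella proof} gives $|\vec\omega_{\mathtt m}(\mathtt E)\cdot\ell|\geq\bar\upsilon\langle\ell\rangle^{-\bar\tau}$. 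For the second term, the analytic regularity of $\mathtt A\mapsto\vec\omega_{\mathtt m}(\mathtt A)$ combined with $|\mathtt A-\mathtt E|\leq\sqrt\varepsilon$ (see \eqref{lip omega mathtt A E}) yields $|\vec\omega_{\mathtt m}(\mathtt A)-\vec\omega_{\mathtt m}(\mathtt E)|\lesssim\sqrt\varepsilon$. For the third term, the bound \eqref{eq:tang_res_est} gives $|\vec r_\varepsilon(\mathtt A)|\lesssim\sqrt\varepsilon\,\upsilon^{-1}$, which dominates the previous one.

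Combining these bounds by the reverse triangle inequality, for every $\mathtt A\in\cJ_\varepsilon(\mathtt E)$ and every $\ell\in\Z^{\kappa_0}\setminus\{0\}$,
\begin{equation*}
|\vec\omega_\varepsilon(\mathtt A)\cdot\ell| \;\geq\; \bar\upsilon\,\langle\ell\rangle^{-\bar\tau} \;-\; C\,\sqrt\varepsilon\,\upsilon^{-1}\,\langle\ell\rangle,
\end{equation*}
for some $C>0$ independent of $\varepsilon,\upsilon,\ell$. Requiring the perturbative term to absorb at most half of the main term amounts to
\begin{equation*}
\langle\ell\rangle^{\bar\tau+1} \;\leq\; \frac{\bar\upsilon}{2C}\,\frac{\upsilon}{\sqrt\varepsilon},
\end{equation*}
i.e. $|\ell|\leq C(\bar\upsilon)(\upsilon\,\varepsilon^{-1/2})^{1/(\bar\tau+1)}$ with $C(\bar\upsilon):=(\bar\upsilon/(2C))^{1/(\bar\tau+1)}$. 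Under this smallness assumption on $|\ell|$ we get $|\vec\omega_\varepsilon(\mathtt A)\cdot\ell|\geq\tfrac12\bar\upsilon\langle\ell\rangle^{-\bar\tau}$.

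It remains to check that $\tfrac12\bar\upsilon\langle\ell\rangle^{-\bar\tau}\geq\upsilon\langle\ell\rangle^{-\tau}$. Since $\tau>\bar\tau$ by \eqref{param_small_meas} and $\upsilon=\varepsilon^{\mathtt a}\ll\bar\upsilon$ for $\varepsilon\ll 1$, this inequality is equivalent to $\langle\ell\rangle^{\tau-\bar\tau}\geq 2\upsilon/\bar\upsilon$, which is trivially satisfied for $\varepsilon$ small enough. We conclude $|\vec\omega_\varepsilon(\mathtt A)\cdot\ell|\geq\upsilon\langle\ell\rangle^{-\tau}$, so that $\mathtt A\notin R_\ell(\mathtt E)$, proving $R_\ell(\mathtt E)=\emptyset$. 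The whole argument is just bookkeeping; no real obstacle appears beyond making sure the constants in the chain of perturbative estimates are independent of $\varepsilon$, $\upsilon$ and of the fixed $\tE\in\bar\cK$.
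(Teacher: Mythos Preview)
Your proof is correct and follows essentially the same approach as the paper: both use the triangle inequality to compare $\vec\omega_\varepsilon(\mathtt A)\cdot\ell$ with $\vec\omega_{\mathtt m}(\mathtt E)\cdot\ell$, invoke the Diophantine property of $\mathtt E\in\bar\cK$ for the main term, and absorb the $O(\sqrt\varepsilon\,\upsilon^{-1})$ perturbation for $|\ell|$ below the stated threshold. The paper's version is slightly terser (it bundles your two perturbative pieces into a single bound via \eqref{lip omega mathtt A E} and \eqref{inv_alpha}), but the argument is the same.
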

\begin{proof}
 By the estimates \eqref{lip omega mathtt A E}, \eqref{inv_alpha}, we deduce
$
|\omega_\varepsilon({\mathtt A}) - \vec\omega_{\mathtt m}({\mathtt E})| \lesssim \sqrt{\varepsilon} \upsilon^{- 1}
$ for any $\mathtt A \in \cJ_{\varepsilon}(\tE)$.
Hence, by Proposition \ref{omega.fm.tE.diophantine}, we get, for some constant $C \geq 0$,
$$
\begin{aligned}
|\omega_\varepsilon({\mathtt A}) \cdot \ell | & \geq |\vec\omega_{\mathtt m}(\mathtt E) \cdot \ell| - C \sqrt{\varepsilon} \upsilon^{- 1} |\ell| \geq \frac{\bar \upsilon}{| \ell |^{\bar \tau}} - C \sqrt{\varepsilon} \upsilon^{- 1} |\ell|  \geq \frac{\bar \upsilon}{2 | \ell |^{\bar \tau}}
\end{aligned}
$$
provided that the condition stated in the statement holds. Hence, for $\upsilon \ll \bar \upsilon$ and $\tau\gg \bar\tau$, this implies that $R_\ell(\tE) = \emptyset$.
\end{proof}
	
	\begin{proof}[Proof of Theorem \ref{MEASEST} completed]
		The series $\sum_{\ell \neq 0} |\ell|^{-\frac{\tau + 1}{m_0}}$ is convergent because $\frac{\tau+1}{m_0}>\kappa_0$ by \eqref{param_small_meas}. Hence, by Lemmata \ref{lem:meas_res}, \ref{risonanti bassi zero}, the measure of the set $\cK_{\varepsilon}^c(\tE)$ in \eqref{kvare.compl} is estimated by
		\begin{equation*}
	\begin{aligned}
		|\cK_{\varepsilon}^c(\tE)| & \leq \sum_{|\ell| > C (\upsilon\varepsilon^{-\frac12})^{\frac{1}{\bar \tau + 1}} } |R_{\ell}(\tE)|  
		\lesssim  \upsilon^{\frac{1}{m_0}}\sum_{|\ell| > C (\upsilon\varepsilon^{-\frac12})^{\frac{1}{\bar \tau + 1}}  } \frac{1}{|\ell|^{\frac{\tau + 1}{m_0}}} \\
		& 
		\lesssim \upsilon^{\frac{1}{m_0}} \Big( \dfrac{\sqrt{\varepsilon}}{\upsilon} \Big)^{\frac{\tau + 1 - m_0 \kappa_0 }{m_0(\bar \tau + 1)}}   \simeq \upsilon^{\frac{1}{m_0}} \Big( \dfrac{\sqrt{\varepsilon}}{\upsilon} \Big)^{\alpha (\tau - \beta)} \,, 
	\end{aligned}
\end{equation*}
	where $\alpha := (m_0(\bar\tau+1))^{-1}$ and 
		$\beta:= m_0 \kappa_{0}-1$.
		Therefore, we have
		$$
		\frac{|{\cal K}_\varepsilon^2(\tE)|}{2 \sqrt{\varepsilon}} \lesssim {\varepsilon}^{\mathtt p_1} \upsilon^{- \mathtt p_2}\,, \quad 	\mathtt p_1 := \frac{\alpha( \tau - \beta )- 1}{2} \,,  \quad \mathtt p_2 :=\frac{m_0\alpha(\tau -\beta) -1}{m_0} \,.
				$$
		Since $\tau > m_0 (\kappa_{0}+\bar\tau+1)$ by \eqref{param_small_meas}, we have that $\tp_1>0$ and, consequently, also $\tp_2>0$.
		Then, for $\upsilon = \varepsilon^{\mathtt a}$ with $$0 < \mathtt a < \frac{\mathtt p_1}{\mathtt p_2}=\frac{\alpha(\tau-\beta)-1}{2(\alpha(\tau-\beta)-\tfrac{1}{m_0})}<\frac12 <1\,,$$ we have $(2\sqrt\varepsilon)^{-1}| \cK_{\varepsilon}^c(\tE)|\lesssim \varepsilon^{\tp_1-\ta \tp_2}\to 0$ as $\varepsilon\to 0$.
		It implies $(2\sqrt\varepsilon)^{-1}|\cK_{\varepsilon}(\tE)|\geq 1- C\varepsilon^{\frac{{\rm a}}{m_0}}$ and the proof of Theorem \ref{MEASEST} is concluded.
	\end{proof}

	\section{Approximate inverse}\label{sec:approx_inv}
In order to implement a convergent Nash-Moser scheme that leads to a solution of $\cF(i,\alpha)=0$, where $ \cF (i, \alpha) $ is the nonlinear operator  defined in \eqref{F_op},  
we construct the \emph{approximate right inverses} of the linearized operators
\begin{equation*}
	\di_{i,\alpha}\cF(i_{0},\alpha_{0})[\whi,\wh\alpha] = \omega\cdot \pa_\bx \whi - \di_i X_{{\cal H}_{\varepsilon, \alpha}}\left( i_{0}(\bx) \right)[\whi] - \left(\wh\alpha,0,0\right) \,.
\end{equation*}
Note that $\di_{i,\alpha}\cF (i_{0},\alpha_{0})=\di_{i,\alpha}\cF(i_{0})$ is independent of $\alpha_{0}$.
We assume that the torus $ i_{0} (\bx) = ( \theta_{0} (\bx), I_{0} (\bx), z_{0} (\bx)) $ 
is  reversible,  according to  \eqref{RTTT}.

In the sequel we shall assume  the smallness condition,  
$$
\sqrt{\varepsilon}\upsilon^{-1} \ll 1 \, . 
$$

\smallskip

First of all, we state  tame estimates for the composition operator induced by the Hamiltonian vector field $X_{{\cal P}_\varepsilon}= ( \pa_I  {\cal P}_\varepsilon , - \pa_\theta {\cal P}_\varepsilon, J \nabla_{z} {\cal P}_\varepsilon )$ in \eqref{F_op}.
\begin{lem}{\bf (Estimates of the perturbation ${\cal P}_\varepsilon$).} \label{XP_est}
	Let $S \geq 2(s_0+\mu)$, with $\mu >0$ as in Lemma \ref{estimates f eta g eta}-$(ii)$. Let $\fI(\bx)$ in \eqref{ICal} satisfy $\norm{ \fI }_{\cX^{s_0}}^{k_0,\upsilon}\leq 1$. Then, for any $s_0 \leq s \leq S/2 - \mu$ and any $\varepsilon \ll 1$, we have
	$	\norm{ X_{{\cal P}_\varepsilon}(i) }_{\cY^{s}}^{k_0,\upsilon} \lesssim_s  1 + \norm{ \fI }_{\cX^{s}}^{k_0,\upsilon} $, 
	and, for all $\whi:= (\wh\theta,\whI,\whz)$, $\whi_1:= (\wh\theta_1,\whI_1,\whz_1)$, $\whi_2:= (\wh\theta_2,\whI_2,\whz_2)$
	\begin{align*}
		\norm{ \di_i X_{{\cal P}_\varepsilon}(i)[\whi] }_{\cY^{s}}^{k_0,\upsilon} &\lesssim_s \norm{\whi}_{{\cal Y}^s}^{k_0,\upsilon} + \norm{ \fI }_{\cX^{s}}^{k_0,\upsilon}\norm{ \whi}_{{\cal Y}^{s_0}}^{k_0,\upsilon} \,, \\
		\norm{ \di_i^2 X_{{\cal P}_\varepsilon}(i)[\whi_1,\whi_2] }_{\cY^{s}}^{k_0,\upsilon} &\lesssim_s \norm{\whi_1}_{{\cal Y}^s}^{k_0,\upsilon}\norm{\whi_2}_{{\cal Y}^{s_0}}^{k_0,\upsilon} + \norm{\whi_1}_{{\cal Y}^{s_0}}^{k_0,\upsilon}\norm{\whi_2}_{{\cal Y}^s}^{k_0,\upsilon} + \norm{ \fI }_{\cX^{s}}^{k_0,\upsilon} ( \norm{\whi}_{{\cal Y}^{s_0}}^{k_0,\upsilon} )^2 \,.
	\end{align*}
\end{lem}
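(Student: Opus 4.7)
\textbf{Proof plan for Lemma \ref{XP_est}.} The strategy is to split $\cP_\varepsilon = P_\varepsilon\circ A + \frac{1}{\sqrt\varepsilon}\bigl(\vec\omega_\fm(\tE)-\vec\omega_\fm(\tA)\bigr)\cdot I$ and to treat the two summands separately. The counterterm is linear in $I$, with coefficients of weighted size $O(1)$ by \eqref{lip omega mathtt A E}; it contributes only a constant vector to the $(-\partial_\theta)$-component of $X_{\cP_\varepsilon}$, and its contributions to $d_i X_{\cP_\varepsilon}$ and $d_i^2 X_{\cP_\varepsilon}$ vanish identically. Hence the whole analytic content is in the main term $(P_\varepsilon\circ A)(\theta,I,z)$.

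The next step is to compute $X_{P_\varepsilon\circ A}$ via the chain rule. From \eqref{H_epsilon 0}, $\nabla_{\zeta_1}P_\varepsilon(\zeta_1,\zeta_2) = -q_\varepsilon(y,\zeta_1(y))$ and $\nabla_{\zeta_2}P_\varepsilon = 0$. Since $A(\theta,I,z) = v^\intercal(\theta,I) + z$ is the finite-dimensional-plus-identity map in \eqref{aa_coord}, the partial derivatives $\partial_{\theta_j}A$, $\partial_{I_j}A$ are $y$-dependent only through the eigenfunctions $\phi_{j,\fm}(y)$ and are smooth in $(\theta,I)$ as long as $\xi_j>0$ (which is assumed) and $\|I\|_{s_0}^{k_0,\upsilon}$ is small enough so that $I_j+\xi_j>0$; moreover $\partial_z A = \mathrm{Id}$. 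Thus the three components of $X_{P_\varepsilon\circ A}(i(\bx))$ reduce to a projection of $-q_\varepsilon(y,\zeta_1)$ onto $\cX_\perp$ (for the $z$-component) and to $\bx$-depending integrals of $q_\varepsilon(y,\zeta_1)$ against $\phi_{j,\fm}(y)$ weighted by scalar factors of the form $\sqrt{I_j+\xi_j}$ or $1/\sqrt{I_j+\xi_j}$ (for the $\theta,I$-components).

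With $\|\fI\|_{\cX^{s_0}}^{k_0,\upsilon}\le 1$, the first component of $A\circ i$ satisfies $\|\zeta_1\|_{s,3}^{k_0,\upsilon}\lesssim_s 1 + \|\fI\|_{\cX^s}^{k_0,\upsilon}$, using Lemma \ref{compo_moser} for the composition with $(I_j,\theta_j)\mapsto \sqrt{I_j+\xi_j}\cos\theta_j$. Plugging this into Lemma \ref{stime cal Q eta} (whose range of admissible $s$ is exactly $s_0\le s\le S/2-\sigma$, matching the hypothesis $S\ge 2(s_0+\mu)$) gives $\|q_\varepsilon(\cdot,\zeta_1)\|_{s,1}^{k_0,\upsilon}\lesssim_s 1 + \|\fI\|_{\cX^s}^{k_0,\upsilon}$, which is exactly the codomain $\cY^s = H^{s,1}\times H^{s,1}$ at the normal component. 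The products with the finite-dimensional coefficients coming from $\partial_{\theta,I}A$ are controlled by Lemma \ref{prod.lemma}. This proves the first estimate on $X_{\cP_\varepsilon}$.

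The estimates for $d_i X_{\cP_\varepsilon}$ and $d_i^2 X_{\cP_\varepsilon}$ are obtained by differentiating the explicit chain-rule expression once more in $i$, invoking the bounds on $d q_\varepsilon$ and $d^2 q_\varepsilon$ from Lemma \ref{stime cal Q eta} and again Lemmata \ref{prod.lemma}, \ref{compo_moser}; the triangular structure $\|\cdot\|_s\|\cdot\|_{s_0} + \|\cdot\|_{s_0}\|\cdot\|_s$ required on the right-hand sides is produced automatically by these tame lemmata. I expect the main obstacle to be purely bookkeeping: tracking which variable each derivative lands on (coefficient in $(\theta,I)$ versus argument $\zeta_1$ of $q_\varepsilon$) and checking the norm indices so that at most one factor sits at the top regularity $s$ while the others are evaluated at $s_0$. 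No new analytic input is needed beyond what is already collected in Section~\ref{functiona.setting}, Lemma~\ref{stime cal Q eta}, and the smoothness of the action-angle map $A$ guaranteed by $\xi_j>0$ and the smallness of $\|\fI\|_{\cX^{s_0}}^{k_0,\upsilon}$.
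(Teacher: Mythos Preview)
Your proposal is correct and follows essentially the same approach as the paper: write $X_{\cP_\varepsilon}$ explicitly via the chain rule applied to $P_\varepsilon\circ A$ plus the counterterm, then feed the resulting expression into Lemma~\ref{stime cal Q eta} for $q_\varepsilon$, the tame product estimates of Lemma~\ref{prod.lemma}, the smoothness of the action-angle map $A$, and \eqref{lip omega mathtt A E} for the counterterm. One trivial slip: the counterterm $\tfrac{1}{\sqrt\varepsilon}(\vec\omega_\fm(\tE)-\vec\omega_\fm(\tA))\cdot I$ contributes its constant vector to the $\partial_I$-component of $X_{\cP_\varepsilon}$ (the $\dot\theta$-equation), not to the $(-\partial_\theta)$-component; this does not affect the estimates since, as you note, it is $O(1)$ in the weighted norm and disappears under $d_i$.
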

\begin{proof}
	From \eqref{H_epsilon}, \eqref{Hami} and \eqref{aa_coord}, \eqref{ham.vf.aa}, the Hamiltonian vector field for ${\cal P}_\varepsilon=P_{\varepsilon}\circ A + \tfrac{1}{\sqrt\varepsilon}\big( \ora{\omega}_{\fm}(\tE)-\ora{\omega}_{\fm}(\tA) \big)\cdot I$ is given by
	\begin{equation}
		X_{{\cal P}_\varepsilon} = \begin{pmatrix}
			[\pa_I v^\intercal(\theta,I)]^T \pa_{I} P_{\varepsilon}(A(\theta,I,z)) + \tfrac{1}{\sqrt{\varepsilon}}\big( \ora{\omega}_{\fm}(\tE)-\ora{\omega}_{\fm}(\tA) \big)\\
			-[\pa_\theta v^\intercal(\theta,I)]^T \pa_{\theta} P_{\varepsilon}(A(\theta,I,z)) \\
			\Pi_\perp J^{-1}\nabla_{z} P_{\varepsilon}(A(\theta,I,z))
		\end{pmatrix}\,,
	\end{equation}
	where $\Pi^\perp$ is the projection onto the subspace $\cX_\perp$ in \eqref{cX.perp}. Since $\nabla_{z} P_\varepsilon(\zeta_1, \zeta_2) = q_\varepsilon(y, \zeta_1)$, the claimed estimates follow by Lemma \ref{stime cal Q eta}, by the definition of $v^\intercal$ and $A$ in \eqref{aa_coord}, by the interpolation inequality \eqref{prod} (recall also the definitions given in \eqref{prima def cal X s}) and by the estimate in \eqref{lip omega mathtt A E}.
\end{proof}

\subsection{Invertibility of the linearized operator}

Along this section, we assume the following hypothesis, which is verified by the approximate solutions obtained at each step of the Nash-Moser Theorem \ref{NASH}. We recall the definitions of the spaces ${\cal X}^s_\bot, {\cal Y}^s_\bot, {\cal X}^s, {\cal Y}^s$ in \eqref{prima def cal X s bot}-\eqref{prima def cal X s} that we shall use in the whole section. 
\begin{itemize}
	\item {\sc ANSATZ.} The map $\lambda\mapsto \fI_0(\lambda) = i_0(\lambda;\bx)- (\bx,0,0)$ is $k_0$-times differentiable with respect to the parameter $\lambda=(\omega,\tA)\in \R^{\kappa_0}\times \cJ_{\varepsilon}(\tE)$ and, for some $\sigma :=\sigma(\kappa_0, k_0, \tau) \gg 0$, $\upsilon\in (0,1)$,
	\begin{equation}\label{ansatz}
		\norm{\fI_0}_{{\cal X}^{s_0+\sigma}}^{k_0,\upsilon} + \abs{ \alpha_0-\omega }^{k_0,\upsilon} \leq C \sqrt{\varepsilon} \upsilon^{-1} \,, \quad \sqrt{\varepsilon} \upsilon^{- 1} \ll 1\,. 
	\end{equation} 
\end{itemize}
We remark that, in the sequel, we denote by $\sigma \equiv \sigma(\kappa_0,k_0, \tau) \gg 0$ constants, 
which may increase from lemma to lemma, that represent ``loss of derivatives''.

As in \cite{BB,BM,BBHM}, we first modify the approximate torus $i_0 (\bx) $ to obtain a nearby isotropic torus $i_\delta (\bx) $, namely such that the pull-back 1-form  $i_\delta^*\Lambda $  is closed, 
where $\Lambda$ is the Liouville 1-form defined in 
\eqref{liouville}.  
We first consider the pull-back $ 1$-form 
\begin{equation}\label{ak}
	i_0^*\Lambda  = \sum_{k=1}^{\nu} a_k(\bx) \di \bx_k \, , \ \ 
	a_k(\bx) := -\big( [ \pa_\bx \theta_0(\bx) ]^\top I_0(\bx) \big)_k +\tfrac12 
	\big( J^{-1} z_0(\bx), \pa_{\bx_k} z_0(\bx) \big)_{L^2} \,, 
\end{equation} 
and its exterior differential 
\begin{equation}\label{Akj}
	i_0^*\cW  = \di i_0^*\Lambda = \sum_{1\leq k < j \leq \nu} A_{kj} \di \bx_k \wedge \di \bx_j \,, 
	\quad 
	A_{kj}(\bx)  := \pa_{\bx_k} a_j(\bx) - \pa_{\bx_j}a_k(\bx) \, .
\end{equation}
By the formula given in Lemma 5.3 in \cite{BB}, we deduce that for any $s \geq s_0$, 
if $\omega$ belongs to $\mathtt G^\upsilon$ (see \eqref{Cset_infty}), the estimate (assuming the ansatz \eqref{ansatz}), 
\begin{equation}\label{stimaAjk}
	\begin{aligned}
		& \norm{ A_{kj} }_{s}^{k_0,\upsilon} \lesssim_s  \upsilon^{-1}\big( \norm{ Z }_{{\cal Y}^{s+\sigma}}^{k_0, \upsilon} +  \norm{ \fI_0 }_{{\cal X}^{s+ \sigma}}^{k_0,\upsilon} \norm{ Z }_{{\cal Y}^{s_0+ \sigma}}^{k_0,\upsilon} \big) \, , \\
		& \| A_{k j}\|_{s}^{k_0, \upsilon}  \lesssim_s \| {\frak I}_0 \|_{{\cal X}^{s + 1}}^{k_0, \upsilon}\,. 
	\end{aligned}
\end{equation}
for some $\sigma \equiv \sigma(k_0, \tau ) \gg 0$ large enough, 
where $Z(\bx)   $  is the  ``error function'' 
\begin{equation}\label{ZError}
	Z(\bx)   
	:= \cF(i_0,\alpha_{0})
	:= \omega\cdot \pa_{\bx} i_0(\bx) - X_{H_{\alpha_0}}(i_0(\bx))\,.
\end{equation}
Note that, if $ Z (\bx) = 0 $, the torus $ i_0 (\bx) $ is invariant for $ X_{H_{\alpha_0}} $ and
the  1-form $ i_0^* \Lambda $ is closed, namely the torus $ i_0 (\bx) $ is isotropic.
We denote below the Laplacian $\Delta_\bx:= \sum_{k=1}^{\nu}\pa_{\bx_k}^2$.
\begin{lem} {\bf (Isotropic torus)} \label{torus_iso}
	The torus $i_\delta(\bx):= ( \theta_0(\bx),I_\delta(\bx),w_0(\bx) )$, defined by
	\begin{equation}\label{Idelta}
		I_\delta(\bx)\!:=\! I_0(\bx) + [ \pa_\bx \theta_0(\bx) ]^{-\top}\rho(\bx) \,, 
		\quad \rho = (\rho_j)_{j=1, \ldots,\kappa_0} \, , \quad \rho_j(\bx)\!
		:= \Delta_\bx^{-1} \sum_{k=1}^{\kappa_0}\pa_{\bx_k}A_{kj}(\bx)\,,
	\end{equation}
	is isotropic. 
	Moreover, there is $\sigma:= \sigma(\kappa_0, k_0, \tau) \gg 0$ such that, for $S \geq 2(s_0+\sigma)$ if \eqref{ansatz} holds, then for all $ s_0 \leq s \leq S/2 - \sigma $, 
	\begin{align}
		\| I_\delta-I_0 \|_{s}^{k_0,\upsilon} &\lesssim_s \norm{\fI_0}_{{\cal X}^{s+1}}^{k_0,\upsilon} \, ,    \label{ebb1} \\
		\| I_\delta-I_0 \|_{s}^{k_0,\upsilon} & \lesssim_s \upsilon^{-1}
		\big( \norm{Z}_{{\cal Y}^s}^{k_0,\upsilon} +\norm{Z}_{{\cal Y}^s}^{k_0,\upsilon} \norm{ \fI_0 }_{{\cal X}^{s + \sigma}}^{k_0,\upsilon} \big) \,,\label{ebb2}  \\
		\| \cF(i_\delta,\alpha_0) \|_{{\cal Y}^s}^{k_0,\upsilon} &\lesssim_s  \norm{Z}_{{\cal Y}^{s + \sigma}}^{k_0,\upsilon} +\norm{Z}_{{\cal Y}^{s + \sigma}}^{k_0,\upsilon} \norm{ \fI_0 }_{{\cal X}^{s + \sigma}}^{k_0,\upsilon} \,. \label{ebb4} 
	\end{align}
	Furthermore  $i_\delta(\bx)$
	is  a reversible  torus,  cfr.  \eqref{RTTT}.
\end{lem}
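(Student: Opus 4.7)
The plan is to adapt the Berti--Bolle isotropic torus construction from Lemma 5.3 of \cite{BB} to our Sobolev--Whitney setting, and then to verify that reversibility is preserved. First I verify isotropy. Using \eqref{liouville}, \eqref{ak} and the defining formula \eqref{Idelta}, the only term in $a_k$ that is affected by the shift $I_0 \mapsto I_\delta$ is the action term, and a short computation based on $[\pa_\bx\theta_0]^\top[\pa_\bx\theta_0]^{-\top}\rho = \rho$ gives
\[
i_\delta^* \Lambda \,=\, i_0^* \Lambda \,-\, \sum_{j=1}^{\kappa_0} \rho_j(\bx)\, \di\bx_j\,,
\]
so that the modified coefficients of $i_\delta^*\cW$ read $\widetilde A_{kj} = A_{kj} - (\pa_{\bx_k}\rho_j - \pa_{\bx_j}\rho_k)$. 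Since $\cW$ is closed, $\di(i_0^*\cW)=0$ yields the cocycle identity $\pa_{\bx_i}A_{kj} + \pa_{\bx_k}A_{ji} + \pa_{\bx_j}A_{ik}=0$ on $\T^{\kappa_0}$. Integration by parts shows that each $A_{kj}$ has zero $\bx$-average, so $\Delta_\bx^{-1}$ acts on it. Plugging the definition of $\rho_j$ into $\pa_{\bx_k}\rho_j - \pa_{\bx_j}\rho_k$ and using the cocycle identity gives this expression equal to $A_{kj}$ exactly; hence $\widetilde A_{kj}\equiv 0$ and $i_\delta$ is isotropic.

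For the estimates \eqref{ebb1}--\eqref{ebb2} on $I_\delta - I_0 = [\pa_\bx\theta_0]^{-\top}\rho$, I will use the ansatz \eqref{ansatz} together with the decomposition $\theta_0(\bx) = \bx + \Theta_0(\bx)$, with $\Theta_0$ small, to Neumann-invert $\pa_\bx\theta_0$ and bound $\|[\pa_\bx\theta_0]^{-\top}\|_s^{k_0,\upsilon}$ tamely in terms of $\|\fI_0\|_{{\cal X}^{s+1}}^{k_0,\upsilon}$. Since $\Delta_\bx^{-1}$ is a smoothing of order two on zero-average functions, one has $\|\rho\|_s^{k_0,\upsilon} \lesssim_s \max_{k,j}\|A_{kj}\|_{s-1}^{k_0,\upsilon}$. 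Combining this with Lemma \ref{prod.lemma} and the two bounds in \eqref{stimaAjk} produces \eqref{ebb1} (from the first bound) and \eqref{ebb2} (from the second).

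To prove \eqref{ebb4}, I will write
\[
\cF(i_\delta,\alpha_0) \,=\, \cF(i_0,\alpha_0) \,+\, \int_0^1 \di_i\cF(i_t,\alpha_0)[i_\delta - i_0]\, \di t\,, \qquad i_t := i_0 + t(i_\delta - i_0)\,,
\]
and use that $\cF(i_0,\alpha_0) = Z$ and $i_\delta - i_0 = (0,\,I_\delta - I_0,\,0)$. The linearized operator $\di_i\cF = \omega\cdot\pa_\bx - \di_i X_{\cH_{\varepsilon,\alpha_0}}$ is controlled via the trivial bound on the transport term and Lemma \ref{XP_est} on the Hamiltonian vector field part; the tame product estimates of Lemma \ref{prod.lemma}, combined with \eqref{ebb2}, conclude the proof with a suitable loss $\sigma$ in derivatives. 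Finally, reversibility of $i_\delta$ is a direct parity check: the involution $\vec\cS$ forces $\theta_0$ to be $\bx$-odd, $I_0$ to be $\bx$-even and $z_0(-\bx,\cdot) = \cS z_0(\bx,\cdot)$. Using $J^{-1}\cS = -\cS J^{-1}$ and the fact that $\cS$ is an $L^2_y$-isometry, both terms of $a_k$ in \eqref{ak} are $\bx$-even; hence $A_{kj}$ is $\bx$-odd, $\rho_j$ is $\bx$-even, and $I_\delta - I_0 = [\pa_\bx\theta_0]^{-\top}\rho$ is $\bx$-even, so $i_\delta$ still satisfies \eqref{RTTT}.

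The main obstacle is the verification in the isotropy step that $\pa_{\bx_k}\rho_j - \pa_{\bx_j}\rho_k = A_{kj}$ holds \emph{exactly}, and not just up to additive constants: this uses in an essential way both the cocycle identity coming from $\di(i_0^*\cW)=0$ and the zero-mean property of the $A_{kj}$'s on $\T^{\kappa_0}$. All the remaining work is a systematic, but careful, application of the tame estimates from Section \ref{functiona.setting} and of Lemma \ref{XP_est}, together with the Neumann inversion of $\pa_\bx\theta_0$.
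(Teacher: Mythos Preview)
Your proposal is correct and follows essentially the same approach as the paper, which simply refers to Lemma 5.3 in \cite{BBHM} (equivalently Lemma 5.3 in \cite{BB}) for the estimates \eqref{ebb1}--\eqref{ebb4}; your isotropy computation via the cocycle identity, the tame estimates on $I_\delta-I_0$ through \eqref{stimaAjk}, the mean-value argument for \eqref{ebb4}, and the parity check for reversibility are exactly the standard ingredients of that construction. One cosmetic point: you swapped the labels when attributing \eqref{ebb1} and \eqref{ebb2} to the two bounds in \eqref{stimaAjk} (the former comes from the second line of \eqref{stimaAjk}, the latter from the first), but this has no bearing on correctness.
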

\begin{proof}
	The estimates \eqref{ebb1}-\eqref{ebb4} follow e.g. as in Lemma 5.3 in \cite{BBHM}.
\end{proof}

In order to find an approximate inverse of the linearized operator $\di_{i,\alpha}\cF(i_\delta)$, we introduce the symplectic diffeomorphism $G_\delta:(\phi,\fy,\tz) \rightarrow (\theta,I,z)$ of the phase space $ \T^{\kappa_0}\times \R^{\kappa_0} \times \cX_\perp$, 
\begin{equation}\label{Gdelta}
	\begin{pmatrix}
		\theta \\ I \\ z
	\end{pmatrix} := G_\delta \begin{pmatrix}
		\phi \\ \fy \\ \tz
	\end{pmatrix} := \begin{pmatrix}
		\theta_0(\phi) \\ 
		I_\delta(\phi) + \left[ \pa_\phi \theta_0(\phi) \right]^{-\top}\fy + \left[(\pa_\theta\wtz_0)(\theta_0(\phi))  \right]^\top J^{-1} \tz \\
		z_0(\phi) + \tz
	\end{pmatrix}\,,
\end{equation}
where $\wtz_0(\theta):= z_0(\theta_0^{-1}(\theta))$.
It is proved in Lemma 2 of \cite{BB} that $G_\delta$ is symplectic, because the torus $i_\delta$ is isotropic (Lemma \ref{torus_iso}). In the new coordinates, $i_\delta$ is the trivial embedded torus $(\phi,\fy,\tz)=(\phi,0,0)$.
The diffeomorphism $G_\delta$ in \eqref{Gdelta} is reversibility preserving.

Under the symplectic diffeomorphism $G_\delta $, the Hamiltonian vector field $X_{H_\alpha}$ changes into
\begin{equation}\label{Kalpha}
	X_{K_\alpha} = \left(DG_\delta  \right)^{-1} X_{H_\alpha} \circ G_\delta 
	\qquad {\rm where} \qquad K_\alpha := H_\alpha \circ G_\delta \,.
\end{equation}
We have  that $ K_\alpha $ is reversibility  preserving, in the sense that 
\begin{equation}\label{Ka.prop}
	K_\alpha\circ \vec \cS = K_\alpha \,.
\end{equation} 
The Taylor expansion of $K_\alpha$ at the trivial torus $(\phi,0,0)$ is
\begin{equation}\label{taylor_Kalpha}
	\begin{aligned}
		K_\alpha(\phi,\fy,\tz) =& \ K_{00}(\phi,\alpha) + K_{10}(\phi,\alpha) \cdot \fy + 
		( K_{01}(\phi,\alpha),\tz )_{L^2} + \tfrac12 K_{20}(\phi) \fy\cdot \fy \\
		& + ( K_{11}(\phi)\fy,\tz )_{L^2} + \tfrac12 ( K_{02}(\phi)\tz,\tz )_{L^2} + K_{\geq 3}(\phi,\fy,\tz)\,,
	\end{aligned}
\end{equation}
where $K_{\geq 3}$ collects all terms at least cubic in the variables $(\fy,\tz)$. By \eqref{Halpha} and \eqref{Gdelta}, the only Taylor coefficients that depend on $\alpha$ are $K_{00}\in \R$, $K_{10}\in\R^{\kappa_0}$ and $K_{01}\in \cX_\perp $, whereas the $ \kappa_0 \times \kappa_0 $ symmetric matrix $K_{20} $, $K_{11}\in\cL ( \R^{\kappa_0},\cX_\perp)$ and the linear self-adjoint operator $ K_{02} $,  acting on 
$\cX_\perp$, 
are independent of it. 

Differentiating the identities in \eqref{Ka.prop} at $(\phi,0,0)$, we have
\begin{align}
	& K_{00}(-\phi) = K_{00}(\phi)\,, \quad  K_{10}(-\phi) = K_{10}(\phi)\,, \quad K_{20}(-\phi) = K_{20}(\phi)\,, \label{Ka.rev} \\
	&  \cS \circ K_{01}(-\phi)  = K_{01}(\phi)\,, \quad \cS \circ K_{11}(-\phi) = K_{11}(\phi)\,,  \quad K_{02}(-\phi)\circ \cS =  \cS \circ K_{02}(\phi)\,. \nonumber
\end{align}
The Hamilton equations associated to \eqref{taylor_Kalpha} are
\begin{equation}\label{hameq_Kalpha}
	\begin{cases}
		\dot\phi =   K_{10}(\phi,\alpha) + K_{20}(\phi)\fy + [K_{11}(\phi)]^\top \tz + \pa_{\fy} K_{\geq 3}(\phi,\fy,\tz) \\
		\dot \fy =   - \pa_\phi K_{00}(\phi,\alpha) - [\pa_\phi K_{10}(\phi,\alpha)]^\top \fy - [\pa_\phi K_{01}(\phi,\alpha)]^\top \tz  \\
		\ \ \ \ \ - \pa_\phi\left( \tfrac12 K_{20}(\phi)\fy\cdot \fy + \left( K_{11}(\phi)\fy,\tz \right)_{L^2} + \tfrac12 \left( K_{02}(\phi)\tz,\tz \right)_{L^2} + K_{\geq 3}(\phi,\fy,\tz) \right) \\
		\dot\tz =  J\, \left( K_{01}(\phi,\alpha)+ K_{11}(\phi)\fy + K_{02}(\phi)\tz + \nabla_{\tz} K_{\geq 3}(\phi,\fy,\tz) \right) 
	\end{cases}\,,
\end{equation}
where $\pa_\phi K_{10}^\top $ is the $\kappa_0\times\kappa_0$ transposed matrix and $\pa_\phi K_{01}^\top , K_{11}^\top: \cX_\perp \rightarrow\R^{\kappa_0}$ are defined by the duality relation 
$ (\pa_\phi K_{01}[\wh\phi],\tz  )_{L^2}=\wh\phi\cdot [\pa_\phi K_{01} ]^\top \tz $ 
for any $\wh\phi\in\R^{\kappa_0}$, $\tz\in\cX_\perp$. 
The transpose
$	K_{11}^\top (\phi) $ is defined similarly. 
On an exact solution (that is $Z=0$), the terms $K_{00}, K_{01}$  in the Taylor expansion \eqref{taylor_Kalpha} vanish and $K_{10}= \omega$. 
More precisely, arguing as  in Lemma 5.4 in \cite{BBHM} (with minor adaptations), we have 

\begin{lem}\label{Kcoeff_est}
	There is $ \sigma :=  \sigma (\kappa_0, k_0,  \tau) > 0 $, such that if $S \gg 2(s_0 + \sigma)$ and \eqref{ansatz} holds then for all $ s_0 \leq s \leq S/2 - \sigma$, one has 
	\begin{align}
		& \| \pa_\phi K_{00}(\cdot , \alpha_0)\|_{s}^{k_0, \upsilon} + \| K_{10}(\cdot ,\alpha_0)-\omega\|_{s}^{k_0, \upsilon} + \| K_{01}( \cdot ,\alpha_0) \|_{{\cal Y}^s_\bot}^{k_0, \upsilon} \nonumber \\
		& \ \ \ \ \ \ \ \ \ \ \ \ \ \ \ \ \ \lesssim_s  \norm{Z}_{{\cal Y}^{s+\sigma}}^{k_0,\upsilon} + \norm{Z}_{{\cal Y}^{s+\sigma}}^{k_0,\upsilon}  \norm{ \fI_0 }_{{\cal Y}^{s+\sigma}}^{k_0,\upsilon}  \,, \label{face1} \\
		&\norm{\pa_\alpha K_{00}}_{s}^{k_0,\upsilon} + \norm{ \pa_\alpha K_{10}-{\rm Id} }_{s}^{k_0,\upsilon}  + \norm{ \pa_\alpha K_{01} }_{{\cal Y}^s_\bot}^{k_0,\upsilon}  \lesssim_s \| \fI_0 \|_{{\cal X}^{s + \sigma}}^{k_0,\upsilon}  \,,\label{face2} \\
		& \norm{ K_{20} }_{s}^{k_0,\upsilon}\lesssim_s \sqrt{\varepsilon} ( 1 + \norm{ \fI_0 }_{{\cal X}^{s+\sigma}}^{k_0,\upsilon} )\,,  \label{face3}\\
		& \norm{ K_{11}y }_{{\cal Y}^s_\bot}^{k_0,\upsilon} \lesssim_s \sqrt{\varepsilon} ( \norm{ y }_s^{k_0,\upsilon}+ \norm{y }_{s_0}^{k_0,\upsilon}\norm{\fI_0 }_{{\cal X}^{s+\sigma}}^{k_0,\upsilon} )\,,   \label{face5} \\
		&  \norm{ K_{11}^\top \tz  }_{s}^{k_0,\upsilon} \lesssim_s \sqrt{\varepsilon} ( \norm{\tz}_{{\cal Y}^s_\bot}^{k_0,\upsilon} + \norm{\tz}_{{\cal Y}^{s_0}_\bot}^{k_0,\upsilon}\norm{\fI_0}_{{\cal X}^{s+\sigma}}^{k_0,\upsilon}) \, . \label{face6}
	\end{align}
\end{lem}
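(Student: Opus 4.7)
The plan is to follow the template of Lemma 5.4 in \cite{BBHM}, adapted to our Hamiltonian $\cH_{\varepsilon,\alpha}$ in \eqref{Halpha}, by computing each Taylor coefficient of $K_\alpha = H_\alpha \circ G_\delta$ via the chain rule applied to the explicit symplectic change of variables \eqref{Gdelta}, and then estimating term by term. From the structure $H_\alpha = \alpha \cdot I + \tfrac12(z, \Omega z)_{L^2} + \sqrt{\varepsilon}\,{\cal P}_\varepsilon$ with $\Omega = \bigl(\begin{smallmatrix}-\cL_\fm & 0 \\ 0 & \mathrm{Id}\end{smallmatrix}\bigr)$, one reads off at once that $\partial_\alpha H_\alpha = I$. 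Evaluating $\partial_\alpha$ of $H_\alpha \circ G_\delta$ at $(\phi,0,0)$ and using the second row of \eqref{Gdelta} yields
$$
\partial_\alpha K_{00} = I_\delta(\phi)\,, \quad \partial_\alpha K_{10} = [\partial_\phi\theta_0(\phi)]^{-\top}\,, \quad \partial_\alpha K_{01} = J^{-1}(\partial_\theta \wtz_0)(\theta_0(\phi))\,,
$$
so that \eqref{face2} reduces to tame bounds on $I_\delta$ (via \eqref{ebb1} and the ansatz \eqref{ansatz}), on $[\partial_\phi\theta_0]^{-\top} - \mathrm{Id}$ (using that $\theta_0(\phi) = \phi + \Theta_0(\phi)$ with $\Theta_0$ small and the Neumann series together with the product estimates of Lemma \ref{prod.lemma}), and on $z_0$ (controlled by $\|{\frak I}_0\|_{{\cal X}^{s+1}}^{k_0,\upsilon}$).

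For \eqref{face1}, I would exploit the standard fact that on an exact invariant torus (where the error $Z = \cF(i_0,\alpha_0)$ in \eqref{ZError} vanishes), one has $K_{00} \equiv 0$, $K_{01} \equiv 0$ and $K_{10} \equiv \omega$. Hence $\partial_\phi K_{00}$, $K_{10}-\omega$ and $K_{01}$ are all controlled by $Z$. Concretely, the Hamilton equations \eqref{hameq_Kalpha} evaluated at $(\phi,0,0)$ imply that
$$
(DG_\delta(\phi,0,0))\begin{pmatrix} K_{10}(\phi,\alpha_0)\\ -\partial_\phi K_{00}(\phi,\alpha_0)\\ J K_{01}(\phi,\alpha_0)\end{pmatrix} = X_{H_{\alpha_0}}(i_\delta(\phi)) = \omega\cdot\partial_{\bf x} i_\delta(\phi) - \cF(i_\delta,\alpha_0)(\phi)\,,
$$
together with $DG_\delta(\phi,0,0)(\omega,0,0)^\top = \omega\cdot\partial_{\bf x} i_\delta(\phi)$ (which follows from the isotropy of $i_\delta$). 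Subtracting these identities and inverting $DG_\delta$—whose invertibility and tame bounds follow from \eqref{Gdelta}, the ansatz \eqref{ansatz} and Lemma \ref{prod.lemma}—reduces the estimate to $\|\cF(i_\delta,\alpha_0)\|_{{\cal Y}^s}^{k_0,\upsilon}$, which is precisely bounded by the right hand side of \eqref{face1} via \eqref{ebb4} in Lemma \ref{torus_iso}. The estimates \eqref{face3}--\eqref{face6} are obtained differently: at $\varepsilon = 0$ the Hamiltonian $H_{\alpha,0}$ is affine in $I$ and purely quadratic in $z$ with no mixed $I$-$z$ terms, so the coefficients $K_{20}, K_{11}$ and $K_{11}^\top$ come entirely from $\sqrt{\varepsilon}\,{\cal P}_\varepsilon \circ G_\delta$. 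Differentiating twice at $(\phi,0,0)$, using the explicit formula \eqref{Gdelta} for $G_\delta$ and the tame bounds of Lemma \ref{XP_est} for $\di_i X_{{\cal P}_\varepsilon}(i_\delta)$ and $\di_i^2 X_{{\cal P}_\varepsilon}(i_\delta)$, together with Lemma \ref{prod.lemma}, one extracts the explicit $\sqrt{\varepsilon}$ prefactor and obtains the tame interpolation bounds as claimed.

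The main technical obstacle will be bookkeeping the loss of regularity $\sigma = \sigma(\kappa_0,k_0,\tau) \gg 0$ consistently across all the estimates: each composition with $G_\delta$ introduces factors such as $[\partial_\phi\theta_0]^{-\top}$ and $(\partial_\theta \wtz_0)\circ\theta_0$, which must be estimated in $\|\cdot\|_{s+\sigma}^{k_0,\upsilon}$ norms and then absorbed via Lemma \ref{prod.lemma}; moreover the inversion of $\partial_\phi \theta_0 = \mathrm{Id} + \partial_\phi \Theta_0$ via Neumann series requires the smallness $\|\Theta_0\|_{s_0+\sigma}^{k_0,\upsilon} \ll 1$, which follows from \eqref{ansatz} provided $\sqrt{\varepsilon}\upsilon^{-1}$ is sufficiently small. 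Once $\sigma$ is chosen large enough to accommodate all the losses accumulated from Lemmata \ref{XP_est}, \ref{torus_iso}, \ref{prod.lemma} and from inverting $DG_\delta$, the estimates \eqref{face1}--\eqref{face6} follow by straightforward combination, exactly in the spirit of \cite{BBHM}, \cite{BM}.
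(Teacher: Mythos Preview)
Your proposal is correct and follows precisely the approach the paper intends: the paper does not give a detailed proof but simply states ``arguing as in Lemma 5.4 in \cite{BBHM} (with minor adaptations)'', and your outline is exactly that argument---computing $\partial_\alpha K_{00}, \partial_\alpha K_{10}, \partial_\alpha K_{01}$ directly from $\partial_\alpha H_\alpha = I$ and \eqref{Gdelta}, controlling $\partial_\phi K_{00}, K_{10}-\omega, K_{01}$ by the error $\cF(i_\delta,\alpha_0)$ via \eqref{ebb4}, and extracting the $\sqrt\varepsilon$ factor in $K_{20}, K_{11}, K_{11}^\top$ from the perturbative structure of $\cH_{\varepsilon,\alpha}$ together with Lemma \ref{XP_est}.
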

Under the linear change of variables
\begin{equation}\label{DGdelta}
	DG_\delta(\bx,0,0)\begin{pmatrix}
		\wh\phi \\ \wh\fy \\ \wh\tz
	\end{pmatrix}:= \begin{pmatrix}
		\pa_\phi \theta_0(\bx) & 0 & 0 \\ \pa_\phi I_\delta(\bx) & [\pa_\phi \theta_0(\bx)]^{-\top} &  [(\pa_\theta\wtw_0)(\theta_0(\bx))]^\top  J^{-1} \\\pa_\phi w_0(\bx) & 0 & {\rm Id}
	\end{pmatrix}\begin{pmatrix}
		\wh\phi \\ \wh\fy \\ \wh\tz
	\end{pmatrix} \,,
\end{equation}
the linearized operator $\di_{i,\alpha}\cF(i_\delta)$ is approximately transformed into the one obtained when one linearizes the Hamiltonian system \eqref{hameq_Kalpha} at $(\phi,\fy,\tz) = (\bx,0,0)$, differentiating also in $\alpha$ at $\alpha_0$ and changing $\pa_x \rightsquigarrow \omega\cdot \pa_\bx$, namely
\begin{equation}\label{lin_Kalpha}
	\begin{footnotesize}
		\begin{pmatrix}
			\widehat \phi  \\
			\widehat \fy    \\ 
			\widehat \tz \\
			\widehat \alpha
		\end{pmatrix} \mapsto
		\begin{pmatrix}
			\omega\cdot \pa_\bx \wh\phi - \pa_\phi K_{10}(\bx)[\wh\phi] - \pa_\alpha K_{10}(\bx)[\wh\alpha] - K_{20}(\bx)\wh\fy - [K_{11}(\bx)]^\top \wh\tz  \\
			\omega\cdot \pa_\bx\wh\fy + \pa_{\phi\phi}K_{00}(\bx)[\wh\phi]+ \pa_\alpha\pa_\phi K_{00}(\bx)[\wh\alpha] + [\pa_\phi K_{10}(\bx)]^\top \wh\fy + [\pa_\phi K_{01}(\bx)]^\top  \wh\tz  \\
			\omega\cdot \pa_\bx \wh\tz - J \,  \big( \pa_\phi K_{01}(\bx)[\wh\phi] + \pa_\alpha K_{01}(\bx)[\wh\alpha] + K_{11}(\bx) \wh\fy + K_{02}(\bx) \wh\tz \big)   
		\end{pmatrix}. 
	\end{footnotesize}
\end{equation}
In order to construct an  approximate inverse of \eqref{lin_Kalpha}, we need the operator
\begin{equation}\label{Lomegatrue}
	\cG_\omega := \Pi^\perp \left( \omega\cdot \pa_\bx - J K_{02}(\bx) \right)|_{\cX_\perp}
\end{equation}
to be invertible (on reversible tori), where we recall that $\Pi^\perp$ denotes the projection on the invariant subspace $\cX_{\perp}$ in \eqref{cX.perp}.

	\begin{lem}\label{lemma_inv_infty}
		There exists $\sigma := \sigma(k_0, \kappa_0, \tau) > 0$ such that, if \eqref{ansatz} holds, then, for $S\geq 2(s_0+\sigma)$, for any $s_0 \leq s \leq S/2 -\sigma$ and
		for any $h\in {\cal Y}_\bot^{s+1}$, there exists a solution $f := {\cal G}_\omega^{- 1} h \in \cX_\perp^{s}$ of the equation $\cG_\omega f = h$ satisfying
		\begin{equation}\label{Ginv_est}
			\normk{(\cG_\omega)^{-1} h}{{\cal X}^s_\bot}\lesssim_{s} \normk{h}{{\cal Y}^{s + 1}_\bot}+ \| {\frak I}_0 \|_{{\cal X}^{s +  \sigma}}^{k_0, \upsilon} \| h \|_{{\cal Y}^{s_0 + 1}_\bot}^{k_0, \upsilon}\,.
		\end{equation}
		Moreover, if $h$ is anti-reversible, then $f$ is reversible.
	\end{lem}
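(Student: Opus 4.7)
The plan is to decompose $\cG_\omega = \cG_{\fm} + \cR_0$, where
$$\cG_{\fm} := \Pi^\perp\bigl(\omega\cdot\pa_\bx\,\mathrm{Id} - J \diag(-\cL_{\fm},\mathrm{Id})\bigr)\big|_{\cX_\perp}$$
is the constant-coefficient leading part coming from the unperturbed quadratic Hamiltonian in \eqref{H_epsilon}, and $\cR_0 := -J\Pi^\perp\bigl(K_{02}(\bx) - \diag(-\cL_{\fm},\mathrm{Id})\bigr)\big|_{\cX_\perp}$ collects the contributions of $\sqrt\varepsilon \,{\cal P}_\varepsilon$ and of the symplectic straightening $G_\delta$. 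First I would invert $\cG_{\fm}$ explicitly by expanding $f = (z_1,z_2)\in\cX_\perp$ in the basis $e^{\im\ell\cdot\bx}\phi_{j,\fm}(y)$ with $\ell\in\Z^{\kappa_0}$ and $j\geq\kappa_0+1$: the equation $\cG_{\fm} f = h$ reduces, mode by mode, to a $2\times 2$ algebraic system with determinant $-\bigl((\omega\cdot\ell)^2 + \lambda_{j,\fm}^2\bigr)$. The crucial observation is that on $\cX_\perp$ one has $\lambda_{j,\fm}^2 > 0$, so
$(\omega\cdot\ell)^2 + \lambda_{j,\fm}^2 \geq \lambda_{\kappa_0+1,\fm}^2 > 0$
uniformly in $(\ell,j)$ \emph{and independently of any Diophantine condition on $\omega$} --- a small-divisor-free situation, in agreement with the comments in Section \ref{section strategy}.

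A direct symbol-level computation, using the elementary bounds $j^6/(j^2+\ell^2)^2 \leq j^2$ to control the $H^{s,3}$ norm of $z_1$, and the analogous bound $j^2\bigl(j^4|h_1|^2 + \langle\ell\rangle^2|h_2|^2\bigr)/(j^2+\ell^2)^2 \lesssim j^2|h_1|^2 + |h_2|^2$ to control the $H^{s,1}$ norm of $z_2$, yields the tame bound
$\|\cG_{\fm}^{-1} h\|_{\cX^s_\bot}^{k_0,\upsilon} \lesssim_s \|h\|_{\cY^{s+1}_\bot}^{k_0,\upsilon}$.
The loss of one derivative in $\bx$ comes from the first scalar equation $\omega\cdot\pa_\bx z_1 - z_2 = h_1$, while the gain of two derivatives in $y$ reflects the ellipticity of the reduced scalar operator $(\omega\cdot\pa_\bx)^2 - \cL_{\fm}$ on the positive-spectrum subspace $\cX_\perp$.

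Next I would treat the perturbation $\cR_0$. By Taylor-expanding $K_\alpha = H_\alpha \circ G_\delta$ at the isotropic torus and arguing in the spirit of Lemma \ref{Kcoeff_est} together with Lemma \ref{XP_est} on $X_{{\cal P}_\varepsilon}$, the smallness ansatz \eqref{ansatz} yields a tame estimate of the form
$$\|\cR_0 f\|_{\cY^s_\bot}^{k_0,\upsilon} \lesssim_s \sqrt\varepsilon\,\|f\|_{\cX^s_\bot}^{k_0,\upsilon} + \|\fI_0\|_{\cX^{s+\sigma}}^{k_0,\upsilon}\|f\|_{\cX^{s_0}_\bot}^{k_0,\upsilon}$$
for some $\sigma > 0$ depending on $\kappa_0,k_0,\tau$. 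Since \eqref{ansatz} guarantees that $\cG_{\fm}^{-1}\cR_0$ has small operator norm at the low-regularity level $s_0$, a Neumann series produces $\cG_\omega^{-1} = (\mathrm{Id} + \cG_{\fm}^{-1}\cR_0)^{-1}\cG_{\fm}^{-1}$, and the claimed estimate \eqref{Ginv_est} follows by feeding the above bounds into the tame product estimates of Lemma \ref{prod.lemma} in the standard interpolation-theoretic fashion.

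The reversibility claim will be a direct verification: $\omega\cdot\pa_\bx$ anti-commutes with $\bx\mapsto-\bx$, $J$ anti-commutes with $\cS$, and $K_{02}(-\bx)\circ\cS = \cS\circ K_{02}(\bx)$ by \eqref{Ka.rev}; hence $\cG_\omega$ is a reversible operator and $\cG_\omega^{-1}$ maps anti-reversible functions to reversible ones. The main technical obstacle I anticipate is precisely the perturbation estimate on $\cR_0$: the operator $K_{02}(\bx)$ is defined only implicitly through the Taylor expansion of $H_\alpha \circ G_\delta$, so one must carefully propagate the smallness of $\fI_0$ through the compositions appearing in $G_\delta$ in \eqref{Gdelta}, while keeping track of the regularity loss $\sigma$ in the $\bx$-variable. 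Once this tame bound on $\cR_0$ is in place, the remainder of the proof is entirely routine.
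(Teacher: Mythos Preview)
Your overall strategy---split off the constant-coefficient operator, invert it explicitly on the positive spectral subspace, and treat the rest by Neumann series---is the same as the paper's. But there is a genuine gap in the Neumann step as you have stated it.

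You claim the bound $\|\cG_{\fm}^{-1}h\|_{\cX^s_\bot}\lesssim\|h\|_{\cY^{s+1}_\bot}$ (one derivative lost in $\bx$) and then propose to invert $\mathrm{Id}+\cG_{\fm}^{-1}\cR_0$ by Neumann series. The difficulty is that this series does not close on $\cX^s_\bot$: the perturbation $\cR_0$ maps $\cX^s_\bot\to\cY^s_\bot$, not $\cY^{s+1}_\bot$, so $\cG_{\fm}^{-1}\cR_0$ sends $\cX^s_\bot$ to $\cX^{s-1}_\bot$. For the pure multiplication piece $(0,-a z_1)$ this happens to be harmless (its first component vanishes, and the $\bx$-loss comes only from $h_1$), but the finite-rank correction $\sqrt\varepsilon JR$ has a nonzero first component, and the functions $\chi_j$ appearing there are only in $\cY^s_\bot$, not one derivative better. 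Your schematic symbol bounds with ``$(j^2+\ell^2)$'' also obscure the issue: the actual denominator is $(\omega\cdot\ell)^2+\lambda_{j,\fm}^2$, which has no lower bound in terms of $|\ell|$ (only in $\lambda_j$), so the estimates have to be organized differently.

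The paper resolves this by conjugating with the symmetrizer $\cM_\perp$ built from $\cL_{\fm,\perp}^{\pm1/2}$: after conjugation the constant-coefficient part becomes the diagonal operator $\omega\cdot\pa_\bx\pm\cL_{\fm,\perp}^{1/2}$, whose inverse is bounded on $H^{s,0}_\bot\times H^{s,0}_\bot$ \emph{with no loss at all}. The Neumann series then closes in $H^{s,0}$, yielding first a $\cY^s_\bot$ bound on $\cG_\omega^{-1}$. The upgrade to $\cX^s_\bot$ (i.e.\ $z_1\in H^{s,3}$) is done in a separate elliptic-regularity step, using directly that $(-\pa_y^2)^{-1}:H^{s,1}\to H^{s,3}$ and rewriting the second scalar equation as $-\pa_y^2 z_1=\omega\cdot\pa_\bx z_2-\text{(lower order)}$. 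Your argument can be repaired along the same lines: run the Neumann series in $\cY^s_\bot$ (where, as you can check, $\cG_{\fm}^{-1}$ is bounded without loss), and recover the $\cX^s_\bot$ estimate afterwards by elliptic regularity; but as written the scheme does not converge. The reversibility part of your argument is correct.
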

	
	We postpone the proof of this lemma to Section \ref{sec:proof.AI}.
	%
	To find an approximate inverse of the linear operator in \eqref{lin_Kalpha} (and so of $\di_{i,\alpha}\cF(i_\delta)$), it is enough to  invert the operator
	\begin{equation}\label{Dsys}
		\D\big[ \wh\phi,\wh\fy,\wh\tz,\wh\alpha \big]:=\begin{pmatrix}
			\omega\cdot \pa_\bx\wh\phi - \pa_\alpha K_{10}(\bx)[\wh\alpha] - K_{20}(\bx)\wh\fy- K_{11}^\top(\bx)\wh\tz \\
			\omega\cdot \pa_\bx \wh\fy +\pa_\alpha\pa_\phi K_{00}(\bx)[\wh\alpha] \\
			\cG_\omega \wh\tz -  J  \left( \pa_\alpha K_{01}(\bx)[\wh\alpha] + K_{11}(\bx)\wh\fy \right)
		\end{pmatrix}
	\end{equation}
	obtained neglecting in \eqref{lin_Kalpha} the terms $\pa_\phi K_{10}$, $\pa_{\phi\phi}K_{00}$, $\pa_\phi K_{00}$, $\pa_\phi K_{01}$, 
	(as they vanish at an exact solution).
	For $(\omega,\tA)\in \tG^{\upsilon} \times \cJ_{\varepsilon}(\tE) $  (recall \eqref{Cset_infty} and \eqref{intro parametro mathtt A dim}), we look for an inverse of $\D$ by solving the system
	\begin{equation}\label{Dsys_tos}
		\D\big[ \wh\phi,\wh\fy,\wh\tz,\wh\alpha \big] = \begin{pmatrix}
			g_1 \\ g_2 \\ g_3
		\end{pmatrix}\,,
	\end{equation}
	where $ (g_1, g_2, g_3)  $ is an anti-reversible torus, i.e. 
	\begin{align}\label{g_revcond}
		&g_1(\bx) = g_1(- \bx) , \qquad 
		g_2(\bx) = - g_2(- \bx) , \qquad 
		\cS g_3(\bx) = - g_3(- \bx) \, .
	\end{align}
	We start with the second equation in \eqref{Dsys}-\eqref{Dsys_tos}, that is $\omega\cdot\pa_\bx\wh\fy = g_2-\pa_\alpha\pa_\phi K_{00}(\bx)[\wh\alpha]$. By  \eqref{g_revcond} and \eqref{Ka.rev},  the right hand side of this equation is odd in $ \bx $.
	In particular it has  
	zero average and so, for $\omega \in \mathtt G^\upsilon$ (recall \eqref{Cset_infty}), one can set
	\begin{equation}\label{why_sol}
		\wh\fy := (\omega\cdot\pa_\bx )^{-1}
		( g_2 -\pa_\alpha\pa_\phi K_{00}(\bx)[\wh\alpha] ) \, .
	\end{equation}
	Next, we consider the third equation $\cG_\omega \wh\tz = g_3 + 
	J ( \pa_\alpha K_{01}(\bx)[\wh\alpha]+ K_{11}(\bx)\wh\fy )$.
	By  Lemma \ref{lemma_inv_infty}, there is an anti-reversible solution 
	\begin{equation}\label{whw_sol}
		\wh\tz := ( \cG_\omega )^{-1} \big(  g_3 +  J
		( \pa_\alpha K_{01}(\bx)[\wh\alpha]+ K_{11}(\bx)\wh\fy ) \big) \, . 
	\end{equation}
	Finally, we solve the first equation in \eqref{Dsys_tos}, which, inserting \eqref{why_sol} and \eqref{whw_sol}, becomes
	\begin{equation}\label{whphi_eq}
		\omega\cdot\pa_\bx \wh\phi = g_1 + M_1(\bx)[\wh\alpha]+ M_2(\bx)g_2 + M_3(\bx)g_3\,,
	\end{equation}
	where
	\begin{align}
		M_1(\bx) & := \pa_\alpha K_{10}(\bx) - M_2(\bx)\pa_\alpha\pa_\phi K_{00}(\bx) + M_3(\bx) J\, \pa_\alpha K_{01} (\bx) \,,\notag  \\
		M_2(\bx) & := K_{20}(\bx) (\omega\cdot\pa_\bx )^{-1} + K_{11}^\top (\bx)\left(\cG_\omega \right)^{-1} J\, K_{11}(\bx)(\omega\cdot\pa_\bx)^{-1}\,, \notag\\
		M_3(\bx) & := K_{11}^\top(\bx)\left( \cG_\omega \right)^{-1} \,.\notag
	\end{align}
	In order to solve \eqref{whphi_eq}, we  choose $\wh\alpha$ such that the average  in $\bx$ of the right hand side is zero. 
	The 
	$ \bx $-average of the matrix $ M_1 $ satisfies $\braket{M_1}_\bx = {\rm Id} + O(\sqrt\varepsilon \upsilon^{-1})$. 
	Then, for $\sqrt\varepsilon\upsilon^{-1}$ small enough, $\braket{M_1}_\bx$ is invertible and $\braket{M_1}_\bx^{-1} = {\rm Id} + O(\sqrt\varepsilon\upsilon^{-1})$. Thus we define
	\begin{equation}\label{whalpha_sol}
		\wh\alpha := -\braket{M_1}_\bx^{-1}\big( \braket{g_1}_\bx + \braket{M_2g_2}_\bx + \braket{M_3 g_3}_\bx \big) \, , 
	\end{equation} 
	and
	the solution of equation \eqref{whphi_eq} 
	\begin{equation}\label{whphi_sol}
		\wh\phi := ( \omega\cdot\pa_\bx )^{-1}\big(  g_1 + M_1(\bx)[\wh\alpha] + M_2(\bx)g_2 + M_3(\bx)g_3 \big)\,
	\end{equation}
	for $\omega \in \mathtt G^\upsilon$. 
	Moreover, using \eqref{g_revcond}, \eqref{Ka.rev},  
	the fact that 
	$ J $ and $ \cS $ anti-commutes and Lemma \ref{lemma_inv_infty}, one checks that 
	$ (\wh \phi, \wh\fy, \wh\tz )$  is  reversible, i.e. 
	\begin{equation}\label{rev-def-TW}
		\wh\phi  (\bx) = - \wh\phi  (- \bx) , \quad 
		\wh\fy(\bx) =  \wh\fy (- \bx) , \quad 
		\cS \wh\tz (\bx) = \wh\tz (- \bx) \, . 
	\end{equation}
	In conclusion, we have obtained a  solution 
	$ ( \wh\phi,\wh\fy,\wh\tz,\wh\alpha )$ of the linear system \eqref{Dsys_tos}, and,
	denoting the norm 
	$ \normk{(\phi,\fy,\tz,\alpha)}{{\cal X}^s}:= \max \big\{ \normk{(\phi,\fy,\tz)}{{\cal X}^s},\abs{\alpha}^{k_0,\upsilon} \big\} $ (where ${\cal X}^s$ is defined in \eqref{prima def cal X s}), we have:  
	\begin{prop}\label{Dsystem}
		For all $(\omega,\tA)\in \tG^{\upsilon}\times \cJ_{\varepsilon}(\tE)$ and for any anti-reversible torus variation $ g =(g_1,g_2,g_3)$ (i.e. satisfying \eqref{g_revcond}),
		the linear system \eqref{Dsys_tos} has a solution $\D^{-1}g:= ( \wh\phi,\wh\fy,\wh\tz,\wh\alpha)$, with $ ( \wh\phi,\wh\fy,\wh\tz,\wh\alpha)$ defined in \eqref{whphi_sol}, \eqref{why_sol}, \eqref{whw_sol}, \eqref{whalpha_sol}, 
		where $( \wh\phi,\wh\fy,\wh\tz)$ is a reversible torus variation,
		satisfying, for any $s_0\leq s\leq S/2 - \sigma$
		\begin{equation}
			\label{est.D-1g}
			\normk{\D^{-1}g}{{\cal X}^s} \lesssim_{S} \upsilon^{-1}\big( \normk{g}{{\cal Y}^{s + \sigma}}+\normk{g}{{\cal Y}^{s_0+ \sigma}}\normk{\fI_0}{{\cal X}^{s+\sigma}} \big) \,. 
		\end{equation}
	\end{prop}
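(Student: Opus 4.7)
\textbf{Proof proposal for Proposition \ref{Dsystem}.} The argument essentially amounts to verifying that the explicit formulas \eqref{why_sol}, \eqref{whw_sol}, \eqref{whalpha_sol}, \eqref{whphi_sol} produced before the statement actually yield a solution of $\D[\wh\phi,\wh\fy,\wh\tz,\wh\alpha] = g$, that this solution satisfies the reversibility \eqref{rev-def-TW}, and that it obeys the tame estimate \eqref{est.D-1g}. I would organize the argument following the lower-triangular structure of $\D$: first $\wh\fy$, then $\wh\tz$, then $\wh\alpha$, and finally $\wh\phi$.

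I would start by solving the scalar equation for $\wh\fy$. The right-hand side $g_2 - \pa_\alpha\pa_\phi K_{00}(\bx)[\wh\alpha]$ is odd in $\bx$: this follows from the anti-reversibility \eqref{g_revcond} of $g_2$ and from the parity relations \eqref{Ka.rev}, so it has zero mean and \eqref{why_sol} is well defined by the Diophantine estimate \eqref{lem:diopha.eq} for $\omega\in\tG^\upsilon$. Next, I would invoke Lemma \ref{lemma_inv_infty} on the equation $\cG_\omega \wh\tz = g_3 + J(\pa_\alpha K_{01}[\wh\alpha] + K_{11}(\bx)\wh\fy)$; the right-hand side is anti-reversible (using $J\cS = -\cS J$, the covariance \eqref{Ka.rev} of $K_{01}, K_{11}$, and the evenness of $\wh\fy$ that will be verified in the reversibility step), which guarantees that the solution $\wh\tz$ given by \eqref{whw_sol} is reversible.

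Inserting \eqref{why_sol}, \eqref{whw_sol} into the first scalar equation of \eqref{Dsys_tos} reduces it to the Diophantine equation \eqref{whphi_eq} whose right-hand side depends linearly on $\wh\alpha$ through the operator $M_1(\bx)$. By Lemma \ref{Kcoeff_est}, combined with the ansatz \eqref{ansatz} and the smallness $\sqrt\varepsilon\,\upsilon^{-1}\ll 1$, the averaged matrix satisfies $\braket{M_1}_\bx = \mathrm{Id} + O(\sqrt\varepsilon\,\upsilon^{-1})$, hence is invertible by Neumann series, and \eqref{whalpha_sol} is the unique choice that cancels the $\bx$-average of the source; the resulting $\wh\phi$ in \eqref{whphi_sol} is then constructed by one further application of \eqref{lem:diopha.eq}. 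Reversibility \eqref{rev-def-TW} of $(\wh\phi,\wh\fy,\wh\tz)$ follows by a parity check: $\omega\cdot\pa_\bx$ swaps even and odd functions of $\bx$, the coefficients $K_{00}, K_{10}, K_{20}, \pa_\alpha K_{10}$ are even and $\pa_\phi K_{00}$ is odd by \eqref{Ka.rev}, and $\cS, J$ intertwine as needed; in particular Lemma \ref{lemma_inv_infty} guarantees that the inverse $\cG_\omega^{-1}$ sends anti-reversible to reversible, which is what propagates the parity through \eqref{whw_sol}.

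The only quantitative task is then the tame estimate \eqref{est.D-1g}. I would obtain it by chaining the three basic tools, the Diophantine bound \eqref{lem:diopha.eq}, the tame bound \eqref{Ginv_est} of Lemma \ref{lemma_inv_infty}, and the estimates of Lemma \ref{Kcoeff_est}, through the interpolation inequality \eqref{prod} of Lemma \ref{prod.lemma}, computing successively $|\wh\alpha|^{k_0,\upsilon}$, $\|\wh\fy\|_s^{k_0,\upsilon}$, $\|\wh\tz\|_{\cX_\bot^s}^{k_0,\upsilon}$, $\|\wh\phi\|_s^{k_0,\upsilon}$. The main technical point, and the place where one must be careful, is the bookkeeping of the losses of derivatives: each application of \eqref{lem:diopha.eq} or \eqref{Ginv_est} costs $\mu$ or $\sigma$ derivatives, and each Taylor coefficient of $K_\alpha$ carries a further loss $\sigma$ via Lemma \ref{Kcoeff_est}; the constant $\sigma=\sigma(\kappa_0,k_0,\tau)$ appearing in \eqref{est.D-1g} must therefore be chosen larger than the sum of all such individual losses. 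The ``nonlinear'' terms of the form $\|g\|_{\cY^{s_0+\sigma}}\|\fI_0\|_{\cX^{s+\sigma}}$ arise from interpolating the products $M_j g_k$ and from the high-low splitting in \eqref{prod}, while the smallness $\sqrt\varepsilon\,\upsilon^{-1}\ll 1$ ensures that cross-terms of the form $\|\fI_0\|_{\cX^{s+\sigma}}^2 \|g\|_{\cY^{s_0+\sigma}}$ are subleading and that a single power of $\upsilon^{-1}$ suffices on the right-hand side of \eqref{est.D-1g}, coming from the single inversion of $\omega\cdot\pa_\bx$ in \eqref{whphi_sol}.
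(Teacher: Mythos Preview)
Your proposal is correct and follows essentially the same approach as the paper: the paper does not give a separate proof of Proposition \ref{Dsystem} but rather states it as the summary of the explicit construction \eqref{why_sol}--\eqref{whphi_sol} carried out just before, and you have accurately reproduced that construction together with the parity checks and the bookkeeping of derivative losses needed for \eqref{est.D-1g}. One small imprecision: the single power of $\upsilon^{-1}$ is not due to there being only one inversion of $\omega\cdot\pa_\bx$ (there are two, in \eqref{why_sol} and \eqref{whphi_sol}, and a further one inside $M_2$), but rather to the fact that the composed inversions always come multiplied by a factor $\sqrt\varepsilon$ from $K_{20}$ or $K_{11}$, so that $\sqrt\varepsilon\,\upsilon^{-1}\ll 1$ absorbs the extra $\upsilon^{-1}$.
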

	
	
	Finally, we prove that the operator
	\begin{equation}\label{bT0}
		\bT_0 := \bT_0(i_0):=  ( D\wtG_\delta )(\bx,0,0) \circ \D^{-1} \circ (D G_\delta ) (\bx,0,0)^{-1}
	\end{equation}
	is an approximate right inverse for $\di_{i,\alpha}\cF(i_0)$, where
	$	\wtG_\delta(\phi,\fy,\tz,\alpha) := \left( G_\delta(\phi,\fy,\tz),\alpha \right) $
	is the identity on the $\alpha$-component.  
	\begin{thm} {\bf (Approximate inverse)} \label{alm.approx.inv}
		Assume {\rm (AI)}.  There is $\bar\sigma :=\bar\sigma(\tau,\kappa_0,k_0)>0$ such that, if \eqref{ansatz} holds with $\sigma=\bar\sigma$, then, for all $(\omega,\tA)\in \tG^{\upsilon}\times \cJ_{\varepsilon}(\tE)$ (recall \eqref{Cset_infty} and \eqref{intro parametro mathtt A dim}) and for any anti-reversible torus variation $g:=(g_1,g_2,g_3)$ (i.e. satisfying \eqref{g_revcond}), the operator $\bT_0$ defined in \eqref{bT0} satisfies, for $S\geq 2(s_0+\bar\sigma)$ and for all $s_0 \leq s \leq S/2 - \overline\sigma$,
		\begin{equation}\label{tame-es-AI}
			\begin{aligned}
				& \normk{\bT_0 g}{{\cal X}^s} \lesssim_{s} \upsilon^{-1} \big( \normk{g}{{\cal Y}^{s + \bar\sigma}} +\normk{g}{{\cal Y}^{s_0 + \bar \sigma}}\normk{\fI_0}{{\cal X}^{s + \bar \sigma}}  \big)\,, \\
				& \normk{\bT_0 g}{{\cal Y}^s} \lesssim_{s} \upsilon^{-1} \big( \normk{g}{{\cal Y}^{s + \bar\sigma}} +\normk{g}{{\cal Y}^{s_0 + \bar \sigma}}\normk{\fI_0}{{\cal X}^{s + \bar \sigma}}  \big)
			\end{aligned}
		\end{equation}
		Moreover, the first three components of $\bT_0 g $  form a reversible 
		torus variation (i.e. satisfy \eqref{rev-def-TW}).
		Finally, $\bT_0$ is an approximate right inverse of $\di_{i,\alpha}\cF(i_0)$, namely
		\begin{equation*}
			\di_{i,\alpha}\cF(i_0) \circ \bT_0 - {\rm Id} = \cP(i_0) 
		\end{equation*}
		where, for any variation $ g \in {\cal Y}^{s_0 + \bar\sigma}$, one has 
		\begin{equation}\label{pfi1}
			\begin{aligned}
				\normk{\cP(i_0) g}{{\cal Y}^{s_0}} & \lesssim  \upsilon^{-1}  \normk{\cF(i_0,\alpha_0)}{{\cal Y}^{s_0 + \bar \sigma}}\normk{g}{{\cal Y}^{s_0+\bar\sigma}}   
				%
			\end{aligned}
		\end{equation}
	\end{thm}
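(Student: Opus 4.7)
\textbf{Proof plan for Theorem \ref{alm.approx.inv}.} The strategy follows the Berti--Bolle symplectic approach \cite{BB} adapted, as in \cite{BM,BBHM,BFM}, to our reversible setting. I would split the construction in six steps.

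\emph{Step 1 (Isotropic correction).} I would first replace the approximate torus $i_0$ by the isotropic torus $i_\delta$ of Lemma \ref{torus_iso}. The error $i_0-i_\delta$ is supported only in the action component and, by \eqref{ebb1}--\eqref{ebb2}, is controlled in ${\cal X}^s$ by $\upsilon^{-1}\norm{Z}_{{\cal Y}^{s+\sigma}}^{k_0,\upsilon}$ (modulo a factor $\norm{{\frak I}_0}_{{\cal X}^{s+\sigma}}^{k_0,\upsilon}$ in high norm). This is the first source of ``smallness'' in the error $\cP(i_0)$, because $\di_{i,\alpha}\cF(i_0)-\di_{i,\alpha}\cF(i_\delta)$ is estimated by $\norm{i_0-i_\delta}$ via Lemma \ref{XP_est} (and the mean value theorem applied to $X_{{\cal H}_{\varepsilon,\alpha}}$).

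\emph{Step 2 (Symplectic straightening).} I would use the symplectic diffeomorphism $G_\delta$ in \eqref{Gdelta}, which maps the trivial torus $(\bx,0,0)$ onto $i_\delta(\bx)$. Because $i_\delta$ is isotropic, $G_\delta$ is symplectic, so the Hamiltonian $H_{\alpha}\circ G_\delta=K_\alpha$ admits the Taylor expansion \eqref{taylor_Kalpha}. Linearizing the Hamilton equations for $K_\alpha$ at $(\bx,0,0)$ and also in $\alpha$ at $\alpha_0$ yields the operator \eqref{lin_Kalpha}. The conjugation $DG_\delta(\bx,0,0)$ links this operator to $\di_{i,\alpha}\cF(i_\delta)$ up to an error that vanishes on exact solutions, and is therefore controlled by $\norm{Z}$ through Lemma \ref{Kcoeff_est} (this is where the identities $K_{00}\equiv 0$, $K_{01}\equiv 0$, $K_{10}\equiv \omega$ at exact solutions are used).

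\emph{Step 3 (Inversion of the triangular system $\D$).} In \eqref{Dsys} I drop the ``small'' blocks $\pa_\phi K_{10}$, $\pa_{\phi\phi}K_{00}$, $\pa_\phi K_{00}$, $\pa_\phi K_{01}$, all of which are $O(\norm{Z}^{k_0,\upsilon})$ by \eqref{face1}. The resulting operator $\D$ has a triangular structure, already solved right above the theorem: for $\omega\in{\mathtt G}^\upsilon$ I (i) recover $\wh\fy$ from the second equation via \eqref{why_sol}, using reversibility \eqref{g_revcond}, \eqref{Ka.rev} to guarantee zero average and \eqref{lem:diopha.eq} to invert $\omega\cdot\pa_\bx$ with loss $\upsilon^{-1}\langle\ell\rangle^{\tau k_0}$; (ii) solve for $\wh\tz$ via \eqref{whw_sol} using the invertibility of $\cG_\omega$ from Lemma \ref{lemma_inv_infty}; (iii) choose $\wh\alpha$ as in \eqref{whalpha_sol}, exploiting the Neumann-invertibility of $\braket{M_1}_\bx={\rm Id}+O(\sqrt\varepsilon\upsilon^{-1})$, to kill the $\bx$-average in the $\wh\phi$-equation, and finally solve \eqref{whphi_sol}. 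Estimate \eqref{est.D-1g} then follows by chaining \eqref{lem:diopha.eq}, Lemmata \ref{lemma_inv_infty}, \ref{Kcoeff_est}, and the tame product estimates of Lemma \ref{prod.lemma}; reversibility of $(\wh\phi,\wh\fy,\wh\tz)$ follows from \eqref{Ka.rev} and the fact that $J$ anti-commutes with $\cS$.

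\emph{Step 4 (Tame estimates for $\bT_0$ and error estimate).} Composition with $DG_\delta(\bx,0,0)$ and its inverse is a ``triangular-in-${\frak I}_0$'' bounded operator on the scales ${\cal X}^s$ and ${\cal Y}^s$ (Lemma \ref{prod.lemma}), hence \eqref{tame-es-AI} follows from \eqref{est.D-1g} with an appropriate enlargement $\bar\sigma\geq\sigma$. For \eqref{pfi1}, I would write
\[
\di_{i,\alpha}\cF(i_0)\circ\bT_0-{\rm Id} = \underbrace{\bigl(\di_{i,\alpha}\cF(i_0)-\di_{i,\alpha}\cF(i_\delta)\bigr)\circ\bT_0}_{=:\cP_1} + \underbrace{\bigl(\di_{i,\alpha}\cF(i_\delta)\circ\bT_0-{\rm Id}\bigr)}_{=:\cP_2},
\]
and estimate $\cP_1$ using Lemma \ref{XP_est} together with the bound $\norm{i_0-i_\delta}\lesssim\upsilon^{-1}\norm{Z}_{{\cal Y}^{s_0+\sigma}}^{k_0,\upsilon}$ from \eqref{ebb2}, and $\cP_2$ using Lemma \ref{Kcoeff_est} to bound the dropped off-diagonal blocks by $\norm{Z}_{{\cal Y}^{s_0+\sigma}}^{k_0,\upsilon}$. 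Combining both pieces with \eqref{tame-es-AI} at low norm gives \eqref{pfi1}.

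\emph{Main obstacle.} The routine part is the triangular inversion in Step 3; the delicate part is the bookkeeping of the loss of derivatives and of the parameter dependence in $\lambda=(\omega,\tA)$. In particular, because the eigenfunctions $\phi_{j,\fm}(\tE;y)$ entering the definition \eqref{def H s rho bot} of $H^{s,\rho}_\bot$ depend on $\tE$ (which here is fixed), I need the bound for $\cG_\omega^{-1}$ from Lemma \ref{lemma_inv_infty} to be uniform in this dependence; this forces $\bar\sigma$ to absorb the Diophantine losses $k_0+\tau(k_0+1)$ in \eqref{lem:diopha.eq} as well as the losses in \eqref{face3}--\eqref{face6}. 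Tracking reversibility through the chain $G_\delta,\ \D^{-1},\ DG_\delta^{-1}$ is also subtle but handled by the parity identities \eqref{Ka.rev} together with the fact that $J\cS=-\cS J$.
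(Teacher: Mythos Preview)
Your proposal is correct and follows essentially the same approach as the paper's proof, which is the standard Berti--Bolle symplectic decoupling scheme. The paper organizes the error term slightly more explicitly: it writes $\di_{i,\alpha}\cF(i_0) = DG_\delta(\tu_\delta)\circ\D\circ D\wtG_\delta(\tu_\delta)^{-1} + \cE$ with $\cE = \cE_0 + \cE_1 + DG_\delta\circ R_Z\circ D\wtG_\delta^{-1}$, where $\cE_0$ is your $\cP_1$-piece (coming from $i_0$ vs.\ $i_\delta$), $R_Z$ is the collection of dropped off-diagonal blocks you identify in $\cP_2$, and $\cE_1 = D^2G_\delta(\tu_\delta)\bigl[DG_\delta^{-1}\cF(i_\delta,\alpha_0),\,DG_\delta^{-1}\Pi[\cdot]\bigr]$ is the second-derivative correction from differentiating the conjugation identity $\cF(G_\delta(\tu),\alpha)=DG_\delta(\tu)(\omega\cdot\pa_\bx\tu - X_{K_\alpha}(\tu,\alpha))$. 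You allude to this last piece in Step~2 (``up to an error that vanishes on exact solutions''), but the bound on $\cE_1$ comes from \eqref{ebb4} and the tame estimate on $D^2G_\delta$, not from Lemma~\ref{Kcoeff_est}; your attribution there is slightly off, though inconsequential for the argument.
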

	\begin{proof}
		The claim that the first three components of $\bT_0 g$, with $\bT_0$ as in \eqref{bT0}, form a reversible torus variation follows from the facts that $DG_\delta(\bx,0,0)$, $DG_\delta(\bx,0,0)^{-1}$ are reversibility preserving and that $\D^{-1}$ maps anti-reversible torus variations into reversible torus variations, see Proposition \ref{Dsystem}.
		
		First, we note that $DG_\delta(\bx,0,0)$, defined in \eqref{DGdelta}, satisfies the estimates, by Lemma \ref{torus_iso} and \eqref{ansatz}, with $\cZ^{s} = \cX^{s}$ or $\cZ^{s}= \cY^{s}$,
		\begin{align}
			& \! \! \!\!\!\!\! \| DG_\delta(\bx,0,0)[\whi] \|_{{\cal Z}^s}^{k_0,\upsilon}\!  +\!  \| DG_\delta(\bx,0,0)^{-1}[\whi] \|_{{\cal Z}^s}^{k_0,\upsilon} \lesssim_{s} \| \whi \|_{{\cal Z}^s}^{k_0,\upsilon} + \|\fI_0 \|_{{\cal X}^{s+\sigma}}^{k_0,\upsilon}\|\whi \|_{{\cal Z}^{s_0}}^{k_0,\upsilon}, \label{chiove1}  \\
			&\! \! \!\!\!\!\!\| D^2G_\delta(\bx,0,0)[\whi_1,\whi_2] \|_{{\cal Z}^s}^{k_0,\upsilon} \lesssim_{s} \| \whi_1 \|_{{\cal Z}^s}^{k_0,\upsilon} \| \whi_2 \|_{{\cal Z}^{s_0}}^{k_0,\upsilon} +  \| \whi_1 \|_{{\cal Z}^{s_0}}^{k_0,\upsilon} \| \whi_2 \|_{{\cal Z}^{s}}^{k_0,\upsilon} \notag \\
			& \quad \quad \quad \quad \quad \quad \quad \quad \quad\quad \ \ \ \ \ + \| \fI_0 \|_{{\cal X}^{s+\sigma}}^{k_0,\upsilon}  \| \whi_1 \|_{{\cal Z}^{s_0}}^{k_0,\upsilon}  \| \whi_2 \|_{{\cal Z}^{s_0}}^{k_0,\upsilon}\,,\label{chiove2}
		\end{align}
		for any variation $\whi:=(\wh\phi,\wh\fy,\wh\tz)$ and for some $\sigma=\sigma(k_0, \kappa_0, \tau)>0$. These latter estimates, together with Proposition \ref{Dsystem} imply the first estimate in \eqref{tame-es-AI}. The second estimate easily follows from the first one by recalling the property \eqref{norma Ys Xs}.

		We now compute the operator $\cP$ and prove the estimate \eqref{pfi1}. By \eqref{F_op} and Lemma \ref{torus_iso}, since $X_\cN$ is independent of the action $I$, we have
		\begin{equation}\label{part0}
			\di_{i,\alpha} \cF(i_0) - \di_{i,\alpha}\cF(i_\delta) 
			= \sqrt{\varepsilon} \int_{0}^{1} \pa_I \di_i X_{{\cal P}_\varepsilon}(\theta_0,I_\delta
			+ \lambda(I_0-I_\delta),z_0)
			[I_0-I_\delta,\Pi[\,\cdot\,]]    \wrt \lambda =: \cE_0 \,, 
		\end{equation}
		where $\Pi$ throughout this proof denotes the projection $(\whi,\wh\alpha)\to\whi$. Denote by $\tu:=(\phi,\fy,\tz)$ the symplectic coordinates induced by $G_\delta$ in \eqref{Gdelta}. Under the symplectic map $G_\delta$, the nonlinear operator $\cF$ in \eqref{F_op} is transformed into 
		\begin{equation}
			\cF(G_\delta(\tu(\bx)),\alpha) = DG_\delta(\tu(\bx))\big(\omega\cdot\pa_{\bx} \tu(\bx) -X_{K_\alpha} (\tu(\bx),\alpha) \big) \,,
		\end{equation}
		with $K_\alpha= H_\alpha\circ G_\delta$ as in \eqref{Kalpha}. By differentiating at the trivial torus $\tu_\delta(\bx):= G_{\delta}^{-1}(i_\delta)(\bx) = (\bx,0,0)$ and at $\alpha=\alpha_0$, we get
		\begin{equation}\label{part1}
			\di_{i,\alpha}\cF(i_\delta) =  DG_\delta(\tu_\delta(\bx))\big(\omega\cdot\pa_{\bx}
			-\di_{\tu,\alpha}X_{K_\alpha} (\tu_\delta(\bx),\alpha_0) \big) D\wtG_\delta(\tu_\delta)^{-1} + \cE_1 \,,
		\end{equation}
		where
		\begin{equation}\label{cE1}
			\cE_1:= D^2 G_\delta(\tu_\delta) [DG_\delta (\tu_\delta)^{-1} \cF(i_\delta,\alpha_0), DG_\delta(\tu_\delta)^{-1}\Pi[\,\cdot\,]  ]\,.
		\end{equation}
		Furthermore, by \eqref{lin_Kalpha}, \eqref{Lomegatrue}, \eqref{Dsys}, we split
		\begin{equation}\label{split.field.delta}
			\omega\cdot\pa_{\bx}  -\di_{\tu,\alpha}X_{K_\alpha} (\tu_\delta(\bx),\alpha_0) = \D + R_Z\,,
		\end{equation}
		where
		\begin{equation}\label{RZ}
			R_Z[\wh\phi,\wh\fy,\wh\tz,\wh\alpha]:=\begin{pmatrix}
				- \pa_\phi K_{10}(\bx)[\wh\phi]   \\
				\pa_{\phi\phi}K_{00}(\bx)[\wh\phi] + [\pa_\phi K_{10}(\bx)]^\top \wh\fy + [\pa_\phi K_{01}(\bx)]^\top  \wh\tz  \\
				- J  \pa_\phi K_{01}(\bx)[\wh\phi] 
			\end{pmatrix}.
		\end{equation}
		Summing up \eqref{part0}, \eqref{part1} and \eqref{split.field.delta}, we get the decomposition
		\begin{equation}\label{part2}
			\di_{i,\alpha}\cF(i_0) = DG_\delta(\tu_\delta)\circ \D\circ D\wtG_\delta(\tu_\delta)^{-1} + \cE\,,
		\end{equation}
		where
		\begin{equation}\label{cE.total}
			\cE:= \cE_0 + \cE_1 + DG_\delta(\tu_\delta)\circ R_Z\circ D\wtG_\delta(\tu_\delta)^{-1}\,,
		\end{equation}
		with $\cE_0$, $\cE_1$ and $R_Z$ defined in \eqref{part0}, \eqref{cE1}, \eqref{RZ}, respectively. Applying $\bT_0$ defined in \eqref{bT0} to the right of \eqref{part2}, since $\D\circ\D^{-1}={\rm Id}$ by Proposition \ref{Dsystem}, we get
		\begin{equation}
			\di_{i,\alpha}\cF(i_0)\circ \bT_0 - {\rm Id} = \cP\,, \quad \cP:= \cE \circ \bT_0\,.
		\end{equation}
		
		By \eqref{cE.total}, Lemmata \ref{XP_est}, \ref{torus_iso} and \eqref{ansatz}, \eqref{chiove1}, \eqref{chiove2} and using the ansatz \eqref{ansatz}, we obtain the estimate
		\begin{equation}\label{chiove3}
			\| \cE[\whi,\wh\alpha] \|_{{\cal Y}^{s_0}}^{k_0,\upsilon} \lesssim \| Z \|_{{\cal Y}^{s_0+\sigma}}^{k_0,\upsilon} \| \whi \|_{{\cal Y}^{s_0+\sigma}}^{k_0,\upsilon}\,,
		\end{equation}
		where $Z=\cF(i_0,\alpha_0)$, recall \eqref{ZError}. The estimate on ${\cal P}$ then follows by \eqref{chiove3} and the second estimate in \eqref{tame-es-AI},  assuming \eqref{ansatz} for some $\bar \sigma \gg \sigma \gg 0$ large enough.
	\end{proof}


	\subsection{Invertibility of the operator $\cG_{\omega}$ and proof of Lemma \ref{lemma_inv_infty}}\label{sec:proof.AI}
	
	In this section we prove the invertibility of the operator ${\cal G}_\omega$ as a bounded operator $\cX_\perp^{s + 1} \to {\cal Y}^s_\bot$ (recall their definitions in \eqref{prima def cal X s bot}). 
	First, we write an explicit expression of the linear operator $\cG_{\omega}$, defined in \eqref{Lomegatrue}, as the projection on the normal directions of the linearized Hamilton equation \eqref{F_op} at the approximate solution, up to a remainder with finite rank, therefore bounded and small.
	
	\begin{lem}\label{lem:K02}
		The Hamiltonian operator $\cG_\omega$ in \eqref{Lomegatrue}, acting on 
		the subspace $ \cX_\perp^s$,  has the form
		\begin{equation}\label{cLomega_again}
			\cG_\omega = \Pi^\perp
			(\cG - \sqrt{\varepsilon} J R)|_{\cX^s_\perp}  \,,
		\end{equation}
		where:
		\\[1mm]
		\noindent	$(i)$ $ \cG $ is the Hamiltonian operator 
		\begin{equation}\label{cL000}
			\cG := \omega\cdot \pa_\bx  - J \pa_\zeta\nabla_\zeta H_\varepsilon(y,T_\delta(\phi)) \, ,
		\end{equation}
		where  
		$H_\varepsilon$  is the Hamiltonian in \eqref{Hami} evaluated at 
		\begin{equation}\label{Tdelta}
			T_\delta(\phi):=   A ( i_\delta (\phi) ) =   A\left( \theta_0(\phi),I_\delta(\bx),z_0(\phi) \right) = v^\intercal\left( \theta_0(\phi),I_\delta(\phi) \right) + z_0(\phi)\,,
		\end{equation}
		the torus  $i_\delta(\phi):= ( \theta_0(\phi),I_\delta(\phi),z_0(\phi) )$ is as in Lemma 
		\ref{torus_iso} 
		and  $A(\theta,I,z) $, $ v^\intercal(\theta,I)$ in \eqref{aa_coord};
		\\[1mm]
		\noindent $(ii)$
		$ R (\phi) $ is Hamiltonian and it has the finite rank form  $ R(\phi)[h] = \sum_{j=1}^{\kappa_0} \left( h,g_j \right)_{L^2} \chi_j $ for any $ h({\bf x}, y)$,
		for functions $g_j,\chi_j \in {\cal Y}^s_\bot$ that satisfy, for some $\sigma:= \sigma(\tau,\kappa_0, k_0) > 0 $, for $S\geq 2(s_0+\sigma)$,  for all $ j = 1, \ldots, \kappa_0 $ and for all $s_0\leq s \leq S/2 - \sigma$, 
		\begin{equation}
			\label{gjchij_est}
			\begin{aligned}
				\norm{ g_j }_{{\cal Y}^s_\bot}^{k_0,\upsilon} + \norm{ \chi_j }_{{\cal Y}^s_\bot}^{k_0,\upsilon} & \lesssim_s 1 + \norm{ \fI_0 }_{{\cal X}^{s + \sigma}}^{k_0,\upsilon} \,.
			\end{aligned}
		\end{equation}
		Furthermore, the operator $ \cG_\omega $, ${\cal G}$, $R$ are reversible.
	\end{lem}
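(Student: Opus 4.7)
The plan is to compute $K_{02}(\phi)$ explicitly via the chain rule and identify it, up to a small finite-rank perturbation, with the restriction of the full Hessian $-J\partial_\zeta\nabla_\zeta H_\varepsilon(y,T_\delta(\phi))$ to $\cX_\perp$. Since $\cN_\alpha$ in \eqref{Halpha} is quadratic in $(I,z)$ and linear in $I$, the $\alpha$-dependence drops out when taking two derivatives in the normal direction, so throughout the computation we may replace $H_\alpha$ by $H_\varepsilon$ (modulo the quadratic part $\cN_0$ which contributes $-\cL_\fm$ to the $\tz$-Hessian).

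First, I would differentiate the identity $K_\alpha=H_\alpha\circ G_\delta$ twice in $\tz$ at the point $(\phi,0,0)$. Using the explicit block form of $DG_\delta(\bx,0,0)$ in \eqref{DGdelta}, the column associated to $\wh\tz$ is
\begin{equation}\label{DGd.tz.sketch}
	DG_\delta(\bx,0,0)\begin{pmatrix}0\\0\\\wh\tz\end{pmatrix}=\begin{pmatrix}0\\[1mm] [(\pa_\theta\wtz_0)(\theta_0(\phi))]^\top J^{-1}\wh\tz\\[1mm]\wh\tz\end{pmatrix},
\end{equation}
i.e.\ the identity on $\cX_\perp$ plus a correction of rank at most $\kappa_0$ landing in the action component. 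Applying the chain rule yields the splitting
\begin{equation}\label{split.K02.sketch}
	K_{02}(\phi)=\Pi^\perp\big(\nabla_\zeta^2 H_\varepsilon(T_\delta(\phi))\big)\Pi^\perp+R_1(\phi)+R_2(\phi),
\end{equation}
where $R_1$ collects the cross terms coming from \eqref{DGd.tz.sketch} (contractions of $\nabla_\zeta^2 H_\varepsilon$ with $(\pa_\theta\wtz_0)^\top J^{-1}$) and $R_2$ collects terms of the form $\langle \nabla H_\varepsilon(T_\delta),\,D^2G_\delta(\bx,0,0)[\wh\tz,\wh\tz]\rangle$. Because the only $\tz$-dependence in $G_\delta$ is linear through the action component, both $R_1$ and $R_2$ are finite rank with range in the tangential span of $\phi_{j,\fm}$-modes, hence take the form $R(\phi)h=\sum_{j=1}^{\kappa_0}(h,g_j)_{L^2}\chi_j$ once written in the $L^2$-duality. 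Moreover $\nabla H_\varepsilon=\nabla \cN_\fm+\sqrt\varepsilon\nabla(P_\varepsilon\circ A)$; the $\cN_\fm$-gradient vanishes on the tangential components of $T_\delta(\phi)$ after applying $\cS$-symmetry and an integration by parts against $\pa_\theta\wtz_0$, so the $R_2$ contribution carries a factor of $\sqrt\varepsilon$, and likewise the $R_1$ contribution coming from the cross $\nabla^2(P_\varepsilon\circ A)$ piece is of size $\sqrt\varepsilon$. The remaining contributions at size $O(1)$ cancel between $R_1$ and the projection correction once one recalls that the quadratic part $\cN_\fm$ decouples into $\kappa_0$-dimensional tangential and infinite-dimensional normal pieces.

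To conclude the structural decomposition \eqref{cLomega_again}, I combine \eqref{split.K02.sketch} with the definition \eqref{cL000} of $\cG$: the operator $\Pi^\perp(\omega\cdot\pa_\bx -JK_{02})|_{\cX_\perp}$ equals $\Pi^\perp(\omega\cdot\pa_\bx -J\pa_\zeta\nabla_\zeta H_\varepsilon(T_\delta))|_{\cX_\perp}$ plus $-\sqrt\varepsilon \Pi^\perp J R(\phi)|_{\cX_\perp}$, which is the claimed form with $g_j,\chi_j$ read off from the explicit expressions of $R_1,R_2$. The tame estimate \eqref{gjchij_est} then follows by estimating these explicit expressions term-by-term: each involves $\pa_\theta\wtz_0$, $D^2G_\delta$, and the first or second derivatives of $P_\varepsilon\circ A$ at $T_\delta$, for which Lemma \ref{XP_est}, Lemma \ref{torus_iso} (for bounds on $I_\delta$, hence on $T_\delta$) and the product/composition tame estimates of Lemma \ref{prod.lemma} and Lemma \ref{compo_moser} directly yield the bound $\lesssim_s 1+\|\fI_0\|_{\cX^{s+\sigma}}^{k_0,\upsilon}$, once $\sigma$ is chosen large enough to absorb the losses of derivatives.

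Finally, reversibility of $\cG_\omega$, $\cG$, $R$ follows from the fact that $H_\varepsilon$ is invariant under $\vec\cS$ (cfr.\ \eqref{invar.rev.Halpha}), the isotropic torus $i_\delta$ inherits reversibility from $i_0$ (Lemma \ref{torus_iso} together with \eqref{RTTT}), and $G_\delta$ is reversibility preserving. Standard arguments (as in Lemma 3.22 of \cite{BFM}) show that the linearization of a reversible Hamiltonian vector field at a reversible embedded torus is a reversible operator, and since the decomposition \eqref{cLomega_again} is performed through reversibility preserving objects, each piece $\cG$ and $R$ is separately reversible. I expect the main obstacle to be the bookkeeping that identifies precisely which parts of $R_1,R_2$ carry the factor $\sqrt\varepsilon$ and verifying that the $O(1)$ contributions from the quadratic Hamiltonian $\cN_\fm$ actually cancel upon projection, rather than the estimates, which are routine tame estimates using the already-established ansatz \eqref{ansatz}.
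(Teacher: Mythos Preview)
Your approach is essentially the same as the one the paper invokes (the paper simply cites Lemma~7.1 in \cite{BFM} and Lemma~6.1 in \cite{BM} and gives no details), and your sketch correctly reconstructs that computation: differentiate $K_\alpha=H_\alpha\circ G_\delta$ twice in $\tz$ at $(\phi,0,0)$, use the block form of $DG_\delta$ in \eqref{DGdelta}, and separate the principal $\Pi^\perp\nabla_\zeta^2 H_\varepsilon(T_\delta)\Pi^\perp$ piece from the finite-rank corrections.

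Two minor imprecisions are worth flagging, though they do not affect correctness. First, your term $R_2$ is identically zero: by \eqref{Gdelta} the map $G_\delta$ is \emph{affine} in $\tz$, so $D^2_{\tz\tz}G_\delta=0$ and no contribution of the form $\langle\nabla H_\varepsilon,\,D^2G_\delta[\wh\tz,\wh\tz]\rangle$ arises. Second, the ``$O(1)$ cancellation'' you anticipate as the main obstacle is not actually an issue: the quadratic part $\cN_\alpha$ satisfies $\pa_I^2\cN_\alpha=0$ and $\pa_I\pa_z\cN_\alpha=0$, so the cross terms in $R_1$ (those pairing the $I$-column $[(\pa_\theta\wtz_0)]^\top J^{-1}\wh\tz$ with the $z$-column) involve only $\pa_I\nabla_z(\sqrt\varepsilon\,\cP_\varepsilon)$ and $\pa_I^2(\sqrt\varepsilon\,\cP_\varepsilon)$, hence already carry the factor $\sqrt\varepsilon$ without any cancellation being required. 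With these simplifications your argument goes through cleanly and matches the referenced proofs.
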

	
	\begin{proof}
		The claims are proved as in Lemma 7.1 in \cite{BFM} and Lemma 6.1 in \cite{BM}. We refer to these articles for more details.
	\end{proof}

	The operator $\cG$ in \eqref{cL000}
	is obtained by linearizing the original system \eqref{first_order} at any torus 
	\begin{equation}\label{small_embed}
		\begin{aligned}
			\bar\zeta(\bx)=(\bar\zeta_1(\bx),\bar\zeta_2(\bx))= T_\delta(\bx)\,,
		\end{aligned}
	\end{equation}
	with $T_\delta(\bx)$ as in \eqref{Tdelta}.
	Explicitly, we have
	\begin{equation}\label{cG0}
		\cG = \omega\cdot\pa_\bx - \begin{pmatrix}
			0 & {\rm Id} \\ \cL_{\fm} + a(\bx,y) & 0
		\end{pmatrix}\,, \quad a(\bx, y) := \sqrt{\varepsilon}q_\varepsilon(y, \bar\zeta_1( \bx, y))
	\end{equation}

	\begin{proof}[Proof of Lemma \ref{lemma_inv_infty}.]
		First, we estimate the norm of the function $a(\bx,y)$. By Moser composition estimates in Lemma \ref{compo_moser}, we have that the norm of $\bar{\zeta}$ satisfies $\| \bar\zeta\|_{s, 1}^{k_0,\upsilon} \lesssim_s 1+\| \fI_0\|_{{\cal X}^s}^{k_0,\upsilon}$ and hence by the ansatz \eqref{ansatz}, $\| \bar\zeta \|_{s_0, 1}^{k_0, \upsilon} \lesssim 1$. Thus, by  estimates \eqref{stime nonlinearita riscalata}, we get, for any $s_0 \leq s \leq S/2 - \sigma$,
		\begin{equation}\label{a0_est}
			\| a \|_{s, 1}^{k_0,\upsilon} \lesssim_s \sqrt{\varepsilon}   (1+\| \fI_0\|_{{\cal X}^{s + \sigma}}^{k_0,\upsilon}) \,.
		\end{equation}
		Now, we want to solve the equation $\cG_{\omega} f=h$. By \eqref{cLomega_again} \eqref{cG0} we write $	\cG_\omega  = {\cal D}_\omega +  {\mathcal R}_\varepsilon$, where
		\begin{equation}\label{cDomega}
			\begin{aligned}
				{\cal D}_\omega :=  \Pi^{\perp} \Big( \omega\cdot\pa_\bx -\begin{pmatrix}
					0 & {\rm Id} \\ \cL_{\fm} & 0
				\end{pmatrix} \Big) \Pi^{\perp}, \ \
				{\cal R}_\varepsilon := - \Pi^\perp \Big( \begin{pmatrix}
					0 & 0 \\ a & 0
				\end{pmatrix} + \sqrt{\varepsilon} J R \Big)\Pi^{\perp},
			\end{aligned}
		\end{equation}
		where $JR(\phi)$ is the finite rank operator  in Lemma \ref{lem:K02}-$(ii)$.
		 In order to invert $\cG_{\omega}$, we conjugate it with the symmetrizing invertible transformation
		\begin{equation}\label{cM.transf}
			\begin{aligned}
				\cM_{\perp} := \frac{\Pi^{\perp}}{\sqrt 2} \begin{pmatrix}
					\cL_{\fm,\perp}^{- 1/2} & \cL_{\fm,\perp }^{- 1/2} \\ -{\rm Id} & {\rm Id}
				\end{pmatrix}\Pi^{\perp}\,, \quad \cM_{\perp}^{- 1} :=
				\frac{\Pi^{\perp}}{\sqrt 2} \begin{pmatrix}
					\cL_{\fm,\perp}^{1/2}& -{\rm Id} \\ \cL_{\fm,\perp}^{1/2} & {\rm Id}
				\end{pmatrix}\Pi^{\perp}\,.
			\end{aligned}
		\end{equation}
		where, by the standard functional calculus on the self-adjoint operator $\cL_{\fm}$ in Proposition \ref{L_operator}, for any $\alpha\in \R\setminus\{0\}$ we defined
	\begin{equation}\label{funct.calculus}
		f(y) = \sum_{j=1}^{\infty} f_j \phi_{j,\fm}(y) \ \mapsto \ \cL_{\fm,\perp}^\alpha f(y) := \cL_{\fm}^\alpha \Pi^\perp f(y) = \sum_{j\geq \kappa_{0}+1} f_j \lambda_{j,\fm}^{2\alpha} \phi_{j,\fm}(y)\,.
	\end{equation}
	It is immediate to verify that $\big( {\cal L}_{\mathtt m} f , f \big)_{L^2} = \sum_{j \geq \kappa_0 + 1} \lambda_{j, \mathtt m}^2 |f_j|^2  \geq \lambda_{\kappa_0 + 1}^2 \| f \|_{L^2}^2 $ and
	\begin{equation}\label{bla spec 1}
	\begin{aligned}
	 \big( {\cal L}_{\mathtt m} f , f \big)_{L^2} &\lesssim \| \partial_y f \|_{L^2}^2 + \| Q_{\mathtt m} \|_{L^\infty} \| f \|_{L^2}^2 \lesssim \| \partial_y f \|_{L^2}^2 + \| f \|_{L^2}^2\,, \\
	 \| \partial_y f \|_{L^2}^2 & \lesssim  \big( {\cal L}_{\mathtt m} f , f \big)_{L^2} + |\big( Q_{\mathtt m} f , f \big)_{L^2}|  \lesssim \big( {\cal L}_{\mathtt m} f , f \big)_{L^2} + \| f \|_{L^2}^2 \lesssim \big( {\cal L}_{\mathtt m} f , f \big)_{L^2}\,. 
	\end{aligned}
	\end{equation}
	This implies that, for any function $  f(y) = \sum_{j \geq \kappa_0 + 1} f_j \phi_{j, \mathtt m}(y) \in H_0^1([- 1, 1])$,
	\begin{equation}\label{bla spec 2}
	\begin{aligned}
	& \| f \|_{H^1}^2 \simeq \big( {\cal L}_{\mathtt m, \bot} f, f \big)_{L^2}  = \sum_{j \geq \kappa_0 + 1} \lambda_j^2 |f_j|^2 \,.
	\end{aligned}
	\end{equation}
	By the latter inequality, we easily deduce that, for any $f \in H^{s, 0}_\bot$ and $0 \leq \rho, \alpha \leq 1$, 
	\begin{equation}\label{bla spec 3}
	\begin{aligned}
	& \| {\cal L}_{\mathtt m, \bot}^{- 1} f \|_{s, \rho} \lesssim \| f \|_{s, 0}\,, \quad 
	 \| {\cal L}_{\mathtt m, \bot}^\alpha f \|_{s, 0} \lesssim \| f \|_{s, 1},, \quad 
	\| {\cal L}_{\mathtt m, \bot}^{- \alpha} f \|_{s, \rho} \lesssim \| f \|_{s, \rho}\,.
	\end{aligned}
	\end{equation}
		By \eqref{cDomega} and \eqref{cM.transf}, we have
		\begin{equation}\label{bGomega}
			\begin{aligned}
				\bG_{\omega} & := \cM_{\perp}^{-1} \cG_{\omega} \cM_{\perp} = \bD_{\omega} + \bR_{\varepsilon} \,, \\
				\bD_{\omega} &:=  \Pi^{\perp}\begin{pmatrix}
					\omega \cdot \pa_{\bx} + \cL_{\fm,\perp}^{1/2} & \\ 0 & \omega\cdot\pa_{\bx} -\cL_{\fm,\perp}^{1/2}
				\end{pmatrix} \Pi^{\perp}\,, \quad \bR_{\varepsilon} := \cM_{\perp}^{-1} \cR_{\varepsilon} \cM_{\perp}\,.
			\end{aligned}
		\end{equation}
		We split the rest of the proof in several steps.
		\\[1mm]
		\noindent
		{\sc Step 1) Analysis of ${\bf R}_\varepsilon$.} We split the remainder ${\bf R}_\varepsilon$ in \eqref{bGomega} as 
		\begin{equation}\label{split cal R epsilon R 1 R2}
			\begin{aligned}
				& {\bf R}_\varepsilon  = {\bf R}_1 + {\bf R}_2 \,, \\
				&{\bf R}_1 := - {\cal M}_\bot^{- 1}\Pi_\bot \begin{pmatrix}
					0 & 0 \\
					a & 0
				\end{pmatrix} \Pi_\bot {\cal M}_\bot \,, \quad {\bf R}_2 =  - \sqrt{\varepsilon} {\cal M}_\bot^{- 1}  J R {\cal M}_\bot \,.
			\end{aligned}
		\end{equation}
		By a direct calculation and using that $J R$ is a finite rank operator (see Lemma \ref{lem:K02}), the operators ${\bf R}_1, {\bf R}_2$ have the following form: 
		\begin{equation}\label{forma bf R1 bf R2}
			\begin{aligned}
				{\bf R}_1& =  \frac{\Pi_\bot }{2} \begin{pmatrix}
					a {\cal L}_{\mathtt m,\perp}^{- 1/2} & a {\cal L}_{\mathtt m,\perp}^{- 1/2} \\
					- a {\cal L}_{\mathtt m,\perp}^{- 1/2} & - a {\cal L}_{\mathtt m,\perp}^{- 1/2}
				\end{pmatrix} \Pi_\bot\,, \\
				{\bf R}_2[u] & =-  \sqrt{\varepsilon}\sum_{j \in S} \big( p_j , u  \big)_{L^2_x} q_j\,, \quad p_j := {\cal M}_\bot^\top[g_j]\,, \quad q_j := {\cal M}_\bot^{- 1}[\chi_j] \,.
			\end{aligned}
		\end{equation}
		The latter formula, together with the estimate \eqref{a0_est}, the tame estimate \eqref{prod2}, the properties \eqref{bla spec 3} and the ansatz \eqref{ansatz} implies that ${\bf R}_1$ satisfies the estimate
		\begin{equation}\label{stima bf R1}
			\| {\bf R}_1 u\|_{s, 0}^{k_0, \upsilon} \lesssim_s \sqrt{\varepsilon}\big( \| u \|_{s, 0}^{k_0, \upsilon} + \| {\frak I} \|_{{\cal X}^s}^{k_0, \upsilon} \| u \|_{s_0, 0}^{k_0, \upsilon} \big)\,, \quad \forall\, s_0 \leq s \leq S/2 - \sigma\,.
		\end{equation}
		We now estimate ${\bf R}_2.$ Since $g_j = (g_j^{(1)}, g_j^{(2)})$, $\chi_{j} = (\chi_{j}^{(1)}, \chi_{j}^{(2)}) \in {\cal Y}^s_\bot$, by \eqref{gjchij_est}, we have, for any $s_0 \leq s \leq S/2 - \sigma$, 
		$$
		\| g_j^{(1)} \|_{s, 1}^{k_0, \upsilon}, \, \| \chi_{j}^{(1)} \|_{s, 1}^{k_0, \upsilon}, \,\| g_j^{(2)} \|_{s, 1}^{k_0, \upsilon}, \, \| \chi_{j}^{(2)} \|_{s, 1}^{k_0, \upsilon} \lesssim_s  1 + \| {\frak I}_0 \|_{s + \sigma}^{k_0, \upsilon}\,.
		$$
		Moreover, since we have, by \eqref{stima bf R1}, \eqref{cM.transf} that
		$$
		p_j = \frac{1}{\sqrt{2}}\begin{pmatrix}
			{\cal L}_{\mathtt m,\perp}^{- 1/2} g_j^{(1)} - g_j^{(2)}\,, \\
			{\cal L}_{\mathtt m,\perp}^{- 1/2} g_j^{(1)} + g_j^{(2)}
		\end{pmatrix}\,, \quad q_j = \frac{1}{\sqrt{2}}\begin{pmatrix}
			{\cal L}_{\mathtt m,\perp}^{1/2} \chi_j^{(1)} - \chi_j^{(2)}\,, \\
			{\cal L}_{\mathtt m,\perp}^{1/2} \chi_j^{(1)} + \chi_j^{(2)}
		\end{pmatrix}\,,
		$$
		we obtain, together with the estimates \eqref{bla spec 3}, that
		$$
		\| p_j \|_{{\cal Y}^s_\bot}^{k_0, \upsilon}\,,\, \| q_j \|_{s, 0}^{k_0, \upsilon} \lesssim_s 1 + \| {\frak I}_0 \|_{\cX^{s+\sigma}}^{k_0, \upsilon}, \quad \forall \, s_0 \leq s \leq S/2 - \sigma\,. 
		$$
		These latter estimates, together with the formula of ${\bf R}_2$ in \eqref{forma bf R1 bf R2} and Lemma \ref{prod.lemma}-$(ii)$
		 to estimates the terms $(p_j, u)_{L^2} q_j$, imply that ${\bf R}_2$ satisfies the same estimate as ${\bf R}_1$ in \eqref{stima bf R1}. We conclude that
		\begin{equation}\label{stima cal R eps inversione}
			\| {\bf R}_\varepsilon u\|_{s, 0}^{k_0, \upsilon} \lesssim_s \sqrt{\varepsilon}\big( \| u \|_{s, 0}^{k_0, \upsilon} + \| {\frak I} \|_{{\cal X}^{s + \sigma}}^{k_0, \upsilon} \| u \|_{s_0, 0}^{k_0, \upsilon} \big)\,, \quad \forall \,s_0 \leq s \leq S/2 - \sigma\,.  
		\end{equation}
		\noindent
		{\sc Step 2) Inversion of ${\bf D}_\omega$.} The operators $  \Pi^\perp \big(\omega\cdot\pa_{\bx}\pm \cL_{\fm,\perp}^{1/2}\big) \Pi^\perp$ are invertible on their range, with bounded and smoothing inverse from 
		$  H_{\bot}^{s,\rho}$ to $H_{\bot}^{s,\rho + 1}$ for any $\rho \geq 0$, where the spaces are defined in \eqref{def H s rho bot}. Indeed, recalling Proposition \ref{L_operator} and \eqref{funct.calculus}, the operators $\big(\omega\cdot\pa_\bx\pm \cL_{\fm,\perp}^{1/2}\big)^{-1}$ act on elements of the basis of the form $e^{\im \ell\cdot\bx}\phi_{j,\fm}(y)$, with  $j\geq \kappa_0+1$ and $\ell\in\Z^{\kappa_0}$, as
		\begin{equation}\label{inverse.cA}
			\big(\omega\cdot\pa_\bx\pm \cL_{\fm,\perp}^{1/2}\big)^{-1} e^{\im \ell\cdot\bx}\phi_{j,\fm}(y) =  \frac{1}{\im\,\omega\cdot\ell \pm \lambda_{j,\fm}} e^{\im \ell\cdot\bx}\phi_{j,\fm}(y)\,.
		\end{equation}
		By Proposition \ref{L_operator}, It is clear that $|\im \,\omega\cdot\ell  \pm \lambda_{j,\fm}|> \lambda_{j,\fm}>0$ for any $j\geq \kappa_0+1$ and $\ell\in\Z^{\kappa_0}$. Therefore, by recalling \eqref{bla spec 2}, $\bD_{\omega}$ is invertible and it satisfies the following bounds, for any $u = (u_1, u_2) \in H^{s, \rho}_\bot \times H^{s, \rho}_\bot$ and any $ s, \rho \geq 0 $,
		\begin{equation}\label{bound bf D omega inv}
			\begin{aligned}
				& 	\| {\bf D}_\omega^{- 1} u \|_{s, 0}^{k_0,\upsilon} \lesssim \| {\bf D}_\omega^{- 1} u \|_{s, 1}^{k_0,\upsilon} \lesssim_s \| u \|_{s, 0}^{k_0,\upsilon}.
			\end{aligned}
		\end{equation}
		\noindent
		{\sc Step 3) Inversion of ${\bf G}_\omega$.} We write $\bG_{\omega}$ in \eqref{bGomega} as
		\begin{equation}\label{bGomega.re}
			\bG_{\omega} =  \big( {\rm Id} +  \bR_{\varepsilon}\bD_{\omega}^{-1} \big) \bD_{\omega}\,.
		\end{equation}
		The estimates in \eqref{stima cal R eps inversione} and \eqref{bound bf D omega inv} imply that 
		\begin{equation}\label{stima resto Neumann 2}
			\big\|  \bR_{\varepsilon} \bD_{\omega}^{-1} u \big\|_{s, 0}^{k_0, \upsilon} \lesssim_s \sqrt{\varepsilon} \big( \| u \|_{s, 0}^{k_0, \upsilon} + \| {\frak I} \|_{{\cal X}^{s + \sigma}}^{k_0, \upsilon}  \| u \|_{s_0, 0}^{k_0, \upsilon}  \big), \quad \forall s_0 \leq s \leq S/2 - \sigma\,. 
		\end{equation}
		Thus, by the ansatz \eqref{ansatz}, for $\varepsilon \ll 1$ small enough, the operator ${\bf G}_\omega =  \big({\rm Id} +  \bR_{\varepsilon} \bD_{\omega}^{-1} \big) {\bf D}_\omega$ is invertible as an operator from ${\cal Y}^s_\bot = H^{s, 1}_\bot \times H^{s, 1}_\bot \to H^{s, 0}_\bot \times H^{s, 0}_\bot$ and  its inverse satisfies the estimate, for all $ s_0 \leq s \leq S/2 - \sigma$, 
		\begin{equation}\label{stima I D omega R Neumann}
			\big\| {\bf G}_\omega^{- 1} u \big\|_{{\cal Y}^s_\bot}^{k_0, \upsilon} \lesssim_s  \big( \| u_1 \|_{s, 0}^{k_0, \upsilon}  + \| u_2 \|_{s, 0}^{k_0, \upsilon}  \big) + \| {\frak I}_0\|_{{\cal X}^{s + \sigma}}^{k_0, \upsilon} \big( \| u_1 \|_{s_0, 0}^{k_0, \upsilon}  + \| u_2 \|_{s_0, 0}^{k_0, \upsilon}  \big)  \,. 
		\end{equation}
		\noindent
		{\sc Step 4) Inversion of ${\cal G}_\omega$ on ${\cal Y}^s_\bot$.} In order to estimate ${\cal G}_\omega^{- 1}$, we observe that, by \eqref{bGomega}, one has ${\cal G}_\omega^{- 1} = {\cal M}_\bot {\bf G}_\omega^{- 1} {\cal M}_\bot^{- 1}$. By the expressions of ${\cal M}_\bot, {\cal M}_\bot^{- 1}$ in \eqref{cM.transf} and the estimates in \eqref{bla spec 3}, one easily deduces that
		$$
		\| {\cal M}_\bot u  \|_{{\cal Y}^s_\bot}^{k_0,\upsilon} \lesssim \| u \|_{{\cal Y}^s_\bot}^{k_0,\upsilon} \quad \text{and} \quad  \| ({\cal M}_\bot^{- 1} u)_1 \|_{s, 0}^{k_0,\upsilon} +\| ({\cal M}_\bot^{- 1} u )_2\|_{s, 0}^{k_0,\upsilon}  \lesssim \| u \|_{{\cal Y}^s_\bot}^{k_0,\upsilon}\,.
		$$
		Therefore, by this latter estimate, together with \eqref{stima I D omega R Neumann} we deduce that, for any $s_0 \leq s \leq S/2 - \sigma$, given $f = (f_1, f_2) \in {\cal Y}^s_\bot$, there exists a solution $h = (h_1, h_2) := {\cal G}_\omega^{- 1} f\in {\cal Y}^s_\bot$ of the equation ${\cal G}_\omega h = f$ satisfying the tame estimates 
		\begin{equation}\label{prima stima cal G omega inverse}
			\| h \|_{{\cal Y}^s_\bot}^{k_0, \upsilon} = \| {\cal G}_\omega^{- 1} f \|_{{\cal Y}^s_\bot}^{k_0, \upsilon} \lesssim_s \|  f \|_{{\cal Y}^s_\bot}^{k_0, \upsilon} + \| {\frak I} \|_{{\cal X}^{s + \sigma}}^{k_0, \upsilon} \| f \|^{k_0, \upsilon}_{{\cal Y}^{s_0}_\bot}, \quad \forall s_0 \leq s \leq S/2 - \sigma\,. 
		\end{equation}
		Moreover, if $f$ is anti-reversible, then $h$ is reversible.
		\\[1mm]
		\noindent
		{\sc Step 5) Estimate of ${\cal G}_\omega^{- 1}$ on ${\cal X}^s_\bot$.} It remains to show that $h = (h_1, h_2) \in {\cal X}^s_\bot$, for any $s_0 \leq s \leq S/2 - \sigma$, namely $h_1 \in H^{s, 3}_\bot$ for any $s_0 \leq s \leq S/2 - \sigma$. This follows essentially by an elliptic regularity argument. Indeed, using that ${\cal L}_{\mathtt m,\perp} = \Pi_\bot (- \partial_y^2 + Q_{\mathtt m}(y)) \Pi_\bot$, by \eqref{cDomega}, \eqref{funct.calculus}, the vector $h = (h_1, h_2)$ solves the system
		\begin{equation}\label{sistema per regolarita ellittica}
			\begin{cases}
				\omega \cdot \partial_{\bf x} h_1 - h_2 =f_1 -  {\cal R}^{(1)}_\varepsilon[h_1, h_2]\,, \\
				\omega \cdot \partial_{\bf x} h_2  + \pa_{y}^2 h_1 - \Pi_\bot Q_{\mathtt m}(y)  h_1 = f_2 - {\cal R}^{(2)}_\varepsilon[h_1, h_2]\,,
			\end{cases}
		\end{equation}
		where we write the remainder ${\cal R}_\varepsilon$ as
		$$
		{\cal R}_\varepsilon[h_1, h_2] = \begin{pmatrix}
			{\cal R}^{(1)}_\varepsilon[h_1, h_2] \\
			{\cal R}^{(2)}_\varepsilon[h_1, h_2]
		\end{pmatrix}  \,.
		$$
		By Lemma \ref{lem:K02}-$(ii)$,  the estimates \eqref{a0_est},  \eqref{prod} (use also the ansatz \eqref{ansatz}), \eqref{prima stima cal G omega inverse} and the form of ${\cal R}_\varepsilon$ in \eqref{cDomega}, one deduces, for any $s_0 \leq s \leq S/2 - \sigma$, 
		\begin{equation}\label{stima cal R eps fine proof}
			\begin{aligned}
				\| {\cal R}^{(2)}_\varepsilon[h_1, h_2] \|_{s, 1}^{k_0, \upsilon} & \leq \| {\cal R}_\varepsilon h \|_{{\cal Y}^s_\bot}^{k_0, \upsilon} \lesssim_s \sqrt{\varepsilon} \big( \| h \|_{{\cal Y}^s_\bot}^{k_0, \upsilon} + \|{\frak I}_0 \|_{{\cal X}^{s + \sigma}}^{k_0, \upsilon} \| h \|_{{\cal Y}^{s_0}_\bot}^{k_0, \upsilon} \big) \\
				& \lesssim_s \sqrt{\varepsilon} \big( \| f \|_{{\cal Y}^s_\bot}^{k_0, \upsilon} + \|{\frak I}_0 \|_{{\cal X}^{s + \sigma}}^{k_0, \upsilon} \| f \|_{{\cal Y}^{s_0}_\bot}^{k_0, \upsilon} \big) \\ 
				\| Q_{\mathtt m} h_1 \|_{s, 1}^{k_0, \upsilon} & \lesssim \| h_1 \|_{s, 1}^{k_0, \upsilon}  \lesssim_s  \big( \| f \|_{{\cal Y}^s_\bot}^{k_0, \upsilon} + \|{\frak I}_0 \|_{{\cal X}^{s + \sigma}}^{k_0, \upsilon} \| f \|_{{\cal Y}^{s_0}_\bot}^{k_0, \upsilon} \big)
			\end{aligned}
		\end{equation}
	where in the second estimate we have used that $Q_{\mathtt m}(y)$ is a smooth potential, with $\| Q_{\mathtt m} \|_{H^1} \leq C(\mathtt E_1 , \mathtt E_2)$ for some constant $C(\mathtt E_1 , \mathtt E_2) >0$. Then, by the second equation in \eqref{sistema per regolarita ellittica}, since $  -\pa_{y}^2 : H^3_0([- 1, 1]) \to H_0^1([- 1, 1])$ is invertible and its inverse $(- \pa_{y}^2)^{- 1}$ gains two derivatives with respect to $y$, namely $\| (- \pa_{y}^2)^{- 1}  u \|_{s, 3}^{k_0, \upsilon} \lesssim \| u \|_{s, 1}^{k_0, \upsilon}$ for any $s \geq 0$, $u \in H^{s, 1}_\bot$, one has 
		$$
		h_1 = ( -\pa_{y}^2)^{- 1}\big[  \omega \cdot \partial_{\bf x} h_2 - \Pi_\bot Q_{\mathtt m}(y)  h_1  - f_2 + {\cal R}^{(2)}_\varepsilon[h_1, h_2] \big]\,.
		$$
		Therefore, by \eqref{prima stima cal G omega inverse}, \eqref{stima cal R eps fine proof} and  for $\varepsilon>0$ small enough, we have,   for any $s_0 \leq s \leq S/2 - \sigma$ with an eventually larger $\sigma$, 
		$$
		\begin{aligned}
			\| h_1 \|_{s, 3}^{k_0, \upsilon} & \leq \big\|  \omega \cdot \partial_{\bf x} h_2 - \Pi_\bot Q_{\mathtt m}(y)  h_1  - f_2 + {\cal R}^{(2)}_\varepsilon[h_1, h_2]  \big\|_{s, 1}^{k_0, \upsilon } \\
			& \leq \| h_2 \|_{s + 1, 1}^{k_0, \upsilon} + \| f_2 \|_{s, 1}^{k_0, \upsilon} + \| Q_{\mathtt m}(y)  h_1 \|_{s, 1}^{k_0, \upsilon} + \| {\cal R}^{(2)}_\varepsilon[h_1, h_2]  \|_{s, 1}^{k_0, \upsilon} \\
			& \lesssim_s \|  f \|_{{\cal Y}^{s + 1}_\bot}^{k_0, \upsilon} + \| {\frak I} \|_{{\cal X}^{s +  \sigma}}^{k_0, \upsilon} \| f \|^{k_0, \upsilon}_{{\cal Y}^{s_0}_\bot}\,. 
		\end{aligned}
		$$
		This latter estimate, together with the estimate \eqref{prima stima cal G omega inverse}, implies the claimed bound of Lemma \ref{lemma_inv_infty}. The proof is then concluded. 
	\end{proof}

\section{Proof of Theorem \ref{NMT}}\label{sez.proof.NMT}

Theorem \ref{NMT} is a consequence of Theorem \ref{NASH} below. Recalling \eqref{prima def cal X s}, we define the spaces ${\cal X}^\infty$ and ${\cal Y}^\infty$ as
$	{\cal X}^\infty := \bigcap_{s \geq 0} {\cal X}^s$, $ {\cal Y}^\infty := \bigcap_{s \geq 0} {\cal Y}^s $
and, for any $\tn \in\N_0$, we define the superexponential scale 
\begin{equation}\label{scales}
	K_\tn: = K_0^{\chi^\tn} \,, \quad \chi= 3/2\, .
\end{equation}
We consider the finite dimensional subspaces
\begin{equation*}
	E_\tn := \big\{ 
	\fI(\bx)= (\Theta,I,z)(\bx) \in {\cal X}^\infty  \ :  \ 
	\Theta = \Pi_\tn \Theta\,, \ I=\Pi_\tn I \,, \ z = \Pi_\tn z \big\}
\end{equation*}
where $\Pi_\tn z := \Pi_{K_\tn} z $   
is defined  as in \eqref{pro:N} with 
$ K_n $ in 
\eqref{scales}, 
and we denote with the same symbol $\Pi_\tn g(\bx)  
:= \sum_{\abs\ell\leq K_\tn} g_\ell e^{\im\ell\cdot\bx}$.  
Note that the  projector $\Pi_{\tn}$ maps (anti)-reversible  variations into (anti)-reversible  variations. We introduce some constants needed to implement the Nash-Moser iteration. Let $\bar\sigma\equiv \bar \sigma(\tau, k_0) \gg 0$ be the largest loss of derivatives coming from the construction of the approximate inverse of the linearized operator in Theorem \ref{alm.approx.inv} and let $S \gg s_0$ be the smoothness of our nonlinearity. Then, we define the following parameters
\begin{equation}\label{costanti Nash Moser}
	\begin{aligned}
		& \bar\mu := 3 \bar\sigma + 1\,, \quad \mu_1 := 3(\bar \mu + 1) + 1\,, \quad  \ta_1:= 3(\bar \mu + 1) + 1 \,, \\
		& \mathtt b_1 := \mathtt a_1 + \bar \mu + \tfrac23 \mu_1 + 3\,, \quad \mathtt a_2 := \mathtt a_1 - 3 \bar \sigma\,, \quad S := 2(s_0 + \mathtt b_1 + \bar \sigma)\,. 
	\end{aligned}
\end{equation}
\begin{rem}
	The constant $\ta_1$ is the exponent in \eqref{P2.2}. The constant $\ta_2$ is the exponent in \eqref{P1.2}. The constant $\tb_1$ is the largest increase of derivatives we need to control with respect to the low regularity $s_0$ and the constant $\mu_1$ is the exponent in the control of the high norm in \eqref{P3.1}. 
\end{rem}

\begin{thm}{\bf (Nash-Moser).} \label{NASH}
	There exist $\delta_0, C_*>0$ such that, if
	\begin{equation}\label{param.NASH}
		\begin{aligned}
			&	K_0^{\bar \mu + \mathtt a_1} \sqrt{\varepsilon}\upsilon^{-1} < \delta_0 \,, \quad
			K_0 := \upsilon^{-1}\,,  \quad \\
			& \upsilon:= \varepsilon^{\rm a}\,, \quad   0< {\rm a} < {\rm a}_0 := (2(1 + \bar \mu + \mathtt a_1))^{-1}\,,
		\end{aligned}
	\end{equation}
	then, for all $\tn\geq 0$:
	\\[1mm]
	\noindent
	$	(\cP 1)_\tn$ There exists a $k_0$-times differentiable function $\wtW_\tn:\R^{\kappa_0}\times \cJ_{\varepsilon}(\tE) \rightarrow E_{\tn-1}\times \R^{\kappa_0}$, $\lambda=(\omega,\tA)\mapsto \wtW_\tn(\lambda):= (\wt\fI_\tn, \wt\alpha_\tn-\omega)$, for $\tn \geq 1 $, and $\wtW_0:=0$, satisfying
	\begin{equation}\label{P1.1}
		\normk{\wtW_\tn}{{\cal X}^{s_0+\bar\sigma}} \leq C_* \sqrt{\varepsilon}\upsilon^{-1} \,.
	\end{equation}
	Let $\wtU_\tn:= U_0+\wtW_\tn$, where $U_0:= (\bx,0,0,\omega)$. The difference $\wtH_\tn:= \wtU_\tn-\wtU_{\tn-1}$, for $\tn \geq 1 $, satisfies
	\begin{equation}\label{P1.2}
		\begin{aligned}
			& \normk{\wtH_1}{{\cal X}^{s_0+\bar\sigma}}\leq C_*  \sqrt{\varepsilon} \upsilon^{-1}\,, \quad
			\normk{\wtH_\tn}{{\cal X}^{s_0+\bar\sigma}} \leq C_*  \sqrt{\varepsilon} \upsilon^{-1} K_{\tn-1}^{-\ta_2}\,, \ \forall\, \tn\geq 2 \,.
		\end{aligned}
	\end{equation}
	The torus embedding $ \wti_\tn := (\bx,0,0) + \wt\fI_\tn $ 
	is  reversible, 
	i.e.  \eqref{RTTT} holds.
	\\[1mm]
	\noindent
	$(\cP 2)_\tn$
	For all $\omega\in \tG^{\upsilon}$ (see \eqref{Cset_infty}),
	setting $K_{-1}:=1$, we have
	\begin{equation}\label{P2.2}
		\normk{\cF(\wtU_\tn)}{{\cal Y}^{s_0}} \leq C_* \sqrt{\varepsilon}  K_{\tn-1}^{-\ta_1} \,.
	\end{equation}
	\noindent
	$(\cP 3)_\tn$ {\sc (High norms)} 
	For any  $\lambda=(\omega,\tA) \in \R^{\kappa_{0}}\times \cJ_{\varepsilon}(\tE)$, we have
	\begin{equation}\label{P3.1}
		\normk{\wtW_\tn}{{\cal X}^{s_0+\tb_1}} \leq C_*  \sqrt{\varepsilon} \upsilon^{-1} K_{\tn-1}^{\mu_1}\,.	
	\end{equation} 
\end{thm}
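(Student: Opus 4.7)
The plan is to implement a standard Nash--Moser iteration with smoothing, based on the approximate right inverse ${\bf T}_0(i)$ constructed in Theorem \ref{alm.approx.inv}. At step $\tn$, given the approximate torus $\wti_\tn = (\bx,0,0) + \wt{\frak I}_\tn$ and counterterm $\wt\alpha_\tn$, I would define the next iterate by the modified Newton correction
\begin{equation*}
\wtH_{\tn+1} := -\wt{\bT}_\tn\,\Pi_\tn \cF(\wtU_\tn)\,,\qquad \wtU_{\tn+1} := \wtU_\tn + \wtH_{\tn+1}\,,
\end{equation*}
where $\wt{\bT}_\tn$ is a smoothed version of ${\bf T}_0(\wti_\tn)$ (post-composed with $\Pi_\tn$), extended to the whole parameter set by a cut-off that preserves the $k_0$-th Whitney derivatives. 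The point of inserting $\Pi_\tn$ is twofold: it forces $\wtH_{\tn+1}\in E_\tn\times \R^{\kappa_0}$, and it allows one to exploit the smoothing estimates \eqref{SM12} to control high norms.

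The base case $\tn=0$ is immediate from $\wtW_0=0$, together with $\cF(U_0) = -\sqrt\varepsilon X_{\cP_\varepsilon}(U_0) + O(\sqrt\varepsilon)$ and the tame bound on $X_{\cP_\varepsilon}$ in Lemma \ref{XP_est}, which gives \eqref{P2.2} with $\tn=0$ for $K_0^{\bar\mu+\ta_1}\sqrt\varepsilon\upsilon^{-1}$ small enough. For the inductive step I would Taylor expand
\begin{equation*}
\cF(\wtU_{\tn+1}) = \cF(\wtU_\tn) + \di\cF(\wtU_\tn)[\wtH_{\tn+1}] + \cQ_\tn\,,
\end{equation*}
with $\cQ_\tn$ the quadratic remainder, and use the identity $\di\cF(\wtU_\tn)\circ\bT_0(\wti_\tn) = {\rm Id} + \cP(\wti_\tn)$ from Theorem \ref{alm.approx.inv} together with the commutator with $\Pi_\tn$ to decompose
\begin{equation*}
\cF(\wtU_{\tn+1}) = \Pi_\tn^\perp \cF(\wtU_\tn) + \cC_\tn + \cQ_\tn\,,
\end{equation*}
where $\cC_\tn$ collects the contributions from $\cP(\wti_\tn)$, from the commutator $[\di\cF(\wtU_\tn),\Pi_\tn]$ and from the difference $\wt{\bT}_\tn-\bT_0(\wti_\tn)$. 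Each of the four terms is controlled: $\Pi_\tn^\perp\cF(\wtU_\tn)$ in low norm via \eqref{SM12} and the high-norm bound \eqref{P3.1}; the approximate-inverse defect via \eqref{pfi1} and \eqref{P2.2}; the commutator via \eqref{SM12} applied to the gain of one derivative of $\di\cF$ combined with \eqref{tame-es-AI}; the quadratic term $\cQ_\tn$ via the tame second-variation bound in Lemma \ref{XP_est} and \eqref{P1.1}. Collecting everything yields
\begin{equation*}
\| \cF(\wtU_{\tn+1})\|_{{\cal Y}^{s_0}}^{k_0,\upsilon} \lesssim K_\tn^{s_0-s_0-\tb_1}\|{\frak I}_{n}\|_{{\cal X}^{s_0+\tb_1}}^{k_0,\upsilon} + \upsilon^{-1}K_\tn^{2\bar\sigma}\big(\|\cF(\wtU_\tn)\|^{k_0,\upsilon}_{{\cal Y}^{s_0}}\big)^2\,,
\end{equation*}
and the parameter choices in \eqref{costanti Nash Moser} (in particular $\tb_1$, $\ta_1$, $\ta_2$, $\mu_1$) are exactly those that make the super-exponential scheme $K_\tn = K_0^{(3/2)^\tn}$ close and imply $(\cP2)_{\tn+1}$. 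Propagation of the low-norm bound \eqref{P1.1} and of the telescopic bound \eqref{P1.2} follow by applying \eqref{tame-es-AI} with $s=s_0+\bar\sigma$ to $\wtH_{\tn+1}$ and summing the geometric series, while \eqref{P3.1} is obtained by applying \eqref{tame-es-AI} at $s=s_0+\tb_1$, using the smoothing gain $K_\tn^{\mu_1}$.

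The reversibility of $\wti_{\tn+1}$ is automatic: $\Pi_\tn$ and $\bT_0$ map (anti-)reversible variations to (anti-)reversible variations by Theorem \ref{alm.approx.inv}, and $\cF$ sends reversible embeddings to anti-reversible variations because $\cH_{\varepsilon,\alpha}\circ\vec\cS=\cH_{\varepsilon,\alpha}$ by \eqref{invar.rev.Halpha}. Finally, Theorem \ref{NMT} follows by letting $\tn\to\infty$: the sequence $\wtU_\tn$ converges in ${\cal X}^{s_0+\bar\sigma}$ to $(i_\infty,\alpha_\infty)$ thanks to \eqref{P1.2}, and for $\omega\in\tG^\upsilon$ the bound \eqref{P2.2} forces $\cF(\omega,\tA,\varepsilon;i_\infty,\alpha_\infty)=0$; the estimate $|\alpha_\infty-\omega|^{k_0,\upsilon}\lesssim\sqrt\varepsilon\upsilon^{-1}$ in \eqref{alpha_infty} follows from \eqref{P1.1}.

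The main obstacle is the loss-of-derivatives bookkeeping, which is delicate here because of two concurrent sources: the small divisors from \eqref{lem:diopha.eq} (giving $\bar\sigma$) and the $\eta$-dependent estimates on the local nonlinearities from Lemma \ref{stime cal Q eta}, which force an upper bound $s\leq S/2-\mu$ on every admissible Sobolev index. This is why $S$ must be chosen as in \eqref{costanti Nash Moser}: the high Sobolev index $s_0+\tb_1+\bar\sigma$ appearing on the right-hand side when Theorem \ref{alm.approx.inv} is applied at $s=s_0+\tb_1$ must still lie in the admissible range of Lemmas \ref{XP_est} and \ref{lemma_inv_infty}. Verifying the compatibility of all these thresholds, together with the non-triviality of $\tp_1$ used in the measure estimate of Section \ref{subsec:measest}, is the real technical heart of the argument.
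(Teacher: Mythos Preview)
Your plan is correct and matches the paper's proof closely: the paper defines the Newton correction as $H_{\tn+1}:=-\b\Pi_\tn\bT_\tn\Pi_\tn\cF(\wtU_\tn)$ and splits $\cF(U_{\tn+1})=P_\tn+R_\tn+Q_\tn$, where $P_\tn$ is the approximate-inverse defect $\cP(\wti_\tn)\Pi_\tn\cF(\wtU_\tn)$, $R_\tn$ contains the high-frequency tail $\Pi_\tn^\perp\cF(\wtU_\tn)$ together with $G_\tn\b\Pi_\tn^\perp\bT_\tn\Pi_\tn\cF(\wtU_\tn)$, and $Q_\tn$ is the quadratic Taylor remainder --- exactly the three ingredients you list under $\cC_\tn$ and $\cQ_\tn$. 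Two small points of calibration: the paper's recursive estimate \eqref{final.est1} carries the exponent $K_\tn^{\bar\mu}=K_\tn^{3\bar\sigma+1}$ (not $K_\tn^{2\bar\sigma}$) on the quadratic term, and $\upsilon^{-1}K_\tn^{\bar\mu-\tb_1}$ (not $K_\tn^{-\tb_1}$) on the smoothing term, because estimating $R_\tn$ requires applying $\bT_\tn$ at high norm $s_0+\tb_1$ through \eqref{Ttn.1}; also, rather than extending $\bT_\tn$ by a parameter cut-off upfront, the paper performs the iteration on $\tG^\upsilon$ and only at the end extends $H_{\tn+1}$ to $\wtH_{\tn+1}$ on the full parameter set with equivalent norms.
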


\begin{proof}
	We argue by induction.
	\\[1mm]
	{\sc STEP 1: Proof of $(\cP 1,2,3)_0$}. By \eqref{F_op}, Lemma \ref{XP_est} and Proposition \ref{prop.F.etax}, we deduce
	\begin{equation}\label{fU0.est}
		\normk{\cF(U_0)}{\cY^{s}} =O( \sqrt{\varepsilon})\,.
	\end{equation}
	The claims then follow by \eqref{fU0.est},
	taking $C_*$ large enough and by noting that $i_0:=(\bx,0,0)$ is clearly reversible.
	\\[1mm]
	{\sc STEP 2: Assume  $(\cP 1,2,3)_\tn$ for some $\tn\in\N_0$ and prove $(\cP 1,2,3)_{\tn+1}$}.
	We are going to define the successive approximation $\wtU_{\tn+1}$ by a modified Nash-Moser scheme and prove by induction that the approximate torus $\wti_{\tn+1}$ is reversible. For that, we prove the almost-approximate invertibility of the linearized operator
	$ G_\tn = G_\tn(\lambda):= \di_{i,\alpha} \cF(\wti_\tn(\lambda)) $.
	We apply Theorem \ref{alm.approx.inv} to $G_\tn(\lambda)$.
	It implies, for $\lambda=(\omega,\tA)\in \tG^{\upsilon}\times \cJ_{\varepsilon}(\tE)$, the existence of an almost-approximate inverse $\bT_\tn := \bT_\tn(\lambda,\wti_\tn(\lambda))$ of the linearized operator $G_{\tn}=\di_{i, \alpha}\cF(\wti_\tn)$ which satisfies, for any anti-reversible variation $g$ and for any $s_0\leq s \leq s_0+\tb_1$,
	\begin{align}
		& \normk{\bT_\tn g}{{\cal X}^s} \lesssim_{s_0+\tb_1} \upsilon^{-1} (\normk{g}{{\cal Y}^{s+\bar\sigma}}+\normk{\wt\fI_\tn}{{\cal X}^{s+\bar\sigma}} \normk{g}{{\cal Y}^{s_0+\bar\sigma}})\,,\label{Ttn.1} \\
		& \normk{\bT_\tn g}{{\cal X}^{s_0}} \lesssim_{s_0+\tb_1} \upsilon^{-1} \normk{g}{{\cal Y}^{s_0+\bar\sigma}} \,.\label{Ttn.2}
	\end{align}
	Moreover, the first three components of $\bT_\tn g$ form a reversible variation.
	For all $\lambda \in \tG^{\upsilon}\times \cJ_{\varepsilon}(\tE)$
	we define the successive approximation
	\begin{equation}\label{succ.approx}
		\begin{aligned}
			&U_{\tn+1} := \wtU_\tn + H_{\tn+1} \,, \\
			&H_{\tn+1} := (\wh\fI_{\tn+1}, \wh\alpha_{\tn+1}) := -\b\Pi_\tn \bT_\tn \Pi_n \cF(\wtU_\tn) \in E_\tn \times \R^{\kappa_0} \,,
		\end{aligned}
	\end{equation}
	where $\b\Pi_\tn$ is defined for any $(\fI,\alpha)$
	by 
	\begin{equation}\label{bPitn}
		\b\Pi_\tn(\fI,\alpha) := ( \Pi_\tn\fI,\alpha) \,, \quad 
		\b\Pi_\tn^\perp:= (\Pi_\tn^\perp \fI,0)\,.
	\end{equation}
	Since 
	$ \wti_{\tn} $ is reversible by induction assumption, we have that 
	$ \cF(\wtU_\tn) = \cF(\wti_\tn, \widetilde \alpha_n) $ is 
	anti-reversible, i.e. \eqref{g_revcond} holds. Thus the first three components of 
	$\bT_\tn \cF(\wtU_\tn) $ form a reversible variation, as well as
	$ \b\Pi_\tn \bT_\tn \Pi_\tn \cF(\wtU_\tn) $. 
	We now show that the iterative scheme in \eqref{succ.approx} is rapidly converging. We write
	\begin{equation*}
		\cF(U_{\tn+1}) = \cF(\wtU_\tn) + G_\tn H_{\tn+1} + Q_{\tn} \,,
	\end{equation*}
	where $G_\tn := \di_{i,\alpha} \cF(\wti_\tn)$ and 
	\begin{equation}\label{Qn}
		\begin{aligned}
			&Q_\tn := Q(\wtU_\tn, H_{\tn+1}) 
			:= \cF(\wtU_\tn+H_{\tn+1}) - \cF(\wtU_\tn) - G_\tn H_{\tn+1}
			\,.
		\end{aligned}
	\end{equation}
	Then, by the definition of $H_{\tn+1}$ in \eqref{succ.approx}, we have
	\begin{equation}\label{FUn+1.split}
		\begin{aligned}
			\cF(U_{\tn+1}) & = \cF(\wtU_\tn) - G_{\tn} \b\Pi_\tn \bT_\tn \Pi_\tn  \cF(\wtU_\tn) + Q_{\tn} \\
			& = ( {\rm Id} - G_{\tn} \bT_\tn  )\cF(\wtU_\tn) + G_{\tn} \b\Pi_\tn^\perp \bT_\tn  \cF(\wtU_\tn) + Q_{\tn} \\
			& =P_{\tn} + R_{\tn} +  Q_{\tn}\,,
		\end{aligned}
	\end{equation}
	where $Q_{\tn}$ is as in \eqref{Qn} and, according also to Theorem \ref{alm.approx.inv},
	\begin{equation}\label{RnPn}
		\begin{aligned}
			&  P_{\tn}:= ({\rm Id}-G_{\tn}\bT_\tn) \Pi_n\cF(\wtU_\tn)= -\cP(\wti_{\tn}) \Pi_n \cF(\wtU_\tn) \,, \\
			& R_{\tn}:= G_{\tn}\b\Pi_\tn^\perp \bT_\tn \Pi_n\cF(\wtU_\tn) + \Pi_\tn^\bot {\cal F}(\widetilde U_\tn)\,. 
		\end{aligned}
	\end{equation}
	First, by  \eqref{F_op} ,  \eqref{fU0.est}, \eqref{P1.1} and Lemma \ref{XP_est}, we have, for any $\lambda \in \R^{\kappa_0}\times \cJ_{\varepsilon}(\tE)$,
	\begin{equation}\label{still.piccolo.s}
		\begin{aligned}
			\| \cF(\wtU_\tn) \|_{{\cal Y}^s}^{k_0,\upsilon} &\leq 	\| \cF(U_0) \|_{{\cal Y}^s}^{k_0,\upsilon} + 	\| \cF(\wtU_\tn) - \cF(U_0)\|_{{\cal Y}^s}^{k_0,\upsilon} \lesssim_{s}  \sqrt{\varepsilon}  + \| \wtW_{\tn} \|_{{\cal X}^{s+\bar\sigma}}^{k_0,\upsilon} \,.
		\end{aligned}
	\end{equation}
	The latter estimate, together with \eqref{param.NASH}, \eqref{P1.1}, implies that
	\begin{equation}\label{piccolo.s0}
		\upsilon^{-1} \| \cF(\wtU_\tn) \|_{{\cal Y}^{s_0}}^{k_0,\upsilon} \leq 1\,.
	\end{equation}
	We start with the estimates for $H_{\tn+1}$ since we will need them for the other estimates. By \eqref{succ.approx}, \eqref{bPitn}, \eqref{SM12}, \eqref{Ttn.1}, \eqref{Ttn.2}, \eqref{still.piccolo.s}, \eqref{piccolo.s0}, we have
	\begin{align}
		\| H_{\tn+1} \|_{{\cal X}^{s_0 + \tb_1}}^{k_0,\upsilon} & \lesssim_{s_0+\tb_1} K_{\tn}^{2\bar\sigma} \| \bT_\tn \cF(\wtU_{\tn}) \|_{{\cal X}^{s_0+\tb_1-2\bar\sigma}} \notag \\ 
		&\lesssim_{s_0+\tb_1} \upsilon^{-1} K_{\tn}^{2\bar\sigma} \big(\| \cF(\wtU_{\tn})\|_{{\cal Y}^{s_0+\tb_1-\bar\sigma}}^{k_0,\upsilon}+\|\wt\fI_\tn\|_{{\cal X}^{s_0+\tb_1-\bar\sigma}}^{k_0,\upsilon}  \| \cF(\wtU_{\tn}) \|_{{\cal Y}^{s_0+\bar\sigma}}^{k_0,\upsilon}\big) \notag  \\
		& \lesssim_{s_0+\tb_1} \upsilon^{-1} K_{\tn}^{2\bar\sigma}\big(  \sqrt{\varepsilon}   + \| \wtW_{\tn} \|_{{\cal X}^{s_0+\tb_1}}^{k_0,\upsilon} \big)\,, \label{Hn+1.est1} \\
		\| H_{\tn+1} \|_{{\cal X}^{s_0}}^{k_0,\upsilon} & \lesssim_{s_0} \upsilon^{-1} K_{\tn}^{\bar\sigma} \| \cF(\wtU_{\tn}) \|_{{\cal Y}^{s_0}}^{k_0,\upsilon}\,, \label{Hn+1.est2} \\
		\| H_{\tn+1} \|_{{\cal X}^{s_0 + \bar \sigma}}^{k_0,\upsilon} & \lesssim  \upsilon^{-1} K_{\tn}^{2 \bar\sigma} \| \cF(\wtU_{\tn}) \|_{{\cal Y}^{s_0}}^{k_0,\upsilon} \,. \label{Hn+1.est3}
	\end{align}
	We estimate $P_{\tn}$, $Q_{\tn}$ and $R_{\tn}$ with respect to the Sobolev norms in the low regularity $s_0$. By the definition of $Q_{\tn}$ in \eqref{Qn}, together with \eqref{F_op}, Lemma \ref{XP_est}, \eqref{P1.1}, \eqref{succ.approx}, \eqref{SM12}, \eqref{Ttn.2}, \eqref{Hn+1.est2}, \eqref{Hn+1.est3}, we have the quadratic estimate, 
	\begin{align}
		\| Q_{\tn} \|_{{\cal Y}^{s_0}}^{k_0,\upsilon} 
		& \lesssim_{s_0} \sqrt{\varepsilon}\big( \| H_{\tn+1} \|_{{\cal X}^{s_0}}^{k_0,\upsilon} \big)^2  \lesssim_{s_0} \sqrt{\varepsilon} \upsilon^{- 2} K_\tn^{2 \bar \sigma} \big(\| {\cal F}(\wtU_\tn) \|_{{\cal Y}^{s_0}}^{k_0, \upsilon} \big)^2  \,. \label{Qn.est}
	\end{align}
	Before estimating $P_{\tn}$, by \eqref{still.piccolo.s} (applied with $s = s_0 + \bar \sigma$), we have
	\begin{equation}\label{stima.interm.1}
		\begin{aligned}
			\| \cF(\wtU_{\tn}) \|_{{\cal Y}^{s_0+\bar\sigma}}^{k_0,\upsilon} & \leq \| \Pi_\tn {\cal F}(\widetilde U_\tn) \|_{{\cal Y}^{s_0 + \bar \sigma}}^{k_0, \upsilon} + \| \Pi_\tn^\bot {\cal F}(\widetilde U_\tn) \|_{{\cal Y}^{s_0 + \bar \sigma}}^{k_0, \upsilon} \\
			& \stackrel{\eqref{SM12}}{\lesssim} K_\tn^{\bar\sigma} \| {\cal F}(\widetilde U_\tn) \|_{{\cal Y}^{s_0}}^{k_0, \upsilon} + K_\tn^{- \mathtt b_1 + 2 \bar \sigma} \| {\cal F}(\widetilde U_\tn)\|_{{\cal Y}^{s_0 + \mathtt b_1 - \bar \sigma}}^{k_0, \upsilon}   \\
			&  \stackrel{\eqref{still.piccolo.s}}{\lesssim} K_\tn^{\bar\sigma} \| {\cal F}(\widetilde U_\tn) \|_{{\cal Y}^{s_0}}^{k_0, \upsilon} + K_\tn^{- \mathtt b_1 + 2 \bar \sigma} \big(\sqrt{\varepsilon} + \| \widetilde W_\tn \|^{k_0, \upsilon}_{{\cal X}^{s_0 + \mathtt b_1}} \big) \,.
		\end{aligned}
	\end{equation}
	By \eqref{RnPn}, Theorem \ref{alm.approx.inv}, \eqref{param.NASH}, \eqref{still.piccolo.s}, \eqref{piccolo.s0}, \eqref{stima.interm.1} and, by induction assumption, \eqref{P1.1} at the step $\tn$, we have
		\begin{align}
			\| P_{\tn} \|_{{\cal Y}^{s_0}}^{k_0,\upsilon} & \lesssim_{s_0} \upsilon^{-1}
			\| {\cal F}(\widetilde U_\tn) \|_{{\cal Y}^{s_0 + \bar\sigma}}^{k_0, \upsilon} \| \Pi_\tn {\cal F}(\widetilde U_\tn) \|_{{\cal Y}^{s_0 + \bar\sigma}}^{k_0, \upsilon}\,, \nonumber \\
			& \lesssim \upsilon^{- 1}K_\tn^{2 \bar\sigma} \big( \| {\cal F}(\widetilde U_\tn) \|_{{\cal Y}^{s_0}}^{k_0, \upsilon} \big)^2 + K_\tn^{- \mathtt b_1 + 3 \bar \sigma} \big(\sqrt{\varepsilon} + \| \widetilde W_\tn \|^{k_0, \upsilon}_{{\cal X}^{s_0 + \mathtt b_1}} \big) \upsilon^{- 1} \| {\cal F}(\widetilde U_\tn) \|_{{\cal Y}^{s_0}}^{k_0, \upsilon} \nonumber \\
			& \stackrel{\eqref{piccolo.s0}}{\lesssim} \upsilon^{- 1}K_\tn^{2 \bar\sigma} \big( \| {\cal F}(\widetilde U_\tn) \|_{{\cal Y}^{s_0}}^{k_0, \upsilon} \big)^2 + K_\tn^{- \mathtt b_1 + 3 \bar \sigma} \big(\sqrt{\varepsilon} + \| \widetilde W_\tn \|^{k_0, \upsilon}_{{\cal X}^{s_0 + \mathtt b_1}} \big)	\,.\label{Pn.est}
		\end{align}
	We now estimate $R_{\tn}$. By \eqref{SM12}, \eqref{F_op}, Lemma \ref{XP_est}, \eqref{P1.1}, \eqref{RnPn}, \eqref{still.piccolo.s}, \eqref{piccolo.s0} and Theorem \ref{alm.approx.inv}, one gets 
	\begin{equation}\label{Rn.est}
		\begin{aligned}
			\| R_{\tn} \|_{{\cal Y}^{s_0}}^{k_0, \upsilon}& \leq \| \Pi_\tn^\bot {\cal F}(\widetilde U_\tn) \|_{{\cal Y}^{s_0}}^{k_0, \upsilon} + \| G_{\tn}\b\Pi_\tn^\perp \bT_\tn \Pi_n\cF(\wtU_\tn) \|_{{\cal Y}^{s_0}}^{k_0, \upsilon}  \\
			& \lesssim K_n^{\bar \sigma - \mathtt b_1} \big(\sqrt{\varepsilon} + \| \widetilde W_\tn \|_{{\cal X}^{s_0 + \mathtt b_1}}^{k_0, \upsilon} \big) + \| \b\Pi_\tn^\perp \bT_\tn \Pi_n\cF(\wtU_\tn) \|_{{\cal X}^{s_0 + 1}}^{k_0, \upsilon} \\
			& \lesssim K_n^{\bar \sigma - \mathtt b_1} \big(\sqrt{\varepsilon} + \| \widetilde W_\tn \|_{{\cal X}^{s_0 + \mathtt b_1}}^{k_0, \upsilon} \big) + K_\tn^{- \mathtt b_1} \|  \bT_\tn \Pi_n\cF(\wtU_\tn) \|_{{\cal X}^{s_0 + \mathtt b_1 + 1}}^{k_0, \upsilon}  \\
			& \lesssim K_n^{\bar \sigma - \mathtt b_1} \big(\sqrt{\varepsilon} + \| \widetilde W_\tn \|_{{\cal X}^{s_0 + \mathtt b_1}}^{k_0, \upsilon} \big)  \\
			&  + K_\tn^{\bar \sigma- \mathtt b_1} \upsilon^{- 1} \Big( \| \Pi_n\cF(\wtU_\tn)  \|_{{\cal Y}^{s_0 + \mathtt b_1 + 1}}^{k_0, \upsilon}  + \| \widetilde W_\tn \|_{{\cal X}^{s_0 + \mathtt b_1 + 1 }}^{k_0, \upsilon} \| \Pi_n {\cal F}(\widetilde U_\tn) \|_{{\cal Y}^{s_0 }}^{k_0, \upsilon}\Big)  \\
			& \lesssim  K_n^{\bar \sigma - \mathtt b_1} \big(\sqrt{\varepsilon} + \| \widetilde W_\tn \|_{{\cal X}^{s_0 + \mathtt b_1}}^{k_0, \upsilon} \big) + K_\tn^{\bar \sigma + 1- \mathtt b_1} \upsilon^{- 1}  \| \Pi_n\cF(\wtU_\tn)  \|_{{\cal Y}^{s_0 + \mathtt b_1 }}^{k_0, \upsilon} \\
			& \lesssim K_\tn^{2 \bar\sigma + 1 - \mathtt b_1} \upsilon^{- 1} \big( \sqrt{\varepsilon} + \| \widetilde W_\tn \|_{{\cal X}^{s_0 + \mathtt b_1}}^{k_0, \upsilon} \big)\,.
		\end{aligned}
	\end{equation}
	By \eqref{FUn+1.split}, \eqref{Qn.est}, \eqref{Pn.est}, \eqref{Rn.est}, \eqref{still.piccolo.s}, \eqref{piccolo.s0}, we finally estimate $\cF(U_{\tn+1})$ by
	\begin{equation}\label{final.est1}
		\begin{aligned}
			&	\| \cF(U_{\tn+1})\|_{{\cal Y}^{s_0}}^{k_0,\upsilon}  \lesssim  \upsilon^{-1} K_{\tn}^{\bar \mu-\tb_1} (\sqrt{\varepsilon} + \| \wtW_{\tn} \|_{{\cal X}^{s_0+\tb_1}}^{k_0,\upsilon} )\\
			& \quad \quad + \upsilon^{-1} K_{\tn}^{\bar\mu} \big( \| \cF(\wtU_{\tn}) \|_{{\cal Y}^{s_0}}^{k_0,\upsilon} \big)^2 + \sqrt{\varepsilon} \upsilon^{-2} K_\tn^{\bar\mu} (\| \cF(\wtU_{\tn})\|_{{\cal Y}^{s_0}}^{k_0,,\upsilon})^2 \\
			& \stackrel{\sqrt{\varepsilon} \upsilon^{- 1} \ll 1}{\lesssim}  \upsilon^{-1} K_{\tn}^{\bar \mu-\tb_1} (\sqrt{\varepsilon} + \| \wtW_{\tn} \|_{{\cal X}^{s_0+\tb_1}}^{k_0,\upsilon} ) + \upsilon^{-1} K_{\tn}^{\bar\mu} \big( \| \cF(\wtU_{\tn}) \|_{{\cal Y}^{s_0}}^{k_0,\upsilon} \big)^2 \,,
		\end{aligned}
	\end{equation}
	with $\bar \mu:=3\bar\sigma+1$.Moreover, by \eqref{succ.approx}, \eqref{Ttn.1}, \eqref{fU0.est}, we have
	\begin{equation}\label{final.est2}
		\| W_{1} \|_{{\cal X}^{s_0+\tb_1}}^{k_0,\upsilon} = \| H_1 \|_{{\cal X}^{s_0+\tb_1}}^{k_0,\upsilon} \lesssim \upsilon^{-1} \| \cF(U_0) \|_{{\cal Y}^{s_0+\bar\sigma+\tb_1}}^{k_0,\upsilon} \lesssim \sqrt{\varepsilon} \upsilon^{-1}\,,  
	\end{equation}
	and, noting that $W_{\tn+1}=\wtW_\tn+ H_{\tn+1}$ for $\tn\geq 1$, we have, by \eqref{Hn+1.est1},
	\begin{equation}\label{final.est3}
		\| W_{\tn+1} \|_{{\cal X}^{s_0+\tb_1}}^{k_0,\upsilon} \lesssim\upsilon^{-1} K_{\tn}^{\bar \mu} (\sqrt{\varepsilon} + \| \wtW_{\tn} \|_{{\cal X}^{s_0+\tb_1}}^{k_0,\upsilon})\,.
	\end{equation}
	We extend $H_{\tn+1}$ in \eqref{succ.approx}, defined for $\omega\in \tG^{\upsilon}$, to $\wtH_{\tn+1}$ defined for all parameters $\lambda\in \R^{\kappa_{0}}\times \cJ_{\varepsilon}(\tE)$ with equivalent $\| \,\cdot\, \|_{s}^{k_0,\upsilon}$ norms and we set $\wtU_{\tn+1}:= \wtU_{\tn} + \wtH_{\tn+1}$. Therefore, by \eqref{final.est1}, \eqref{final.est2}, \eqref{final.est3}, the induction assumptions, the choice of the constants in \eqref{costanti Nash Moser} and the smallness condition in \eqref{param.NASH}, we conclude that \eqref{P1.1}, \eqref{P1.2}, \eqref{P2.2}, \eqref{P3.1} hold at the step $\tn+1$. Finally, by \eqref{succ.approx}, \eqref{F_op}, \eqref{Halpha}, \eqref{invar.rev.Halpha},  Theorem \ref{alm.approx.inv} and the induction assumption on $\wtU_{\tn}$, we have that $\wh\fI_{\tn+1}$ satisfies \eqref{RTTT} and so $\wtU_{\tn+1}$ is a reversible embedding. This concludes the proof.
\end{proof}

\noindent
{\bf Proof of Theorem \ref{NMT}} Let $\upsilon=\varepsilon^{\ta}$, with $0<\ta< \ta_0$ (see \eqref{param.NASH}). Then, there exists $\varepsilon_0>0$ small enough such that the smallness condition \eqref{param.NASH} holds and Theorem \ref{NASH} applies. By \eqref{P1.2}, the sequence $\wtW_\tn = \wtU_\tn - (\bx,0,0,(\omega,\tA))= (\fI_{\tn},\alpha_{\tn}-\omega)$ converges to a function $W_\infty:\R^{\kappa_0}\times \cJ_{\varepsilon}(\tE)\to H_\bx^{s_0} \times H_\bx^{s_0} \times H^{s_0,1}\times \R^{\kappa_0}$ and we define
\begin{equation}
	U_\infty:=(i_\infty,\alpha_{\infty}) = (\bx,0,0,(\omega,\tA)) + W_\infty\,.	
\end{equation}
The torus $i_\infty$ is reversible, namely it satisfies \eqref{RTTT}. Moreover, by \eqref{P1.1}, \eqref{P1.2}, we deduce
\begin{equation}
	\| U_\infty - U_0 \|_{s_0+\bar\sigma} \leq C_* \sqrt\varepsilon\upsilon^{-1} \,, \quad \| U_\infty -\wtU_{\tn} \|_{s_0+\bar\sigma}^{k_0,\upsilon} \leq C_* \sqrt\varepsilon\upsilon^{-1} K_{\tn}^{-\ta_2}\,, \ \forall\, \tn\geq 1\,. 
\end{equation}
In particular, \eqref{alpha_infty}, \eqref{embed.infty.est} hold. By Theorem \ref{NASH}-$(\cP 2)_{\tn}$, we deduce $\cF(\omega,U_\infty(\omega))=0$ for any $(\omega,\tA)\in \mathtt{DC}(\upsilon, \tau)\times \cJ_{\varepsilon}(\tE)$ and hence also for $(\omega,\tA) \in \tG^{\upsilon}\times \cJ_{\varepsilon}(\tE)$ (see \eqref{Cset_infty}, \eqref{intro parametro mathtt A dim}), where the set $\t\Omega$ in \eqref{Cset_infty} is the $\varrho$-neighbourhood of the unperturbed linear frequencies in \eqref{neigh.Omega}. This concludes the proof of Theorem \ref{NMT}.

\begin{footnotesize}
	
\end{footnotesize}

\bigskip

\begin{flushright}
	\textbf{Luca Franzoi}
	
	\smallskip
	
	NYUAD Research Institute
	
	New York University Abu Dhabi
	
	NYUAD Saadiyat Campus
	
	129188 Abu Dhabi, UAE
	
	\smallskip 
	
	\texttt{lf2304@nyu.edu}
	
	\bigskip
	
	\textbf{Nader Masmoudi}
	
	\smallskip
	
	NYUAD Research Institute
	
	New York University Abu Dhabi
	
	NYUAD Saadiyat Campus
	
	129188 Abu Dhabi, UAE
	
	\smallskip
	
	Courant Institute of Mathematical Sciences
	
	New York University
	
	251 Mercer Street
	
	10012, New York, NY, USA
	
	\smallskip 
	
	\texttt{masmoudi@cims.nyu.edu}

	\bigskip
	
	\textbf{Riccardo Montalto}
	
	\smallskip
	
	Dipartimento di Matematica ``Federigo Enriques''
	
	Universit\`a degli Studi di Milano
	
	Via Cesare Saldini 50
	
	20133 Milano, Italy
	
	\smallskip
	
	\texttt{riccardo.montalto@unimi.it}
\end{flushright}

\end{document}